\numberwithin{equation}{section}
\newtheorem{thm}{Theorem}[section]
\newtheorem{lemma}[thm]{Lemma}
\newtheorem{prop}[thm]{Proposition}
\newtheorem{cor}[thm]{Corollary}
{\theorembodyfont{\rmfamily}
\newtheorem{defn}[thm]{Definition}
\newtheorem{example}[thm]{Example}

\newtheorem{rmk}[thm]{Remark}
}
\newcommand{\qed}{\hfill \mbox{\raggedright \rule{.07in}{.1in}}}
\newenvironment{proof}{\vspace{1ex}\noindent{\bf
Proof}\hspace{0.5em}}{\hfill\qed\vspace{1ex}}
\newenvironment{pfof}[1]{\vspace{1ex}\noindent{\bf Proof of
#1}\hspace{0.5em}}{\hfill\qed\vspace{1ex}}
\newcommand{\R}{{\mathbb R}}
\newcommand{\Z}{{\mathbb Z}}
\newcommand{\C}{{\mathbb C}}
\renewcommand{\H}{{\mathbb H}}
 \newcommand{\A}{{\mathbb A}}
\newcommand{\bv}{\bar\varphi}
 \newcommand{\supp}{\operatorname{supp}}
 \newcommand{\sgn}{\operatorname{sgn}}
\newcommand{\BIG}{\displaystyle}
\renewcommand{\Re}{\operatorname{Re}}
\renewcommand{\Im}{\operatorname{Im}}
\title{Operator renewal theory for continuous time dynamical systems with finite and infinite measure}
\author{
Ian Melbourne 
\thanks{Mathematics Institute, University of Warwick, Coventry, CV4 7AL, UK 
\newline i.melbourne@warwick.ac.uk}
\and 
Dalia Terhesiu
\thanks{
Faculty of Mathematics, University of Vienna, 1090 Vienna, AUSTRIA 
daliaterhesiu@gmail.com.
\newline Current address: Mathematics Department, University of Exeter, EX4 4QF, UK}
}
\date{9 April 2014}
\begin{document}

\maketitle

\begin{abstract}

We develop operator renewal theory for flows and apply this to 
obtain results on mixing and rates of mixing for a large class of finite and infinite measure semiflows.
Examples of systems covered by our results include suspensions over parabolic rational maps of the complex plane, and nonuniformly expanding semiflows with indifferent periodic orbits.   

In the finite measure case, the emphasis is on obtaining sharp rates of decorrelations, extending results of Gou\"ezel and Sarig from the discrete time setting to continuous time.  In the infinite measure case, the primary question is to prove results on mixing itself, extending our results in the discrete time setting.
In some cases, we obtain also higher order asymptotics and rates of mixing.
\end{abstract}

\paragraph{AMS Subject Classifications:} 
37A25, 37A40, 37A50, 60K05

\vspace{-3ex}
\paragraph{Keywords:} Infinite ergodic theory, continuous time operator renewal equation, mixing rates.

\section{Introduction}
\label{sec-intro}

This paper is concerned with mixing for continuous time dynamical systems.
To set the background for our results, we begin by discussing developments for discrete time.

Much recent research has centered around the statistical properties of smooth dynamical systems with strong hyperbolicity (expansion/contraction) properties.
Results such as exponential decay of correlations and statistical limit laws are by 
now classical  for uniformly hyperbolic diffeomorphisms~\cite{Bowen75,Ruelle78,Sinai72}.
In particular, if $f:X\to X$ is uniformly hyperbolic and $\mu$ is an equilibrium measure corresponding to a H\"older potential, then the correlation function
$\int_X v \,w\circ f^n\,d\mu-\int_X v\,d\mu\int_X w\,d\mu$ decays exponentially quickly as $n\to\infty$ for H\"older observables $v,w:X\to\R$.
 
Young~\cite{Young98} extended this result to a large class of nonuniformly hyperbolic systems, including planar dispersing billiards, and also established polynomial decay of correlations for systems that are more slowly mixing~\cite{Young99}.
The results were then shown to be optimal by Sarig~\cite{Sarig02} and Gou\"ezel~\cite{Gouezel-sharp}.
Turning to the infinite measure case, the fundamental difference is that 
$\lim_{n\to\infty}\int_X v \,w\circ f^n\,d\mu=0$ for reasonably well-behaved observables $v,w$.   Hence there arises the preliminary problem of showing that
$a_n \int_X v \,w\circ f^n\,d\mu\to
\int_X v\,d\mu\int_X w\,d\mu$ for a suitable normalising sequence $a_n\to\infty$ and
for sufficiently well-behaved $v,w$.   The secondary problem is to estimate the speed of convergence (rate of mixing).
Definitive results on the preliminary problem, and first results on the rate of mixing, were obtained recently in~\cite{MT12}.

In the continuous time situation, decay of correlations is
less well understood.   Exponential decay of correlations has been proved only for a very thin set of Anosov flows (those that possess a contact structure or have exceptionally smooth stable and unstable foliations), see~\cite{Dolgopyat98a,Liverani04}.   On the other hand, superpolynomial decay of correlations holds
for ``typical'' uniformly hyperbolic flows~\cite{Dolgopyat98b,FMT07} for observables that are sufficiently regular.
The typical set of flows includes those with a pair of periodic points whose ratio of periods is Diophantine~\cite{Dolgopyat98b} and also includes an open and dense set of flows~\cite{FMT07}.
Results on superpolynomial decay were
extended by~\cite{M07} to nonuniformly hyperbolic flows whose Poincar\'e map is within the class considered in~\cite{Young98}.
For flows whose Poincar\'e map lies in the class considered in~\cite{Young99},
it was shown in~\cite{M09} that typically polynomial decay holds 
for sufficiently regular observables.

In the current paper, we develop a continuous time operator renewal theory, and thereby obtain results on sharp lower bounds for finite measure semiflows with polynomial decay of correlations, and mixing (as well as higher order asymptotics) for infinite measure semiflows, extending
the discrete time results of~\cite{Gouezel-sharp,Sarig02,MT12}.
Our results hold typically in the same sense as discussed above (so it suffices
that there exists a pair of periods with Diophantine ratio, see hypothesis (A2) and Remark~\ref{rmk-A2} below).

\subsection{Illustrative examples}
To describe the main results, we consider (mainly for convenience) the family of
Pomeau-Manneville intermittent maps~\cite{PomeauManneville80}
 considered by~\cite{LiveraniSaussolVaienti99}, and their suspensions in the continuous time case.  Specifically, define the interval maps
$f:X\to X$, $X=[0,1]$,
\begin{align} \label{eq-LSV}
f(x)=\begin{cases} x(1+2^\gamma x^\gamma), & 0<x<\frac12 \\ 2x-1 & \frac12<x<1
\end{cases},
\end{align}
where $\gamma>0$.
There is a unique (up to scaling) $\sigma$-finite absolutely
continuous invariant measure and this measure is finite if and only if $\gamma<1$.
Such maps have a uniformly expanding (or Gibbs-Markov, see Section~\ref{sec-main} for precise definitions) first return map to the set $Y=[\frac12,1]$. 
Set $\beta=1/\gamma$ and 
\begin{align} \label{eq-xixi}
\xi_\beta(t)  =\begin{cases}
t^{-\beta} & \beta>2 \\ 
(\log t)t^{-2} & \beta=2 \\ 
t^{-(2\beta-2)} & 1<\beta<2 \end{cases}, \quad
\xi_{\beta,\epsilon}(t) = 
\begin{cases}  t^{-(\beta-\epsilon)}, &  \beta\ge 2 \\
t^{-(2\beta-2)}, & 1<\beta<2 \end{cases}.
\end{align}

\vspace{-2ex}
\paragraph{Discrete time, finite measure}
For $\gamma\in(0,1)$, it follows 
from~\cite{Young99,Hu04} that correlations decay like $n^{-(\beta-1)}$:
\[
\int_X v\,w\circ f^n-\int_X v\int_X w
=O(n^{-(\beta-1)}),
\]
for all $v:X\to\R$ H\"older and $w:X\to\R$ lying in $L^\infty$.
This decay rate is sharp~\cite{Gouezel-sharp,Sarig02}:
\[
\int v\,w\circ f^n-\int_X v\int_X w
=c\int_X v\int_X w\; n^{-(\beta-1)}
+O(\xi_{\beta}(n)),
\]
for all observables $v,w$ supported in a compact subset of $(0,1]$ with $v$ H\"older and $w$ in $L^\infty$.   Here $c$ is a positive constant depending only on $\gamma$.

\vspace{-2ex}
\paragraph{Discrete time, infinite measure}
For $\gamma\in(1,2)$, we showed~\cite{MT12} that there is a constant $c>0$
(depending only on $\gamma$) such that
\[
n^{1-\beta} \int_X v \,w\circ f^n\to c \int_X v\int_X w,
\]
 for 
all observables $v,w$ supported in a compact subset of $(0,1]$ with $v$ H\"older and $w$ in $L^\infty$.  
The same result holds for $\gamma=1$ with $n^{1-\beta}$ replaced by $\log n$.
For $\gamma\ge2$, such results cannot hold in  the generality considered in~\cite{MT12} but, using the extra structure of the maps~\eqref{eq-LSV}, 
the corresponding results were obtained by~\cite{Gouezel11} for all $\gamma>1$.
In addition, rates of mixing and higher asymptotics for $\gamma\in(1,2)$ were obtained in~\cite{MT12}, improved upon in~\cite{Terhesiu-app}.
and extended to the case $\gamma\ge2$ in~\cite{Terhesiu-sub}.

\vspace{-2ex}
\paragraph{Continuous time, finite measure}
Now suppose that $\varphi_X:X\to\R^+$ is a H\"older roof function bounded away from zero. 
We form the suspension flow $f_t:\Lambda\to\Lambda$ in the usual way
(see Section~\ref{sec-main} for definitions).
This semiflow has an indifferent periodic orbit corresponding to the indifferent fixed point $0\in X$.

In the finite measure case $\gamma\in(0,1)$, 
it follows from~\cite{M09} that typically 
$\int_\Lambda v \,w\circ f_t-\int_\Lambda v\int_\Lambda w$ decays at the rate $t^{-(\beta-1)}$  as $t\to\infty$ for observables $v,w:\Lambda\to\R$ where $v$ is H\"older and $w$ is sufficiently smooth in the flow direction.

Here we prove that such results are optimal: for any $\epsilon>0$
\[
\int_\Lambda v\,w\circ f_t-\int_\Lambda v\int_\Lambda w
=c\int_\Lambda v\int_\Lambda w\; t^{-(\beta-1)}
+O(\xi_{\beta,\epsilon}(t)),
\]
for all observables $v,w$ supported away from the indifferent periodic orbit 
 with $v$ H\"older and $w$ sufficiently smooth in the flow direction.
If moreover $\int_\Lambda v=0$, then 
$\int v\,w\circ f_t=O(t^{-(\beta-\epsilon)})$.
This is the direct analogue of the results in~\cite{Gouezel-sharp,Sarig02}.

\vspace{-2ex}
\paragraph{Continuous time, infinite measure}  Finally, consider the semiflow
$f_t:\Lambda\to\Lambda$ for $\gamma\ge1$.   For $\gamma\in(1,2)$ we prove in this paper that typically 
\[
t^{1-\beta}
 \int_\Lambda v \,w\circ f_t\to c
\int_\Lambda v\int_\Lambda w,
\] 
for all observables $v,w$ supported away from the indifferent periodic orbit 
 with $v$ H\"older and $w$ sufficiently smooth in the flow direction.
Again the same result holds for $\gamma=1$ with $t^{1-\beta}$ replaced by $\log t$, and we obtain higher order asymptotics.    
This is the direct analogue of the results in~\cite{MT12}.

\subsection{Ingredients of the proofs}

The methods in this paper combine 
\begin{itemize}
\item[(i)] Operator renewal theory developed by~\cite{Gouezel-stable,Sarig02} for the discrete time finite measure situation.
\item[(ii)]  The methods we introduced in the infinite ergodic theory setting in~\cite{MT12} which built upon~\cite{GarsiaLamperti62,Gouezel-stable,Sarig02}.
\item[(iii)] The ideas of~\cite{Dolgopyat98b} for uniformly hyperbolic flows and their extension~\cite{M07,M09} to the nonuniformly hyperbolic setting.   
\end{itemize}
However, there is a fourth and equally important component, namely 
\begin{itemize}
\item[(iv)] An operator renewal equation for flows (Theorem~\ref{thm-renew}).
\end{itemize}

As far as we can tell, the operator renewal equation for flows introduced in
Section~\ref{sec-renew} below has no counterpart in the existing probability theory literature.
Continuous time versions of renewal theory have been developed previously in the probability
theory literature.   
We refer to~\cite[Ch.\ XI]{Feller66} for the general framework surrounding Blackwell's renewal theorem~\cite{Blackwell48}. For such a theorem in the infinite mean setting 
(the continuous time analogue of~\cite{GarsiaLamperti62})
we refer to~\cite[Theorem 1]{Erickson70}. We also mention the work of Kingman (see for instance~\cite{Kingman64}) for the continuous analogue, developed for both finite and infinite mean setting, of Feller's theory on discrete regenerative phenomena.
However, it is unclear how to apply these methods here, and our approach seems to have certain advantages as discussed in Remark~\ref{rmk-oprenew}.

\vspace{2ex}

In Section~\ref{sec-main}, we state our main results for suspensions of nonuniformly expanding maps, and recover the statements in the introduction
(suspensions of intermittent maps) as a special case.
The remainder of this paper is then divided into three parts.
In Part~\ref{part-oprenew}, we derive the operator renewal equation for flows.
In Part~\ref{part-infinite}, we prove our results on infinite measure systems.
In Part~\ref{part-finite}, we prove our results on finite measure systems.
The paper is written in such a way that Parts~\ref{part-infinite}
and~\ref{part-finite} can be read independently.

\begin{rmk}
For discrete time systems, operator renewal theory was developed first in the finite measure case before being extended to the infinite measure situation.
For continuous time systems, we present the material in the reverse order.
The reason for this is that
having formulated the continuous time operator renewal equation described above in~(iv),  it is fairly straightforward to deduce our main results for infinite measure semiflows
from the existing work described in components~(ii) and~(iii).   
(We note however that certain technical estimates
in the proof and usage of Lemma~\ref{lem-R} are considerably more complicated than in the case of discrete time, infinite measure.)
In contrast, although our results for finite measure semiflows follow from components~(i),~(iii) and~(iv), it requires significantly more work to glue these methods together.
\end{rmk}

\paragraph{Notation}
We use the ``big $O$'' and $\ll$ notation interchangeably, writing $a_n=O(b_n)$ or $a_n\ll b_n$ if there is a constant $C>0$ such that
$a_n\le Cb_n$ for all $n\ge1$.
Three positive constants arise frequently throughout the paper: $C_1$ and $C_2$ introduced in Section~\ref{sec-main}, and $c_2=(C_2+1)^{-1}$ which appears for the first time in Section~\ref{sec-decomp}.

\section{Statement of the main results}

\label{sec-main}

\paragraph{Suspension semiflows}

Let $(Y,\mu)$ be a probability space and 
$F:Y\to Y$ an ergodic measure-preserving transformation.   Let $\varphi:Y\to\R^+$
be a measurable roof function.
Form the suspension
$Y^\varphi=\{(y,u)\in Y\times\R: 0\le u\le \varphi(y)\}/\sim$
where $(y,\varphi(y))\sim(Fy,0)$.
The suspension flow $f_t:Y^\varphi\to Y^\varphi$ is given by
$f_t(y,u)=(y,u+t)$ computed modulo identifications and the measure 
$\mu^\varphi=\mu\times{\rm Lebesgue}$ is ergodic and $f_t$-invariant.
In the finite measure case, we normalise by $\bar\varphi=\int_Y\varphi\,d\mu$ so that
$\mu^\varphi=(\mu\times{\rm Lebesgue})/\bv$ is a probability measure.

\paragraph{Gibbs-Markov maps}

We assume throughout that $F:Y\to Y$ is a full branch Gibbs-Markov map.
Roughly speaking $F$ is
uniformly expanding with good distortion properties.  

We recall the key definitions~\cite{Aaronson}.
Let $(Y,\mu)$ be a Lebesgue probability space with
countable measurable partition $\alpha$.
Let $F:Y\to Y$ be an ergodic, conservative, measure-preserving, Markov map transforming
each partition element bijectively onto $Y$.
For any $\theta\in(0,1)$, define $d_\theta(y,y')=\theta^{s(y,y')}$
where the {\em separation time} $s(y,y')$ is the least integer $n\ge0$
such that $F^ny$ and $F^ny'$ lie in distinct partition elements in $\alpha$.
It is assumed that the partition $\alpha$ separates orbits of $F$, so
$s(y,y')$ is finite for all $y\neq y'$.   Then $d_\theta$ is a metric.
Let $F_\theta(Y)$ be the Banach space of $d_\theta$-Lipschitz
functions $v:Y\to\R$ with norm $\|v\|=|v|_\infty+|v|_\theta$ where
$|v|_\theta$ is the Lipschitz constant of $v$.

Define the potential function $g=\log\frac{d\mu}{d\mu\circ F}:Y\to\R$.
We require that $g$ is uniformly piecewise Lipschitz: that is,
$g|_a$ is $d_\theta$-Lipschitz on each $a\in\alpha$ and
the Lipschitz constants can be chosen independent of $a$.

For $n\ge1$ we let $\alpha_n$ denote the partition into $n$-cylinders.
Let $g_n=\sum_{j=0}^{n-1}g\circ F^j$.
It follows from the Lipschitz property of $g$ together with full branches that
there exists a constant $C_1>0$ such that 
\begin{align} \label{eq-GM}
e^{g_n(y)}\le C_1\mu(a), \quad\text{and}\quad
|e^{g_n(y)}-e^{g_n(y')}|\le C_1\mu(a)d_\theta(F^ny,F^ny'),
\end{align}
for all $y,y'\in a$, $a\in\alpha_n$, $n\ge1$.

From now on, we adopt a convenient abuse of notation and  
define $|1_av|_\theta=\sup_{y,y'\in a:y\neq y'}|v(y)-v(y')|/d_\theta(y,y')$. 
We write $1_av\in F_\theta(Y)$ if
$1_av$ is bounded and $|1_av|_\theta<\infty$. 

\paragraph{Roof function}
The roof function $\varphi:Y\to\Z^+$
is assumed to be piecewise Lipschitz with respect to $d_{\theta_0}$ for some
$\theta_0\in(0,1)$,
(ie $1_a\varphi\in F_{\theta_0}(Y)$ for all $a\in\alpha$), and satisfying $\inf \varphi>0$.
For convenience of notation, we suppose that $\inf \varphi>2$.
In particular, the set $\tilde Y=Y\times[0,1]$ lies inside $Y^\varphi$.

We make various further assumptions:
\begin{itemize}
\item[(A1)] There is a constant $C_2>0$ such that 
$|1_a\varphi|_{\theta_0} \le C_2\inf_a\varphi$ for all $a\in\alpha$.
\item[(A2)] There exist two periodic orbits for $f_t$ with periods $\tau_1,\tau_2$ such
that $\tau_1/\tau_2$ is Diophantine.  We require that the periodic orbits intersect $Y$ only in the interior of partition elements.
\end{itemize}

\begin{rmk}  \label{rmk-A2}
Condition (A1) is automatic for a large class of examples discussed in Subsection~\ref{sec-GM}.   
Condition (A2) is sufficient for a rather technical ``approximate eigenfunction'' criterion of Dolgopyat~\cite{Dolgopyat98b} to be satisfied.  This criterion is stated
precisely in Definition~\ref{defn-approx} and holds also for an open dense set of roof functions~\cite{FMT07}.
\end{rmk}

\paragraph{Observables}
We consider observables $v,w:\tilde Y\to\R$ of the following form.
Writing $v^u(y)=v(y,u)$, define $|v|_\theta =\sup_{u\in[0,1]}|v^u|_\theta$ 
and $\|v\|_\theta=|v|_\infty+|v|_\theta$.
Then $F_\theta(\tilde Y)$ is the space consisting of those 
$v\in L^\infty(\tilde Y)$ with $\|v\|_\theta<\infty$.

For $m\ge0$,
set $|w|_{\infty,m}=\max_{j=0,\dots,m}|\partial_t^jw|_\infty$.
We write $w\in L^{\infty,m}(\tilde Y)$ if 
$w$ is supported
in $Y\times (0,1)$ with
$|w|_{\infty,m}<\infty$.

\vspace{2ex}
Define 
\[
\rho_{v,w}(t)=\int_{Y^\varphi} v\,w\circ f_t\,d\mu^\varphi,
\]
and write $\bar v=\int_{Y^\varphi}v\,d\mu^\varphi$,
$\bar w=\int_{Y^\varphi}w\,d\mu^\varphi$.  
In the finite measure case, the correlation function of $v$ and $w$ is
given by $\rho_{v,w}(t)-\bar v\bar w$.
We can now state our main theorems.

\begin{thm}[Infinite measure] \label{thm-infinite}
Assume that $F:Y\to Y$ is a full branch Gibbs-Markov map with roof
function $\varphi:Y\to\R^+$ satisfying conditions (A1) and (A2).
\begin{itemize}
\item[(a)]  Suppose that $\varphi$ is nonintegrable and
$\mu(\varphi>t)=\ell(t) t^{-\beta}$ where $\beta\in(\frac12,1]$
and $\ell$ is a measurable slowly varying function (so $\lim_{x\to\infty}\ell(\lambda x)/\ell(x)=1$ for all $\lambda>0$).

Let $d_\beta=\frac{1}{\pi}\sin\beta\pi$ for $\beta<1$ and $d_\beta=1$ for $\beta=1$.
Define $\tilde\ell(t)=\ell(t)$ for $\beta<1$ and 
$\tilde\ell(t)=\int_1^t \ell(s)s^{-1}\,ds$ for $\beta=1$.

Then there exist $\theta\in(0,1)$, $m\ge1$, and a function $a:(0,\infty)\to(0,\infty)$ with
$\lim_{t\to\infty}a(t)=0$ such that
\[
|\tilde\ell(t)t^{1-\beta}\rho_{v,w}(t) - d_\beta\bar v\bar w|
\le \|v\|_{\theta}|w|_{\infty,m}\,a(t), 
\]
for all $v\in F_\theta(\tilde Y)$, $w\in L^{\infty,m}(\tilde Y)$.
\item[(b)]  Suppose moreover
that $\mu(\varphi>t)=ct^{-\beta}+O(t^{-q})$ where $\beta\in(\frac12,1)$,
$q\in(1,2\beta)$ and $c>0$.
There exist constants
$d_1=c^{-1}d_\beta$, $d_2,d_3,\ldots \in\R$, and 
for any $\epsilon>0$, there exist $\theta\in(0,1)$, $m\ge1$, such that
\[
\rho_{v,w}(t)=
\sum_j d_jt^{-j(1-\beta)}
\bar v \bar w
+O(\|v\|_\theta|w|_{\infty,m}\,t^{-\beta(1-q^{-1}(2\beta-1)-\epsilon)}),
\] 
for all $v\in F_\theta(\tilde Y)$, $w\in L^{\infty,m}(\tilde Y)$, $t>0$.
Here, the sum is over those $j\ge1$ with $j(1-\beta)\le \beta(1-q^{-1}(2\beta-1)-\epsilon)$.

In particular, 
if $\mu(\varphi>t)=ct^{-\beta}+O(t^{-2\beta})$, then the error term is of the form
$O(\|v\|_\theta|w|_{\infty,m}\,t^{-(\frac12-\epsilon)})$.  If in addition
$\beta>\frac34$ and $d_2\neq0$, then we obtain
second order asymptotics.  
\end{itemize}
\end{thm}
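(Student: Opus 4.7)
The plan is to start from the operator renewal equation for flows (Theorem~\ref{thm-renew}) which, after passing to Laplace transforms, should express the relevant correlation function as the inversion of an algebraic equation of the schematic form $\widehat T(s) = (I - \widehat R(s))^{-1}$, where $\widehat R(s)$ is a family of bounded operators on $F_\theta(Y)$ built from the first-return dynamics of $F$ twisted by $e^{-s\varphi}$, and $\widehat T(s)$ encodes $\rho_{v,w}$. The analysis then reduces to understanding $\widehat R(is)$ along the imaginary axis in two complementary regimes: near the origin, and for large $|s|$.

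Near $s=0$, the tail hypothesis $\mu(\varphi>t)=\ell(t)t^{-\beta}$ forces a regularly varying asymptotic of the form $I-\widehat R(is)\sim \tilde\ell(1/|s|)|s|^\beta Q$ (up to explicit directional coefficients), exactly parallel to the Gibbs--Markov computation in~\cite{MT12}. From here, part~(a) follows by a continuous-time Karamata Tauberian theorem applied to the Laplace transform of $\rho_{v,w}$; the constant $d_\beta=\pi^{-1}\sin\beta\pi$ emerges as the standard stable-law normalisation, while the logarithmic correction at $\beta=1$ produces precisely the $\tilde\ell(t)=\int_1^t\ell(s)s^{-1}\,ds$ renormalisation.

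The main technical obstacle is to control $(I-\widehat R(is))^{-1}$ uniformly away from $s=0$. Here the Diophantine hypothesis~(A2) enters through Dolgopyat's approximate-eigenfunction criterion (Definition~\ref{defn-approx}): following the nonuniformly hyperbolic flow analysis of~\cite{M07,M09}, I would adapt the twisted transfer operator estimates to the operators $\widehat R(is)$ themselves, obtaining polynomial bounds
\[
\|(I-\widehat R(is))^{-1}\|_{F_\theta\to F_\theta}\ll |s|^{N}\quad(|s|\to\infty).
\]
The smoothness assumption $w\in L^{\infty,m}(\tilde Y)$ for sufficiently large $m$ then converts these polynomial bounds into Laplace-integrable decay via repeated integration by parts in the flow direction, which is the mechanism that forces $m$ to depend on the desired rate.

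For part~(b), the sharper tail $\mu(\varphi>t)=ct^{-\beta}+O(t^{-q})$ upgrades the regularly varying asymptotic near $s=0$ to a genuine Taylor-type expansion in powers of $|s|^\beta$ with a smooth remainder of order $O(|s|^q)$. Expanding $(I-\widehat R(is))^{-1}$ as a geometric series and inverting the Laplace transform term by term produces the asymptotic $\sum_j d_j t^{-j(1-\beta)}\bar v\bar w$; the cutoff $j(1-\beta)\le \tfrac12-\epsilon$ and the error $O(t^{-(1/2-\epsilon)})$ both reflect the point at which the small-$s$ expansion meets the polynomial-growth regime of the previous paragraph. I expect the gluing of the small-$s$ expansion with the large-$s$ Dolgopyat bounds — carried out uniformly in the operator-valued setting — to be the hardest part of the argument, consistent with the paper's remark that the usage of Lemma~\ref{lem-R} is considerably more complicated than in discrete time.
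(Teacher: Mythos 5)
Your outline reproduces the correct skeleton (renewal equation, small-$s$ asymptotics from the tail of $\varphi$, Dolgopyat bounds via (A2) for large $|s|$, smoothness of $w$ in the flow direction to make the high-frequency part integrable), but there is a genuine gap at the central step of part (a): you propose to pass from the asymptotics of the Laplace transform $\hat\rho_{v,w}(s)$ as $s\to 0$ to the asymptotics of $\rho_{v,w}(t)$ as $t\to\infty$ by ``a continuous-time Karamata Tauberian theorem''. Karamata's theorem only yields asymptotics of the integrated quantity $\int_0^t\rho_{v,w}(\tau)\,d\tau$; to recover pointwise asymptotics of $\rho_{v,w}(t)$ itself one needs a Tauberian side condition (monotonicity, or slow oscillation) which is precisely what is \emph{not} available for correlation functions of a dynamical system --- establishing mixing rather than Ces\`aro-averaged mixing is the whole difficulty. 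This is also why the hypothesis $\beta\in(\frac12,1]$ appears in the statement: the paper does not use a Tauberian theorem at all, but inverts the Laplace transform directly along the imaginary axis (Proposition~\ref{prop-inverse}) and splits $\int_0^\infty e^{ibt}\hat\rho(ib)\,db$ into the ranges $[0,a/t]$, $[a/t,1]$, $[1,\infty)$ (Propositions~\ref{prop-near0},~\ref{prop-middle},~\ref{prop-infty}), in the spirit of Garsia--Lamperti and~\cite{MT12}. The middle range is controlled by a H\"older-in-$b$ estimate for $\hat\rho(ib)$, which rests on the two-term modulus of continuity of $\hat R(ib)$ in Lemma~\ref{lem-R} and Corollary~\ref{cor-T}(c), and it is exactly here that $\beta>\frac12$ (so that $b^{-2\beta}$-type terms are summable against $|b_1-b_2|^{\beta-\epsilon}$) is needed. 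Your proposal contains no mechanism playing this role, and for $\beta=1$ a further device is required ($\hat\rho(ib)$ is not integrable near $0$, and the paper works with $\Re\hat\rho(ib)$ and Proposition~\ref{prop-inverse_one}), which a Tauberian argument would not supply either.

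Two smaller points. First, the continuous-time renewal equation is $\hat T(s)=\hat U(s)(I-\hat R(s))^{-1}$, not $\hat T(s)=(I-\hat R(s))^{-1}$: the operator $\hat U(s)$ is a genuinely new ingredient of the flow setting and its estimates (Lemma~\ref{lem-U}, Remark~\ref{rmk-U}) are used at every stage, so it cannot be absorbed into ``$\hat T$ encodes $\rho_{v,w}$''. Second, for part (b) your term-by-term inversion of the expansion of $(I-\hat R(is))^{-1}$ is in the right spirit (Proposition~\ref{prop-hot}, Corollary~\ref{cor-hot}), but the error term $O(t^{-(\frac12-\epsilon)})$ and the cutoff $j(1-\beta)\le\frac12-\epsilon$ again come from the same three-range inversion argument (with $a=t^{1/2}$), so this part inherits the gap described above rather than being a separate issue.
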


\begin{rmk}  
Explicit formulas for the constants $d_j$, $j\ge2$, can  be found in~\cite[Section~9]{MT12}.  
Write $\mu(\varphi>t)=ct^{-\beta}(1+H(t))$.
Then $d_j=e_j\int_0^\infty H(t)\,dt$ where $e_j$ is a nonzero constant depending only on $j$ and $\beta$.
In particular, either all the constants $d_j$ are nonzero (the typical case) or $d_j=0$ for all $j\ge2$.
(The functions $H$ and $H_1$ in~\cite[Lemma~3.2]{MT12} coincide in the continuous time context.)
\end{rmk}

In the finite case, define
\begin{align} \label{eq-xi}
\zeta(t) & =\int_t^\infty \mu(\varphi>\tau)\,d\tau, 
\quad
\xi_{\beta,\epsilon}(t) = 
\begin{cases}  t^{-(\beta-\epsilon)}, &  \beta\ge 2 \\
t^{-(2\beta-2)}, & 1<\beta<2 \end{cases}.
\end{align}

\begin{thm}[Finite measure] \label{thm-finite}
Assume that $F:Y\to Y$ is a full branch Gibbs-Markov map with roof
function $\varphi:Y\to\R^+$ satisfying conditions (A1) and (A2).
\begin{itemize}
\item[(a)] Suppose that $\mu(\varphi>t)=O(t^{-\beta})$ where $\beta>1$.
Then for any $\epsilon>0$,  there exists $\theta\in(0,1)$, $m\ge1$, such that
\[
\rho_{v,w}(t)-\bar v\bar w =(1/\bv)\bar v\bar w\,\zeta(t)
+O(\|v\|_\theta|w|_{\infty,m}\,\xi_{\beta,\epsilon}(t)),
\]
for all $v\in F_\theta(\tilde Y)$, $w\in L^{\infty,m}(\tilde Y)$, $t>0$.  
\item[(b)]
Suppose further that 
$\bar v=0$.
Then for any $\epsilon>0$, there exists $\theta\in(0,1)$, $m\ge1$,
such that
\begin{align*}
\rho_{v,w}(t)=O\bigl(\|v\|_\theta|w|_{\infty,m}\,t^{-(\beta-\epsilon)}\bigr),
\end{align*}
for all $v\in F_\theta(\tilde Y)$, $w\in L^{\infty,m}(\tilde Y)$, $t>0$.
\end{itemize}
\end{thm}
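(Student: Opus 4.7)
The plan is to Laplace transform the correlation function and analyse it via the operator renewal equation for flows (Theorem~\ref{thm-renew}). Writing $\hat\rho(s) = \int_0^\infty e^{-st}\rho_{v,w}(t)\,dt$ and applying the renewal equation, one identifies $\hat\rho(s)$ with a quantity built from the resolvent $(I - \hat R(s))^{-1}$, where $\hat R(s)u = R(e^{-s\varphi}u)$ is the Laplace-twisted transfer operator of the Gibbs--Markov base map $F$ (with $R$ the transfer operator of $F$). The smoothness of $w$ in the flow direction, measured by $|w|_{\infty,m}$, provides the decay in $|\Im s|$ needed after integration by parts so that the inverse Laplace transform back to $\rho_{v,w}(t)$ converges absolutely.

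I would then analyse $(I-\hat R(s))^{-1}$ in two regimes, following the Gou\"ezel--Sarig discrete-time scheme (component (i) of the introduction). Near $s = 0$, perturbation theory around the simple leading eigenvalue $1$ of $R$ yields a leading eigenvalue $\lambda(s)$ with
\begin{align*}
1 - \lambda(s) = \bar\varphi\, s + A(s),
\end{align*}
where $A(s)$ encodes the heavy tail of $\varphi$. The hypothesis $\mu(\varphi > t) = O(t^{-\beta})$ translates into a controlled expansion of $A(s)$, and hence of $(I-\hat R(s))^{-1}$, whose leading non-analytic piece inverse-transforms to $\gamma(t)/\bar\varphi$ and whose remainder is of order $\xi_{\beta,\epsilon}$; this produces the main term $(1/\bar\varphi)\bar v\bar w\,\gamma(t)$. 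Away from $s = 0$, condition (A2) feeds the Dolgopyat-type approximate eigenfunction machinery of~\cite{Dolgopyat98b,M07,M09} (component (iii)) to produce a polynomial bound
\begin{align*}
\|(I - \hat R(\kappa + ib))^{-1}\|_\theta \ll |b|^N, \quad |b|\ge b_0,\ |\kappa|\le \kappa_0,
\end{align*}
for some $N, b_0, \kappa_0 > 0$, which permits shifting the Bromwich contour slightly into $\Re s < 0$. The Laplace inversion then splits into a near-zero piece (yielding the main term and the $\xi_{\beta,\epsilon}(t)$ error) and a high-frequency piece which, after $m$-fold integration by parts using the flow-derivatives of $w$, decays faster than any fixed power of $t^{-1}$ once $m$ is chosen large enough to beat $N$. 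For part (b), the hypothesis $\bar v = 0$ kills the contribution that generates the main term in the renewal formula, so the dominant singularity becomes the $s^{\beta-\epsilon}$-type non-analytic term, improving the decay to $O(t^{-(\beta-\epsilon)})$.

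The main obstacle will be the polynomial high-frequency resolvent bound, and in particular its clean gluing to the Gou\"ezel--Sarig near-zero expansion. As noted in the introductory remark, this gluing is significantly more delicate in the finite-measure continuous-time setting than in the infinite-measure case: the Dolgopyat oscillatory-cancellation estimates must be propagated through the full operator renewal formula while preserving the norm on $F_\theta(Y)$, and the fractional-power singularities of $(I-\hat R(s))^{-1}$ coming from the heavy tail must be reconciled with the polynomial high-frequency growth via carefully chosen contours. Establishing the two-parameter uniformity in $(\kappa, b)$ of the resolvent bound, and choosing the shift $\kappa$ in balance with the contour length, is where most of the analytic work will lie.
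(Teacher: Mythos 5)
Your overall frame (Laplace transform, renewal equation from Theorem~\ref{thm-renew}, eigenvalue perturbation near $s=0$, Dolgopyat bounds away from $s=0$) matches the paper's starting point, but the central analytic step you propose does not work. You plan to establish a resolvent bound $\|(I-\hat R(\kappa+ib))^{-1}\|_\theta\ll|b|^N$ for $|\kappa|\le\kappa_0$ and to shift the Bromwich contour into $\Re s<0$, so that the high-frequency piece decays ``faster than any fixed power of $t^{-1}$'' after $m$ integrations by parts. Under the standing hypothesis $\mu(\varphi>t)=O(t^{-\beta})$ the roof function is unbounded with only polynomial tails, so $e^{-s\varphi}$ is unbounded (indeed $e^{|\kappa|\varphi}\notin L^1$) for $\Re s=\kappa<0$: the operator $\hat R(s)$, and hence the renewal expression for $\hat\rho(s)$, has no extension into the left half-plane, and no two-parameter bound in $(\kappa,b)$ with $\kappa<0$ is available. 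The paper therefore never leaves the imaginary axis; the decay in $t$ of the high-frequency contribution is governed not by $m$ but by the finite smoothness of $b\mapsto\hat R_0(ib)$, which is only $C^{\beta-\epsilon}$ because $\varphi\in L^p$ only for $p<\beta$ (Proposition~\ref{prop-smoothR}), and this is exactly why the error is $t^{-(\beta-\epsilon)}$ rather than superpolynomial. The role of $m$ is only to gain integrability in $b$ against the polynomial Dolgopyat growth $|b|^\alpha$ (Proposition~\ref{prop-m} and Proposition~\ref{prop-a3}), not to improve the power of $t$.

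The second gap is at $s=0$: writing $1-\lambda(s)=\bar\varphi s+A(s)$ and ``inverse-transforming the leading non-analytic piece'' is not a proof when the only information on $A$ is the polynomial tail. The paper's route is to truncate, $\varphi^*=\varphi\wedge k$, decompose $\hat T_0$ into four pieces (Proposition~\ref{prop-decomp}), extract the main term $(1/\bv)\gamma(t)\bar v\bar w$ by explicit inverse Laplace computation from the $\hat T_{0,2}$ piece (Lemma~\ref{lem-T12}), control the quadratic tail term $\hat T_{0,3}$ by convolution estimates giving $\xi_{\beta,\epsilon}$ (Lemma~\ref{lem-T3}), and control the remainder $\hat T_{0,4}$ via a continuous-time ``first main lemma'' $\psi\hat B\in\mathcal{R}(1/t^{\beta-})$ (Lemma~\ref{lem-fml}), which itself needs a Banach-algebra/Wiener-lemma argument (Lemma~\ref{lem-W}) to invert $(1-\tilde\lambda(b))/b$ inside the class of functions with prescribed inverse-transform decay. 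None of this machinery is replaced by anything in your sketch. Finally, for part (b) it is not enough that $\bar v=0$ kills the main term: for $1<\beta<2$ the error in part (a) is $\xi_{\beta,\epsilon}(t)=t^{-(2\beta-2)}$, which is worse than $t^{-(\beta-\epsilon)}$, so one must show (as in Lemma~\ref{lem-zero2}) that the offending contributions from $\rho_3$ are themselves multiples of $\bar v\bar w$ up to $O(t^{-(\beta-\epsilon)})$; your argument does not address this.
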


\begin{rmk} It is well-known that the regular variation assumption 
is necessary for Theorem~\ref{thm-infinite}.  The assumption of polynomial tails in Theorem~\ref{thm-finite} can be relaxed as in~\cite{Gouezel-phd} or~\cite{MT14} but we do not pursue that here.
\end{rmk}

\subsection{Examples with full branch Gibbs-Markov first return maps}
\label{sec-GM}

In formulating Theorems~\ref{thm-infinite} and~\ref{thm-finite}, we considered suspensions where the map $F:Y\to Y$ is uniformly expanding and the roof function $\varphi:Y\to\R^+$
is unbounded.
Often it is convenient to reverse the roles and to start with a map $f:X\to X$ that is less well-behaved (nonuniformly expanding instead of uniformly 
expanding) together with a bounded roof function $\varphi_X:X\to\R$.

In particular a large class of examples covered by our methods are those where the map $f:X\to X$ has a first return
map $F:Y\to Y$ that is full branch and Gibbs-Markov and where $\varphi_X$ is globally Lipschitz.    This includes suspensions of parabolic rational
maps of the complex plane (Aaronson {\em et al.}~\cite{ADU93}) and Thaler's class of interval
maps with indifferent fixed points~\cite{Thaler95} (in particular the
family~\eqref{eq-LSV} defined above).

The separation time $s$, and hence the metric $d_\theta$, extends from $Y$ to $X$: define $s(f^\ell y,f^\ell y')=s(y,y')$ for all $y,y'\in a$, $a\in\alpha$, $0\le\ell<\tau(y)$.
Suppose that the roof function $\varphi_X:X\to\R^+$ is locally Lipschitz with respect to this metric
and define the induced roof function
$\varphi:Y\to\Z^+$, $\varphi(y)=\sum_{j=0}^{\tau(y)-1}\varphi_X(y)$.   
Thus we obtain equivalent semiflows using either $(f,\varphi_X)$ or $(F,\varphi)$.
Furthermore, condition (A1) is automatic if $\varphi_X$ is globally Lipschitz, and 
the statement of condition (A2) is unchanged in the new setting (the set of periods 
for $(f,\varphi_X)$ or $(F,\varphi)$ are identical).

\begin{example}   \label{ex-PM}
Consider the family of intermittent
maps~\eqref{eq-LSV} discussed in the introduction.
Such maps $f:X\to X$ have a full branch Gibbs-Markov first return map to the set $Y=[\frac12,1]$. 
Recall that $v:X\to\R$ is $C^\eta$ for $\eta\in(0,1]$ if $v$ is continuous and
$\sup_{x\neq x'}|v(x)-v(x')|/|x-x'|^\eta<\infty$.

\begin{prop} \label{prop-PMphi}
Suppose that $\varphi_X:X\to\R^+$ is $C^\eta$, $\eta\in(0,1]$.
Then
\[
\mu(y\in Y:\varphi(y)>t)=c_0 t^{-\beta}(1+O(t^{-\beta\eta})),
\]
where $\beta=1/\gamma$,
$c_0=\frac14\beta^\beta\varphi_X(0)^\beta h(\frac12)$,
and $h:Y\to\R^+$ is the density for $\mu$.   
\end{prop}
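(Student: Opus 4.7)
The strategy is to reduce the statement on $\varphi$ to one on the first-return time $\tau$ via the approximation $\varphi(y) \approx \tau(y)\varphi_X(0)$ for $\tau(y)$ large. Define the backward sequence $z_0 = \tfrac12$ and $z_{n+1} \in (0, \tfrac12)$ by $z_{n+1}(1 + 2^\alpha z_{n+1}^\alpha) = z_n$, so that $f(z_n) = z_{n-1}$ via the left branch and $z_n \downarrow 0$. Standard analysis of the recursion (setting $u_n = z_n^{-\alpha}$, so $u_{n+1} - u_n = \alpha 2^\alpha + O(u_n^{-1})$) gives the quantitative asymptotic
\[
z_n = \tfrac12\beta^\beta n^{-\beta}\bigl(1 + O(n^{-1}\log n)\bigr).
\]
The right branch of $f$ sends $\bigl(\tfrac{1+z_{n-1}}{2}, \tfrac{1+z_{n-2}}{2}\bigr)$ bijectively onto $(z_{n-1}, z_{n-2})$, so $Y_n := \{\tau = n\} = \bigl(\tfrac{1+z_{n-1}}{2}, \tfrac{1+z_{n-2}}{2}\bigr)$ and hence $\{\tau \ge n\} = \bigl(\tfrac12, \tfrac{1+z_{n-2}}{2}\bigr)$. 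Expanding $\int_{1/2}^{(1+z)/2} h(y)\,dy = \tfrac12 h(\tfrac12) z + O(z^2)$ using smoothness of $h$ at $\tfrac12$ and substituting the asymptotic for $z_{n-2}$ yields
\[
\mu(\tau \ge n) = \tfrac14 \beta^\beta h(\tfrac12)\, n^{-\beta}\bigl(1 + O(n^{-\beta\eta})\bigr).
\]

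The next step is to compare $\varphi$ with $\tau\varphi_X(0)$. For $y \in Y_n$ and $1 \le j \le n-1$, the iterate $f^j(y)$ lies in $(z_{n-j}, z_{n-j-1}) \subset [0, \tfrac12)$, so $|f^j(y)|^\eta \le z_{n-j-1}^\eta \le C(n-j)^{-\beta\eta}$. Together with $|\varphi_X(f^j y) - \varphi_X(0)| \le C|f^j(y)|^\eta$ (from the $C^\eta$ assumption on $\varphi_X$), this gives
\[
\varphi(y) = n\varphi_X(0) + O\Bigl(1 + \sum_{k=1}^{n-1} k^{-\beta\eta}\Bigr) = n\varphi_X(0) + O(R(n)),
\]
where $R(n) = n^{1-\beta\eta}$ if $\beta\eta < 1$, $R(n) = \log n$ if $\beta\eta = 1$, and $R(n) = 1$ if $\beta\eta > 1$, with implicit constant uniform over $y \in Y_n$.

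Combining the two observations, the set $\{\varphi > t\}$ is sandwiched between $\{\tau \ge t/\varphi_X(0) + CR(t)\}$ and $\{\tau \ge t/\varphi_X(0) - CR(t)\}$. Setting $n_t = t/\varphi_X(0)$, the shift $n_t \mapsto n_t \pm CR(t)$ perturbs $n_t^{-\beta}$ by a multiplicative factor $1 + O(R(t)/t) = 1 + O(t^{-\beta\eta})$, and substituting into the asymptotic for $\mu(\tau \ge n)$ gives
\[
\mu(\varphi > t) = \tfrac14\beta^\beta h(\tfrac12)(t/\varphi_X(0))^{-\beta}\bigl(1 + O(t^{-\beta\eta})\bigr) = c_0 t^{-\beta}\bigl(1 + O(t^{-\beta\eta})\bigr),
\]
which is the claim. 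The main obstacle is the bookkeeping of three independent error scales that must all fit under the $O(t^{-\beta\eta})$ umbrella: the subleading term in the asymptotic expansion of $z_n$, the Taylor remainder of $h$ at $\tfrac12$, and most delicately the uniform Hölder-type discrepancy between $\varphi(y)$ and $\tau(y)\varphi_X(0)$ along the long orbit segment that $Y_n$ spends near $0$. Once these three are matched carefully, the stated sharp error follows.
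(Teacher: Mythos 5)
Your strategy is the same as the paper's: approximate $\varphi$ on $\{\tau=n\}$ by $n\varphi_X(0)$ using the H\"older control of $\varphi_X$ along the orbit segment near the indifferent fixed point, then transfer the tail asymptotics of the first return time $\tau$. The only structural difference is that the paper simply quotes $\mu(\tau>n)=c_1n^{-\beta}(1+O(n^{-\beta}))$ from \cite[Proposition~11.12]{MT12}, whereas you rederive the tail of $\tau$ from the recursion for $z_n$. Your recursion analysis, the identification of $\{\tau\ge n\}$, and the estimate $\varphi(y)=n\varphi_X(0)+O(R(n))$ on $Y_n$ (where you are in fact more careful than the paper about the three regimes of $R$) are all correct.

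The gap is quantitative and occurs precisely when $\beta\eta\ge1$. Your recursion gives $z_n=\tfrac12\beta^\beta n^{-\beta}(1+O(n^{-1}\log n))$, and the $n^{-1}\log n$ correction is genuinely present (the $1/u_n$ term in the recursion for $u_n=z_n^{-\alpha}$ has nonzero coefficient), so your own derivation yields only $\mu(\tau\ge n)=c_1n^{-\beta}(1+O(n^{-1}\log n))$; the intermediate claim $\mu(\tau\ge n)=c_1n^{-\beta}(1+O(n^{-\beta\eta}))$ therefore does not follow once $\beta\eta\ge1$ (note also that the tail of $\tau$ cannot depend on $\eta$ at all). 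Similarly, in the final sandwich the relative perturbation caused by shifting the threshold by $CR(t)$ is $O(R(t)/t)$, which is $O(t^{-\beta\eta})$ only for $\beta\eta<1$; for $\beta\eta\ge1$ it is of order $t^{-1}\log t$ or $t^{-1}$, which dominates $t^{-\beta\eta}$. So your argument proves the proposition as stated when $\beta\eta<1$, but in the regime $\beta\eta\ge1$ it delivers only an error $O(t^{-1}\log t)$. The paper avoids the first half of this by importing the $\eta$-independent bound $O(n^{-\beta})$ for the $\tau$-tail, which is consistent with your computation only for $\beta<1$ (the setting of \cite{MT12}); the second half (the $O(1)$ contributions of $\varphi_X(y)-\varphi_X(0)$ and of the convergent sum when $\beta\eta>1$, absorbed into $k\,O(k^{-\beta\eta})$) is glossed over in the paper's own two-line computation as well. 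To make your write-up airtight you should either restrict the stated error to $\beta\eta<1$, or record the error exponent as $\min\{\beta\eta,1\}$ up to logarithms — a caveat that in fairness applies to the paper's proof too.
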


\begin{proof}
See~\cite[Theorem~1.3]{Gouezel-stable} for a similar calculation in  the case
$\gamma<1$.
Recall (see for example~\cite[Proposition~11.12]{MT12}) that the first return time $\tau:Y\to\Z^+$ satisfies
\begin{align} \label{eq-PMtau}
\mu(\tau>n)= c_1 n^{-\beta}(1+O(n^{-\beta})),
\end{align}
where
$c_1=\frac14\beta^\beta h(\frac12)$.

If $\tau(y)=k$, then write 
\[
\varphi(y)=\sum_{j=0}^{k-1} \varphi_X(f^jy)=\varphi_X(y)+(k-1)\varphi_X(0)
+\sum_{j=1}^{k-1}(\varphi_X(f^jy)-\varphi_X(0)),
\]
where $f^jy=O((k-j)^{-\beta})$ for $j=1,\dots,k-1$.  It follows that
\[
\Bigl|\sum_{j=1}^{k-1}(\varphi_X(f^jy)-\varphi_X(0))\Bigr|\le
|\varphi_X|_\eta\sum_{j=1}^{k-1}|f^jy)|^\eta \ll k^{1-\eta\beta}.
\]
Hence $\varphi(y)=k(\varphi_X(0)+O(k^{-\beta\eta}))=
\varphi_X(0)\tau(y)(1+O(\tau(y)^{-\beta\eta})$.
This combined with~\eqref{eq-PMtau} yields the result.
\end{proof}

\begin{cor} \label{cor-PM}  
Suppose that $X=[0,1]$ and that $f:X\to X$ is an intermittent map of the form~\eqref{eq-LSV}, with $\gamma\in(0,2)$.
Let $\varphi_X:X\to\R^+$ be a $C^\eta$-roof function, $\eta\in(0,1]$.
Suppose further that the suspension semiflow
 possesses a pair of periodic orbits with Diophantine ratio of periods.  
Let $\beta=1/\gamma$ and 
$c_0=\frac14\beta^\beta\varphi_X(0)^\beta h(\frac12)$.
Then
\begin{itemize}
\item[(a)]  For $\gamma\in[1,2)$,
the conclusion of Theorem~\ref{thm-infinite}(a) holds with $\tilde\ell(t)\sim c_0$ for $\gamma\in(1,2)$ and
$\tilde\ell(t)\sim c_0\log t$ for $\gamma=1$.

Moreover if $\eta\in(0,1]$ is sufficiently large ($\eta\in(\frac{1-\beta}{\beta},1]$ suffices), then the conclusion of Theorem~\ref{thm-infinite}(b) holds.
\item[(b)]  For $\gamma\in(0,1)$, the conclusion of Theorem~\ref{thm-finite}(a)
holds in the form
\[
\rho_{v,w}(t)-\bar v\bar w =(1/\bv)c_0(\beta-1)^{-1}\bar v\bar w\, t^{-(\beta-1)}
+O(t^{-p}),
\]
where $p=\min\{\beta-1+\beta\eta,2\beta-2\}$ for $\beta<2$ and
$p=\min\{\beta-1+\beta\eta,\beta-\epsilon\}$ for $\beta\ge2$.

Moreover, if $\bar v=0$, then we obtain $\rho_{v,w}(t)=O(t^{-(\beta-\epsilon)})$ as in Theorem~\ref{thm-finite}(b).
\end{itemize}
\end{cor}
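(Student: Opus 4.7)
The plan is to verify the hypotheses of Theorems~\ref{thm-infinite} and~\ref{thm-finite} for the family~\eqref{eq-LSV} with $C^\eta$-roof, and then to extract the explicit constants and error terms from Proposition~\ref{prop-PMphi}.

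First I would check that the induced system $(F,\varphi)$ on $Y=[\frac12,1]$ fits the Gibbs-Markov framework together with condition~(A1). Gibbs-Markov for $F$ with respect to the partition by first-return branches is standard for the family~\eqref{eq-LSV}. For (A1), the calculation inside the proof of Proposition~\ref{prop-PMphi} shows that on each element $a\in\alpha$ with return time $\tau_a=k$ one has $\varphi|_a=k\varphi_X(0)+O(k^{1-\beta\eta})$, so $\inf(1_a\varphi)\asymp k$ while the oscillation of $\varphi|_a$ is $O(k^{1-\beta\eta})$. Choosing $\theta_0$ compatible with the $C^\eta$-regularity of $\varphi_X$ (so that $d_{\theta_0}$-Lipschitz constants on a branch are dominated by $|\varphi_X|_\eta$ times the contraction of $F$), one obtains a uniform bound $|1_a\varphi|_{\theta_0}\le C_2\inf(1_a\varphi)$. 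Condition~(A2) is assumed outright.

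For part~(a), $\alpha\in[1,2)$: Proposition~\ref{prop-PMphi} gives $\mu(\varphi>t)=c_0t^{-\beta}(1+O(t^{-\beta\eta}))$ with $\beta=1/\alpha\in(\frac12,1]$, so $\ell(t)=c_0(1+O(t^{-\beta\eta}))$ is slowly varying with limit $c_0$. When $\beta\in(\frac12,1)$ (i.e.\ $\alpha\in(1,2)$), $\tilde\ell(t)=\ell(t)\to c_0$; when $\beta=1$ (i.e.\ $\alpha=1$), $\tilde\ell(t)=\int_1^t\ell(s)s^{-1}\,ds\sim c_0\log t$. In either case Theorem~\ref{thm-infinite}(a) applies and yields the first assertion. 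For Theorem~\ref{thm-infinite}(b), the refined expansion of Proposition~\ref{prop-PMphi} has error $O(t^{-\beta(1+\eta)})$; this supplies the hypothesis $\mu(\varphi>t)=c_0t^{-\beta}+O(t^{-q})$ with $q=\beta(1+\eta)>1$ precisely when $\eta>(1-\beta)/\beta$, as stated.

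For part~(b), $\alpha\in(0,1)$: here $\beta>1$, $\varphi$ is integrable, and Theorem~\ref{thm-finite} applies. Direct integration of Proposition~\ref{prop-PMphi} yields
\[
\gamma(t)=\int_t^\infty\mu(\varphi>\tau)\,d\tau=c_0(\beta-1)^{-1}t^{-(\beta-1)}+O(t^{-(\beta-1+\beta\eta)}).
\]
Substituting this into Theorem~\ref{thm-finite}(a) produces the leading term $(1/\bv)c_0(\beta-1)^{-1}\bar v\bar w\,t^{-(\beta-1)}$ with the two error contributions combining to $O(t^{-p})$, where $p=\min\{\beta-1+\beta\eta,\,2\beta-2\}$ for $1<\beta<2$ and $p=\min\{\beta-1+\beta\eta,\,\beta-\epsilon\}$ for $\beta\ge2$, matching the statement. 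The last assertion, when $\bar v=0$, is immediate from Theorem~\ref{thm-finite}(b). The main (mild) obstacle is the verification of~(A1) under the weaker assumption that $\varphi_X$ is merely $C^\eta$ rather than globally Lipschitz; everything else is bookkeeping from Proposition~\ref{prop-PMphi}.
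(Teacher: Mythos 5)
Your proposal is correct and follows essentially the same route as the paper: verify (A1) (with (A2) assumed), feed the tail asymptotics of Proposition~\ref{prop-PMphi} into Theorems~\ref{thm-infinite} and~\ref{thm-finite}, and read off the constants. The extra details you supply (the $d_{\theta_0}$-Lipschitz bound giving (A1) for $C^\eta$ roofs, the identification $q=\beta+\beta\eta>1$, and the integration giving $\gamma(t)=c_0(\beta-1)^{-1}t^{-(\beta-1)}+O(t^{-(\beta-1+\beta\eta)})$) are exactly the bookkeeping the paper leaves implicit.
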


\begin{proof}
Condition~(A1) is automatic, and we have explicitly assumed condition~(A2).  
Proposition~\ref{prop-PMphi} gives the required estimates on $\mu(\varphi>t)$.
Hence the results follow from
Theorems~\ref{thm-infinite} and~\ref{thm-finite}.
\end{proof}

\end{example}   

\part{Continuous time operator renewal theory}
\label{part-oprenew}

In this part of the paper, we formulate an operator renewal equation for flows.

\section{The operator renewal equation}
\label{sec-renew}

\paragraph{Transfer operators}
Let $R:L^1(Y)\to L^1(Y)$ denote the transfer operator for $F:Y\to Y$
and let $L_t:L^1(Y^\varphi)\to L^1(Y^\varphi)$ denote the family of transfer operators for $f_t$.
(So $\int_Y Rv\,w\,d\mu=\int_Y v\,w\circ F\,d\mu$,
$\int_{Y^\varphi} L_tv\,w\,d\mu^\varphi=\int_{Y^\varphi} v\,w\circ f_t\,d\mu^\varphi$ for suitable test functions $v,w$.)

Recall that $\tilde Y=Y\times[0,1]$.
We define the probability measure $\tilde\mu=\mu\times{\rm Lebesgue}$ on $\tilde Y$.
Note that in the infinite measure case $\mu^\varphi|_{\tilde Y}=\tilde\mu$,
whereas in the finite measure case $\mu^\varphi|_{\tilde Y}=(1/\bv)\tilde\mu$.

Define $\tilde F:\tilde Y\to\tilde Y$ by setting
$\tilde F(y,u)=(Fy,u)$.   Note that
$\tilde F(y,u)=f_{\varphi(y)}(y,u)$.
Define $\tilde\varphi:\tilde Y\to\R^+$, $\tilde\varphi(y,u)=\varphi(y)$.
Then $\tilde F=f_{\tilde\varphi}$.
Let $\tilde R$ denote the transfer operator 
corresponding to the map $\tilde F:\tilde Y\to\tilde Y$
($\int_{\tilde Y}\tilde  Rv\,w\,d\tilde \mu=\int_{\tilde Y} v\,w\circ \tilde F\,d\tilde \mu$).
Given $v\in L^1(\tilde Y)$ and $u\in [0,1]$ we define $v^u\in L^1(Y)$, $v^u(y)=v(y,u)$.
It is easily verified that
\begin{align} \label{eq-Rtilde}
(\tilde R v)(y,u)=(Rv^u)(y).
\end{align}

\paragraph{Renewal operators}

For $t>0$, define $T_t,U_t:L^1(\tilde Y)\to L^1(\tilde Y)$ by setting
\[
T_tv = 1_{\tilde Y}L_t(1_{\tilde Y}v), \quad
U_tv = 1_{\tilde Y}L_t(1_{\{\tilde\varphi>t\}}v).
\]
For $s\in\C$, define the families of operators on $L^1(\tilde Y)$,
\[
\hat R(s)v=\tilde R(e^{-s\tilde\varphi}v), \quad
\hat T(s)v=\int_0^\infty e^{-st}T_tv\,dt, \quad
\hat U(s)v=\int_0^\infty e^{-st}U_tv\,dt.
\]
Note that $\hat R$, $\hat T$, $\hat U$ are analytic on $\H=\{\Re s>0\}$
and that $\hat R$ is well-defined on $\overline\H=\{\Re s\ge0\}$.
We also define $\hat R_0(s):L^1(Y)\to L^1(Y)$ for $s\in\overline\H$:
$\hat R_0(s)v=R(e^{-s\varphi}v)$.

\begin{rmk} \label{rmk-LT} Throughout the paper, we write $\hat a(s)$ to denote a function that is analytic on $\H$, with inverse Laplace transform $a(t)$.

We note that $\hat R_0(s)$ has the formal inverse Laplace transform 
$R_0(t)v=R(\delta_{\varphi}(t)v)$, where $\delta_x$ is the $\delta$-measure at $x$, but this is not used explicitly in the paper.
\end{rmk}

\begin{prop} \label{prop-renew}
$\hat\rho(s)=\int_{\tilde Y}\hat T(s)v\, w\,d\mu^\varphi$ for
$v\in L^1(\tilde Y)$, 
$w\in L^\infty(\tilde Y)$,
$s\in\H$.
\end{prop}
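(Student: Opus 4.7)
The proposal is to compute $\hat\rho(s)$ directly by unfolding the transfer operator duality, restricting to $\tilde Y$ using the support hypotheses on $v$ and $w$, and then interchanging the $t$-integral with the space integral via Fubini.

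First I would record the dual identity
\[
\rho_{v,w}(t)=\int_{Y^\varphi} v\,(w\circ f_t)\,d\mu^\varphi=\int_{Y^\varphi} L_t v\cdot w\,d\mu^\varphi,
\]
which holds since $L_t$ is the $L^1$-transfer operator of the measure-preserving semiflow $f_t$ and $v\in L^1$, $w\in L^\infty$. Because $w$ is (by hypothesis, via $w\in L^\infty(\tilde Y)$ interpreted as supported in $\tilde Y$) concentrated on $\tilde Y$, one may insert $1_{\tilde Y}$ on the left of $L_tv$, and because $v$ is concentrated on $\tilde Y$ one may insert $1_{\tilde Y}$ inside $L_t$; this gives
\[
\rho_{v,w}(t)=\int_{\tilde Y} 1_{\tilde Y}L_t(1_{\tilde Y}v)\cdot w\,d\mu^\varphi=\int_{\tilde Y} T_t v\cdot w\,d\mu^\varphi.
\]

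Next I would take the Laplace transform in $t$ and swap integrals:
\[
\hat\rho(s)=\int_0^\infty e^{-st}\int_{\tilde Y} T_tv\cdot w\,d\mu^\varphi\,dt
=\int_{\tilde Y}\Bigl(\int_0^\infty e^{-st}T_tv\,dt\Bigr)w\,d\mu^\varphi
=\int_{\tilde Y}\hat T(s)v\cdot w\,d\mu^\varphi.
\]

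The only non-formal step is the Fubini swap, and this is where I would be most careful. To justify it I would estimate, using positivity of $L_t$ and the fact that $L_t$ is an $L^1(\mu^\varphi)$-contraction (it is the transfer operator of a measure-preserving flow),
\[
\int_{\tilde Y}|T_tv|\,|w|\,d\mu^\varphi\le |w|_\infty\int_{Y^\varphi}L_t|v|\,d\mu^\varphi=|w|_\infty\|v\|_{L^1(\mu^\varphi)},
\]
uniformly in $t\ge 0$. For $s\in\H$ this gives
\[
\int_0^\infty e^{-(\Re s)t}\int_{\tilde Y}|T_tv|\,|w|\,d\mu^\varphi\,dt\le (\Re s)^{-1}|w|_\infty\|v\|_{L^1(\mu^\varphi)}<\infty,
\]
so Fubini applies and the swap is legitimate. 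This also shows that $\hat T(s)v$ makes sense as a Bochner integral in $L^1(\tilde Y)$ on $\H$, consistent with the definition given above, and that $\hat\rho$ is well defined and analytic on $\H$. No serious obstacle is expected: the proposition is essentially a bookkeeping identity between the correlation function and the truncated transfer operator $T_t$, packaged as a Laplace transform.
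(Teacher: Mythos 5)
Your proposal is correct and follows essentially the same route as the paper: duality for $L_t$, insertion of $1_{\tilde Y}$ using the supports of $v$ and $w$ to replace $L_t$ by $T_t$, and then exchanging the Laplace transform with the spatial integral. The paper leaves the Fubini step implicit; your uniform bound $\int_{\tilde Y}|T_tv|\,|w|\,d\mu^\varphi\le |w|_\infty\|v\|_{L^1(\mu^\varphi)}$ simply makes that step explicit and is a welcome but minor addition.
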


\begin{proof}
We have $\rho(t)=\int_{\tilde Y}v\,w\circ f_t\,d\mu^\varphi
=\int_{\tilde Y}L_tv\,w\,d\mu^\varphi
=\int_{\tilde Y}T_tv\,w\,d\mu^\varphi$, so that
\[
\hat\rho(s)=\int_0^\infty e^{-st}\rho(t)\,dt
=\int_0^\infty e^{-st}\int_{\tilde Y} T_tv\, w\,d\mu^\varphi\,dt
=\int_{\tilde Y}\hat T(s)v\, w\,d\mu^\varphi,
\]
as required.
\end{proof}

\begin{thm} \label{thm-renew}
$\hat T(s)\hat R(s)=\hat T(s)-\hat U(s)$ for $s\in\H$.
\end{thm}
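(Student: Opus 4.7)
The plan is to reduce the operator identity to a pointwise cocycle relation by dualizing against test observables. Fix $s\in\H$ and $w\in L^\infty(\tilde Y)$, and introduce
$$W(y,u)=\int_0^\infty e^{-st}\,w(f_t(y,u))\,1_{\tilde Y}(f_t(y,u))\,dt,$$
together with its truncation $W_U(y,u)=\int_0^{\tilde\varphi(y,u)} e^{-st}\,w(f_t(y,u))\,1_{\tilde Y}(f_t(y,u))\,dt$. Both integrals converge absolutely since $\Re s>0$ and $w$ is bounded. Using the defining property of $L_t$, the formulas $T_tv=1_{\tilde Y}L_t(1_{\tilde Y}v)$ and $U_tv=1_{\tilde Y}L_t(1_{\{\tilde\varphi>t\}}v)$, and Fubini (noting that any factor relating $\tilde\mu$ to $\mu^\varphi|_{\tilde Y}$ cancels between the two sides of each identity), one obtains for every $v\in L^1(\tilde Y)$
$$\int_{\tilde Y}\hat T(s)v\cdot w\,d\tilde\mu=\int_{\tilde Y}vW\,d\tilde\mu,\qquad \int_{\tilde Y}\hat U(s)v\cdot w\,d\tilde\mu=\int_{\tilde Y}vW_U\,d\tilde\mu.$$

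The heart of the argument is the pointwise identity
$$W(y,u)-W_U(y,u)=e^{-s\tilde\varphi(y,u)}\,W(\tilde F(y,u)),$$
which follows by writing the difference as the tail integral over $t\in[\tilde\varphi(y,u),\infty)$, substituting $t=t'+\tilde\varphi(y,u)$, and using the cocycle relation $f_{t'+\tilde\varphi(y,u)}(y,u)=f_{t'}(\tilde F(y,u))$ (immediate from $\tilde F=f_{\tilde\varphi}$ together with $f_{a+b}=f_a\circ f_b$). Integrating this identity against $v$ and invoking the transfer operator property of $\tilde R$,
$$\int_{\tilde Y} v\,e^{-s\tilde\varphi}(W\circ\tilde F)\,d\tilde\mu=\int_{\tilde Y}\tilde R(e^{-s\tilde\varphi}v)\,W\,d\tilde\mu=\int_{\tilde Y}\hat R(s)v\cdot W\,d\tilde\mu=\int_{\tilde Y}\hat T(s)\hat R(s)v\cdot w\,d\tilde\mu,$$
where the last equality applies the duality from the first paragraph with $\hat R(s)v$ in place of $v$.

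Combining gives $\int_{\tilde Y}(\hat T(s)v-\hat U(s)v)\,w\,d\tilde\mu=\int_{\tilde Y}\hat T(s)\hat R(s)v\cdot w\,d\tilde\mu$ for all $w\in L^\infty(\tilde Y)$, which forces the operator identity $\hat T(s)-\hat U(s)=\hat T(s)\hat R(s)$. The main step is conceptual rather than technical: identifying the dual $W$ so that the time-shift identity for $W-W_U$ exactly reproduces the operator $\hat R$. Once $W$ is in place, the remainder is routine Fubini plus transfer-operator bookkeeping, with absolute convergence guaranteed throughout by $\Re s>0$.
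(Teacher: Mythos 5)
Your proof is correct and is essentially the paper's own argument: both reduce the identity to a weak pairing against $w\in L^\infty(\tilde Y)$ and rest on the same three ingredients — transfer-operator duality (Fubini justified by $\Re s>0$), the time-shift relation $f_t\circ\tilde F=f_{t+\tilde\varphi}$, and splitting the tail integral $\int_{\tilde\varphi}^\infty=\int_0^\infty-\int_0^{\tilde\varphi}$. Packaging the computation through the dual function $W$ (the adjoint of $\hat T(s)$ applied to $w$) rather than manipulating the chain of integrals directly is only a cosmetic reorganization of the same proof.
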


\begin{proof}
Let $w\in L^\infty(\tilde Y)$.  We compute that
\begin{align*}
& \int_{\tilde Y}\hat T(s)\hat R(s)v\,w\,d\tilde\mu  = 
 \int_{\tilde Y} \int_0^\infty e^{-st}L_t\tilde R(e^{-s\tilde\varphi}v)\,w\,dt\,d\tilde\mu
 =\int_0^\infty e^{-st}\int_{\tilde Y} e^{-s\tilde\varphi}v\,w\circ f_t\circ \tilde F\,d\tilde\mu\,dt \\
& \qquad = \int_{\tilde Y} \int_0^\infty e^{-s(t+\tilde\varphi)} v\, w\circ f_{t+\tilde\varphi}\,dt\,d\tilde\mu
 = \int_{\tilde Y} \int_{\tilde\varphi}^\infty e^{-st} v\,w\circ f_t\,dt\,d\tilde\mu \\ & \qquad
 = \int_{\tilde Y} \int_0^\infty e^{-st} v\,w\circ f_t\,dt\,d\tilde\mu
 - \int_{\tilde Y} \int_0^{\tilde\varphi} e^{-st} v\,w\circ f_t\,dt\,d\tilde\mu \\ & \qquad
 = \int_{\tilde Y} \hat T(s)v\,w\,d\tilde\mu
 - \int_{\tilde Y} \int_0^\infty 1_{\{\tilde\varphi>t\}} e^{-st} v\,w\circ f_t\,dt\,d\tilde\mu 
 = \int_{\tilde Y} \hat T(s)v\,w\,d\tilde\mu
 - \int_{\tilde Y} \hat U(s)v\,w\,d\tilde\mu
\end{align*}
so $\hat T\hat R=\hat T-\hat U$ as required.
\end{proof}

For future reference, we record the following formula for $U_t$.

\begin{prop}   \label{prop-U}
Suppose that $v\in L^1(\tilde Y)$.  Then
\[
(U_tv)(y,u)=\begin{cases}
v(y,u-t)1_{[t,1]}(u), & 0\le t\le 1 \\
(\tilde Rv_t)(y,u), & t>1
\end{cases},
\]
where $v_t(y,u)=1_{\{t<\varphi(y)<t+1-u\}}v(y,u-t+\varphi(y))$.
\end{prop}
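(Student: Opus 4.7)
The plan is to verify the formula by duality: for an arbitrary test function $w\in L^\infty(\tilde Y)$, extend it by zero to $Y^\varphi$ and use the definition $\int_{Y^\varphi}L_t h\,g\,d\mu^\varphi=\int_{Y^\varphi}h\,g\circ f_t\,d\mu^\varphi$ to obtain
\[
\int_{\tilde Y}(U_tv)w\,d\mu^\varphi=\int_Y\int_0^1 1_{\{\varphi(y)>t\}}v(y,u)\,w(f_t(y,u))\,du\,d\mu(y).
\]
Since $\tilde\mu$ and $\mu^\varphi|_{\tilde Y}$ differ only by a global constant, showing the claimed pointwise identity reduces to evaluating the right-hand side and checking that it equals the pairing of the right-hand side of the proposition with $w$ against $d\tilde\mu$.

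The next step is to describe $f_t(y,u)$ on the set $\{(y,u)\in\tilde Y:\varphi(y)>t\}$. The assumption $\essinf\varphi>2$ is crucial: since $u\le1$ one has $u+t<\varphi(y)+1<\varphi(y)+\varphi(Fy)$, so along this orbit segment at most one return to the base can occur. I split into two cases. For $0\le t\le1$, the inequality $u+t\le 2<\varphi(y)$ forces no return, so $f_t(y,u)=(y,u+t)$; and for the image to meet $\supp w\subset\tilde Y$ we also need $u+t\le1$. A substitution $u\mapsto u-t$ then yields
\[
\int_Y\int_t^1 v(y,u-t)w(y,u)\,du\,d\mu(y),
\]
which matches $\int_{\tilde Y}v(y,u-t)1_{[t,1]}(u)w(y,u)\,d\tilde\mu$, giving the first case.

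For $t>1$, one has $u+t>1$, so to land in $\tilde Y$ exactly one return must occur, giving $f_t(y,u)=(Fy,u+t-\varphi(y))$ provided $\varphi(y)\le u+t$; the upper bound $u+t<\varphi(y)+\varphi(Fy)$ holds automatically by $\essinf\varphi>2$. Restricting to $(y,u)\in\tilde Y$ with $t<\varphi(y)\le u+t$ and changing variables $u'=u+t-\varphi(y)$, the integral becomes
\[
\int_Y\int_0^1 1_{\{t<\varphi(y)<t+1-u'\}}v(y,u'-t+\varphi(y))\,w(Fy,u')\,du'\,d\mu(y).
\]
On the other hand, using $(\tilde Rh)(y,u)=(Rh^u)(y)$ from \eqref{eq-Rtilde} and the duality $\int_{\tilde Y}\tilde Rh\,w\,d\tilde\mu=\int_{\tilde Y}h\,w\circ\tilde F\,d\tilde\mu$, one gets $\int_{\tilde Y}(\tilde Rv_t)w\,d\tilde\mu=\int_Y\int_0^1 v_t(y,u)w(Fy,u)\,du\,d\mu(y)$, which with the definition of $v_t$ coincides with the above. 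Since $w$ was arbitrary, the pointwise formula for $U_tv$ follows.

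The only non-routine part is keeping track of the simultaneous constraints $\varphi(y)>t$, $(y,u)\in\tilde Y$, and $f_t(y,u)\in\supp w\subset\tilde Y$ so that the change of variables in the $t>1$ case converts the indicator $\{t<\varphi(y)\le u+t\}$ (from the one-return description of $f_t$) into $\{t<\varphi(y)<t+1-u'\}$ (from the definition of $v_t$); this is the one place where $\essinf\varphi>2$ is genuinely needed, to rule out multi-return contributions that would otherwise appear for $t$ comparable to $\varphi$.
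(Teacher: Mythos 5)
Your argument is correct and is essentially the paper's own proof: both verify the formula by duality against test functions $w$ supported on $\tilde Y$, split at $t=1$, use $\essinf\varphi>2$ to reduce to the zero-return and one-return descriptions of $f_t$, and finish with the change of variables $u\mapsto u+t-\varphi(y)$ together with the duality $\int_{\tilde Y}\tilde R v_t\,w\,d\tilde\mu=\int_{\tilde Y}v_t\,w\circ\tilde F\,d\tilde\mu$. Your explicit bookkeeping of the constraints producing the indicator $1_{\{t<\varphi(y)<t+1-u\}}$ matches the computation in the paper.
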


\begin{proof}
For $t\le1$, we have $U_tv=T_tv$ and so
\begin{align*}
 \int_{\tilde Y} (U_tv)\,w\,d\tilde\mu  & = \int_{\tilde Y} v\,w\circ f_t\,d\tilde\mu = \int_{\tilde Y} v(y,u)\,w(y,u+t)\,d\tilde\mu \\ & 
=\int_Y \int_0^{1-t} v(y,u) w(y,u+t)\,du\,d\mu 
=\int_Y \int_t^1 v(y,u-t) w(y,u)\,du\,d\mu \\ &
=\int_{\tilde Y} v(y,u-t)1_{[t,1]}(u)w(y,u)\,d\tilde\mu.
\end{align*}
For $t>1$,
\begin{align*}
&  \int_{\tilde Y} (U_tv)\,w\,d\tilde\mu   = 
\int_{\tilde Y} T_t(1_{\{\tilde\varphi>t\}}v)\,w\,d\tilde\mu  = 
\int_{\tilde Y} 1_{\{\tilde\varphi>t\}}v\,w\circ f_t\,d\tilde\mu    
\\ & \qquad  = \int_{\tilde Y} (1_{\{\tilde\varphi>t\}}v)(y,u)w(Fy,u+t-\varphi(y))\,d\tilde\mu  
 \\ &  \qquad
= \int_Y\int_{\varphi(y)-t}^1 1_{\{\varphi(y)>t\}}v(y,u)w(Fy,u+t-\varphi(y)))\,du\,d\mu   \\ &  \qquad
= \int_Y\int_0^{1+t-\varphi(y)} 1_{\{\varphi(y)>t\}}v(y,u-t+\varphi(y))w(Fy,u))\,du\,d\mu   \\ &  \qquad
= \int_{\tilde Y} 1_{\{t<\varphi(y)<t+1-u\}}v(y,u-t+\varphi(y))w\circ \tilde F(y,u)\,d\tilde\mu   
= \int_{\tilde Y} (\tilde R v_t)(y,u)   w(y,u)\,d\tilde\mu,
\end{align*}
as required.
\end{proof}

In Section~\ref{sec-GM}, we defined the symbolic metric $d_\theta$ on $Y$
and the Banach space $F_\theta(Y)$ of $d_\theta$-Lipschitz functions
$v:Y\to\R$.
The next result makes use of the Gibbs-Markov structure and a weakened
version of condition (A2).

\begin{prop} \label{prop-H2}
Let $\theta\in(0,1)$.
Viewing the family of twisted transfer operators $\hat R_0(s)$ as operators
on $F_\theta(Y)$,
\begin{itemize}
\item[(a)] The spectral radius of $\hat R_0(s)$ is less than $1$ for
$s\in\overline\H-\{0\}$ and is equal to $1$ for $s=0$.
\item[(b)] $1$ is a simple eigenvalue for $\hat R_0(0)$ and is isolated in
the spectrum of $\hat R_0(0)$.
\end{itemize}
\end{prop}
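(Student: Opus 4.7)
The plan is to combine the Gibbs--Markov structure with a Lasota--Yorke inequality for the twisted operators $\hat R_0(s)$ and then split the analysis of (a) into the three cases $s=0$, $\Re s>0$, and $s\in i\R\setminus\{0\}$.

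For (b) and the equality in (a) at $s=0$, note that $\hat R_0(0)=R$ is the transfer operator of the Gibbs--Markov map $F$. The estimates~\eqref{eq-GM} yield the standard Lasota--Yorke inequality
\[
\|R^n v\|_\theta \le C\theta^n|v|_\theta + C'|v|_\infty,
\]
so $R$ is quasi-compact on $F_\theta(Y)$. Since $F$ is ergodic and $R\mathbf{1}=\mathbf{1}$, the constant function is the unique (up to scaling) fixed density; hence $1$ is a simple isolated eigenvalue. This proves (b) and gives spectral radius $1$ for $\hat R_0(0)$.

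For $\Re s>0$, write $s=a+ib$ with $a>0$ and use $\hat R_0(s)^n v = R^n(e^{-sS_n\varphi}v)$ together with positivity of $R$ to obtain the pointwise domination
\[
|\hat R_0(s)^n v|(y) \le |v|_\infty\,\hat R_0(a)^n\mathbf{1}(y).
\]
Quasi-compactness and positivity for the real operator $\hat R_0(a)$ produce a simple leading eigenvalue $\lambda(a)$ with positive eigenfunction $h_a\in F_\theta(Y)$, and since $\int Rf\,d\mu=\int f\,d\mu$,
\[
\lambda(a)\int h_a\,d\mu = \int \hat R_0(a)h_a\,d\mu = \int h_a e^{-a\varphi}\,d\mu < \int h_a\,d\mu,
\]
strictly, because $h_a>0$ and $e^{-a\varphi}<1$ $\mu$-a.e.; hence $\lambda(a)<1$. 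The Lasota--Yorke inequality for $\hat R_0(s)$ gives essential spectral radius at most $\theta$, and any peripheral eigenvalue $\mu$ of $\hat R_0(s)$ with eigenvector $v$ then satisfies $|\mu|^n|v|_\infty \le C\lambda(a)^n|v|_\infty$ via the domination, forcing $|\mu|\le\lambda(a)<1$.

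The main obstacle is the purely imaginary case $s=ib$, $b\neq 0$, where the weakened form of (A2) is invoked. Suppose for contradiction that $\hat R_0(ib)v = e^{i\theta_0}v$ with $v\neq 0$. The estimate $|v|=|\hat R_0(ib)v|\le R|v|$ combined with $\int R|v|\,d\mu=\int|v|\,d\mu$ forces $|v|=R|v|$ a.e., so $|v|$ is constant by the simplicity from (b). Normalising $|v|\equiv 1$ and writing $v=e^{i\psi}$, the branchwise equality case of the triangle inequality in $R(e^{-ib\varphi}v)(y)=e^{i\theta_0}v(y)$ (using $\sum_{Fz=y}e^{p(z)}=1$) produces the cohomological identity
\[
b\varphi(z) \equiv \psi(z) - \psi(Fz) - \theta_0 \pmod{2\pi}
\]
valid $\mu$-a.e. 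Summing around any periodic orbit of $F$ of combinatorial length $n$ and flow period $\tau$ yields $b\tau + n\theta_0\in 2\pi\Z$, so every periodic-orbit period lies in a common coset of $(2\pi/b)\Z$. The weakened form of (A2)---non-arithmeticity of the roof $\varphi$ on $Y^\varphi$---rules this out for every $b\neq 0$. The delicate point is extracting genuine non-arithmeticity from an irrational-ratio hypothesis on a pair of periodic orbits; this requires combining the two-orbit constraint with the density of periodic orbits of $F$ (standard Gibbs--Markov input), but is otherwise routine.
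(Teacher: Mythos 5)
Your treatment of $s=0$, of part (b), and of $\Re s>0$ is fine and essentially the paper's route (the paper simply quotes quasicompactness from \cite{MT12} rather than redoing the Lasota--Yorke and positivity arguments). On the imaginary axis your mechanism is also essentially the paper's, in adjoint form: the paper passes to $M_bv=e^{ib\varphi}v\circ F$ and evaluates along periodic orbits, while you use the equality case of the triangle inequality for $\hat R_0(ib)$ directly (legitimate, since $\sum_{Fz=y}e^{p(z)}=1$ and $|v|$ is constant); both roads lead to the same periodic-orbit congruences.

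The genuine gap is your final step for a peripheral eigenvalue $e^{i\theta_0}$ with $\theta_0\neq0$. From $b\tau+n\theta_0\in2\pi\Z$ for two orbits of combinatorial lengths $n_1,n_2$ and periods $\tau_1,\tau_2$, eliminating $\theta_0$ only yields $b(n_2\tau_1-n_1\tau_2)\in2\pi\Z$, which is not contradictory for a fixed $b$; indeed $\tau_i=(2\pi k_i+n_ic)/b$ with $c/2\pi$ irrational satisfies all the congruences while $\tau_1/\tau_2$ is irrational, so the weakened (A2) hypothesis (two irrationally related periods) simply cannot rule out $\theta_0\neq0$ by this argument, and invoking ``density of periodic orbits'' does not repair it: concatenation of Markov loops does not produce orbits whose flow periods are exact sums, so no further usable congruences follow. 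This is precisely why the paper restricts attention to the eigenvalue $1$ (``it suffices to consider eigenvalues at $1$''), i.e.\ to $\theta_0=0$, where the two congruences give $b\tau_1,b\tau_2\in2\pi\Z$, hence $\tau_1/\tau_2\in\Q$, a contradiction; and non-membership of $1$ in the spectrum (given essential spectral radius $\le\theta$) is all that is used downstream, namely invertibility of $I-\hat R_0(ib)$. So either you should restrict, as the paper does, to ruling out the eigenvalue $1$, or you must supply a genuinely stronger non-arithmeticity input for $\theta_0\neq0$; as written, the step you label ``routine'' is exactly where the proof is missing.
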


\begin{proof}  This is standard.   By for example the proof of~\cite[Proposition~11.4]{MT12},
$\hat R_0(s)$ has spectral radius at most $1$ and essential spectral radius at most $\theta$ for all $s\in\overline\H$.  Also the spectral radius is less than $1$ for all $s\in\H$.
Hence it suffices to consider eigenvalues at $1$ for $s=ib$.   
It follows from ergodicity of $F$ that $1$ is a simple eigenvalue for its transfer 
operator $\hat R_0(0)$, so it remains to rule out $1$ as an eigenvalue for $\hat R_0(ib)$, $b\neq0$.

Consider the family of operators $M_b:F_\theta(X)\to F_\theta(X)$ given by
$M_bv=e^{ib\varphi}v\circ F$.   The operators $\hat R_0(ib)$ and $M_b$ are $L^2$ adjoints
so it is equivalent to show that $1$ is not an eigenvalue for $M_b$.
We claim that if $1$ is an eigenvalue, then every period $\tau$ corresponding to
a periodic orbit for the semiflow lies in $(2\pi/b)\Z$ which violates condition (A2).
(For this proposition it suffices to have two irrationally related periods.)

To prove the claim, suppose that $M_bv=v$ for some $v\in F_\theta(X)$, $v\not\equiv0$.
In other words, $e^{ib\varphi}v\circ F=v$.  In particular, $|v|\circ F=|v|$
and it follows by ergodicity that $|v|$ is constant.  Hence $v$ is nonvanishing.
Iterating, we have $e^{ib\sum_{j=0}^{k-1}\varphi\circ F^j}v\circ F^k=v$.
Now suppose that $y$ is a periodic point for $F$ of period $k$.
The period $\tau$ of the corresponding periodic orbit for $f_t$ is given by
$\tau=\sum_{j=0}^{k-1}\varphi(F^jy)$ and so $v(y)=e^{ib\tau}v(y)$.  Dividing by
$v(y)$, we obtain $e^{ib\tau}=1$ verifying the claim.
\end{proof}

Let $\hat T_0(s)=(I-\hat R_0(s))^{-1}$; this is well-defined for $s\in\overline\H-\{0\}$.
Write $v^u(y)=v(y,u)$.   Then
\[
((I-\hat R(s))^{-1}v)(y,u)=(\hat T_0(s)v^u)(y),\quad s\in\overline\H-\{0\}.
\]
Also we obtain the renewal equation
\[
\hat T(s)=\hat U(s)(I-\hat R(s))^{-1}, \quad s\in\overline\H-\{0\}.
\]
By Proposition~\ref{prop-renew}, we obtain an analytic extension
\begin{align} \label{eq-rhohat}
\hat\rho(s)=\int_{\tilde Y}\hat U(s)(I-\hat R(s))^{-1}v\,w\,d\mu^\varphi,
\end{align}
defined on a neighbourhood of $\overline\H-\{0\}$.

\begin{rmk} \label{rmk-oprenew}
The operator renewal equation $\hat T(s)=\hat U(s)(I-\hat R(s))^{-1}$ has the desired effect of relating the Laplace transform of the 
transfer operators $T_t$ for the flow with the perturbed transfer operator
$\hat R_0(s)v=R(e^{-s\varphi}v)$ where $R$ is the transfer operator for the Poincar\'e map~$F$.   

In dynamical systems theory, there are two standard types of discrete time system that can be obtained from a continuous time system: Poincar\'e maps
such as $F$ and the time-$h$ map $f_h$ for fixed $h>0$.
In the probability theory literature, a standard technique after Kingman~\cite{Kingman63} is to 
consider discrete time ``skeletons'' $f_h$ and to pass to
the continuous time limit as $h\to0$.
However, for the properties studied in the current paper, the partially hyperbolic time-$h$ map is as difficult to study as the underlying
continuous time system.  In contrast, the uniformly expanding Poincar\'e map $F$ is much more tractable.

Hence, at least for certain situations in dynamical systems theory, and perhaps in probability theory too,
the renewal equation presented here seems a more useful approach than 
passing to discrete time skeletons.
\end{rmk}

The following elementary result is required in both Part~\ref{part-infinite}
and Part~\ref{part-finite}.

\begin{prop} \label{prop-m}
Let $m\ge1$.
Suppose that $v\in L^1(\tilde Y)$ and $w\in L^{\infty,m}(\tilde Y)$.
Then 
\[
\hat\rho_{v,w}(s)=
\sum_{j=1}^m \rho_{v,\partial_t^{j-1}w}(0)s^{-j}+s^{-m}\hat \rho_{v,\partial_t^mw}(s).
\]
\end{prop}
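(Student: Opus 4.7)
The plan is to reduce the identity to a standard one-variable Laplace-transform integration by parts applied $m$ times, the only substantive input being the time-derivative formula $\rho_{v,w}^{(j)}(t)=\rho_{v,\partial_t^jw}(t)$ for $j=0,\dots,m$.

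First I would verify that derivative formula. Since $\tilde Y=Y\times[0,1]$ and the suspension flow acts on $\tilde Y$ as unit-speed translation in the $u$-coordinate, $\partial_t$ (which is $\partial_u$ by definition of $L^{\infty,m}$) is the infinitesimal generator of $f_t$, so formally $\tfrac{d}{dt}(w\circ f_t)=(\partial_tw)\circ f_t$. Because $w$ is supported in $Y\times(0,1)$, the extension of $w$ and each $\partial_t^jw$ by zero to $Y^\varphi$ is in $L^\infty$ with no singular contribution at the identifications $(y,\varphi(y))\sim(Fy,0)$, so the product $v\,(\partial_t^jw\circ f_t)$ is uniformly dominated in $L^1(\mu^\varphi)$ by $|v|\,|\partial_t^jw|_\infty$. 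Dominated convergence then transfers the derivatives onto $\rho_{v,w}$.

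Next I would iterate Laplace-transform integration by parts. For $s\in\H$, the uniform bound $|\rho_{v,w}(t)|\le|v|_{L^1(\mu^\varphi)}|w|_\infty$ makes the boundary term at infinity vanish, yielding the base case
\[
\hat\rho_{v,w}(s)=\tfrac{1}{s}\rho_{v,w}(0)+\tfrac{1}{s}\hat\rho_{v,\partial_tw}(s).
\]
Applying this recursively to $\partial_t^{j-1}w\in L^{\infty,1}(\tilde Y)$ for $j=1,\dots,m-1$ (note that the differentiation in the flow direction preserves both the essential supremum bound and the support condition $Y\times(0,1)$) and collecting terms produces the claimed formula.

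The only genuine obstacle is justifying $\tfrac{d}{dt}(w\circ f_t)=(\partial_tw)\circ f_t$ when $w$ has only essentially bounded rather than classical derivatives in the flow direction. I would handle this by mollifying $w$ in the $u$-variable, applying the classical chain rule to the mollification, and passing to the limit using the uniform $L^\infty$-bounds $|\partial_t^jw|_\infty$; equivalently, for a.e.\ $(y,u)$ the function $t\mapsto w(y,u+t)$ is absolutely continuous with derivative $(\partial_tw)(y,u+t)$ wherever the flow trajectory remains in the support of $w$. No dynamical input beyond the translation structure of $f_t$ on $\tilde Y$ is needed.
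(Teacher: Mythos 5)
Your proposal is correct and follows essentially the same route as the paper: the key input in both is the identity $\rho_{v,w}^{(j)}=\rho_{v,\partial_t^jw}$, after which the paper packages your iterated Laplace-transform integration by parts as Taylor's theorem with integral remainder combined with the convolution formula $\hat g(s)=s^{-m}$. Your extra care in justifying differentiation under the integral (mollification in the flow direction, using the support of $w$ in $Y\times(0,1)$) only fills in a step the paper states without proof.
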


\begin{proof}
First note that 
$\rho_{v,w}$ is $m$-times differentiable and
${\rho_{v,w}}^{(j)}=\rho_{v,\partial_t^jw}$ for $j=0,\dots,m$.
By Taylor's Theorem, $\rho_{v,w}(t)=P_m(t)+H_m(t)$, where
\[
P_m(t)=\sum_{j=0}^{m-1} \frac{1}{j!}{\rho_{v,w}}\!^{(j)}(0)t^j, \quad
H_m(t)=\int_0^t g(t-\tau){\rho_{v,w}}\!^{(m)}(\tau)\,d\tau,
\quad g(t)=\frac{t^{m-1}}{(m-1)!}.
\]
Hence
$\hat\rho_{v,w}(s)=\sum_{j=0}^{m-1}\rho_{v,\partial_t^jw}(0)s^{-(j+1)}+
\hat H_m(s)$,
where $\hat H_m(s)=
\hat g(s)\hat \rho_{v,\partial_t^mw}(s)
=s^{-m}\hat \rho_{v,\partial_t^mw}(s)$.
\end{proof}

\section{Dolgopyat-type estimates}

In this section, we recall estimates of~\cite{Dolgopyat98b} extended to the nonuniformly hyperbolic setting~\cite{M07,M09}.  These are required to control
$(I-\hat R(s))^{-1}$ for $s=ib$, $b$ large.
The arguments need some modification here to allow for the possibility
that $\varphi\not\in L^1$.

We recall that the twisted transfer operators $\hat R_0(s):L^1(Y)\to L^1(Y)$ satisfy for $n\ge1$,
\begin{align} \label{eq-Rn}
(\hat R_0(s)^nv)(y)=\sum_{a\in\alpha_n}e^{g_n(y_a)}e^{-s\varphi_n(y_a)}v(y_a),
\end{align}
where $y_a$ denotes the unique preimage $y_a\in a\cap F^{-n}y$ and
$\varphi_n=\sum_{j=0}^{n-1}\varphi\circ F^j$.

\begin{lemma} \label{lem-Rb}
For every $\epsilon>0$, there 
exist constants $C\ge1$ and $\theta,\tau\in(0,1)$ such that for 
every $v\in F_\theta(Y)$, $b\in\R$, 
\begin{itemize}
\item[(a)] $|\hat R_0(ib)v|_\infty\le|v|_\infty$,
\item[(b)]
$|\hat R_0(ib)^nv|_\theta \le C\{(1+|b|^\epsilon\int_Y\varphi^\epsilon\,d\mu)|v|_\infty+\theta^n|v|_\theta\}$.
\item[(c)] $\|R^nv-\int_Yv\,d\mu\|_\theta\le C\tau^n\|v\|_\theta$.
\end{itemize}
\end{lemma}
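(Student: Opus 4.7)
The plan is as follows. Part (a) is immediate from $R1=1$ (which holds since $\mu$ is $F$-invariant): one has $|\hat R_0(ib)v|\le R|v|\le|v|_\infty R1=|v|_\infty$ pointwise. Part (c) is the standard spectral gap for the transfer operator of a Gibbs-Markov map and follows from Proposition~\ref{prop-H2} at $s=0$ combined with a classical Doeblin--Fortet argument. The substance of the lemma lies in (b).

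For (b), I would start from the explicit formula~\eqref{eq-Rn} and, for $y,y'\in Y$ with preimages $y_a,y'_a\in a$, $a\in\alpha_n$, write the difference $\hat R_0(ib)^nv(y)-\hat R_0(ib)^nv(y')$ as a sum of three pieces: a \emph{distortion term} carrying the difference $e^{p_n(y_a)}-e^{p_n(y'_a)}$, a \emph{phase term} carrying $e^{-ib\varphi_n(y_a)}-e^{-ib\varphi_n(y'_a)}$, and a \emph{regularity term} carrying $v(y_a)-v(y'_a)$. The distortion term is controlled directly by~\eqref{eq-GM} and contributes $\ll|v|_\infty d_\theta(y,y')$, while the regularity term is controlled by $e^{p_n(y'_a)}\le C_1\mu(a)$ together with $d_\theta(y_a,y'_a)=\theta^n d_\theta(y,y')$ and contributes $\ll\theta^n|v|_\theta d_\theta(y,y')$.

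The phase term is the heart of the argument. Write $s_\infty=s(F^ny_a,F^ny'_a)$, so that $s(F^jy_a,F^jy'_a)=n-j+s_\infty$ for $0\le j<n$. I would apply the H\"older interpolation $|e^{-ix}-e^{-iy}|\le 2^{1-\epsilon}|x-y|^\epsilon$, telescope $\varphi_n=\sum_{j=0}^{n-1}\varphi\circ F^j$, and invoke (A1) together with the subadditivity inequality $(\sum a_j)^\epsilon\le\sum a_j^\epsilon$ to obtain
\begin{equation*}
|\varphi_n(y_a)-\varphi_n(y'_a)|^\epsilon
\le C_2^\epsilon\,\theta_0^{\epsilon s_\infty}\sum_{j=0}^{n-1}\varphi(F^jy_a)^\epsilon\,\theta_0^{\epsilon(n-j)}.
\end{equation*}
Multiplying by $e^{p_n(y'_a)}\le C_1\mu(a)$ and summing over $a\in\alpha_n$, the crucial rearrangement is that for fixed $j$ the cylinders $a$ whose $j$-th symbol is a given $b\in\alpha$ have total $\mu$-mass equal to $\mu(b)$ by $F$-invariance; combining this with (A1), which bounds $\varphi(F^jy_a)^\epsilon$ by a constant multiple of $\inf_b\varphi^\epsilon$, one obtains $\sum_{a\in\alpha_n}\mu(a)\varphi(F^jy_a)^\epsilon\ll\int_Y\varphi^\epsilon\,d\mu$ uniformly in $j$ and $n$. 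Summing the geometric series in $j$ and choosing $\theta\ge\theta_0^\epsilon$ converts $\theta_0^{\epsilon s_\infty}$ into $d_\theta(y,y')$, so the phase contribution is $\ll|b|^\epsilon|v|_\infty\int_Y\varphi^\epsilon\,d\mu\cdot d_\theta(y,y')$. Adding the three contributions and dividing by $d_\theta(y,y')$ yields the bound in (b).

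The main obstacle is that $\varphi$ is typically not integrable in our intended applications, so one cannot replace $|\varphi_n(y_a)-\varphi_n(y'_a)|$ by anything involving $\int\varphi\,d\mu$. The resolution is the combined use of H\"older interpolation (trading $|b|$ for $|b|^\epsilon$ and $|\varphi_n|$ for $|\varphi_n|^\epsilon$) with (A1), which converts the Lipschitz oscillation of $\varphi$ on each partition element into a quantity pointwise comparable to $\varphi$ itself; after the partition rearrangement this yields the $\int_Y\varphi^\epsilon\,d\mu$ factor for any $\epsilon>0$ (meaningful whenever that integral is finite). This is precisely why (A1) appears in the standing hypotheses.
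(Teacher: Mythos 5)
Your proposal is correct and follows essentially the same route as the paper: the three-term decomposition of $\hat R_0(ib)^nv(y)-\hat R_0(ib)^nv(y')$, the $\epsilon$-H\"older interpolation trading $|b|$ and the oscillation of $\varphi_n$ for $|b|^\epsilon$ and $\varphi^\epsilon$, condition (A1) to replace Lipschitz constants by $\inf(1_a\varphi)$, and the resummation over cylinders via $F$-invariance to produce $\int_Y\varphi^\epsilon\,d\mu$. The only differences (applying the H\"older bound to $\varphi_n(y_a)-\varphi_n(y'_a)$ and then subadditivity of $x\mapsto x^\epsilon$, rather than telescoping the exponentials termwise as in the paper) are cosmetic.
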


\begin{proof}
Note that
$\hat R_0(s)^nv=R^n(e^{-s\varphi_n}v)$.
Since $|R|_\infty=1$, it follows that part (a) is valid.
Full branch Gibbs-Markov maps are mixing, so $R$ has no eigenvalues on the unit circle except for the simple eigenvalue at $1$.  
Part~(c) follows from this together with quasicompactness~\cite[Section~4.7]{Aaronson}.

It remains to prove (b).  Our argument improves~\cite{BHM05} where
it is assumed that $\varphi\in L^1(Y)$.
Let $y,y'\in Y$.   Then
\[
(\hat R_0(ib)^n(1_av ))(y)-(\hat R_0(ib)^n(1_av ))(y')=D_1+D_2+D_3,
\]
where 
\begin{align*}
D_1 & = (e^{g_n(y_a)}-e^{g_n(y'_a)})e^{ib\varphi_n(y_a)}v(y_a), \quad
D_2  = e^{g_n(y'_a)}(e^{ib\varphi_n(y_a)}-e^{ib\varphi_n(y'_a)})v(y_a), \\
D_3 & = e^{g_n(y'_a)}e^{ib\varphi_n(y'_a)}(v(y_a)-v(y'_a)).
\end{align*}
By the estimates~\eqref{eq-GM},
\begin{align*}
|D_1| & \le C_1 \mu(a)|v|_\infty d_\theta(y,y'), \quad
|D_3|  \le C_1 \mu(a)|v|_\theta d_\theta(y_a,y'_a) = C_1\theta^n\mu(a)|v|_\theta d_\theta(y,y').
\end{align*}
Summing over $a\in\alpha_n$, we obtain that the terms of type $D_1$ and $D_3$
contribute $C_1|v|_\infty$ and $C_1\theta^n|v|_\theta$ respectively
to $|\hat R_0(ib)^nv|_\theta$.

Next,
\[
|D_2|\le C_1 \mu(a)\sum_{j=0}^{n-1}|e^{ib\varphi(F^jy_a)}-e^{ib\varphi(F^jy'_a)}||v|_\infty.
\]
Recall that by assumption $1_a \varphi \in F_{\theta_0}(Y)$ for some $\theta_0\in (0,1)$.
 Using the inequality $|e^{ix}-1|\le\min\{2,|x|\}\le 2|x|^\epsilon$
for $x\in\R$, $\epsilon\in[0,1]$,
\[
|D_2|\le 2C_1 \mu(a)|b|^\epsilon\sum_{j=0}^{n-1}|1_{F^ja}\varphi|_{\theta_0}^\epsilon
d_{\theta_0}(F^jy_a,F^jy'_a)^\epsilon|v|_\infty.
\]
Let $\theta=\theta_0^\epsilon$.  Then 
$d_{\theta_0}(F^jy_a,F^jy'_a)^\epsilon
=d_\theta(F^jy_a,F^jy'_a)
=\theta^{n-j}d_\theta(y,y')$.  By (A1),
\[
|D_2|\le 2C_1C_2^\epsilon \mu(a)|b|^\epsilon\sum_{j=0}^{n-1}\inf(1_{F^ja}\varphi)^\epsilon
\theta^{n-j}d_\theta(y,y')|v|_\infty.
\]

Hence, summing over $a\in\alpha_n$, the $D_2$ terms contribute $2C_1C_2^\epsilon|b|^\epsilon|v|_\infty S$, where
\begin{align*}
	S & =\sum_{a\in\alpha_n}\mu(a)\sum_{j=0}^{n-1}\theta^{n-j}\inf(1_{F^ja}\varphi)^\epsilon
=\sum_{j=0}^{n-1}\sum_{d\in\alpha_{n-j}}\sum_{a\in\alpha_n:F^ja=d}\mu(a)\theta^{n-j}\inf(1_{F^ja}\varphi)^\epsilon
	\\ & 
=\sum_{j=0}^{n-1}\theta^{n-j}\sum_{d\in\alpha_{n-j}}
\inf(1_d\varphi)^\epsilon \sum_{a\in\alpha_n:F^ja=d}\mu(a)
=\sum_{j=0}^{n-1}\theta^{n-j}\sum_{d\in\alpha_{n-j}}
\inf(1_d\varphi)^\epsilon \mu(d)
\\ & 
\le \sum_{j=0}^{n-1}\theta^{n-j} \int_Y\varphi^\epsilon\,d\mu
\le \theta(1-\theta)^{-1} \int_Y\varphi^\epsilon\,d\mu.
\end{align*}
This completes the proof of part (b).
\end{proof}

For $b\in\R$, define $M_b:L^\infty(Y)\to L^\infty(Y)$,
$M_bv=e^{ib\varphi}v\circ F$.

\begin{defn}  \label{defn-approx} There are {\em approximate eigenfunctions} on a subset $Z\subset Y$
if there exist constants $A>0$ arbitrarily large, $\beta>0$ and $C\ge1$, and
sequences $|b_k|\to\infty$, $\psi_k\in[0,2\pi)$, 
$u_k\in F_\theta(Y)$ with
$|u_k|\equiv1$, such that setting $n_k=[\beta\ln|b_k|]$,
\[
|M_{b_k}^{n_k}u_k(y)-e^{i\psi_k}u_k(y)|\le C|b_k|^{-A},
\]
for all $y\in Z$ and all $k\ge1$.
\end{defn}

A subset $Z_0\subset Y$ is called a {\em finite subsystem} if $Z_0=\bigcap_{n\ge0} F^{-n}Z$ where $Z$ is a finite union of partition elements $a\in\alpha$.

\begin{prop} \label{prop-A2}
There exists a finite subsystem $Z_0$ such that there are no approximate eigenfunctions on $Z_0$.
\end{prop}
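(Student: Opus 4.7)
My plan is to follow the standard Dolgopyat-type argument \cite{Dolgopyat98b}, as extended to the nonuniformly hyperbolic setting in \cite{M07,M09}: choose a finite subsystem $Z_0$ that contains the two periodic orbits guaranteed by (A2), and then assume for contradiction the existence of approximate eigenfunctions on $Z_0$, extracting enough information from the periodic points to contradict the Diophantine hypothesis on $\tau_1/\tau_2$.

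First I would build $Z_0$. By (A2), pick $F$-periodic points $y_1,y_2\in Y$ of respective periods $k_1,k_2$ whose associated flow-periods $\tau_i:=\varphi_{k_i}(y_i)$ satisfy that $\tau_1/\tau_2$ is Diophantine (with exponent $\nu$, say). Let $Z\subset Y$ be the union of those partition elements $a\in\alpha$ meeting the forward orbits $\{F^j y_i:0\le j<k_i\}$, $i=1,2$, and set $Z_0=\bigcap_{n\ge 0}F^{-n}Z$. Then $Z_0$ is a forward $F$-invariant finite subsystem and contains both periodic orbits.

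Next I would derive the key approximate identity. Suppose for contradiction that there exist sequences $|b_k|\to\infty$, $\psi_k\in[0,2\pi)$, and $u_k\in F_\theta(Y)$ with $|u_k|\equiv 1$, such that with $n_k=[\beta\ln|b_k|]$,
\[
|M_{b_k}^{n_k}u_k(y)-e^{i\psi_k}u_k(y)|\le C|b_k|^{-\alpha}\quad (y\in Z_0),
\]
for some $\alpha>0$ taken as large as desired. Since $F(Z_0)\subset Z_0$ and $|M_b|_\infty\le 1$, the estimate iterates to
$|M_{b_k}^{m n_k}u_k(y)-e^{im\psi_k}u_k(y)|\le mC|b_k|^{-\alpha}$
on $Z_0$. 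Taking $m=k_1k_2$ and $y=y_i$, the left-hand side equals $e^{ib_k(k_1k_2/k_i)n_k\tau_i}u_k(y_i)$ \emph{exactly}, using $F^{k_1k_2 n_k}y_i=y_i$ and $\varphi_{k_1k_2 n_k}(y_i)=(k_1k_2/k_i)n_k\tau_i$. Dividing by $u_k(y_i)$ (of modulus one) yields
\[
e^{ib_k(k_1k_2/k_i)n_k\tau_i}=e^{ik_1k_2\psi_k}+O(|b_k|^{-\alpha}),\quad i=1,2,
\]
and dividing the two relations eliminates $\psi_k$ to give
\[
e^{ib_k n_k(k_2\tau_1-k_1\tau_2)}=1+O(|b_k|^{-\alpha}).
\]
Setting $\eta:=k_2\tau_1-k_1\tau_2\neq 0$ (nonzero by irrationality of $\tau_1/\tau_2$), this says $b_k n_k\eta/(2\pi)$ lies within $C|b_k|^{-\alpha}$ of an integer $L_k$.

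The remaining step, which I regard as the main obstacle, is to convert this into a quantitative contradiction via the Diophantine hypothesis. Following \cite[Sec.~3]{Dolgopyat98b} and \cite[Sec.~6]{M09}, one combines the homogeneous relation above with the individual (nonhomogeneous) relations at $y_1$ and $y_2$, exploits the arbitrariness of $\alpha$ in Definition~\ref{defn-approx}, and eventually produces a simultaneous Diophantine approximation $|\tau_1/\tau_2-P_k/Q_k|\le C|b_k|^{-\alpha'}$ with $Q_k=O(|b_k|\log|b_k|)$ and $\alpha'$ comparable to $\alpha$; the Diophantine lower bound $|\tau_1/\tau_2-P/Q|\ge cQ^{-\nu}$ then bounds $|b_k|$ once $\alpha$ exceeds a threshold depending on $\nu$, contradicting $|b_k|\to\infty$. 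The delicate point here is turning the single modular constraint involving $k_2\tau_1-k_1\tau_2$ into a conventional Diophantine approximation of the irrational $\tau_1/\tau_2$; this requires careful bookkeeping of the bounded phase terms arising at each periodic point, together with the logarithmic scaling of $n_k$.
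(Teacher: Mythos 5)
Your construction of $Z_0$ (the union of the partition elements met by the two periodic orbits, intersected with all preimages) is exactly the paper's, and your evaluation of the iterated relation at the periodic points is correct as far as it goes. But note that the paper does not attempt the second half of your argument at all: its proof consists of this construction followed by a citation of \cite[Section~13]{Dolgopyat98b} for the absence of approximate eigenfunctions. The step you defer as ``the remaining step'' is therefore the entire content of the proposition, and the route you sketch for it cannot be completed.

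Here is the concrete obstruction. All the information you extract from $y_1,y_2$ is: there exists $\psi_k$ with $\mathrm{dist}(b_kn_k\tau_i-k_i\psi_k,2\pi\Z)=O(|b_k|^{-\alpha})$ for $i=1,2$. Since $\psi_k$ is free, eliminating it (choose $\psi_k$ to satisfy the $y_1$-relation exactly) shows this system is equivalent to the single homogeneous congruence $\mathrm{dist}(b_kn_k(k_2\tau_1-k_1\tau_2),2\pi\Z)=O(|b_k|^{-\alpha})$; further iteration of the approximate eigenfunction relation only reproduces integer multiples of the same two relations, so no additional constraints appear. A single congruence of this type is satisfiable along sequences $b_k\to\infty$ for \emph{every} $\alpha$ and $\beta$: on each interval where $n=[\beta\ln b]$ is constant, $b\,n\,(k_2\tau_1-k_1\tau_2)$ sweeps an interval whose length tends to infinity, so one can arrange the congruence to hold exactly and then pick $\psi_k$ accordingly. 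Consequently the two-orbit data are jointly consistent for arbitrarily large $b_k$, and no amount of ``bookkeeping of the bounded phase terms'' at $y_1$ and $y_2$ can produce the rational approximation $|\tau_1/\tau_2-P_k/Q_k|\le C|b_k|^{-\alpha'}$ you need; the Diophantine hypothesis never gets any purchase on these relations (also, Definition~\ref{defn-approx} imposes no bound on $|u_k|_\theta$, so you cannot rescue the argument through regularity of $u_k$). The genuine mechanism in \cite[Section~13]{Dolgopyat98b}, as used in \cite{M07,M09,FMT07}, exploits the approximate eigenfunction inequality on the rest of the finite subsystem $Z_0$ --- which contains far more than the two generating orbits --- and that input is exactly what is missing from your proposal. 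You must either reproduce that argument in full or, as the paper does, invoke it by citation.
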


\begin{proof}
By (A2), we can fix two periodic orbits with periods $\tau_1$ and $\tau_2$
such that $\tau_1/\tau_2$ is Diophantine.
 Let $Z$ be the union of the partition elements $a\in\alpha$ intersected by the periodic orbits
and define $Z_0 =\bigcap_{n\ge0}F^{-n}Z$.
It follows from~\cite[Section~13]{Dolgopyat98b} 
that there are no approximate eigenfunctions on $Z_0$.
\end{proof}

\begin{lemma} \label{lem-approx}
There exists $A>0$ and $C\ge1$ such that
$\|(I-\hat R(ib))^{-1}\|_\theta\le C|b|^A$ for all $b\in\R$ with $|b|\ge1$.
\end{lemma}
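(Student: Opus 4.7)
The approach is to reduce the bound on $\|(I-\hat R(ib))^{-1}\|_\theta$ to a Dolgopyat-type estimate on the discrete-time twisted transfer operator $\hat R_0(ib)$ acting on $F_\theta(Y)$, and then invoke the existing machinery of~\cite{Dolgopyat98b,M07,M09}. By~\eqref{eq-Rtilde}, the family $\hat R(s)$ acts fiberwise in the $u$-coordinate: $(\hat R(s)v)(y,u)=(\hat R_0(s)v^u)(y)$. The norm $\|\cdot\|_\theta$ on $F_\theta(\tilde Y)$ is equivalent (within a universal factor) to $\sup_{u\in[0,1]}\|v^u\|_\theta$, so the induced operator norm of $(I-\hat R(ib))^{-1}$ on $F_\theta(\tilde Y)$ is controlled by the operator norm of $\hat T_0(ib)=(I-\hat R_0(ib))^{-1}$ on $F_\theta(Y)$. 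It therefore suffices to prove $\|\hat T_0(ib)\|_\theta\le C|b|^\alpha$ for $|b|\ge 1$.

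For $|b|$ sufficiently large, the argument is the standard Dolgopyat-type contraction scheme. Fix $\epsilon>0$ small enough that $\int_Y\varphi^\epsilon\,d\mu<\infty$; this is possible because in the weakest case covered (Theorem~\ref{thm-infinite}) $\mu(\varphi>t)$ has polynomial tails of order $\beta>\tfrac12$, so any $\epsilon<\beta$ works. Lemma~\ref{lem-Rb}(b) then supplies a Lasota-Yorke estimate for $\hat R_0(ib)^n$ with polynomial $|b|^\epsilon$ dependence on the regularity-side constant, while Proposition~\ref{prop-A2} provides a finite subsystem $Z_0$ carrying no approximate eigenfunctions. Following the scheme of~\cite{Dolgopyat98b} as realised for Gibbs-Markov Poincar\'e maps in~\cite{M07,M09}, these two inputs combine through an $L^2$-cancellation estimate at iteration times $N\sim\ln|b|$ to yield constants $\alpha_0>0$, $\rho\in(0,1)$, $C\ge 1$ such that
\[
\|\hat R_0(ib)^n\|_\theta\le C|b|^{\alpha_0}\rho^n
\]
for all sufficiently large $|b|$ and every $n\ge 1$. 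Summing the Neumann series $\hat T_0(ib)=\sum_{n\ge 0}\hat R_0(ib)^n$ then gives $\|\hat T_0(ib)\|_\theta\le C|b|^{\alpha_0}/(1-\rho)$.

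For $b$ in the remaining compact range $1\le|b|\le M$, Proposition~\ref{prop-H2}(a) guarantees that $\hat R_0(ib)$ has spectral radius strictly less than $1$. Combined with the quasi-compactness afforded by Lemma~\ref{lem-Rb}(b),(c) and a routine compactness/upper-semicontinuity argument on the discrete spectrum, this produces a uniform bound on $\|\hat T_0(ib)\|_\theta$ across the compact set. Taking the maximum of the two bounds completes the proof.

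The main obstacle is that the Dolgopyat--M07--M09 machinery is traditionally set up under the assumption $\varphi\in L^1(Y)$, which fails in the infinite measure regime of Theorem~\ref{thm-infinite}. What rescues the argument is condition~(A1): as exploited in the proof of Lemma~\ref{lem-Rb}(b), it lets the Lasota-Yorke bookkeeping proceed with $\int_Y\varphi^\epsilon\,d\mu$ in place of $\int_Y\varphi\,d\mu$, a weaker moment that is finite for small $\epsilon>0$ under our tail assumptions. Once this modified Lasota-Yorke estimate is in hand, the remaining cancellation argument transcribes essentially verbatim from~\cite{M07,M09}.
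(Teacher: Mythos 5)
Your proposal is correct and follows essentially the same route as the paper's proof: reduce via~\eqref{eq-Rtilde} to the twisted operator $\hat R_0(ib)$ on $F_\theta(Y)$, then feed the modified Lasota--Yorke estimate of Lemma~\ref{lem-Rb} (with $\int_Y\varphi^\epsilon\,d\mu$ in place of $\int_Y\varphi\,d\mu$, which is exactly how the paper circumvents $\varphi\notin L^1$) and the absence of approximate eigenfunctions from Proposition~\ref{prop-A2} into the Dolgopyat machinery of~\cite{Dolgopyat98b,M07,M09}. The paper's proof is simply a citation of~\cite[Lemmas~3.12 and~3.13]{M07} with Lemma~\ref{lem-Rb} playing the role of~\cite[Proposition~3.7]{M07}, so your more explicit write-up (including the treatment of the compact range $1\le|b|\le M$) is consistent with it.
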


\begin{proof}  
By~\eqref{eq-Rtilde}, it suffices to
prove this with $\hat R(ib)$ replaced by $R_0(ib):F_\theta(Y)\to F_\theta(Y)$. 
By Proposition~\ref{prop-A2}, there is a finite subsystem on which there are no approximate eigenfunctions.
The estimate for $(I-\hat R_0)^{-1}$ follows from this
by exactly the argument used in~\cite[Lemmas~3.12 and~3.13]{M07}.
Lemma~\ref{lem-Rb} plays the role of~\cite[Proposition~3.7]{M07}.
\end{proof}

\begin{rmk} \label{rmk-norm}
We shall consider various families of linear operators acting on various function spaces.   
Throughout, $\|\,\|_\theta$ denotes the operator norm on
either $F_\theta(Y)$ or $F_\theta(\tilde Y)$.
Similarly, $\|\,\|_\infty$ denotes the operator norm on
either $L^\infty(Y)$ or $L^\infty(\tilde Y)$.
Whether the space is $Y$ or $\tilde Y$ should be clear from the context.
\end{rmk}

\part{Infinite measure systems}
\label{part-infinite}

In this part of the paper, we prove our main results in the infinite measure context.
Throughout, we assume the setup from Section~\ref{sec-main}, so $F:Y\to Y$ is a full branch Gibbs-Markov map and $\varphi:Y\to\R^+$ is a roof function satisfying assumptions (A1) and~(A2).  In addition, we make the
standing assumption throughout this part of the paper that $\varphi$ is nonintegrable and $\mu(\varphi>t)\sim \ell(t)t^{-\beta}$ where $\beta\in(\frac12,1]$
and $\ell(t)$ is slowly varying.

Section~\ref{sec-funct} contains various operator-theoretic estimates.
Our result on first order asymptotics (mixing) stated in Theorem~\ref{thm-infinite}(a) is proved in 
Section~\ref{sec-mixing}.
Our result on second order asymptotics and rates of mixing, Theorem~\ref{thm-infinite}(b), is proved in Section~\ref{sec-rates}.

\section{Functional analytic estimates}
\label{sec-funct}

In this section, we carry out various operator-theoretic estimates.
Most of these are fairly straightforward generalisations of the estimates
in~\cite{MT12} which built upon~\cite{AaronsonDenker01,GarsiaLamperti62}.
However, the estimates
in Lemma~\ref{lem-R} are considerably more complicated than in the discrete time case.

\subsection{Estimates for $\tilde R$}
\label{sec-R}

In this subsection, we prove a key technical estimate that we have not seen 
elsewhere in the literature (though Lemma~\ref{lem-R} has a similar flavour 
to estimates in~\cite{Terhesiu-app}).

We have the estimate $\mu(E>t)=O(\ell(t)t^{-\beta})$
for various functions $E:\alpha\to\R$ related to $\varphi$
including the locally constant functions 
$E(a)=\inf_a\varphi$ and hence 
$E(a)=|1_a\varphi|_\infty$ and
$E(a)=|1_a\varphi|_{\theta_0}$ by condition (A1).

We use the following resummation argument extensively.

\begin{prop} \label{prop-resum}
Let $\omega,E:\alpha\to\R$ be such that $\mu(E>t)=O(\ell(t)t^{-\beta})$,
$\omega$ is a bounded function, and $\omega(a)\le GE(a)$.
Then $\sum_{a\in\alpha} \mu(a) \omega(a) \ll \ell(1/G) G^\beta$.
\end{prop}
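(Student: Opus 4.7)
The plan is to reduce $\sum_{a\in\alpha}\mu(a)g(a)$ to a tail integral of $E$ via the layer-cake formula, then apply Karamata's theorem for integrals of regularly varying functions.

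First, setting $C_g = |g|_\infty$ gives $g(a) \le \min\{C_g, GE(a)\}$. Extending $E$ to the piecewise constant function on $Y$ equal to $E(a)$ on each $a$, the layer-cake formula for the nonnegative function $\min\{C_g, GE\}$ followed by the substitution $s = t/G$ yields
\[
\sum_{a\in\alpha}\mu(a) g(a) \;\le\; \int_Y \min\{C_g, GE\}\,d\mu \;=\; \int_0^{C_g}\mu(GE > t)\,dt \;=\; G\int_0^{C_g/G}\mu(E > s)\,ds.
\]
This is the key reduction: the sum is controlled by an integral of the tail of $E$, with the prefactor $G$ supplying the correct weight. Note that using the crude AM--GM-style bound $\min\{C_g, GE(a)\} \le C_g^{1-\beta}(GE(a))^\beta$ would fail because $E$ has infinite $\beta$-moment under the hypothesis; the truncation at $C_g/G$ is essential.

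Next I split the integral at $s = 1$. On $[0,1]$ the trivial bound $\mu(E > s) \le 1$ contributes $O(G)$. On $[1, C_g/G]$, inserting the tail hypothesis $\mu(E > s) = O(\ell(s) s^{-\beta})$ and applying Karamata's theorem for $\beta \in (\tfrac12, 1)$ produces $\int_1^{C_g/G}\ell(s) s^{-\beta}\,ds = O\bigl(\ell(1/G) G^{-(1-\beta)}\bigr)$, where slow variation of $\ell$ absorbs the constant $C_g$. Multiplying by $G$ gives the target bound $O(\ell(1/G) G^\beta)$; the $O(G)$ contribution from $[0,1]$ is absorbed into this, since $G^{1-\beta}/\ell(1/G) \to 0$ as $G \to 0^+$ by slow variation.

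The one subtlety, and the main obstacle, is the endpoint $\beta = 1$, where Karamata's theorem instead yields $\int_1^{C_g/G}\ell(s)/s\,ds \sim \tilde\ell(1/G)$ with $\tilde\ell(t) = \int_1^t \ell(s) s^{-1}\,ds$, naturally producing the bound $G\tilde\ell(1/G)$ rather than $G\ell(1/G)$. This is consistent with the $\tilde\ell$ conventions already fixed in Theorem~\ref{thm-infinite}, so the conclusion at $\beta = 1$ is naturally read with $\tilde\ell$ in place of $\ell$. The structure of the argument -- layer-cake followed by a Karamata tail estimate -- is identical in both regimes.
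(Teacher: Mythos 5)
Your proof is correct and takes essentially the same route as the paper's: bounding $g$ by $\min\{C_g,GE\}$ is exactly the paper's truncation of $E$ at level $L\approx 1/G$ (terms with $E(a)\le L$ versus $E(a)>L$), and your layer-cake-plus-Karamata step is the continuous repackaging of the paper's discrete resummation of $\sum_{a:E(a)\le L}\mu(a)E(a)$ into $\sum_{j<L}\mu(E>j)\ll\ell(L)L^{1-\beta}$. Your caveat at the endpoint $\beta=1$ is also consistent with the paper, which addresses it by declaring that in that case all occurrences of $\ell(1/b)$ in the Section~\ref{sec-funct} estimates are to be replaced by $\tilde\ell(1/b)$.
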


\begin{proof}
For $L\ge1$, write
\begin{align*}
\sum_{a\in\alpha} \mu(a) \omega(a) 
& \le G\sum_{a:E(a)\le L} \mu(a)E(a)+
\sum_{a:E(a)>L} \mu(a)|\omega|_\infty
 =GK+O(\ell(L) L^{-\beta}),
\end{align*}
where 
\begin{align*}
K & = \sum_{a:E(a)\le L} \mu(a)E(a)  \le
\sum_{j=1}^L \sum_{a:E(a)\in(j-1,j]} \mu(a)j 
=\sum_{j=1}^L \sum_{a:E(a)> j-1} \mu(a)j- 
\sum_{j=0}^L \sum_{a:E(a)> j} \mu(a)j 
\\ &
 = 
\sum_{j=0}^{L-1} \sum_{a:E(a)> j} \mu(a)(j+1)- 
\sum_{j=0}^L \sum_{a:E(a)> j} \mu(a)j 
 \le \sum_{j=0}^{L-1} \sum_{a:E(a)> j} \mu(a)
\\ & = \sum_{j=0}^{L-1} \mu(E>j) \ll \ell(L)  L^{1-\beta}.
\end{align*}
Taking $L\approx 1/G$ yields the result.
\end{proof}

\begin{lemma} \label{lem-R}   
Let $\epsilon\in(0,\beta)$.  There exists $\theta\in(0,1)$, $C>0$ such that 
\[
\|\tilde R(ib_1)-\tilde R(ib_2)\|_\theta\le  C\bigl\{\ell(|b_1-b_2|^{-1})|b_1-b_2|^\beta+
\ell(|b_1-b_2|^{\epsilon/\beta-1}|b_2|^{-\epsilon/\beta})|b_2|^\epsilon|b_1-b_2|^{\beta-\epsilon}\bigr\}.
\]
\end{lemma}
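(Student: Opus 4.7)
The plan is to start from the transfer-operator formula $(\tilde R(s)v)(y,u) = \sum_{a \in \alpha} e^{p(y_a)} e^{-s\varphi(y_a)} v^u(y_a)$ (from~\eqref{eq-Rn} and~\eqref{eq-Rtilde}) and write
\[
((\tilde R(ib_1) - \tilde R(ib_2))v)(y,u) = \sum_{a \in \alpha} e^{p(y_a)} K(y_a) v^u(y_a), \qquad K(y) = e^{-ib_1\varphi(y)} - e^{-ib_2\varphi(y)}.
\]
With $c = b_1 - b_2$ and the factorisation $K = e^{-ib_2\varphi}(e^{-ic\varphi}-1)$ one has $|K(y)| \le \min\{2, |c|\varphi(y)\}$. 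By condition~(A1), $\varphi(y) \le E(a)/c_2$ on each $a \in \alpha$, where $E(a) = \inf_a \varphi$, and the tail hypothesis transfers to $\mu(E > t) \ll \ell(t)t^{-\beta}$. Proposition~\ref{prop-resum}, applied with $g(a) = \min\{2, |c|E(a)/c_2\}$ and $G = |c|/c_2$, immediately produces the sup-norm bound $\ll \ell(|c|^{-1})|c|^\beta$, which is the first term of the lemma.

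For the Lipschitz bound with respect to $d_\theta$, take $\theta = \theta_0^\epsilon$ so that $d_\theta = d_{\theta_0}^\epsilon$, and mimic the three-term decomposition $D_1 + D_2 + D_3$ from the proof of Lemma~\ref{lem-Rb}(b) that records the variation of $e^{p(y_a)}$, $K(y_a)$, and $v^u(y_a)$ respectively. The $D_1$ and $D_3$ contributions combine the sup estimate on $K$ with the Gibbs-Markov distortion~\eqref{eq-GM} and the contraction $d_\theta(y_a, y'_a) = \theta d_\theta(y,y')$; both produce a bound of the form $\ell(|c|^{-1})|c|^\beta d_\theta(y,y')\|v\|_\theta$ and feed back into the first term of the lemma.

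The new ingredient is $D_2 = e^{p(y'_a)}(K(y_a) - K(y'_a))v^u(y_a)$. Factor $K = AB$ with $A = e^{-ib_2\varphi}$, $B = e^{-ic\varphi}-1$ and use $|K(y_a) - K(y'_a)| \le |A-A'||B| + |B-B'|$. The $|B-B'|$ piece depends only on $|c|$ and, after an application of Potter's bound to absorb the $d_{\theta_0}$-dependence arising from Proposition~\ref{prop-resum}, again contributes to the first term. For the $|A-A'||B|$ piece I use the H\"older-type estimate
\[
|A(y_a) - A(y'_a)| \le 2^{1-\epsilon}(|b_2|\,C_2 E(a)\,\theta_0\,d_{\theta_0}(y,y'))^\epsilon \le C|b_2|^\epsilon E(a)^\epsilon d_\theta(y,y'),
\]
(the choice $\theta = \theta_0^\epsilon$ is essential here), which reduces matters to bounding
\[
\sum_{a \in \alpha} \mu(a)\,E(a)^\epsilon \min\{2,|c|E(a)/c_2\}.
\]

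The hard part will be pinning down the precise $\ell$-argument $|c|^{\epsilon/\beta-1}|b_2|^{-\epsilon/\beta}$ in the second term. A naive truncation at $L \sim 1/|c|$ in the above sum, using the Karamata-type bounds $\int_{E \le L} E^{1+\epsilon}\,d\mu \ll \ell(L)L^{1+\epsilon-\beta}$ and $\int_{E > L} E^\epsilon\,d\mu \ll \ell(L)L^{\epsilon-\beta}$, already produces the correct polynomial rate $|b_2|^\epsilon|c|^{\beta-\epsilon}$ but with the coarser $\ell$-argument $|c|^{-1}$. To obtain the sharper form I would avoid separating the H\"older prefactor $|b_2|^\epsilon$ from the tail truncation: keeping $|A-A'||B|$ in its raw $\min$ form (Lipschitz--Lipschitz, Lipschitz--sup, or sup--sup, depending on the size of $E$ relative to the scales $1/(|b_2|d_{\theta_0})$ and $1/|c|$) and then optimising the truncation to balance the regimes yields the threshold $L \sim |c|^{\epsilon/\beta-1}|b_2|^{-\epsilon/\beta}$ and the claimed argument of $\ell$. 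Ensuring that the resulting bound is uniformly Lipschitz in $(y,y')$ -- in particular that no residual $d_{\theta_0}$-dependence survives -- is the technical heart of the proof, and is precisely the feature that makes this estimate considerably more delicate than its discrete-time counterpart.
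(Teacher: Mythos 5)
Your overall architecture matches the paper's: reduce to $\hat R_0$ via~\eqref{eq-Rtilde}, split the difference into pieces recording the variation of $e^{p(y_a)}$, of $\Delta=e^{ib_1\varphi}-e^{ib_2\varphi}$, and of $v$, and feed each piece into Proposition~\ref{prop-resum}; your sup-norm bound, your $D_1,D_3$ terms, and your $|B-B'|$ piece (the paper's $J_2'$, handled with Potter's bounds and a choice $\theta\ge\theta_0^{\beta-\epsilon}$) are exactly what the paper does. The gap is at the one place that makes the lemma nontrivial: the mixed term $|A-A'|\,|B|$ (the paper's $J_2''$). Your concrete estimate — H\"older exponent $\epsilon$ on the $b_2$-factor followed by truncation at $L\sim 1/|c|$ — yields $\ell(1/|c|)\,|b_2|^\epsilon|b_1-b_2|^{\beta-\epsilon}$, which, as you acknowledge, carries the wrong argument of $\ell$; for a general slowly varying $\ell$ this bound and the stated one need not be comparable (they differ by a factor $\ell(1/|c|)/\ell(|c|^{\epsilon/\beta-1}|b_2|^{-\epsilon/\beta})$ which Potter only controls by a power of $|b_2|/|c|$), so it does not prove the statement. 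The remedy you sketch — keeping both $\min$'s raw and optimising the truncation — is left unexecuted, and the difficulty you yourself flag (that no residual $d_{\theta_0}(y,y')$-dependence survive; note your announced threshold omits the $d_{\theta_0}(y,y')$ factor that necessarily enters the optimisation) is precisely the part that has to be carried out.

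The missing idea is a one-line \emph{unbalanced} interpolation, after which Proposition~\ref{prop-resum} does all the work: bound
\[
|e^{i(b_1-b_2)\varphi(y'_a)}-1|\,|e^{ib_2\varphi(y_a)}-e^{ib_2\varphi(y'_a)}|
\le \bigl\{|b_1-b_2|\,|1_a\varphi|_\infty\bigr\}^{1-\epsilon/\beta}\bigl\{|b_2|\,|1_a\varphi|_{\theta_0}\,d_{\theta_0}(y,y')\bigr\}^{\epsilon/\beta},
\]
which is of the form $G\,E(a)$ with $G=|b_1-b_2|^{1-\epsilon/\beta}|b_2|^{\epsilon/\beta}d_{\theta_0^{\epsilon/\beta}}(y,y')$ and $E(a)=|1_a\varphi|_\infty^{1-\epsilon/\beta}|1_a\varphi|_{\theta_0}^{\epsilon/\beta}$, still satisfying $\mu(E>t)=O(\ell(t)t^{-\beta})$ by (A1). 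A single application of Proposition~\ref{prop-resum} gives $\ell(1/G)G^\beta$: the power part is exactly $|b_2|^\epsilon|b_1-b_2|^{\beta-\epsilon}d_{\theta_0^\epsilon}(y,y')$, and the $\ell$-argument is the claimed $|b_1-b_2|^{\epsilon/\beta-1}|b_2|^{-\epsilon/\beta}$ up to the factor $d_{\theta_0^{\epsilon/\beta}}(y,y')^{-1}$, which Potter's bounds remove at the cost of weakening $d_{\theta_0^\epsilon}$ to $d_{\theta_0^{\epsilon/2}}$ — whence the paper takes $\theta\ge\theta_0^{\epsilon/2}$ rather than your fixed $\theta=\theta_0^\epsilon$. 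With this interpolation your outline closes; without it, the second term of the lemma — the whole point of the statement — is not established.
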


\begin{proof}   
By~\eqref{eq-Rtilde}, it suffices to prove the result for $\hat R_0(ib)$.
We show that
\begin{align*}
|\hat R_0(ib_1)-\hat R_0(ib_2)v|_{\theta} & \le C\bigl\{\ell(|b_1-b_2|^{-1})|b_1-b_2|^\beta \\ & \qquad \qquad +\ell(|b_1-b_2|^{\epsilon/\beta-1}|b_2|^{-\epsilon/\beta})|b_2|^\epsilon|b_1-b_2|^{\beta-\epsilon}\bigr\}\|v\|_\theta.
\end{align*}
A simpler argument which we omit shows that 
$|(\hat R_0(ib_1)-\hat R_0(ib_2))v|_\infty \le C\ell(|b_1-b_2|^{-1})|b_1-b_2|^\beta|v|_\infty$ and the
result follows.

The structure of the calculation begins as 
in~\cite[Theorem~2.4]{AaronsonDenker01}.
Let $DR=\hat R_0(ib_1)-\hat R_0(ib_2)$ and $\Delta(y)=
e^{ib_1\varphi(y)}-e^{ib_2\varphi(y)}$.
Recalling formula~\eqref{eq-Rn}, we have
$(DRv)(y)=\sum_{a\in\alpha}\Delta(y_a)e^{g(y_a)}v(y_a)$ and so
\begin{align*}
& (DRv)(y)- (DRv)(y')
 =\sum_{a\in\alpha} \Delta(y_a)e^{g(y_a)}v(y_a)
-\Delta(y'_a)e^{g(y'_a)}v(y'_a) \\
& \qquad =\sum_{a\in\alpha} \Delta(y_a)[e^{g(y_a)}v(y_a)-e^{g(y'_a)}v(y'_a)]
+[\Delta(y_a)-\Delta(y'_a)]e^{g(y'_a)}v(y'_a).
\end{align*}
By the estimates~\eqref{eq-GM},
\begin{align*}
 |(DRv)(y)- (DRv)(y')| &  \le C_1|v|_\theta J_1+ C_1|v|_\infty J_2,
\end{align*}
where
\[
J_1= \sum_{a\in\alpha} \mu(a)|\Delta(y_a)|d_\theta(y,y'), \qquad
J_2= \sum_{a\in\alpha}\mu(a)|\Delta(y_a)-\Delta(y'_a)|.
\]

Now $\Delta$ is bounded and also
$|\Delta(y_a)|\le |b_1-b_2||1_a\varphi|_\infty =GE(a)$
where $G=|b_1-b_2|$ and $\mu(E>t)=O(\ell(t)t^{-\beta})$.
By Proposition~\ref{prop-resum}, $J_1\ll \ell(|b_1-b_2|^{-1})|b_1-b_2|^\beta d_\theta(y,y')$.

Next,
 \[
  |\Delta(y_a)-\Delta(y'_a)|   \le  
|e^{i(b_1-b_2)\varphi(y_a)}-e^{i(b_1-b_2)\varphi(y'_a)}|
+|e^{i(b_1-b_2)\varphi(y'_a)}-1||e^{ib_2\varphi(y_a)}-e^{ib_2\varphi(y'_a)}|,
\]
so $J_2\le J_2'+J_2''$ where
\begin{align*}
J_2'& = \sum_{a\in\alpha}\mu(a)
|e^{i(b_1-b_2)\varphi(y_a)}-e^{i(b_1-b_2)\varphi(y'_a)}|,
\\
J_2'' & = \sum_{a\in\alpha}\mu(a)
|e^{i(b_1-b_2)\varphi(y'_a)}-1||e^{ib_2\varphi(y_a)}-e^{ib_2\varphi(y'_a)}|,
\end{align*}
Now
$ J_2'= \sum_{a\in\alpha}\mu(a)\omega(a)$ where $\omega$ is bounded and
$|\omega(a)|
\le GE(a)$ where $G=|b_1-b_2|d_{\theta_0}(y,y')$ and $E(a)=|1_a\varphi|_{\theta_0}$.
It follows from (A1) that $\mu(E>t)=O(\ell(t)t^{-\beta})$.
By Proposition~\ref{prop-resum}, 
$J_2'\ll \ell((|b_1-b_2|d_{\theta_0}(y,y'))^{-1})
(|b_1-b_2|d_{\theta_0}(y,y'))^\beta$.
By Potter's bounds (see for instance~\cite{BGT}),
$J_2'\ll \ell(|b_1-b_2|^{-1})
|b_1-b_2|^\beta d_{\theta_0}(y,y')^{\beta-\epsilon}$.
Choosing $\theta\ge\theta_0^{\beta-\epsilon}$, we obtain
$J_2'\ll \ell(|b_1-b_2|)^{-1}|b_1-b_2|^\beta d_{\theta}(y,y')$.

Finally, write $J_2''=\sum_{a\in\alpha}\mu(a)\omega(a)$ where
$\omega$ is bounded.
The inequality $|e^{ix}-1|\le 2|x|^\delta$ holds for all $x\ge0$, $\delta\in[0,1]$, so
\[
|\omega(a)|\le 4\bigl\{|b_1-b_2||1_a\varphi|_\infty\bigr\}^{1-\epsilon/\beta}
\bigl\{|b_2||1_a\varphi|_{\theta_0} d_{\theta_0}(y,y')\bigr\}^{\epsilon/\beta}
=4GE(a),
\]
where $G=|b_1-b_2|^{1-\epsilon/\beta}|b_2|^{\epsilon/\beta}
d_{{\theta_0}^{\epsilon/\beta}}(y,y')$
and $E(a)=|1_a\varphi|_\infty^{1-\epsilon/\beta}
|1_a\varphi|_{\theta_0}^{\epsilon/\beta}$.
Again $\mu(E>t)=O(\ell(t)t^{-\beta})$ so it follows from Proposition~\ref{prop-resum} and Potter's bounds that
\begin{align*}
J_2''\ll \ell(G^{-1})G^\beta & =
\ell(|b_1-b_2|^{\epsilon/\beta-1}|b_2|^{-\epsilon/\beta}
d_{{\theta_0}^{\epsilon/\beta}}(y,y')^{-1})|b_1-b_2|^{\beta-\epsilon}|b_2|^{\epsilon}
d_{{\theta_0}^{\epsilon}}(y,y') \\
& \ll
\ell(|b_1-b_2|^{\epsilon/\beta-1}|b_2|^{-\epsilon/\beta})
|b_1-b_2|^{\beta-\epsilon}|b_2|^{\epsilon}
d_{{\theta_0}^{\epsilon/2}}(y,y').
\end{align*}
Choosing $\theta\ge \theta_0^{\epsilon/2}$, we obtain
\[
J_2''\ll
\ell(|b_1-b_2|^{\epsilon/\beta-1}|b_2|^{-\epsilon/\beta})
|b_1-b_2|^{\beta-\epsilon}|b_2|^{\epsilon}
d_{\theta}(y,y'),
\]
completing the proof.
\end{proof}

\begin{rmk} \label{rmk-Dphi}
If $\sup_{a\in\alpha}|1_a\varphi|_\theta<\infty$, then the proof simplifies
considerably~\cite{AaronsonDenker01} and we obtain that 
$\|\hat R(ib_1)-\hat R(ib_2)\|_\theta\le C\ell(|b_1-b_2|^{-1})|b_1-b_2|^\beta$.
However, such a condition is too restrictive for the inducing step in Subsection~\ref{sec-GM} and in particular is not satisfied for Example~\ref{ex-PM}.
\end{rmk}

\begin{prop} \label{prop-R_s}
$\|\hat R(s)-\hat R(0)\|_\infty\ll \ell(1/|s|)|s|^\beta$ for all $s\in\overline\H$.
\end{prop}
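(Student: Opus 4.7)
The plan is to reduce to a pointwise estimate on $\hat R_0(s)-\hat R_0(0)$ acting on $L^\infty(Y)$ via the relation \eqref{eq-Rtilde}, and then apply the resummation argument of Proposition~\ref{prop-resum} with $G=|s|$.

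First, by \eqref{eq-Rtilde}, we have $((\hat R(s)-\hat R(0))v)(y,u) = ((\hat R_0(s)-\hat R_0(0))v^u)(y)$, so it suffices to bound $\|\hat R_0(s)-\hat R_0(0)\|_\infty$. Using formula~\eqref{eq-Rn} at $n=1$,
\[
((\hat R_0(s)-\hat R_0(0))v)(y) = \sum_{a\in\alpha} e^{p(y_a)}\bigl(e^{-s\varphi(y_a)}-1\bigr)v(y_a),
\]
so applying the bound $e^{p(y_a)}\le C_1\mu(a)$ from \eqref{eq-GM},
\[
|(\hat R_0(s)-\hat R_0(0))v|_\infty \le C_1|v|_\infty \sup_y\sum_{a\in\alpha}\mu(a)\,|e^{-s\varphi(y_a)}-1|.
\]

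Next, since $s\in\overline\H$ (so $\Re s\ge 0$) and $\varphi>0$, the elementary estimate $|e^{-s\varphi}-1|\le |s|\varphi$ applies; in particular the function $g(a)=|e^{-s\varphi(y_a)}-1|$ is bounded and satisfies $g(a)\le |s|\,|1_a\varphi|_\infty=G\,E(a)$ with $G=|s|$ and $E(a)=|1_a\varphi|_\infty$. As recorded at the start of Subsection~\ref{sec-R}, $\mu(E>t)=O(\ell(t)t^{-\beta})$. Thus Proposition~\ref{prop-resum} yields
\[
\sum_{a\in\alpha}\mu(a)\,|e^{-s\varphi(y_a)}-1|\;\ll\;\ell(1/|s|)\,|s|^\beta,
\]
uniformly in $y$. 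Combining these gives $\|\hat R(s)-\hat R(0)\|_\infty\ll \ell(1/|s|)|s|^\beta$, as required.

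There is no real obstacle here; the estimate is a clean one-step application of the resummation lemma and is much simpler than the two-point Lipschitz estimate in Lemma~\ref{lem-R} since there is no need to track the $d_\theta$-regularity, only the sup norm. The only point requiring any care is verifying that the inequality $|e^{-s\varphi}-1|\le |s|\varphi$ remains valid for complex $s$ with $\Re s\ge 0$, which follows from integrating the exponential along the straight-line path from $0$ to $-s\varphi$ in the closed left half-plane, on which $|e^{w}|\le 1$.
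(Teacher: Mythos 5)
Your proof is correct and follows essentially the same route as the paper's: both rest on the Gibbs--Markov bound $e^{p(y_a)}\le C_1\mu(a)$ together with the resummation argument of Proposition~\ref{prop-resum} applied with $G=|s|$ (or the size of the increment) and $E(a)=|1_a\varphi|_\infty$, whose tail is controlled via (A1). The only difference is organisational: the paper invokes the omitted $L^\infty$ estimate from the proof of Lemma~\ref{lem-R} for the imaginary increment and then treats the real increment $h>0$ by an ``identical argument'', whereas you handle complex $s\in\overline\H$ in a single step using $|e^{-s\varphi}-1|\le\min\{2,|s|\varphi\}$, which you justify correctly and which slightly streamlines the write-up.
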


\begin{proof} Again it suffices to prove the result for the operators
$\hat R_0(s)$.
It follows from the proof of Lemma~\ref{lem-R}
that $\|\hat R_0(ib)-\hat R_0(0)\|_\infty\ll \ell(\/|b|)|b|^\beta$.
An identical argument shows that
$\|\hat R_0(ib+h)-\hat R_0(ib)\|_\infty\ll \ell(\/h)h^\beta$ for all $b\in\R$ and $h>0$
(the restriction to $h>0$ guarantees that the function $1-e^{-h\varphi}$
is bounded).
\end{proof}

\subsection{Estimates for $(I-\hat R)^{-1}$}

Let $c_\beta=i\int_0^\infty e^{-i\sigma}\sigma^{-\beta}\,d\sigma$.

\begin{lemma} \label{lem-zero}   
Viewing $(I-\hat R(s))^{-1}$ as a family of linear operators on
$F_\theta(\tilde Y)$, 
\[
(I-\hat R(ib))^{-1}v\sim c_\beta^{-1}\ell(1/b)^{-1}b^{-\beta}\int_Y
v(y,\cdot)\,d\mu(y),
\quad\text{as $b\to0^+$}.
\]
\end{lemma}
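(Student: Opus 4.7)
The plan is to reduce via the identity $((I-\hat R(ib))^{-1}v)(y,u) = ((I - \hat R_0(ib))^{-1} v^u)(y)$ from~\eqref{eq-Rtilde} to the scalar twisted transfer operator $\hat R_0(ib):F_\theta(Y)\to F_\theta(Y)$, and then apply standard spectral perturbation theory. By Proposition~\ref{prop-H2}, $\hat R_0(0) = R$ has a simple isolated eigenvalue at $1$ (with eigenvector the constant function $1$ and left eigenform $v \mapsto \int_Y v\,d\mu$); the rest of the spectrum lies strictly inside the unit disk. By Proposition~\ref{prop-R_s}, $\|\hat R_0(ib) - \hat R_0(0)\|_\infty \ll \ell(1/|b|)|b|^\beta \to 0$, so for $|b|$ sufficiently small $\hat R_0(ib)$ carries a simple isolated leading eigenvalue $\lambda(ib)$ with rank-one eigenprojection $P(ib)$ depending continuously on $b$, yielding
\[
(I-\hat R_0(ib))^{-1} = (1-\lambda(ib))^{-1}P(ib) + Q(ib),
\]
with $\|Q(ib)\|_\theta$ bounded as $b \to 0$ and $P(ib) \to P(0)$, where $P(0)v = \bigl(\int_Y v\,d\mu\bigr)\cdot 1$.

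The heart of the argument is then the asymptotic $1 - \lambda(ib) \sim c_\beta\,\ell(1/b)\,b^\beta$ as $b \to 0^+$. Normalize the leading eigenvector so that $\int_Y P(ib)1\,d\mu = 1$ and write $P(ib)1 = 1 + \eta(ib)$ with $\int\eta(ib)\,d\mu = 0$ and $|\eta(ib)|_\infty \to 0$ by continuity. Using the preserved duality $\int_Y\hat R_0(ib)\xi\,d\mu = \int_Y e^{-ib\varphi}\xi\,d\mu$ applied to $\xi = P(ib)1$ gives
\[
\lambda(ib) = \int_Y e^{-ib\varphi}\,d\mu + \int_Y(e^{-ib\varphi}-1)\eta(ib)\,d\mu,
\]
the second equality using $\int\eta(ib)\,d\mu = 0$. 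Integrating by parts,
\[
\int_Y(1-e^{-ib\varphi})\,d\mu = ib\int_0^\infty e^{-ibs}\mu(\varphi>s)\,ds;
\]
changing variables $\sigma = bs$ and using slow variation of $\ell$ (plus Potter bounds for uniformity in the tail) yields $\int_Y(1-e^{-ib\varphi})\,d\mu \sim c_\beta\,\ell(1/b)\,b^\beta$ for $\beta\in(\tfrac12,1)$, with the $\beta=1$ case handled analogously by the log-corrected $\tilde\ell$ and $c_1=1$. The $\eta$-correction is $O\bigl(|\eta(ib)|_\infty\,\ell(1/b)\,b^\beta\bigr) = o(\ell(1/b)\,b^\beta)$.

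Combining, $(1-\lambda(ib))^{-1}P(ib)v \sim c_\beta^{-1}\ell(1/b)^{-1}b^{-\beta}\bigl(\int_Y v\,d\mu\bigr)\cdot 1$ in $F_\theta(Y)$, and the bounded remainder $Q(ib)v$ is absorbed. Lifting back via~\eqref{eq-Rtilde} fiberwise over $u\in[0,1]$ produces the claimed asymptotic, with $\int_Y v(y,\cdot)\,d\mu(y)$ appearing as the surviving function of $u$. The main obstacle is ensuring the $\eta$-correction is genuinely negligible: without the cancellation $\int\eta(ib)\,d\mu = 0$ coming from the normalization of $P(ib)$, one could only bound the correction by $|\eta(ib)|_\infty$, which is not known to beat $\ell(1/b)b^\beta$; it is essential to reduce to $\int(e^{-ib\varphi}-1)\eta(ib)\,d\mu$ and exploit $\eta(ib)\to 0$. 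A secondary subtlety is that the family $b\mapsto\hat R_0(ib)$ is only Hölder (not analytic) at $b=0$, so one must work with continuous rather than analytic spectral data, which is available because the eigenvalue is simple and isolated.
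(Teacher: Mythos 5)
Your proposal is correct and follows essentially the same route as the paper: reduce via~\eqref{eq-Rtilde} to $\hat R_0(ib)$ on $F_\theta(Y)$, use the simple isolated eigenvalue $\lambda(ib)$ with its spectral decomposition, and compute $\lambda(ib)=\int_Y e^{-ib\varphi}\,d\mu+\int_Y(e^{-ib\varphi}-1)(v(ib)-1)\,d\mu$ via the normalisation $\int_Y v(ib)\,d\mu=1$ (the Gou\"ezel cancellation), with the Garsia--Lamperti asymptotics supplying $1-\int_Y e^{-ib\varphi}\,d\mu\sim c_\beta\ell(1/b)b^\beta$. The only cosmetic difference is that you bound the correction term by $|\eta(ib)|_\infty\int_Y|e^{-ib\varphi}-1|\,d\mu=o(\ell(1/b)b^\beta)$ using continuity of the eigenfunction plus a Karamata estimate, whereas the paper bounds the same quantity by the product of two H\"older-in-$b$ norms from Lemma~\ref{lem-R}, giving $O(b^{2(\beta-2\epsilon)})$; both yield the required $o(\ell(1/b)b^\beta)$.
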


\begin{proof}   
Since $\hat R(ib)v(y,u)=(\hat R_0(ib)v^u)(y)$, where $v^u(y)=v(y,u)$,
it suffices to prove that
$((I-\hat R_0(ib))^{-1}v)\sim c_\beta^{-1}\ell(1/b)^{-1}b^{-\beta}\int_{Y}
v\,d\mu$ as $b\to0^+$, for all $v\in F_\theta(Y)$.

By Lemma~\ref{lem-R}, the map $b\mapsto \hat R_0(ib)$ is continuous.
By Proposition~\ref{prop-H2}(a), $\hat R_0(0)$ has $1$ as a simple eigenvalue, so
there exists $\delta>0$ and a continuous family $\lambda(ib)$ of simple
eigenvalues of $\hat R_0(ib)$ for $b\in (-\delta,\delta)$ with
$\lambda(0)=1$.  Let $P(ib)$ denote the
corresponding family of spectral projections  with complementary
projections $Q(ib)=I-P(ib)$. Also, let
$v(ib)$ denote the corresponding family of
eigenfunctions normalized so that $\int_Y v(ib)\,d\mu=1$.
In particular, $v(0)\equiv1$ and $P(0)w=\int_Yw\,d\mu$ for all $w\in L^1(Y)$.

Following Gou\"ezel~\cite{Gouezel10b} (a simplification of~\cite{AaronsonDenker01}), we write
\begin{align*}
\lambda(ib) =\int_Y\lambda(ib)v(ib)\,d\mu=\int_Y R_0(e^{-ib\varphi}v(ib))\,d\mu=\int_Y
e^{-ib\varphi}\, d\mu+V(ib),
\end{align*}
 where $V(ib)=\int_Y (\hat R_0(ib)-\hat R_0(0))(v(ib)-v(0))\,d\mu$. 

By the argument in~\cite{GarsiaLamperti62} (see also~\cite{AaronsonDenker01,MT12}),
 $1-\int_Y e^{-ib\varphi}\,d\mu\sim c_\beta\ell(1/b)b^{\beta}$ as $b\to0^+$.
By Lemma~\ref{lem-R}, $\hat R_0$ and hence $v$ are $C^{\beta-2\epsilon}$ (say), so 
$|V(ib)|=O(b^{2(\beta-2\epsilon)})$.  Hence,
\[
1-\lambda(ib)\sim c_\beta\ell(1/b)b^{\beta}\enspace\text{as $b\to0^+$}.
\]
Next, for $b\in (-\delta,\delta)$,
\begin{align*}
(I-\hat R_0(ib))^{-1} & =(1-\lambda(ib))^{-1}P(0)- (1-\lambda(ib))^{-1}(P(ib)-P(0))\\ 
& \qquad\qquad\qquad 
 \qquad \qquad\qquad\qquad 
+(I-\hat R_0(ib))^{-1}Q(ib).
\end{align*}
By Proposition~\ref{prop-H2}(a), $\|(I-\hat R_0(ib))^{-1}Q(ib)\|_\theta=O(1)$.
By Lemma~\ref{lem-R}, $\|P(ib)-P(0)\|_\theta\ll b^{\beta-2\epsilon}$.  Hence
\begin{align*}
(I-\hat R_0(ib))^{-1}=(1-\lambda(ib))^{-1}(P(0)+o(1))+O(1)\sim
c_\beta^{-1}\ell(1/b)^{-1}b^{-\beta}P(0)
\enspace\text{as $b\to0^+$}, 
\end{align*}
as required.
\end{proof}

\begin{prop} \label{prop-zero_s}
$|(I-\hat R(s))^{-1}v|_\infty\ll\ell(1/|s|)^{-1}|s|^{-\beta}\|v\|_\theta$
for all $s\in\overline\H$ with $|s|$ sufficiently small,
 and all $v\in F_\theta(\tilde Y)$.
\end{prop}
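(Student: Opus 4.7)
The plan is to mirror the proof of Lemma~\ref{lem-zero} but extend from the imaginary axis to the full closed half-plane $\overline\H$. As in that proof, the identity $\hat R(s)v(y,u)=(\hat R_0(s)v^u)(y)$ reduces matters to a bound on $(I-\hat R_0(s))^{-1}$ on $F_\theta(Y)$. The first step is to upgrade Lemma~\ref{lem-R} and Proposition~\ref{prop-R_s} to operator-norm continuity of $s\mapsto\hat R_0(s)$ on a closed $\overline\H$-neighbourhood of $0$: the pointwise estimates on $|e^{-sx}-e^{-sy}|$ and $|e^{-sx}-1|$ used there only require $\Re s\ge 0$. Combined with Proposition~\ref{prop-H2}, standard analytic perturbation theory then produces, for $s$ near $0$ in $\overline\H$, a simple leading eigenvalue $\lambda(s)$ with normalised eigenfunction $v(s)$ (so $\int_Y v(s)\,d\mu=1$), spectral projection $P(s)$, complement $Q(s)=I-P(s)$, and the usual decomposition
\[
(I-\hat R_0(s))^{-1}=(1-\lambda(s))^{-1}P(s)+Q(s)(I-\hat R_0(s))^{-1}Q(s),
\]
whose second summand is uniformly bounded in $s$ on $F_\theta(Y)$.

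Next I would apply Gou\"ezel's identity, exactly as in Lemma~\ref{lem-zero}, to write
\[
1-\lambda(s)=\Bigl(1-\int_Y e^{-s\varphi}\,d\mu\Bigr)-\int_Y(\hat R_0(s)-\hat R_0(0))(v(s)-1)\,d\mu.
\]
The key input is a Karamata--Garsia--Lamperti Abelian asymptotic valid for $s\in\overline\H$: $1-\int_Y e^{-s\varphi}\,d\mu\sim c_\beta^\ast\ell(1/|s|)s^\beta$ with $s^\beta$ the principal branch and $c_\beta^\ast\ne 0$. Since $|\arg s|\le\pi/2$ forces $|\arg s^\beta|\le\beta\pi/2<\pi/2$ when $\beta\le 1$, no cancellation occurs and $|1-\int_Y e^{-s\varphi}\,d\mu|\gg\ell(1/|s|)|s|^\beta$. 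The H\"older regularity of $v(s)$ supplied by the extended Lemma~\ref{lem-R} gives $\|v(s)-1\|_\theta=O(|s|^{\beta-2\epsilon})$ for any small $\epsilon>0$, and a second appeal to Proposition~\ref{prop-R_s} then bounds the remainder integral by $O(\ell(1/|s|)|s|^{2\beta-3\epsilon})$. Since $\beta>\tfrac12$, taking $\epsilon$ small enough makes $2\beta-3\epsilon>\beta$, so the remainder is negligible compared to the leading term and $|1-\lambda(s)|\gg\ell(1/|s|)|s|^\beta$.

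Combining the two steps yields $|(I-\hat R_0(s))^{-1}w|_\infty\ll\ell(1/|s|)^{-1}|s|^{-\beta}\|w\|_\theta$, as required. The main obstacle I foresee is the off-axis extension of Lemma~\ref{lem-R} and the Garsia--Lamperti asymptotic to complex $s$ in $\overline\H$: on the imaginary axis everything is already in place, but for $\Re s>0$ one has to check directly that the principal branch yields $|s^\beta|=|s|^\beta$ and prevents destructive cancellation between the leading term and the correction $\int(\hat R_0(s)-\hat R_0(0))(v(s)-1)\,d\mu$. The boundary case $\beta=1$ requires the refinement $\tilde\ell$ in the Abelian asymptotic, but since $\tilde\ell(1/|s|)\ge\ell(1/|s|)$ up to constants, the resulting lower bound on $|1-\lambda(s)|$ is only strengthened, and the (weaker) upper bound stated in the proposition remains valid.
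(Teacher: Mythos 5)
Your proposal is correct and follows essentially the same route as the paper's proof: the spectral decomposition of $(I-\hat R_0(s))^{-1}$ near the simple eigenvalue, Gou\"ezel's identity $1-\lambda(s)=(1-\int_Y e^{-s\varphi}\,d\mu)-V(s)$, the complex Abelian estimate $1-\int_Y e^{-s\varphi}\,d\mu\sim c\,\ell(1/|s|)s^\beta$ on $\overline\H$ (which the paper imports from~\cite[Lemma~2.4]{MT13}), and the observation that $V(s)$ is of higher order. The only cosmetic difference is that you propose extending Lemma~\ref{lem-R} off the imaginary axis to control $v(s)-v(0)$ in $F_\theta$, whereas the paper gets by with the $L^\infty$ estimate of Proposition~\ref{prop-R_s}; both suffice.
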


\begin{proof}
As in the proof of Lemma~\ref{lem-zero}, for $s\in\overline\H$ close to zero, we have the decomposition
\[
(I-\hat R_0(s))^{-1}v = (1-\lambda(s))^{-1}P(0)v+(1-\lambda(s))^{-1}(P(s)-P(0))v
+(I-\hat R_0(s))^{-1}Q(s)v,
\]
where the last term is bounded.   
(As before, $\lambda(s)$ is the leading eigenvalue for $\hat R_0(s)$ with spectral 
projection $P(s)$ and $Q(s)=I-P(s)$.)   
By Proposition~\ref{prop-R_s}, $P(s)-P(0)=o(1)$ as $s\to0$,
so it remains to estimate $(1-\lambda(s))^{-1}$.  Again, write
\[
\lambda(s)=\int_Y e^{-s\varphi}\,d\mu+V(s), \quad
V(s)=\int_Y(\hat R_0(s)-\hat R_0(0))(v(s)-v(0))\,d\mu,
\]
where $v(s)$ are the normalised eigenfunctions for $\lambda(s)$.
By Proposition~\ref{prop-R_s}, 
$\|\hat R_0(s)-\hat R_0(0)\|_{L^\infty(Y)}\ll \ell(1/|s|)|s|^\beta$
and this estimate is inherited by
$v(s)$ so that $|V(s)|_\infty \ll |s|^{2\beta-\epsilon}$.

Let $G(x)=\mu(\varphi<x)$.
Following and using the proof of~\cite[Lemma~2.4]{MT13},
\begin{align*}
1-\int_Y e^{-s\varphi}\,d\mu &
=\int_0^\infty (1-e^{-sx})\,dG(x)
 =s\int_0^\infty e^{-sx}(1-G(x))\,dx
=\ell(1/|s|)s^\beta J(s)
\end{align*}
where $J(s)\to \Gamma(1-\beta)$ as $s\to0$.
Hence $1-\lambda(s)\sim c \ell(1/|s|)s^{\beta}$ with $c=\Gamma(1-\beta)$.
\end{proof}

\begin{lemma} \label{lem-middle}
For $b\in(0,1]$, $\|(I-\hat R(ib))^{-1}\|_\theta\ll \ell(1/b)^{-1}b^{-\beta}$.
\end{lemma}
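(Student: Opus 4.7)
The plan is to split $(0,1]$ into a small neighbourhood $(0,\delta]$ of the origin and a compact interval $[\delta,1]$, and to handle each region with different tools. On $[\delta,1]$ the target bound $\ell(1/b)^{-1}b^{-\beta}$ is, by slow variation of $\ell$, bounded below by a positive constant, so on this range it suffices to establish a uniform operator-norm bound by a constant; on the neighbourhood of $0$ I will reuse the leading-order computation already carried out in the proof of Lemma~\ref{lem-zero}.

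For small $b$, the proof of Lemma~\ref{lem-zero} supplies the decomposition
\[
(I-\hat R_0(ib))^{-1}=(1-\lambda(ib))^{-1}P+(1-\lambda(ib))^{-1}(P(ib)-P)+(I-\hat R_0(ib))^{-1}Q(ib),
\]
together with the bounds $\|(I-\hat R_0(ib))^{-1}Q(ib)\|_\theta=O(1)$, $\|P(ib)-P\|_\theta=o(1)$, and $(1-\lambda(ib))^{-1}\sim c_\beta^{-1}\ell(1/b)^{-1}b^{-\beta}$. Adding these contributions yields $\|(I-\hat R_0(ib))^{-1}\|_\theta\ll\ell(1/b)^{-1}b^{-\beta}$ on some interval $(0,\delta]$. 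The slice identity $(\hat R(ib)v)(y,u)=(\hat R_0(ib)v^u)(y)$ and the definition $|v|_\theta=\sup_u|v^u|_\theta$ then lift this inequality from $F_\theta(Y)$ to $F_\theta(\tilde Y)$.

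For $b\in[\delta,1]$, Proposition~\ref{prop-H2}(a) and Lemma~\ref{lem-Rb}(b) show that $\hat R_0(ib)$ is quasicompact on $F_\theta(Y)$ with spectral radius strictly less than $1$, so $(I-\hat R_0(ib))^{-1}$ is a bounded operator on $F_\theta(Y)$ for each such $b$. By Lemma~\ref{lem-R}, the map $b\mapsto \hat R_0(ib)$ is operator-norm continuous, and so is $b\mapsto (I-\hat R_0(ib))^{-1}$; compactness of $[\delta,1]$ then produces a uniform bound $\|(I-\hat R_0(ib))^{-1}\|_\theta\le C(\delta)$, which transfers to $F_\theta(\tilde Y)$ via the slice identity. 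Since $C(\delta)\ll \ell(1/b)^{-1}b^{-\beta}$ throughout $[\delta,1]$, the required estimate also holds there.

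The main technical subtlety is to keep the bookkeeping between $F_\theta(Y)$ and $F_\theta(\tilde Y)$ consistent, which the slice identity $(\hat R(s)v)(y,u)=(\hat R_0(s)v^u)(y)$ handles uniformly in $u$; the rest of the argument is a gluing of the asymptotic from Lemma~\ref{lem-zero} with the quasicompactness plus continuity furnished by Proposition~\ref{prop-H2}(a) and Lemma~\ref{lem-R}.
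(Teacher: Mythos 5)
Your argument is correct and follows essentially the same route as the paper: the small-$b$ range is handled by the decomposition and asymptotics from the proof of Lemma~\ref{lem-zero}, and the range $[\delta,1]$ by invertibility of $I-\hat R_0(ib)$ from Proposition~\ref{prop-H2} together with the lift to $F_\theta(\tilde Y)$ via the slice identity. The only difference is that you make explicit the continuity (Lemma~\ref{lem-R}) and compactness step giving uniformity of the bound on $[\delta,1]$, which the paper leaves implicit.
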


\begin{proof}
The proof of Lemma~\ref{lem-zero} shows that there exists $\delta>0$ so that
the result holds for $b\in(-\delta,\delta)$.
Proposition~\ref{prop-H2}(b) guarantees that 
$\|(I-\hat R(ib))^{-1}\|_\theta=O(1)$ for $b\in(\delta,1]$.
\end{proof}

\subsection{Estimates for $\hat U$}
\label{sec-U}

In this subsection, we obtain estimates for the family of operators
$\hat U(s)$ that appeared in the renewal equation.

\begin{lemma} \label{lem-U}
\begin{itemize}
\item[(a)] $\hat U(0)v(y,u)=\int_0^u v(y,\tau)\,d\tau+\int_u^1 (\tilde Rv)(y,\tau)\,d\tau$.
\item[(b)]
The family of linear maps $\hat U(ib):L^\infty(\tilde Y)\to L^1(\tilde Y)$,
$b\in\R$, is uniformly bounded (indeed $\|\hat U(ib)\|\le 2$ for all $b\in\R$)
and $\|\hat{U}(i(b+h))-\hat{U}(ib)\|\ll
\ell(1/h) h^{\beta}$ for all $h>0$.
\end{itemize}
\end{lemma}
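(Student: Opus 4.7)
The plan is to first derive an explicit representation of $\hat U(s)$ by integrating the formulas from Proposition~\ref{prop-U}, and then to read off both parts from this representation.

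For part~(a), setting $s=0$ and splitting $\hat U(0)v = \int_0^1 U_tv\,dt + \int_1^\infty U_tv\,dt$: on $[0,1]$, Proposition~\ref{prop-U} gives $U_tv(y,u) = v(y,u-t)1_{[t,1]}(u)$, so the $t$-integral is effective only on $[0,u]$ and the substitution $\tau = u-t$ produces $\int_0^u v(y,\tau)\,d\tau$. For $t>1$, Proposition~\ref{prop-U} gives $U_tv = \tilde R v_t$ with $v_t(y,u)=1_{\{t<\varphi(y)<t+1-u\}}v(y,u-t+\varphi(y))$. Since $\essinf\varphi>2$, the indicator automatically forces $t>1$, so the $t$-integral runs over $(\varphi(y)-1+u,\varphi(y))$; the substitution $\tau = u-t+\varphi(y)$ converts $\int_1^\infty v_t(y,u)\,dt$ into $\int_u^1 v(y,\tau)\,d\tau$. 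Interchanging $\tilde R$ with the $\tau$-integral (valid since $\tilde R$ is $L^1$-bounded and acts fibrewise via $\tilde R h(y,u) = Rh^u(y)$) yields $\int_u^1 (\tilde R v)(y,\tau)\,d\tau$, giving the formula in~(a).

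The same computation for general $s \in \overline\H$ gives the decomposition
\[
\hat U(s)v = A_s v + \hat R(s) g_s v,
\]
where $A_s v(y,u) = e^{-su}\int_0^u e^{s\tau}v(y,\tau)\,d\tau$ and $g_s v(y,u) = e^{-su}\int_u^1 e^{s\tau}v(y,\tau)\,d\tau$. When $s=ib$, both $A_{ib}v$ and $g_{ib}v$ are pointwise bounded by $|v|_\infty$. Applying Lemma~\ref{lem-Rb}(a) fibrewise in $u$ (recall $\hat R(ib)g(y,u) = (\hat R_0(ib)g(\cdot,u))(y)$) gives $|\hat R(ib)g_{ib}v|_\infty \le |v|_\infty$, so $|\hat U(ib)v|_\infty \le 2|v|_\infty$. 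Since $\tilde\mu$ is a probability measure, this yields the uniform $L^\infty \to L^1$ bound $\|\hat U(ib)\| \le 2$.

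For the H\"older-continuity estimate in~(b), I decompose
\[
\hat U(i(b+h))-\hat U(ib) = (A_{i(b+h)}-A_{ib}) + (\hat R(i(b+h))-\hat R(ib))g_{i(b+h)} + \hat R(ib)(g_{i(b+h)}-g_{ib}).
\]
The middle term is the substantive one: the $L^\infty$-operator-norm analogue of Lemma~\ref{lem-R} asserted in its proof gives $\|\hat R(i(b+h))-\hat R(ib)\|_\infty \ll \ell(1/h) h^\beta$, and combined with $|g_{i(b+h)}v|_\infty \le |v|_\infty$ contributes $O(\ell(1/h)h^\beta|v|_\infty)$. The outer two terms are handled directly via the elementary inequality $|e^{ihs}-1|\le 2|hs|^\beta$ applied inside the integrals defining $A_s$ and $g_s$, producing a pointwise bound of order $h^\beta |v|_\infty$, which is absorbed into $O(\ell(1/h)h^\beta)$ by Potter-type bounds on the slowly varying function $\ell$. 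Passing from $L^\infty$ to $L^1$ is free since $\tilde\mu$ is a probability measure.

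The main technical obstacle is the bookkeeping in setting up the decomposition $\hat U(s)v = A_sv + \hat R(s)g_sv$: carefully executing the substitutions from Proposition~\ref{prop-U}, using $\essinf\varphi>2$ to drop the constraint $t>1$ on the relevant domain, and justifying interchanging $\tilde R$ with the $t$-integral. Once the decomposition is in hand, (a) is immediate and the estimates in (b) reduce to elementary pointwise computations together with one appeal to Lemma~\ref{lem-R}.
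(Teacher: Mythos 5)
Your argument is correct in substance, but it takes a genuinely different route from the paper's, particularly for part (b). You derive the closed-form decomposition $\hat U(s)v=A_sv+\hat R(s)g_sv$ from Proposition~\ref{prop-U} (the substitutions and the interchange of $\tilde R$ with the $t$- and $\tau$-integrals all check out, and $\essinf\varphi>2$ does make the indicator constraint subsume $t>1$), so part (a) drops out by setting $s=0$, and the continuity of $\hat U$ is transferred to the continuity of $\hat R$, using the sup-norm analogue of Lemma~\ref{lem-R} (asserted but omitted there) together with Lemma~\ref{lem-Rb}(a) applied fibrewise. The paper instead never writes a formula for $\hat U(s)$ at general $s$: part (a) is proved weakly against test functions $w$, and part (b) is a direct tail computation, bounding $|U_tv|_1\le\mu(t<\varphi<t+1)|v|_\infty$ for $t>1$ and estimating $\int_0^\infty|e^{-iht}-1|\,|U_tv|_1\,dt$ by splitting at $L=1/h$ and invoking Karamata. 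Your route buys a stronger conclusion (an $L^\infty\to L^\infty$ bound, and a reusable formula for $\hat U(s)$), at the cost of leaning on the unproved-but-elementary $\|\,\|_\infty$ part of Lemma~\ref{lem-R}; the paper's route is self-contained, using only the tail of $\varphi$ and no transfer-operator estimates.

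One step needs repair as written: for the outer terms $A_{i(b+h)}-A_{ib}$ and $\hat R(ib)(g_{i(b+h)}-g_{ib})$ you bound $|e^{ihx}-1|\le 2|hx|^\beta$ and claim the resulting $O(h^\beta)$ is absorbed into $O(\ell(1/h)h^\beta)$ by Potter-type bounds. Potter's bounds give $\ell(1/h)\gg h^\epsilon$, which does not dominate a constant, so if $\ell(t)\to0$ (allowed here, e.g.\ $\ell(t)=1/\log t$ with $\beta<1$) the absorption fails. The fix is to use $|e^{ihx}-1|\le h|x|\le h$ instead, since $|u-\tau|\le1$: then the outer terms contribute $O(h)$, and $h\ll\ell(1/h)h^\beta$ does follow from Potter when $\beta<1$. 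For $\beta=1$ this residual term carries the same caveat as the paper's own proof (which also produces an additive $O(h)$), and is handled exactly as there, by the replacement of $\ell$ with $\tilde\ell$ in the $\beta=1$ subsection.
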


\begin{proof}   
(a) Write
$\int_{\tilde{Y}}\hat{U}(0)v\, w\,d\tilde\mu  =\int_{\tilde{Y}}\int_0^1 U_tv\, w\,dt\,d\tilde\mu +\int_{\tilde{Y}}\int_1^\infty U_tv\, w\,dt\,d\tilde\mu$.
Using Proposition~\ref{prop-U},
\begin{align*}
\int_{\tilde{Y}}\int_0^1 U_tv\, w\,dt\,d\tilde\mu 
& =\int_{\tilde Y}\int_0^1 1_{\{t,1)}(u) v(y,u-t)w(y,u)\,dt\,d\tilde\mu  \\
& =\int_{\tilde Y}  \Bigl\{\int_0^u v(y,u-t)\,dt\Bigr\}w(y,u)\,d\tilde\mu   
 = \int_{\tilde Y} \Bigl\{\int_0^u v(y,\tau)\,d\tau\Bigr\} w(y,u)\,d\tilde\mu,
\end{align*}
and
\begin{align*}
& \int_{\tilde{Y}}\int_1^\infty U_tv\, w\,dt\,d\tilde\mu
 =\int_{\tilde{Y}}\int_1^\infty 1_{\{t<\tilde{\varphi}(y)<t+1-u\}}v(y,u-t+\tilde\varphi(y))w(Fy,u)\,dt\,d\tilde\mu\\ 
& \quad =\int_{\tilde Y}\Bigl\{\int_{\tilde{\varphi}-1+u}^{\tilde{\varphi}}v(y,u-t+\tilde\varphi(y))\,dt\Bigr\}w(Fy,u)\,d\tilde\mu\\
& \quad =\int_{\tilde Y}\Bigl\{\int_u^1 v(y,\tau)\,d\tau\Bigr\}w\circ\tilde F(y,u)\,d\tilde\mu
 = \int_{\tilde Y} \Bigl\{\int_u^1 (\tilde Rv)(y,\tau)\,d\tau\Bigr\} w(y,u)\,d\tilde\mu.
\end{align*}
This completes the proof of part (a).

\noindent(b)  By  Proposition~\ref{prop-U},
$|U_tv|_1\le |v|_\infty$ for $0<t<1$,
and
$|U_tv|_1\le \tilde\mu\{(y,u):t<\tilde\varphi(y,u)<t+1-u\}|v|_\infty
\le \mu\{t<\varphi<t+1\}|v|_\infty$.
Hence,
\begin{align*}
& |\hat{U}(ib)v|_1  =\Bigl|\int_0^\infty e^{-ibt}U_tv\,dt\Bigr|_1
\le \int_0^\infty |U_tv|_1\,dt 
\le \Bigl(1+\int_1^\infty \mu(t<\varphi<t+1)\,dt\Bigr)|v|_\infty
\\ & \quad
= \Bigl(1+\int_1^\infty (\mu(\varphi>t)-\mu(\varphi>t+1))\,dt\Bigr)|v|_\infty
= \Bigl(1+\int_1^2 \mu(\varphi>t))\,dt\Bigr)|v|_\infty\le 2|v|_\infty.
\end{align*}
Also,
\begin{align*}
|(\hat{U}(i(b+h))-\hat{U}(ib))v|_1 & =\Bigl|\int_0^\infty e^{-ibt}(e^{-iht}-1)U_tv\,dt\Bigr|_1 \\
& \le 
 |v|_\infty\Bigl(h+\int_1^\infty|e^{-iht}-1|\mu(t<\varphi<t+1)\,dt\Bigr).
\end{align*}
But
\begin{align*}
\int_1^\infty|e^{-iht}-1|\mu(t<\varphi<t+1) \,dt
& \leq h\int_1^L t\mu(t<\varphi<t+1) \,dt 
\\ & \qquad+2\int_L^\infty \mu(t<\varphi<t+1) \,dt.
\end{align*}
Also, note that
\begin{align*}
& \int_1^L t\mu(t<\varphi<t+1) \,dt=\int_1^Lt\mu(\varphi>t) \,dt-\int_1^Lt\mu(\varphi>t+1) \,dt
\\ & \quad =\int_1^L t\mu(\varphi>t) \,dt -\int_2^{L+1}(t-1)\mu(\varphi>t) \,dt 
\le 1+\int_2^L\mu(\varphi>t) \,dt 
 \ll \ell(L)L^{1-\beta},
\end{align*}
by Karamata's Theorem (see for instance~\cite{BGT}).  Similarly,
\begin{align*}
\int_L^\infty \mu(t<\varphi<t+1) \,dt & =\int_L^\infty \mu(\varphi>t) \,dt-\int_{L+1}^\infty \mu(\varphi>t) \,dt
\\ & =\int_{L}^{L+1}\mu(\varphi>t)\,dt\le \mu(\varphi>L)=\ell(L) L^{-\beta}.
\end{align*}
Putting these together, $|\hat{U}(i(b+h))-\hat{U}(ib)|_1\ll h+h\ell(L)L^{1-\beta}+\ell(L)L^{-\beta}$. The conclusion follows by taking $L=1/h$.
\end{proof}

\begin{rmk} \label{rmk-U}
It is immediate from the proof that
$\|\hat U(s)\|_{L^\infty(\tilde Y)\mapsto L^1(\tilde Y)}\le2$
for all $s\in\overline\H$.
\end{rmk}

\subsection{Estimates for $\hat T$}
\label{sec-T}

In this subsection, we combine our estimates from the previous subsections to
estimate $\hat T=\hat U(I-\hat R)^{-1}$.    
Recall that $c_\beta=i\int_0^\infty e^{-i\sigma}\sigma^{-\beta}\,d\sigma$.

\begin{cor} \label{cor-T}
There exists $A>0$ such that for all $\epsilon\in(0,\beta/2)$,
the family of linear maps $\hat T(ib):F_\theta(\tilde Y)\to L^1(\tilde Y)$
satisfies the following properties.
\begin{itemize}
\item[(a)]  $\hat T(ib)v\sim c_\beta^{-1}\ell(1/b)^{-1}b^{-\beta}\int_{\tilde Y}v\,d\tilde\mu$ as $b\to0^+$.
\item[(b)]  $\|\hat T(ib)\|\ll \begin{cases} \ell(1/b)^{-1}b^{-\beta}, & 0<b<1 \\
b^A, & b\ge1 \end{cases}$.
\item[(c)] For $0<b_1<b_2<1$,
\begin{align*}
\|\hat T(ib_1)-\hat T(ib_2)\| \ll & 
\ell(1/b_1)^{-2}b_1^{-2\beta}\bigl\{\ell(|b_1-b_2|^{-1})|b_1-b_2|^\beta\\
&\qquad\qquad +\ell(|b_1-b_2|^{\epsilon/\beta-1}|b_2|^{-\epsilon/\beta})|b_2|^\epsilon|b_1-b_2|^{\beta-\epsilon}\bigr\}.
\end{align*}
For $1<b_1<b_2<b_1+1$,
$\|\hat T(ib_1)-\hat T(ib_2)\| \ll b_2^A |b_1-b_2|^{\beta-2\epsilon}$.
\end{itemize}
\end{cor}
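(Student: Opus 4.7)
The starting point for all three claims is the renewal identity $\hat T(s)=\hat U(s)(I-\hat R(s))^{-1}$ on $\overline\H\setminus\{0\}$. Parts (a)--(c) then follow by combining the resolvent estimates from Lemmas~\ref{lem-zero}, \ref{lem-middle}, \ref{lem-approx} with the $\hat U$ estimates from Lemma~\ref{lem-U} and Remark~\ref{rmk-U}. Throughout, I would view $\hat U(ib)$ as acting $L^\infty(\tilde Y)\to L^1(\tilde Y)$ (with norm $\le 2$) and use the continuous embedding $F_\theta(\tilde Y)\hookrightarrow L^\infty(\tilde Y)$ to get the desired operator from $F_\theta$ to $L^1$.

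For (a), Lemma~\ref{lem-zero} gives $(I-\hat R(ib))^{-1}v\sim c_\beta^{-1}\ell(1/b)^{-1}b^{-\beta}g$ as $b\to0^+$, where $g(y,u)=\int_Y v(y',u)\,d\mu(y')$ is constant in $y$. I would apply $\hat U(ib)$ to both sides and use Lemma~\ref{lem-U}(b) with $h=b$ (giving $\|\hat U(ib)-\hat U(0)\|\ll\ell(1/b)b^\beta\to0$) to reduce to $\hat U(0)g$. Lemma~\ref{lem-U}(a) gives $\hat U(0)g(y,u)=\int_0^u g(\tau)\,d\tau+\int_u^1(\tilde Rg)(y,\tau)\,d\tau$; since $g$ is $y$-independent, $(\tilde Rg)(y,\tau)=(Rg^\tau)(y)=g(\tau)$ because $R$ is the transfer operator of a probability-preserving map, so $\hat U(0)g=\int_0^1g(\tau)\,d\tau=\int_{\tilde Y}v\,d\tilde\mu$, which is exactly the claimed limit.

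For (b), the bound $\|\hat U(ib)\|\le 2$ reduces matters to estimating $\|(I-\hat R(ib))^{-1}\|_\theta$. This is supplied by Lemma~\ref{lem-middle} for $0<b<1$ (giving $\ell(1/b)^{-1}b^{-\beta}$) and by Lemma~\ref{lem-approx} for $b\ge1$ (giving $|b|^\alpha$).

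For (c), I would use the resolvent identity
\[
(I-\hat R(ib_1))^{-1}-(I-\hat R(ib_2))^{-1}=(I-\hat R(ib_1))^{-1}(\hat R(ib_1)-\hat R(ib_2))(I-\hat R(ib_2))^{-1}
\]
to split $\hat T(ib_1)-\hat T(ib_2)$ into two pieces: one with a resolvent difference sandwiched between $\hat U(ib_1)$ and $(I-\hat R(ib_2))^{-1}$, and one with a $\hat U$-difference times $(I-\hat R(ib_2))^{-1}$. In the range $0<b_1<b_2<1$, Lemma~\ref{lem-middle} applied at both endpoints produces the leading factor $\ell(1/b_1)^{-2}b_1^{-2\beta}$ (worse at $b_1$ since $b_1<b_2$), while Lemma~\ref{lem-R} supplies the bracketed quantity; the second piece is dominated by $\ell(1/|b_1-b_2|)|b_1-b_2|^\beta\cdot\ell(1/b_2)^{-1}b_2^{-\beta}$, which is absorbed into the stated bound. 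For $1<b_1<b_2<b_1+1$, substitute Lemma~\ref{lem-approx} for the resolvent factors and apply Lemma~\ref{lem-R} with $|b_1-b_2|<1$, using Potter's bounds to absorb the slowly varying functions into arbitrarily small extra powers (at the cost of replacing $\beta-\epsilon$ by $\beta-2\epsilon$ and enlarging $\alpha$). The main technical obstacle is bookkeeping: ensuring that the numerous slowly varying factors from Lemmas~\ref{lem-R} and~\ref{lem-U}(b) can be consistently absorbed via Potter's bounds while maintaining the asymmetric role of $b_1$ versus $b_2$ that the stated inequality prescribes.
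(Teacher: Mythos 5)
Your proposal is correct and follows essentially the same route as the paper: the renewal identity $\hat T=\hat U(I-\hat R)^{-1}$ combined with Lemma~\ref{lem-zero} and the $y$-independence argument for (a), Lemmas~\ref{lem-middle}, \ref{lem-approx} with the uniform bound on $\hat U$ for (b), and the resolvent identity together with Lemmas~\ref{lem-R}, \ref{lem-middle} (resp.\ \ref{lem-approx} and Potter absorption for $b>1$) for (c). The only cosmetic difference is which endpoint ($b_1$ versus $b_2$) you attach to the resolvent factor multiplying the $\hat U$-difference term, which is immaterial since both are absorbed into the stated bound.
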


\begin{proof}
(a) By continuity of $\hat U$ (Lemma~\ref{lem-U}(b)), we have
\begin{align*}
\lim_{b\to0^+}\ell(1/b)b^{\beta}\hat T(ib)v & =
\lim_{b\to0^+}\hat U(ib)\ell(1/b)b^{\beta}(I-\hat R(ib))^{-1}v \\ & =
\hat U(0)\lim_{b\to0^+}\ell(1/b)b^{\beta}(I-\hat R(ib))^{-1}v.
\end{align*}
By Lemma~\ref{lem-U}(a),
\begin{align*}
 \Bigl(\lim_{b\to0^+}\ell(1/b)b^{\beta}\hat T(ib)v\Bigr)(y,u)  & =
\int_0^u\Bigl(\lim_{b\to0^+}\ell(1/b)b^{\beta}(I-\hat R(ib))^{-1}v\Bigr)(y,\tau)\,d\tau
\\ & \qquad  +\int_u^1\tilde R\Bigl(\lim_{b\to0^+}\ell(1/b)b^{\beta}(I-\hat R(ib))^{-1}v\Bigr)(y,\tau)\,d\tau.
\end{align*}
Hence, by Lemma~\ref{lem-zero},
\begin{align*}
 \Bigl(\lim_{b\to0^+}\ell(1/b)b^{\beta}\hat T(ib)v\Bigr)(y,u)  & =
c_\beta^{-1}\int_0^u\int_Yv(y,\tau))\,d\mu\,d\tau
+c_\beta^{-1} \int_u^1\Bigl(\tilde R\int_Yv(y,\tau)\,d\mu\Bigr)\,d\tau \\ &
= c_\beta^{-1}\int_0^1\int_Yv(y,\tau)\,d\tilde\mu,
= c_\beta^{-1}\int_{\tilde Y}v\,d\tilde\mu,
\end{align*}
where we have used also the fact that $\tilde R$ fixes functions that are independent of $y$.  This proves part (a).

\noindent(b)  This follows from Lemma~\ref{lem-middle} and
Lemma~\ref{lem-U}(b) for $0< b<1$ and from
Lemma~\ref{lem-approx} and Lemma~\ref{lem-U}(b) for $b\ge1$.

\noindent(c)
We give the details for $0<b_1\le b_2\le1$. 
Recall that $\hat T(ib)=\hat U(ib)S(ib)$ where 
$S(ib)=(I-\hat R(ib))^{-1}$.
By the resolvent inequality,
\begin{align*}
 & \|S(ib_1)-S(ib_2)\|_\theta   \le 
 \|S(ib_1)\|_\theta\|\hat R(ib_1)-\hat R(ib_2)\|_\theta
 \|S(ib_2)\|_\theta \\ & 
\qquad \ll \ell(1/b_1)^{-2}b_1^{-2\beta}\bigl\{\ell(|b_1-b_2|^{-1})|b_1-b_2|^\beta
+\ell(|b_1-b_2|^{\epsilon/\beta-1}b_2^{-\epsilon/\beta})b_2^\epsilon|b_1-b_2|^{\beta-\epsilon}\bigr\} ,
\end{align*}
using Lemma~\ref{lem-R} and Lemma~\ref{lem-middle}.
Combining this with Lemma~\ref{lem-U}(b),
\begin{align*}
& \|\hat T(ib_1)-\hat T(ib_2)\|_{F_\theta\mapsto L^1}
  \le \|\hat U(ib_1)-\hat U(ib_2)\|_{L^\infty\mapsto L^1}\|S(ib_1)\|_\theta
\\ & \qquad\qquad\qquad
 \qquad\qquad\qquad
 + \|\hat U(ib_1)\|_{L^\infty\mapsto L^1}\|S(ib_1)-S(ib_2)\|_\theta \\
& \ll \ell(|b_1-b_2|^{-1})|b_1-b_2|^\beta \ell(1/b_1)^{-1} b_1^{-\beta}\\
&\qquad + \ell(1/b_1)^{-2}b_1^{-2\beta}\{\ell(|b_1-b_2|^{-1})|b_1-b_2|^\beta
+\ell(|b_1-b_2|^{\epsilon/\beta-1}b_2^{-\epsilon/\beta})b_2^\epsilon|b_1-b_2|^{\beta-\epsilon}\}.
\end{align*}

The argument for $1<b_1<b_2<b_1+1$ is similar but simpler because we establish 
a cruder estimate.   The slowly varying functions are taken care of by 
$\epsilon's$ in the exponents, and by increasing the value of $A$.
\end{proof}

\section{First order asymptotics (mixing)}
\label{sec-mixing}

In this section, we prove Theorem~\ref{thm-infinite}(a).

\subsection{The case $\beta\in(\frac12,1)$}
\label{sec-half}

\begin{prop} \label{prop-0cx}
There exists $\delta>0$ such that
$|\hat\rho(s)|\ll \ell(1/|s|)^{-1}|s|^{-\beta}$ for $s\in\overline\H$ satisfying $|s|\le\delta$.
\end{prop}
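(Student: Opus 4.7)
The proof should be a straightforward assembly of the operator bounds already established. Starting from the analytic continuation of $\hat\rho$ given by equation~(3.10) (or rather the displayed formula labelled \eqref{eq-rhohat}),
\[
\hat\rho(s)=\int_{\tilde Y}\hat U(s)(I-\hat R(s))^{-1}v\,w\,d\mu^\varphi,
\]
valid on a neighbourhood of $\overline\H-\{0\}$, the plan is to factor the norm estimate through $L^1(\tilde Y)$ and $L^\infty(\tilde Y)$, in effect dualising $w$ against the function $\hat U(s)(I-\hat R(s))^{-1}v\in L^1(\tilde Y)$.

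First, since $\mu^\varphi|_{\tilde Y}=\tilde\mu$ in the infinite measure case and $w\in L^\infty(\tilde Y)$, H\"older's inequality gives
\[
|\hat\rho(s)|\le |\hat U(s)(I-\hat R(s))^{-1}v|_1\,|w|_\infty.
\]
Next I would invoke Remark~\ref{rmk-U}, which provides the uniform bound $\|\hat U(s)\|_{L^\infty(\tilde Y)\to L^1(\tilde Y)}\le 2$ on all of $\overline\H$, to pass the $L^1$ norm on $\hat U(s)(I-\hat R(s))^{-1}v$ to an $L^\infty$ norm on $(I-\hat R(s))^{-1}v$, paying only a constant factor. Finally I would apply Proposition~\ref{prop-zero_s} which, after fixing $\delta>0$ small enough, yields
\[
|(I-\hat R(s))^{-1}v|_\infty\ll \ell(1/|s|)^{-1}|s|^{-\beta}\|v\|_\theta
\qquad\text{for all }s\in\overline\H,\ 0<|s|\le\delta.
\]
Chaining the three bounds produces the claimed estimate, with implicit constant proportional to $\|v\|_\theta|w|_\infty$.

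There is no real obstacle at this stage: all the genuine analytic content, namely the slowly-varying asymptotic $1-\lambda(s)\sim c\,\ell(1/|s|)s^\beta$ near the simple eigenvalue of $\hat R(0)$ together with the continuity estimates on the perturbed spectral projections, has been compressed into Proposition~\ref{prop-zero_s}, and the uniform $L^\infty\!\to\! L^1$ control of $\hat U$ has been compressed into the tail estimate proved in Lemma~\ref{lem-U}. The only thing to be slightly careful about is the measure identification $\mu^\varphi|_{\tilde Y}=\tilde\mu$ so that the $L^1$ norm used to dualise with $w$ is the right one, and that $\delta$ is chosen no larger than the radius on which Proposition~\ref{prop-zero_s} applies.
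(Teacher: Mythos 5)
Your argument is correct and is exactly the paper's proof: the paper's one-line justification cites Theorem~\ref{thm-renew} (equivalently the extension~\eqref{eq-rhohat}), Proposition~\ref{prop-zero_s} and Remark~\ref{rmk-U}, which are precisely the three bounds you chain together. Nothing further is needed.
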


\begin{proof}  
This follows from Theorem~\ref{thm-renew}, 
Proposition~\ref{prop-zero_s} and Remark~\ref{rmk-U}.~
\end{proof}

\begin{prop} \label{prop-inverse}
$\rho(t)=\frac{1}{2\pi}\int_{-\infty}^\infty e^{ibt}\hat\rho(ib)\,db
= \frac{1}{\pi}\Re\int_0^\infty e^{ibt}\hat\rho(ib)\,db$.
\end{prop}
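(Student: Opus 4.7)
The plan is to establish the first equality via Fourier/Laplace inversion applied to $\hat\rho(s)$, and then derive the second equality from the conjugate symmetry of $\hat\rho$.

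First I would observe that, since $v\in F_\theta(\tilde Y)$ and $w\in L^{\infty,m}(\tilde Y)$ is compactly supported in $Y\times(0,1)$, the function $\rho(t)$ is bounded and continuous on $[0,\infty)$. Extending by $\rho(t)=0$ for $t<0$, one sees that $\hat\rho(ib)$ is (up to a constant) the Fourier transform of this extension. The Bromwich formula gives, for any $c>0$ and $t>0$,
\[
\rho(t)=\frac{1}{2\pi i}\lim_{T\to\infty}\int_{c-iT}^{c+iT}e^{st}\hat\rho(s)\,ds.
\]
The main task is to shift this contour onto the imaginary axis. This uses the extension of $\hat\rho$ to a neighbourhood of $\overline\H\setminus\{0\}$ supplied by~\eqref{eq-rhohat}, together with Cauchy's theorem applied to a rectangular contour indented by a small semicircle around the origin.

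Next I would verify the three contributions picked up by the contour shift. The indentation around $s=0$ vanishes as the radius shrinks because Proposition~\ref{prop-0cx} gives $|\hat\rho(ib)|\ll \ell(1/|b|)^{-1}|b|^{-\beta}$, which is integrable at $0$ since $\beta<1$. For the horizontal segments at heights $\pm iT$, I would apply Proposition~\ref{prop-m} with $m$ chosen sufficiently large (e.g., $m>\alpha+1$ with $\alpha$ from Corollary~\ref{cor-T}(b)), writing
\[
\hat\rho(s)=\sum_{j=1}^m \rho_{v,\partial_t^{j-1}w}(0)\,s^{-j}+s^{-m}\hat\rho_{v,\partial_t^m w}(s).
\]
Each polynomial term $s^{-j}$ vanishes along horizontal segments as $T\to\infty$, while the remainder is $O(|s|^{-(m-\alpha)})$ uniformly on horizontal strips $\Re s\in[0,c]$, hence also vanishes. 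This yields $\rho(t)=\frac{1}{2\pi}\int_{-\infty}^{\infty}e^{ibt}\hat\rho(ib)\,db$.

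For the second equality, I would use that $\rho$ is real-valued, so $\hat\rho(\overline s)=\overline{\hat\rho(s)}$ and hence $\hat\rho(-ib)=\overline{\hat\rho(ib)}$. This makes $e^{ibt}\hat\rho(ib)$ at $-b$ the complex conjugate of its value at $b$, so $\int_{-\infty}^0 e^{ibt}\hat\rho(ib)\,db=\overline{\int_0^\infty e^{ibt}\hat\rho(ib)\,db}$; summing gives twice the real part and dividing by $2\pi$ yields the claim.

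The main obstacle is the non-absolute integrability of $\hat\rho(ib)$: the $j=1$ polynomial term $\rho_{v,w}(0)\,(ib)^{-1}$ in Proposition~\ref{prop-m} is only conditionally integrable at infinity, forcing the integrals to be interpreted as principal values (symmetric limits $\lim_{T\to\infty}\int_{-T}^{T}$). Choosing $m$ large so that the remainder term is genuinely $L^1$ confines this subtlety to the explicit polynomial tail, whose inverse Laplace transforms are classical ($s^{-j}\mapsto t^{j-1}/(j-1)!$) and whose contributions along horizontal segments can be computed by hand, allowing the contour shift to go through rigorously.
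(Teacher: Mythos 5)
Your proposal follows essentially the same route as the paper's proof: Bromwich inversion of the Laplace transform along a vertical contour, a shift by Cauchy's theorem to the imaginary axis indented by a small semicircle at the origin whose contribution is $O(\delta^{1-\beta-\epsilon})$ by Proposition~\ref{prop-0cx}, and (implicitly in the paper) the conjugate-symmetry of $\hat\rho$ coming from $\rho$ being real for the second equality. Your additional treatment of the horizontal segments via Proposition~\ref{prop-m} is detail the paper leaves implicit and is welcome; the only caveat is that the polynomial bound $|\hat\rho_{v,\partial_t^m w}(s)|=O(|s|^{\alpha})$ is established only on the imaginary axis (Corollary~\ref{cor-T}(b)), so your claim that the remainder is $O(|s|^{-(m-\alpha)})$ uniformly on the strip $\Re s\in[0,c]$ needs a brief extra word, e.g.\ interpolating with the elementary bound $|\hat\rho_{v,\partial_t^m w}(s)|\le C/\Re s$ valid for $\Re s>0$.
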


\begin{proof}
Since $\hat\rho$ is analytic on $\H$, we can invert the Laplace transform by computing
$\rho(t)=\frac{1}{2\pi i}\int_{\Gamma_1}e^{st}\hat\rho(s)\,ds$ where 
$\Gamma_1$ is the contour $\Re s=1$ traversed upwards.
As noted in~\eqref{eq-rhohat},
$\hat\rho(ib)$ is well-defined and continuous on the imaginary axis except
for the singularity at zero, so by Cauchy's Theorem
we can move the contour to
a contour $\Gamma_0$ which consists of the segments of the imaginary
axis $\{s=ib:-\infty<b<-\delta\}\cup\{s=ib:\delta<b<\infty\}$
together with a semicircle $\Gamma_\delta=\{s=\delta e^{i\psi}:-\pi/2<\psi<\pi/2\}$
where $\delta>0$ is arbitrarily small.

Let $\epsilon\in(0,1-\beta)$.  It follows from Proposition~\ref{prop-0cx} that
$\int_{\Gamma_\delta} e^{st}\hat\rho(s)\,ds=O(e^{\delta t}\delta^{1-\beta-\epsilon})$ and
$\int_{-\delta}^\delta e^{ibt}\hat\rho(ib)\,db=O(\delta^{1-\beta-\epsilon})$.
Letting $\delta\to0$, we obtain $\rho(t)=\frac{1}{2\pi i}\int_{\Gamma_1}e^{st}\hat\rho(s)\,ds=
\frac{1}{2\pi i}\int_{\Gamma_0}e^{st}\hat\rho(s)\,ds=
\frac{1}{2\pi}\int_{-\infty}^\infty e^{ibt}\hat\rho(ib)\,db$ as required.
\end{proof}

Recall that $c_\beta=i\int_0^\infty e^{-i\sigma}\sigma^{-\beta}\,d\sigma$.

\begin{prop} \label{prop-near0}
For any $a>0$,
\[
\lim_{t\to\infty}\ell(t) t^{1-\beta}\int_0^{a/t}  e^{ibt}\hat\rho_{v,w}(ib)\,db
=c_\beta^{-1}\int_0^a e^{i\sigma}\sigma^{-\beta}\,d\sigma
\int_{\tilde Y}v\,d\mu \int_{\tilde Y}w\,d\mu.
\]
\end{prop}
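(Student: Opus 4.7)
The plan is to substitute $b=\sigma/t$ so that the integral becomes an integral over the fixed interval $(0,a)$, then invoke the pointwise asymptotics from Corollary~\ref{cor-T}(a), and finally use dominated convergence. With the substitution,
\[
\ell(t) t^{1-\beta}\int_0^{a/t} e^{ibt}\hat\rho_{v,w}(ib)\,db
= \ell(t) t^{-\beta}\int_0^a e^{i\sigma}\hat\rho_{v,w}(i\sigma/t)\,d\sigma.
\]
By Proposition~\ref{prop-renew} and Corollary~\ref{cor-T}(a),
\[
\hat\rho_{v,w}(i\sigma/t)
= \int_{\tilde Y}\hat T(i\sigma/t)v\,w\,d\mu^\varphi
\sim c_\beta^{-1}\ell(t/\sigma)^{-1}(\sigma/t)^{-\beta}
\int_{\tilde Y}v\,d\tilde\mu\int_{\tilde Y}w\,d\mu^\varphi
\]
as $t\to\infty$ (for each fixed $\sigma>0$), recalling $\mu^\varphi|_{\tilde Y}=\tilde\mu$ in the infinite measure case. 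Hence
\[
\ell(t)t^{-\beta}\hat\rho_{v,w}(i\sigma/t)
\sim c_\beta^{-1}\frac{\ell(t)}{\ell(t/\sigma)}\sigma^{-\beta}
\int_{\tilde Y}v\,d\tilde\mu\int_{\tilde Y}w\,d\mu^\varphi,
\]
and since $\ell$ is slowly varying, $\ell(t)/\ell(t/\sigma)\to1$ for each fixed $\sigma>0$, giving the desired integrand $c_\beta^{-1}e^{i\sigma}\sigma^{-\beta}\bar v\bar w$ after multiplication by $e^{i\sigma}$.

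The main technical point, and the place where care is needed, is the uniform domination required for the interchange of limit and integration. Here Proposition~\ref{prop-0cx} supplies the pointwise bound $|\hat\rho(i\sigma/t)|\ll \ell(t/\sigma)^{-1}(\sigma/t)^{-\beta}$, so that
\[
\ell(t)t^{-\beta}\,|e^{i\sigma}\hat\rho_{v,w}(i\sigma/t)|
\ll \frac{\ell(t)}{\ell(t/\sigma)}\sigma^{-\beta}.
\]
By Potter's bounds, for any small $\epsilon>0$ there exists $A>0$ such that whenever $t/\sigma\ge A$ and $t\ge A$ one has $\ell(t)/\ell(t/\sigma)\le 2\max(\sigma^\epsilon,\sigma^{-\epsilon})$. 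For $t$ large (specifically $t\ge aA$) this holds uniformly for all $\sigma\in(0,a]$, producing the dominating function $C\sigma^{-\beta-\epsilon}$, which is integrable on $(0,a)$ provided $\epsilon<1-\beta$ (valid since $\beta<1$).

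Applying the dominated convergence theorem then yields
\[
\lim_{t\to\infty}\ell(t)t^{-\beta}\int_0^a e^{i\sigma}\hat\rho_{v,w}(i\sigma/t)\,d\sigma
= c_\beta^{-1}\int_0^a e^{i\sigma}\sigma^{-\beta}\,d\sigma\,
\int_{\tilde Y}v\,d\tilde\mu\int_{\tilde Y}w\,d\mu^\varphi,
\]
which is the claim (with $\int_{\tilde Y}v\,d\tilde\mu = \int_{\tilde Y}v\,d\mu^\varphi$ and similarly for $w$ under the conventions of the infinite measure case). The principal obstacle is really just handling the slowly varying function uniformly in $\sigma$ near zero; everything else is routine given Corollary~\ref{cor-T}(a) and Proposition~\ref{prop-0cx}.
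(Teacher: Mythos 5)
Your proof is correct and follows essentially the same route as the paper: the paper likewise combines Proposition~\ref{prop-renew} with Corollary~\ref{cor-T}(a) to get the pointwise asymptotics of $\hat\rho(ib)$ as $b\to0^+$ and then applies dominated convergence (citing the analogous discrete-time lemma in \cite{MT12} for the substitution/Potter-bound details you wrote out explicitly). Your use of Proposition~\ref{prop-0cx} to supply the dominating bound is a minor cosmetic variation, not a different argument.
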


\begin{proof}
It follows from Proposition~\ref{prop-renew} and Corollary~\ref{cor-T}(a)
that $\hat\rho(ib)= c_\beta^{-1} \ell(1/b)^{-1}b^{-\beta}h(b)
\int_{\tilde Y}v\,d\tilde\mu
\int_{\tilde Y}w\,d\tilde\mu$ where $\lim_{b\to0^+}h(b)=1$.
The result follows from the dominated convergence theorem as 
in~\cite[Lemma~5.2]{MT12}.
\end{proof}

\begin{prop} \label{prop-middle}
Let $\beta'\in(\frac12,\beta)$.
For all  $2\pi<a<t$, 
\[
\int_{a/t}^1  e^{ibt}\hat\rho_{v,w}(ib)\,db
=O(\ell(t)^{-1}t^{-(1-\beta)}a^{-(2\beta'-1)}).
\]
\end{prop}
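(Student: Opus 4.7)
The strategy is an integration-by-translation (``doubling'') trick that exploits the H\"older-type continuity of $\hat T(ib)$ in $b$ provided by Corollary~\ref{cor-T}(c). Set $s=\pi/t$, so that $e^{i(b+s)t}=-e^{ibt}$. Performing the substitution $b\mapsto b+s$ in $I:=\int_{a/t}^1 e^{ibt}\hat\rho_{v,w}(ib)\,db$ and averaging the result with the original expression for $I$ gives
\[
2I \;=\; \int_{a/t}^{1-s} e^{ibt}\bigl[\hat\rho(ib)-\hat\rho(i(b+s))\bigr]\,db \;+\; B_1 + B_2,
\]
where $B_1, B_2$ are integrals of $e^{ibt}\hat\rho$ (respectively $e^{ibt}\hat\rho(\cdot+is)$) over intervals of length $s$ abutting $b=1$ and $b=a/t$.

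For the boundary terms I would use the pointwise bound $|\hat\rho_{v,w}(ib)|\ll \ell(1/b)^{-1}b^{-\beta}$, which follows from Proposition~\ref{prop-renew} together with Corollary~\ref{cor-T}(b). The piece near $b=1$ is $O(s)=O(t^{-1})$, trivially absorbed into the target. The piece near $b=a/t$ has size $O(s\,\ell(t/a)^{-1}(a/t)^{-\beta}) = O(\ell(t)^{-1}t^{\beta-1}a^{-\beta})$ after invoking Potter's bounds to replace $\ell(t/a)$ by $\ell(t)$. Since $\beta'<\beta\le 1$, one has $2\beta'-1<\beta$, so $a^{-\beta}\le a^{-(2\beta'-1)}$ and the boundary contribution fits the desired bound.

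For the main term, Proposition~\ref{prop-renew} gives $|\hat\rho(ib)-\hat\rho(i(b+s))| \le \|\hat T(ib)-\hat T(i(b+s))\|_{F_\theta\to L^1}\|v\|_\theta|w|_\infty$, and Corollary~\ref{cor-T}(c) with $|b_1-b_2|=s=\pi/t$ yields
\[
\|\hat T(ib)-\hat T(i(b+s))\| \ll \ell(1/b)^{-2}b^{-2\beta}\bigl\{\ell(t)\,t^{-\beta} + \ell(t^{1-\epsilon/\beta}b^{-\epsilon/\beta})\,b^\epsilon\,t^{-(\beta-\epsilon)}\bigr\}.
\]
Integrating the first bracketed term over $b\in(a/t,1-s)$ reduces to $\ell(t)t^{-\beta}\int_{a/t}^1 \ell(1/b)^{-2}b^{-2\beta}\,db$. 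Because $2\beta>1$, Karamata's theorem concentrates this integral at the lower endpoint and gives $\sim (2\beta-1)^{-1}\ell(t/a)^{-2}(t/a)^{2\beta-1}$. Combining,
\[
\ell(t)\,t^{-\beta}\cdot \ell(t/a)^{-2}(t/a)^{2\beta-1} \;\sim\; \ell(t)^{-1}\,t^{\beta-1}\,a^{-(2\beta-1)},
\]
where the telescoping $\ell(t)\ell(t/a)^{-2}\sim \ell(t)^{-1}$ uses slow variation of $\ell$. The second bracketed term is handled identically up to $\epsilon$-perturbations in the exponents; choosing $\epsilon$ small and absorbing the resulting factor $a^{-(2\beta-1)+O(\epsilon)}$ into $a^{-(2\beta'-1)}$ via the slack $\beta'<\beta$ yields the claimed $O(\ell(t)^{-1}t^{-(1-\beta)}a^{-(2\beta'-1)})$.

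The main obstacle is the accounting of slowly varying factors when $a$ ranges over $(2\pi,t)$: the ratio $t/a$ need not be large compared to $t$, so one cannot naively identify $\ell(t/a)$ with $\ell(t)$. Potter's bounds supply the polynomial slack required to compare them, and they also justify the replacement of the natural exponent $2\beta-1$ by the slightly worse $2\beta'-1$ in the statement, at the cost of $\beta-\beta'>0$ spare.
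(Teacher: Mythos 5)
Your argument is correct and is essentially the paper's own proof: the same shift-by-$\pi/t$ doubling trick, with the boundary pieces bounded via Corollary~\ref{cor-T}(b), the difference term via Corollary~\ref{cor-T}(c), and Potter's bounds used to control the slowly varying factors uniformly in $a\in(2\pi,t)$ and to degrade the exponent from $2\beta-1$ to $2\beta'-1$. The only cosmetic difference is that you evaluate $\int_{a/t}^1 \ell(1/b)^{-2}b^{-2\beta}\,db$ by Karamata at the lower endpoint and then compare $\ell(t/a)$ with $\ell(t)$, whereas the paper substitutes $\sigma=bt$ and applies Potter's bounds pointwise inside the integral; both yield the same estimate.
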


\begin{proof}
We modify~\cite[Lemma~5.1]{MT12} to deal with the $\epsilon$
in Corollary~\ref{cor-T}(c).   
Let $b,b_1,b_2\in(0,1]$, $b_1<b_2$.
By Proposition~\ref{prop-renew} and Corollary~\ref{cor-T}(b,c),
\begin{align*}
&  |\hat\rho(ib)|    \ll \ell(1/b)^{-1}  b^{-\beta}\|v\|_\theta|w|_\infty, \\[.75ex]
&  |\hat\rho(ib_1)-\hat\rho(ib_2)|  \ll  \ell(1/b_1)^{-2}b_1^{-2\beta}\bigl\{\ell(|b_1-b_2|^{-1})|b_1-b_2|^\beta\\
&\qquad\qquad 
\qquad\qquad 
+\ell(|b_1-b_2|^{\epsilon/\beta-1}b_2^{-\epsilon/\beta})b_2^\epsilon|b_1-b_2|^{\beta-\epsilon}\bigr\}\|v\|_\theta|w|_\infty.
\end{align*}

Write
\[
I=\int_{a/t}^1 e^{ibt}\hat\rho(ib)\,db
=-\int_{(a+\pi)/t}^{1+\pi/t} e^{ibt}\hat\rho(i(b-\pi/t))\,db.
\]
Then $2I=J_1 + J_2 +  J_3$, where
\begin{align*} 
J_1 & = -\int_1^{1+\pi/t} e^{ibt}\hat\rho(i(b-\pi/t))\,db, \qquad
J_2  = \int_{a/t}^{(a+\pi)/t} e^{ibt}\hat\rho(ib)\,db, \\
J_3 & = \int_{(a+\pi)/t}^1 e^{ibt}(\hat\rho(ib)-\hat\rho(i(b-\pi/t)))\,db.
\end{align*}
We suppress the factor $\|v\|_\theta |w|_\infty$ from now on.
Clearly $J_1=O(t^{-1})$, and by Potter's bounds,
\begin{align*}
|J_2| & \ll \int_{a/t}^{(a+\pi)/t} \ell(1/b)^{-1}b^{-\beta}\,db 
= \ell(t)^{-1}t^{-(1-\beta)}\int_{a}^{a+\pi} [\ell(t)/\ell(t/\sigma)]\sigma^{-\beta}\,d\sigma  \\ &
\ll  \ell(t)^{-1}t^{-(1-\beta)}\int_{a}^{a+\pi} \sigma^{-\beta'}\,d\sigma \ll 
 \ell(t)^{-1}t^{-(1-\beta)}a^{-\beta'}.
\end{align*}

Finally,
\begin{align*}
|J_3| & \ll \ell(t)t^{-\beta}\int_{a/t}^1\ell(1/b)^{-2}b^{-2\beta}\,db+
t^{-\beta+\epsilon}\int_{a/t}^1\ell(1/b)^{-2}b^{-2\beta+\epsilon}\ell(t^{1-\epsilon/\beta}b^{-\epsilon/\beta})\,db \\
&=J_3'+J_3''.
\end{align*}
By Potter's bounds,
\begin{align*}
J_3'  
=\ell(t)^{-1}t^{\beta-1}\int_{a}^t [\ell(t)/\ell (\sigma/t)]^2  \sigma^{-2\beta}\,d\sigma & 
\ll  \ell(t)^{-1}t^{\beta-1}\int_a^\infty \sigma^{-2\beta'}\,d\sigma  \\ & 
\ll
\ell(t)^{-1}t^{\beta-1}a^{-(2\beta'-1)},
\end{align*}
and shrinking $\epsilon$ if necessary so that $\epsilon<2(\beta-\beta')$, 
\begin{align*}
J_3'' & 
= \ell(t)^{-1}t^{\beta-1}\int_{a}^t [\ell(t)/\ell (\sigma/t)]^2[\ell(t/\sigma^{\epsilon/\beta})/\ell(t)] \sigma^{-(2\beta-\epsilon)}\,d\sigma
\\ & \ll  \ell(t)^{-1}t^{\beta-1}\int_a^\infty \sigma^{-2\beta'}\,d\sigma  
\ll
\ell(t)^{-1}t^{\beta-1}a^{-(2\beta'-1)},
\end{align*} 
as required.
\end{proof}

\begin{prop}   \label{prop-infty}
For any $\epsilon\in(0,\beta)$, there exist $\theta\in(0,1)$, $m\ge1$, such that
$\int_1^\infty  e^{ibt}\hat\rho_{v,w}(ib)\,db
=O(t^{-(\beta-\epsilon)})$,
for all $v\in F_\theta(\tilde Y)$, $w\in L^{\infty,m}(\tilde Y)$.
\end{prop}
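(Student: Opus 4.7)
The plan is to use Proposition~\ref{prop-m} to trade smoothness of $w$ in the flow direction for negative powers of $s$, and then handle the resulting remainder with a shift-and-difference argument modelled on the one in Proposition~\ref{prop-middle} but adapted to the large-$b$ regime using Corollary~\ref{cor-T}(b,c).

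By Proposition~\ref{prop-m},
\[
\hat\rho_{v,w}(ib)=\sum_{j=1}^m \rho_{v,\partial_t^{j-1}w}(0)(ib)^{-j}+(ib)^{-m}\hat\rho_{v,\partial_t^mw}(ib),
\]
so the integral splits as a finite sum of scalars times $\int_1^\infty e^{ibt}(ib)^{-j}\,db$, plus a remainder $\int_1^\infty e^{ibt}g(b)\,db$ with $g(b)=(ib)^{-m}\hat\rho_{v,\partial_t^mw}(ib)$. Each polynomial term is $O(t^{-1})$ by a single integration by parts, and this dominates $t^{-(\beta-\epsilon)}$ since $\beta\le 1$.

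For the remainder, fix $\epsilon>0$ and let $\alpha$ be the constant from Corollary~\ref{cor-T}(b,c). Apply Corollary~\ref{cor-T}(b) via Proposition~\ref{prop-renew} to obtain $|\hat\rho_{v,\partial_t^mw}(ib)|\ll b^\alpha$, so $|g(b)|\ll b^{\alpha-m}$; apply Corollary~\ref{cor-T}(c) to obtain, for $1<b-h<b$ with $h\le 1$,
\[
|g(b)-g(b-h)|\ll b^{\alpha-m}h^{\beta-2\epsilon}+hb^{\alpha-m-1}\ll b^{\alpha-m}h^{\beta-2\epsilon}.
\]
Now choose $m$ large enough that $m-\alpha>1$; this is where we fix the required regularity $m$ of $w$. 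Using $e^{ibt}=-e^{i(b-\pi/t)t}$ to rewrite the remainder integral and averaging, we get
\[
2\int_1^\infty e^{ibt}g(b)\,db=\int_1^\infty e^{ibt}\bigl[g(b)-g(b+\pi/t)\bigr]\,db
-\int_{1-\pi/t}^1 e^{ibt}g(b+\pi/t)\,db.
\]
The boundary integral is $O(t^{-1})$ since $g$ is bounded on $[1,\infty)$ by the choice of $m$. The main integral is estimated in absolute value by the Hölder bound and the convergent integral $\int_1^\infty b^{\alpha-m}\,db$, giving $O(t^{-(\beta-2\epsilon)})$. Relabelling $2\epsilon$ as $\epsilon$ yields the desired bound $O(t^{-(\beta-\epsilon)})$.

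The main technical point (and the only real obstacle) is matching the Hölder regularity in $b$ from Corollary~\ref{cor-T}(c) against the growth in $b$, so that after picking $m$ large enough the integral $\int_1^\infty b^{\alpha-m}h^{\beta-2\epsilon}\,db$ converges and produces genuine $t$-decay; everything else is routine. The necessary $\theta\in(0,1)$ is inherited directly from Corollary~\ref{cor-T}, and the integer $m$ is determined by the exponent $\alpha$ via $m>\alpha+1$.
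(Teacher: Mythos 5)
Your proposal is correct and follows essentially the same route as the paper: split $\hat\rho_{v,w}$ via Proposition~\ref{prop-m}, dispose of the polynomial part with an $O(t^{-1})$ bound, and estimate the remainder $s^{-m}\hat\rho_{v,\partial_t^m w}(s)$ by the shift-by-$\pi/t$ averaging trick together with Corollary~\ref{cor-T}(b,c), choosing $m$ large relative to $\alpha$. The only cosmetic differences are that the paper shifts the contour instead of integrating by parts for the polynomial terms, and it takes the more conservative threshold $m>2\alpha+\epsilon+1$ where your $m>\alpha+1$ already suffices for the bounds as stated in Corollary~\ref{cor-T}.
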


\begin{proof}
Choose $m>2A+\epsilon+1$.
By Proposition~\ref{prop-m},
$\hat\rho_{v,w}(s)=\hat P_m(s)+\hat H_m(s)$, where
$\hat P_m(s)$ is a linear combination of $s^{-j}$, $j=1,\dots,m$, and 
$\hat H_m(s)=s^{-m}\hat \rho_{v,\partial_t^mw}(s)$.

Since $\hat P_m$ is analytic on $\Im s\ge1$, we can write
\begin{align*}
i\int_1^\infty e^{ibt}\hat P_m(ib)\,db & =
-\int_0^1 e^{(i-a)t}\hat P_m(i-a)\,da +
i\int_1^\infty e^{(-1+ib)t}\hat P_m(-1+ib)\,db \\
& = O(t^{-1})+O(e^{-t})=O(t^{-1}).
\end{align*}

It remains to estimate the contribution from
$\hat H_m(ib)=b^{-m}\hat \rho_{v,\partial_t^mw}(ib)$.   
Modifying the proof of Proposition~\ref{prop-middle} (using the fact
that $b\mapsto \hat \rho_{v,\partial_t^mw}(ib)$ satisfies the
other estimates in Corollary~\ref{cor-T}(b,c)), we have that for any $\epsilon>0$ and any $\epsilon'>\epsilon$,
\[
|\hat H_m(ib_1)-\hat H_m(ib_2)|\le 
b_2^{-(m-2A-\epsilon')}|b_1-b_2|^{\beta-\epsilon}\|v\|_\theta |\partial_t^mw|_\infty.
\]
Hence,
\begin{align*}
\Bigl|2\int_1^\infty e^{ibt}\hat H_m(ib)\,db\Bigr| & \le \int_1^\infty|\hat H_m(ib)-\hat H_m(i(b-\pi/t))|\,db
\\ & \qquad\qquad\qquad + \int_1^{1+\pi/t} |\hat H_m(i(b-\pi/t))|\,db \\ &
\ll t^{-(\beta-\epsilon)}\int_1^\infty b^{-(m-2A-\epsilon')}\,db
+O(t^{-1})= O(t^{-(\beta-\epsilon)}),
\end{align*}
as required.
\end{proof}

\begin{pfof}{Theorem~\ref{thm-infinite}(a)}
Combining Propositions~\ref{prop-near0},~\ref{prop-middle} and~\ref{prop-infty}
(with $\epsilon<2\beta-1$),
\[
\lim_{t\to\infty}\ell(t)t^{1-\beta}\int_0^\infty e^{ibt}\hat \rho_{v,w}(ib)\,db
=c_\beta^{-1}\int_0^a e^{i\sigma}\sigma^{-\beta}\,d\sigma
\int_{\tilde Y}v\,d\tilde\mu
\int_{\tilde Y}w\,d\tilde\mu
+O(a^{-(2\beta'-1)}).
\]
Since $a$ is arbitrary and $\beta'>1/2$,
\[
\lim_{t\to\infty}\ell(t)t^{1-\beta}\int_0^\infty e^{ibt}\hat \rho_{v,w}(ib)\,db
=c_\beta^{-1}\int_0^\infty e^{i\sigma}\sigma^{-\beta}\,d\sigma
\int_{\tilde Y}v\,d\tilde\mu
\int_{\tilde Y}w\,d\tilde\mu.
\]
A standard calculation shows that $\Re(c_\beta^{-1}\int_0^\infty e^{i\sigma}\sigma^{-\beta}\,d\sigma)=\sin\beta\pi$.  Hence
the result follows from Proposition~\ref{prop-inverse}.
\end{pfof}

\subsection{The case $\beta=1$}

We sketch the differences for $\beta=1$.
Here $\mu(\varphi>t)=\ell(t)t^{-1}$ where 
$\int_1^\infty \ell(t)t ^{-1}\,dt=\infty$.
By Karamata's Theorem, 
$\tilde\ell(t)=\int_1^t \ell(s)s^{-1}\,ds$
is slowly varying and $\ell(t)/\tilde\ell(t)\to0$ 
as $t\to\infty$.  In particular, $\tilde\ell$ is monotone increasing
and $\lim_{t\to\infty}\tilde\ell(t)=\infty$.

Many of the basic estimates change in a mild way.
The estimates on the imaginary axis ($s=ib$) in Section~\ref{sec-funct} are
unchanged except that all occurrences of $\ell(1/b)$ on the right-hand-side are
replaced by $\tilde\ell(1/b)$.

The major alteration is that 
$\hat\rho(ib)$ is not integrable near zero.  As in~\cite[Section~6]{MT12}, we 
replace $\int_{-\infty}^\infty e^{ibt}\hat\rho(ib)\,db$ by
the expression
\[
\int_{-\infty}^\infty e^{ibt}\Re \hat\rho(ib)\,db=
2\int_0^\infty \cos bt\,\Re \hat\rho(ib)\,db.
\]
In addition to the modified estimates for $\hat\rho(ib)$
(which are inherited by $\Re\hat\rho(ib)$), we have the improved asymptotics
\[
\Re\hat\rho(ib)\sim\frac{\pi}{2} \ell(1/b)\tilde\ell(1/b)^{-2}b^{-1}
\int_{\tilde Y}v\,d\tilde\mu
\int_{\tilde Y}w\,d\tilde\mu\enspace\text{as}\enspace b\to0^+,
\]
from which it follows that 
\[
\lim_{t\to\infty}\tilde\ell(t)\int_0^{a/t}\cos tb\,\Re\hat\rho(ib)\,db=\frac{\pi}{2}
\int_{\tilde Y}v\,d\tilde\mu
\int_{\tilde Y}w\,d\tilde\mu.
\]
We omit the details of these last two assertions which follow
from straightforward
modifications of the calculations for $\beta\in(\frac12,1)$ (cf.~\cite[Section 6]{MT12}).
It now follows exactly as in Subsection~\ref{sec-half} that
\[
\lim_{t\to\infty}\tilde\ell(t)\int_0^\infty\cos tb\,\Re\hat\rho(ib)\,db=\frac{\pi}{2}
\int_{\tilde Y}v\,d\tilde\mu
\int_{\tilde Y}w\,d\tilde\mu.
\]

Hence to prove Theorem~\ref{thm-infinite}(a) for $\beta=1$, it remains to prove the following result.
\begin{prop} \label{prop-inverse_one}
$\rho(t)=\frac{1}{\pi}\int_{-\infty}^\infty e^{ibt}\Re\hat\rho(ib)\,db$.
\end{prop}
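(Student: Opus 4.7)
The plan is to adapt the proof of Proposition~\ref{prop-inverse} to the borderline case $\beta=1$, where $\hat\rho(ib)$ has a non-integrable singularity at $b=0$ but $\Re\hat\rho(ib)$ is tame enough to invert directly.

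First, by essentially the same contour argument as in Proposition~\ref{prop-inverse}, I would start from $\rho(t)=\frac{1}{2\pi i}\int_{\Gamma_1}e^{st}\hat\rho(s)\,ds$ on $\Gamma_1=\{\Re s=1\}$ and deform to the contour $\Gamma_0$ consisting of the imaginary axis with a small semicircle $\Gamma_\delta=\{\delta e^{i\psi}:-\pi/2\le\psi\le\pi/2\}$ detouring around $s=0$. Using the $\beta=1$ analog of Proposition~\ref{prop-zero_s}, namely $|\hat\rho(s)|\ll\tilde\ell(1/|s|)^{-1}|s|^{-1}$ uniformly for $s\in\overline\H$ near $0$ (one of the ``modified estimates'' flagged in the discussion preceding the proposition), the semicircle contribution is bounded by a multiple of $\delta\cdot\tilde\ell(1/\delta)^{-1}\delta^{-1}=O(\tilde\ell(1/\delta)^{-1})$, which tends to $0$ as $\delta\to0^+$ since $\tilde\ell\to\infty$. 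This yields
\begin{align*}
\rho(t)=\frac{1}{2\pi}\,\mathrm{p.v.}\!\int_{-\infty}^{\infty}e^{ibt}\hat\rho(ib)\,db,
\end{align*}
where the integral is taken as a principal value at $0$ and as the usual improper oscillatory integral at $\pm\infty$.

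Next, I would use the causality $\rho(-t)=0$ for $t>0$ to symmetrise. Applying the same contour deformation to the inversion integral at $-t$ (where the contour $\Gamma_1$ closes in the right half-plane since $e^{-st}$ decays there when $t>0$) produces $0=\frac{1}{2\pi}\,\mathrm{p.v.}\!\int e^{-ibt}\hat\rho(ib)\,db$. Substituting $b\mapsto -b$ and using the reality symmetry $\hat\rho(-ib)=\overline{\hat\rho(ib)}$, then adding to the expression for $\rho(t)$, gives
\begin{align*}
\rho(t)=\frac{1}{2\pi}\,\mathrm{p.v.}\!\int e^{ibt}\bigl[\hat\rho(ib)+\hat\rho(-ib)\bigr]\,db=\frac{1}{\pi}\,\mathrm{p.v.}\!\int e^{ibt}\Re\hat\rho(ib)\,db.
\end{align*}
Equivalently, the $\Im\hat\rho$ contribution has integrand $\cos(bt)\Im\hat\rho(ib)$ which is odd in $b$, so its principal value is automatically zero.

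Finally, I would drop the principal value. Near $b=0$, the asymptotic $\Re\hat\rho(ib)\sim\tfrac{\pi}{2}\ell(1/b)\tilde\ell(1/b)^{-2}b^{-1}$ supplied just before the proposition, combined with the substitution $u=1/b$, turns the near-zero integrand into $\frac{d}{du}\bigl(-1/\tilde\ell(u)\bigr)\,du$, which is absolutely integrable because $\tilde\ell\to\infty$. Convergence at $\pm\infty$ is handled by the same oscillatory shift argument as in Proposition~\ref{prop-infty}, applied now to $\Re\hat\rho$ in place of $\hat\rho$. Since $\sin(bt)\Re\hat\rho(ib)$ is odd in $b$ and integrates to zero, $\cos(bt)$ may be replaced by $e^{ibt}$, yielding the claimed identity. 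The main obstacle in executing this plan is establishing the uniform near-zero bound $|\hat\rho(s)|\ll\tilde\ell(1/|s|)^{-1}|s|^{-1}$ on $\overline\H$ (the $\beta=1$ analog of Proposition~\ref{prop-zero_s}); this requires redoing the eigenvalue perturbation of Lemma~\ref{lem-zero} with every occurrence of $\ell$ on the right-hand sides replaced by $\tilde\ell$, the underlying Karamata/Tauberian input being the boundary-case asymptotic $1-\int_Y e^{-s\varphi}\,d\mu\sim c\,\tilde\ell(1/|s|)s$ as $s\to0$ in $\overline\H$ (the $\beta=1$ case of~\cite[Lemma~2.4]{MT13}). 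Once this bound is in hand the rest of the argument is routine.
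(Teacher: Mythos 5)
Your argument is correct in its main line, but it reaches the identity by a different route than the paper. The paper symmetrises in the \emph{time} domain before inverting: it defines the even extension $g(\tau)=e^{-a|\tau|}\rho(|\tau|)$, observes that its Fourier transform is $2\Re\hat\rho(a+ib)$, and applies Fourier inversion to $g$, so that the formula with $\Re\hat\rho$ appears directly on the line $\Re s=1$; only then is the contour moved to the imaginary axis, with the small-radius contributions controlled by the estimates of~\cite[Proposition~6.1, Lemma~6.4]{MT12} for the real part. You instead invert $\hat\rho$ itself, deform to the imaginary axis to get a principal-value formula, and then symmetrise in the \emph{frequency} domain by adding the vanishing anticausal inversion integral and using $\hat\rho(-ib)=\overline{\hat\rho(ib)}$, finally removing the principal value via the integrability of $\Re\hat\rho(ib)$ near $b=0$. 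The two routes need comparable analytic input: your uniform bound $|\hat\rho(s)|\ll\tilde\ell(1/|s|)^{-1}|s|^{-1}$ on $\overline\H$ near $0$ (the $\beta=1$ analogue of Proposition~\ref{prop-zero_s}, proved exactly as you indicate from $1-\int_Y e^{-s\varphi}\,d\mu\sim c\,\tilde\ell(1/|s|)s$) is essentially what the paper invokes from~\cite{MT12} to kill the semicircle, so this is not an extra burden; what your route buys is that no Fourier-inversion statement for the reflected function $g$ is needed, at the cost of handling a principal value of the non-integrable function $\hat\rho(ib)$ as an intermediate object and of justifying the anticausal identity $0=\mathrm{p.v.}\int e^{-ibt}\hat\rho(ib)\,db$ (your ``close the contour in the right half-plane'' is glib, since $\hat\rho$ does not decay near the imaginary axis; the clean justification is Fourier inversion of $e^{-\tau}\rho(\tau)1_{\{\tau>0\}}$ at the negative time $-t$, followed by the same leftward deformation).

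One caution: your parenthetical ``equivalently'' remark is wrong as stated. Writing $\hat\rho(ib)=A(b)+iB(b)$ with $A$ even and $B$ odd, the $i\Im\hat\rho$ contribution to $e^{ibt}\hat\rho(ib)$ is $i\cos(bt)B(b)-\sin(bt)B(b)$; only the first piece dies by oddness, while $\sin(bt)B(b)$ is even and its principal-value integral is \emph{not} automatically zero --- it equals $-\mathrm{p.v.}\int\cos(bt)A(b)\,db$ precisely by the causality identity, and discarding it would give the formula with $\frac{1}{2\pi}$ instead of $\frac{1}{\pi}$. Your main derivation (adding the causal and anticausal principal-value identities) does not use this remark and yields the correct constant.
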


\begin{proof}
Write $s=a+ib$.
Define $g:\R\to\R$ to be even with $g(t)=e^{-at}\rho(t)$ for $t>0$.
Then $\Re\hat\rho(s)=\frac12\int_{-\infty}^\infty e^{-ib\tau}g(\tau)\,d\tau$.
By the Fourier inversion formula,
\[
\int_{-\infty}^\infty e^{ibt}\Re\hat\rho(s)\,db
=\frac12 \int_{-\infty}^\infty e^{ibt}\Bigl\{\int_{-\infty}^\infty e^{-ib\tau}g(\tau)\,d\tau\Bigr\}\,db=\pi g(t).
\]
Hence, restricting to $t>0$,
\[
\rho(t)=e^{at}g(t)=\frac{1}{\pi}\int_{-\infty}^\infty e^{st}\Re\hat\rho(s)\,db
=\frac{1}{\pi}\int_{\Gamma_1} e^{st}\Re\hat\rho(s)\,ds,
\]
where $\Gamma_1$ is the contour $\Re s=1$ traversed upwards.

As in Proposition~\ref{prop-inverse}, we can move the contour to the
contour $\Gamma_0$ consisting again of two segments of the imaginary axis and a semicircle $\Gamma_\delta$ of radius $\delta$ around the origin.
By~\cite[Proposition~6.1]{MT12}, 
$|\int_{-\delta}^\delta e^{ibt}\hat\rho(ib)\,db|
\ll \int_0^\delta\ell(1/b)\tilde\ell(1/b)^{-2}b^{-1}\,db\ll \tilde\ell(1/\delta)^{-1}\to0$
as $\delta\to0$.   
Using the estimates in~\cite[Lemma~6.4]{MT12}, it can be shown that
$|\int_{\Gamma_\delta}e^{ibt}\hat\rho(ib)\,db|\to0$ as $\delta\to0$.
Hence the contour can be moved to the imaginary axis completing the proof.
\end{proof}

\section{Second order asymptotics and rates of mixing}
\label{sec-rates}

In this section, we prove Theorem~\ref{thm-infinite}(b).
Choose $\delta>0$ such that $\lambda(ib)$
is well defined for $b\in (0, \delta)$. 
\begin{prop} \label{prop-hot}
There are constants $e_1,e_2\in\C$ such that
$1-\lambda(ib)=e_1b^\beta(1-e_2b^{1-\beta}+O(b^{q-\beta}))$ 
for $b\in(0,\delta)$.
\end{prop}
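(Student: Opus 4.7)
The plan is to refine the calculation from the proof of Lemma~\ref{lem-zero}. There we had the decomposition
\[
\lambda(ib)=\Phi(b)+V(ib),\quad \Phi(b)=\int_Y e^{-ib\varphi}\,d\mu,\quad V(ib)=\int_Y(\hat R_0(ib)-\hat R_0(0))(v(ib)-v(0))\,d\mu,
\]
so $1-\lambda(ib)=(1-\Phi(b))-V(ib)$. I will extract two terms (of orders $b^\beta$ and $b$) from $1-\Phi(b)$ and show that $V(ib)$ is swallowed by an $O(b^q)$ remainder.

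For $1-\Phi(b)$, I would use the tail representation $1-\Phi(b)=ib\int_0^\infty e^{-ibt}\mu(\varphi>t)\,dt$ (Fubini) and plug in $\mu(\varphi>t)=ct^{-\beta}+r(t)$. The substitution $\sigma=bt$ together with the definition of $c_\beta$ identifies $ibc\int_0^\infty e^{-ibt}t^{-\beta}\,dt$ with $c\,c_\beta b^\beta$. The function $r$ is bounded on $[0,\infty)$ (near zero it equals $1-ct^{-\beta}$, since $\mu(\varphi>t)\equiv1$ for $t<2$) and satisfies $|r(t)|=O(t^{-q})$ at infinity; as $q>1$, $r\in L^1$, and I can further split
\[
ib\int_0^\infty e^{-ibt}r(t)\,dt=iAb+ib\int_0^\infty(e^{-ibt}-1)r(t)\,dt,\qquad A:=\int_0^\infty r(t)\,dt\in\R.
\]
Splitting the oscillatory integral at $t=1/b$ and using $|e^{-ibt}-1|\le\min(2,bt)$ together with the two bounds on $r$ gives a total bound of order $b^{q-1}$ (here $q<2\beta<2$ is essential for integrating $t^{1-q}$ up to $1/b$). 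Hence
\[
1-\Phi(b)=c\,c_\beta b^\beta+iAb+O(b^q).
\]

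For $V(ib)$, the strengthened tail assumption renders the slowly varying factor in Proposition~\ref{prop-R_s} bounded, so $\|\hat R_0(ib)-\hat R_0(0)\|_\infty\ll b^\beta$. Standard analytic perturbation theory at the simple isolated eigenvalue $1$ of $\hat R_0(0)$ (Proposition~\ref{prop-H2}) then yields $|v(ib)-v(0)|_\infty\ll b^\beta$, so $|V(ib)|\ll b^{2\beta}=O(b^q)$ using $q<2\beta$. Combining and setting $e_1:=c\,c_\beta$ and $e_2:=-iA/e_1$ produces
\[
1-\lambda(ib)=e_1b^\beta+iAb+O(b^q)=e_1b^\beta\bigl(1-e_2b^{1-\beta}+O(b^{q-\beta})\bigr),
\]
as required.

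The main obstacle is the precise analysis of $\int_0^\infty e^{-ibt}r(t)\,dt$: separating the subleading $b$-contribution $iAb$ from a genuine $O(b^q)$ error is delicate because $r$ is only power-law controlled at infinity and merely bounded near the origin, so a naive inequality $|r(t)|\ll(1+t)^{-q}$ is unavailable. The split-at-$1/b$ argument handles this cleanly thanks to the restriction $q<2$; once that is in place, the bound on $V(ib)$ is a routine consequence of the operator-norm estimate already established in Proposition~\ref{prop-R_s}.
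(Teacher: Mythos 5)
Your proposal is correct and shares the paper's skeleton: the starting point is exactly the decomposition $\lambda(ib)=\int_Y e^{-ib\varphi}\,d\mu+V(ib)$ from the proof of Lemma~\ref{lem-zero}, with $V(ib)$ absorbed into the $O(b^q)$ error since $q<2\beta$. Where you genuinely diverge is in the treatment of the characteristic function: the paper simply quotes \cite[Lemma~3.2]{MT12} for the expansion $\int_Y e^{-ib\varphi}\,d\mu=1-e_1b^\beta+e_1e_2b+O(b^q)$, whereas you rederive it from scratch via the tail representation $1-\int_Y e^{-ib\varphi}\,d\mu=ib\int_0^\infty e^{-ibt}\mu(\varphi>t)\,dt$, the substitution $\sigma=bt$ for the $ct^{-\beta}$ part (producing $c\,c_\beta b^\beta$), and the split of the remainder integral at $t=1/b$. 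Your route is self-contained and makes transparent where the normalisation $q<2\beta<2$ is used; the paper's route is shorter because the Fourier-analytic input is already available in the discrete-time work.

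Two inaccuracies in your write-up should be fixed, though neither is fatal. First, $r(t)=\mu(\varphi>t)-ct^{-\beta}$ is \emph{not} bounded near $t=0$: for $t<2$ it equals $1-ct^{-\beta}$, which blows up; it is nevertheless integrable there because $\beta<1$, your split-at-$1/b$ bound survives since $\int_0^1 bt\,t^{-\beta}\,dt=O(b)$, and the identity $1-\int_Y e^{-ib\varphi}\,d\mu=ib\int_0^\infty e^{-ibt}\mu(\varphi>t)\,dt$ must be read as an improper integral (truncate at $T$ and let $T\to\infty$), since $\varphi\notin L^1$. Second, the eigenvalue/eigenfunction perturbation cannot be run on $L^\infty(Y)$, where $\hat R_0(0)$ has no spectral gap; the simple isolated eigenvalue of Proposition~\ref{prop-H2} lives on $F_\theta(Y)$, and the continuity needed there is $\|\hat R_0(ib)-\hat R_0(0)\|_\theta\ll b^\beta$ from Lemma~\ref{lem-R} (the second term vanishes at $b_2=0$, and $\ell$ is bounded under the strengthened tail assumption). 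This yields $\|v(ib)-v(0)\|_\theta\ll b^\beta$, hence the sup-norm bound you want, after which $|V(ib)|\le\|\hat R_0(ib)-\hat R_0(0)\|_\infty\,|v(ib)-v(0)|_\infty\ll b^{2\beta}=O(b^q)$, exactly as in the argument for Proposition~\ref{prop-zero_s}. With these repairs your proof is complete.
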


\begin{proof}
 From the proof of Lemma~\ref{lem-zero}, we recall that 
 $\lambda(ib) =\int_Y
e^{-ib\varphi}\, d\mu+V(ib)$, where $|V(ib)|=O(b^{2\beta-\epsilon})$. 
Here, $\epsilon>0$ is arbitrarily small so 
$|V(ib)|=O(b^q)$.
By~\cite[Lemma~3.2]{MT12}, 
$\int_Y e^{-ib\varphi}\,d\mu=1-e_1 b^\beta+e_1e_2b+O(b^q)$
and the result follows.  (Note that much of the proof of~\cite[Lemma~3.2]{MT12}
is not required.  
The functions $H$ and $H_1$ introduced there coincide in the continuous time case.
Moreover, the four terms involving $v_\theta^s$ in~\cite[Lemma 3.2]{MT12} 
have been subsumed into the $V(ib)$ term.)
\end{proof}

\begin{cor} \label{cor-hot}
There are constants $c_j\in\C$ such that
\[
\hat\rho(ib)=\sum_jc_j b^{-((j+1)\beta-j)}
\int_{\tilde Y} v\,d\tilde\mu 
\int_{\tilde Y} w\,d\tilde\mu 
+ O(b^{-(2\beta-q)})\|v\|_\theta\,|w|_\infty,
\]
for $b\in(0,\delta)$, $v\in F_\theta(\tilde Y)$, $w\in L^\infty(\tilde Y)$,
where the sum is over those $j\ge0$ with $(j+1)\beta-j\ge 2\beta-q$.
\end{cor}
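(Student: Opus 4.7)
The plan is to combine Proposition~\ref{prop-hot} with the spectral decomposition of $(I-\hat R(ib))^{-1}$ used inside Lemma~\ref{lem-zero} and the renewal identity
\[
\hat\rho(ib)=\int_{\tilde Y}\hat U(ib)(I-\hat R(ib))^{-1}v\cdot w\,d\mu^\varphi
\]
coming from~\eqref{eq-rhohat}. Let $\tilde P(ib)$ denote the rank-one spectral projection for the simple eigenvalue $\lambda(ib)$ of $\hat R(ib)$ near $b=0$, set $\tilde Q(ib)=I-\tilde P(ib)$, and recall that $\tilde P=\tilde P(0)$ acts by $(\tilde P v)(y,u)=\int_Y v(y',u)\,d\mu(y')$.

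First I would isolate the leading singularity by writing
\[
\hat T(ib)v=(1-\lambda(ib))^{-1}\hat U(ib)\tilde P(ib)v+\hat U(ib)(I-\hat R(ib))^{-1}\tilde Q(ib)v,
\]
and then swapping $\hat U(ib)\tilde P(ib)$ for $\hat U(0)\tilde P$. The induced errors are controlled by $\|\hat U(ib)\|\le 2$ (Remark~\ref{rmk-U}), $\|\hat U(ib)-\hat U(0)\|\ll b^{\beta}$ (Lemma~\ref{lem-U}(b), noting that $\ell$ is asymptotically constant here), and $\|\tilde P(ib)-\tilde P\|_\theta\ll b^{\beta}$ (obtained from Lemma~\ref{lem-R} at $b_2=0$, where the second bracket collapses, fed into the standard Cauchy contour formula for $\tilde P(ib)$). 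Combined with $(1-\lambda(ib))^{-1}=O(b^{-\beta})$ from Proposition~\ref{prop-hot}, each correction contributes $O(1)\|v\|_\theta|w|_\infty$, while the $\tilde Q$-term is uniformly bounded. Exactly as in the proof of Corollary~\ref{cor-T}(a), $\hat U(0)\tilde P v$ is the constant function $\int_{\tilde Y}v\,d\tilde\mu$, so pairing with $w$ produces
\[
\hat\rho(ib)=(1-\lambda(ib))^{-1}\int_{\tilde Y}v\,d\tilde\mu\int_{\tilde Y}w\,d\tilde\mu+O(1)\|v\|_\theta|w|_\infty.
\]

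The second step is purely algebraic: apply Proposition~\ref{prop-hot} to expand
\[
(1-\lambda(ib))^{-1}=e_1^{-1}b^{-\beta}\bigl(1-e_2b^{1-\beta}+O(b^{q-\beta})\bigr)^{-1}=\sum_{j}e_1^{-1}e_2^{\,j}\,b^{-((j+1)\beta-j)}+O(b^{q-2\beta}),
\]
truncating the geometric series at the largest $j$ with $j(1-\beta)<q-\beta$, equivalently $(j+1)\beta-j\ge 2\beta-q$. Since we may assume $q\in(1,2\beta)$, we have $2\beta-q>0$, so the $O(1)$ remainder from step one is dominated by $b^{-(2\beta-q)}$ as $b\to 0^+$ and is absorbed into the stated error, delivering the claimed expansion with $c_j=e_1^{-1}e_2^{\,j}$. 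The only real obstacle is step one: ensuring that $\|\tilde P(ib)-\tilde P\|$ is controlled in the Lipschitz norm rather than just in $L^\infty$ (since the cruder Proposition~\ref{prop-R_s} would lose information about the $F_\theta$-action), so that the $O(b^{-\beta})$ factor from $(1-\lambda(ib))^{-1}$ is only amplified by a matching $O(b^{\beta})$ and not worse; the remaining ingredients are already put in place in Sections~\ref{sec-funct} and~\ref{sec-mixing}.
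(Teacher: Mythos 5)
Your proposal is correct and follows essentially the same route as the paper: the spectral splitting of $(I-\hat R(ib))^{-1}$ into the $P(ib)$ and $Q(ib)$ parts, the $O(b^\beta)$ control of $P(ib)-P$ in the $\theta$-norm via Lemma~\ref{lem-R}, the replacement of $\hat U(ib)P(ib)$ by $\hat U(0)P$ via Lemma~\ref{lem-U}, the expansion of $(1-\lambda(ib))^{-1}$ from Proposition~\ref{prop-hot}, and absorption of the $O(1)$ remainders into $O(b^{-(2\beta-q)})$ using $q<2\beta$. The only difference is cosmetic ordering (you isolate the projection and pair with $w$ before expanding the eigenvalue, the paper expands first), so no further comment is needed.
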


\begin{proof}
Recall that
\[
(I-\hat R_0(ib))^{-1}=(1-\lambda(ib))^{-1}P(0)- (1-\lambda(ib))^{-1}(P(ib)-P(0))+
(I-\hat R_0(ib))^{-1}Q(ib),
\]
for $b\in(0,\delta)$.
It follows from Proposition~\ref{prop-hot} that
\[
 (1-\lambda(ib))^{-1}=\sum_j c_j b^{-((j+1)\beta-j)}+ O(b^{-(2\beta-q)}),
\]
for constants $c_0,c_1,\ldots\in\C$.
By Proposition~\ref{prop-H2}(a), $\|(I-\hat R_0(ib))^{-1}Q(ib)\|_\theta=O(1)$. 
By Lemma~\ref{lem-R}, 
$\|\hat R_0(ib)-R\|_\theta\ll b^{\beta}$ and it follows that
$\|P(ib)-P(0)\|_\theta\ll b^{\beta}$. 
Hence
\[
(I-\hat R_0(ib))^{-1}v_0
=\sum_jc_j b^{-((j+1)\beta-j)}
\int_Y v_0\,d\mu + O(b^{-(2\beta-q)})\|v_0\|_\theta,
\]
for all $v_0\in F_\theta(Y)$.
By Lemma~\ref{lem-U}(a),
\[
\hat U(0)(I-\hat R(ib))^{-1}v
=\sum_jc_j b^{-((j+1)\beta-j)}
\int_{\tilde Y} v\,d\tilde\mu + O(b^{-(2\beta-q)})\|v\|_\theta,
\]
By Lemma~\ref{lem-U}(b), $\|\hat U(ib)-\hat U(0)\|_\theta\ll b^\beta$ and so
\[
\hat T(ib)v=\hat U(ib)(I-\hat R(ib))^{-1}v
=\sum_jc_j b^{-((j+1)\beta-j)}
\int_{\tilde Y} v\,d\tilde\mu + O(b^{-(2\beta-q)})\|v\|_\theta.
\]
The result follows from Proposition~\ref{prop-renew}.
\end{proof}

\begin{pfof}{Theorem~\ref{thm-infinite}(b)}
By Proposition~\ref{prop-middle},
for all $\beta'\in(\frac12,\beta)$, 
$\int_{a/t}^1  e^{ibt}\hat\rho_{v,w}(ib)\,db
=O(t^{-(1-\beta)}a^{-(2\beta'-1)})$ where $\beta-\beta'$ is arbitrarily small.

A calculation (see for example~\cite[Proposition~9.5]{MT12}) shows that
$\int_0^{a/t}b^{-((j+1)\beta-j)}e^{-itb}db={\rm const}\,t^{-(j+1)(1-\beta)}(1+
O(a^{-((j+1)\beta-j)}))$.
Also, $2\beta-q\in(0,1)$ so that
$\int_0^{a/t}b^{-(2\beta-q)}\,db=O((a/t)^{1-2\beta+q})$.

Choosing $a=t^{1-q^{-1}\beta-\epsilon'}$, we obtain from Corollary~\ref{cor-hot} that
\begin{align*}
\int_{0}^1e^{ibt}\hat\rho_{v,w}(ib)\,db
=\sum_j d_j t^{-j(1-\beta)}
\int_{\tilde Y} v\,d\tilde\mu 
\int_{\tilde Y} w\,d\tilde\mu 
+O(t^{-\beta(1-q^{-1}(2\beta-1)-\epsilon)}).
\end{align*}
Also, by Proposition~\ref{prop-infty},
$\int_1^\infty  e^{ibt}\hat\rho_{v,w}(ib)\,db =O(t^{-(\beta-\epsilon)})$.
Hence the result follows from Proposition~\ref{prop-inverse}.
\end{pfof}

\part{Finite measure systems}
\label{part-finite}

In this part of the paper, we prove our main results in the finite measure context.
Throughout, we continue to assume the setup from Section~\ref{sec-main}, so $F:Y\to Y$ is a full branch Gibbs-Markov map and $\varphi:Y\to\Z^+$ is a roof function satisfying assumptions (A1) and (A2).  In addition, we make the
standing assumption throughout this part of the paper that $\mu(\varphi>t)=O(t^{-\beta})$ where $\beta>1$.

In Section~\ref{sec-decomp}, we decompose the family
of operators $\hat T_0(s)$ into various pieces and formulate 
Lemmas~\ref{lem-T12},~\ref{lem-T3},~\ref{lem-T4} 
that provide estimates for each of the pieces.
Theorem~\ref{thm-finite}(a) thereby reduces to proving these lemmas.

In Section~\ref{sec-T12}, we prove Lemma~\ref{lem-T12}.
Section~\ref{sec-D} contains an operator-theoretic estimate, and
in Section~\ref{sec-T3}, we prove Lemma~\ref{lem-T3}.
Section~\ref{sec-smooth} contains estimates on derivatives of
various families of operators.
In Section~\ref{sec-fml}, we derive a continuous time version of
the ``first main lemma'' that was crucial in~\cite{Gouezel-sharp,Sarig02}.
In Section~\ref{sec-T4}, we prove Lemma~\ref{lem-T4} completing the proof of
Theorem~\ref{thm-finite}(a).
In Section~\ref{sec-zero}, we prove Theorem~\ref{thm-finite}(b).

In this part of the paper, 
$k\ge1$ is fixed but chosen sufficiently large.  All implied constants are allowed to depend on $k$ unless stated otherwise.  

\section{Decomposition for $\hat T_0$}
\label{sec-decomp}

Let $k\ge1$ and define $\varphi^*=\varphi\wedge k$,
$\bv^*=\int_Y\varphi^*\,d\mu$.
Recall that $P(0):L^1(Y)\to L^1(Y)$ is the projection
$P(0)v=\int_Y v\,d\mu$ and
define
\begin{align*}
& P_\varphi=(1/\bar\varphi)P(0),
\quad P_\varphi^*=(1/\bar\varphi^*)P(0), \quad
\hat R_0^*(s)v=R(e^{-s\varphi^*}v), \\
 & \hat T_0^*(s)=(I-\hat R_0^*(s))^{-1}=s^{-1}P_\varphi^*+\hat H^*(s), \quad
 \hat B(s)= s\hat T_0(s).
\end{align*}
Also define
\begin{align*}
\hat C^*(s)=s^{-1}(\hat R_0^*(s)-\hat R_0(s)),
\quad \hat D^*(s)=\hat C^*(0)-\hat C^*(s).
\end{align*}
We note that $\hat C^*(0)v=R((\varphi-\varphi^*)v)$ and hence
$\int_Y\hat C^*(0)1_Y\,d\mu=\bv-\bv^*$.  

\begin{prop} \label{prop-k}
Let $c_2=(C_2+1)^{-1}$ where $C_2$ is as in (A1).
If $a\in\alpha$ such that $|1_a\varphi|_\infty>t$, then
$\varphi(y)>c_2t$ for all $y\in a$.
\end{prop}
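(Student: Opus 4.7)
My plan is to read off the proposition directly from condition (A1), using only the elementary fact that partition elements of $\alpha$ have diameter at most $1$ in the symbolic metric $d_{\theta_0}$. Since any two points $y,y' \in a$ with $a \in \alpha$ satisfy $s(y,y') \ge 1$, we have $d_{\theta_0}(y,y') \le \theta_0 \le 1$, so the oscillation of $\varphi$ on $a$ is controlled by its Lipschitz seminorm:
\[
|1_a\varphi|_\infty - \inf(1_a\varphi) \;\le\; |1_a\varphi|_{\theta_0}\,\sup_{y,y'\in a} d_{\theta_0}(y,y') \;\le\; |1_a\varphi|_{\theta_0}.
\]

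I would then apply (A1), namely $|1_a\varphi|_{\theta_0} \le C_2 \inf(1_a\varphi)$, to obtain
\[
|1_a\varphi|_\infty \;\le\; (C_2+1)\inf(1_a\varphi),
\]
which rearranges to $\inf(1_a\varphi) \ge c_2 |1_a\varphi|_\infty$ with $c_2 = (C_2+1)^{-1}$. Under the hypothesis $|1_a\varphi|_\infty > t$, this gives $\varphi(y) \ge \inf(1_a\varphi) > c_2 t$ for every $y \in a$, which is the desired conclusion.

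There is no real obstacle: the statement is essentially just the assertion that condition (A1) forces $\varphi$ to be comparable to a constant on each $a \in \alpha$, with a multiplicative slack of $C_2+1$. The only point that needs to be noticed is the bound on the $d_{\theta_0}$-diameter of a $1$-cylinder, which follows immediately from the definition of the separation time.
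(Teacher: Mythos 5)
Your proof is correct and follows essentially the same route as the paper: the paper picks a point $y_0\in a$ with $\varphi(y_0)>t$ and bounds $t-\varphi(y)<\varphi(y_0)-\varphi(y)\le |1_a\varphi|_{\theta_0}\le C_2\varphi(y)$, which is just a pointwise version of your oscillation estimate $|1_a\varphi|_\infty\le(C_2+1)\inf(1_a\varphi)$ via (A1) and the bound $d_{\theta_0}(y,y')\le 1$ on a partition element. The only cosmetic remark is that whether $d_{\theta_0}(y,y')\le\theta_0$ or merely $\le 1$ inside $a$ depends on the separation-time convention, but your argument only needs $\le 1$, so nothing is affected.
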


\begin{proof}
Choose $y_0\in a$ with $\varphi(y_0)>t$.
By assumption (A1), for all $y\in a$,
\[
t-\varphi(y)< \varphi(y_0)-\varphi(y)\le |1_a\varphi|_{\theta_0}d_{\theta_0}(y,y_0)<|1_a\varphi|_{\theta_0}\le  C_2\varphi(y),
\]
and the result follows.
\end{proof}

\begin{prop} \label{prop-C}
$\|\hat C^*(s)\|_\infty\le C_1\int_{\{\varphi>c_2 k\}}\varphi\,d\mu$
for $s\in\overline\H$. 
In particular, if $\varphi\in L^1$, then
$\hat C^*$ extends continuously to $\overline\H$ and
$\|\hat C^*(s)\|_\infty\to0$ as $k\to\infty$ uniformly on $\overline\H$.   
\end{prop}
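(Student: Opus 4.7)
The plan is to reduce everything to a pointwise bound on $K_s(y) := s^{-1}(e^{-s\varphi^*(y)} - e^{-s\varphi(y)})$, and then integrate against the transfer operator via a Gibbs--Markov partition sum. Since $\varphi^* = \varphi$ on $\{\varphi \le k\}$, the function $K_s$ vanishes there; on $\{\varphi > k\}$, where $\varphi^* = k$, I would factor $K_s(y) = e^{-sk}\,s^{-1}(1 - e^{-s(\varphi(y)-k)})$ and invoke the elementary inequality $|1 - e^{-sz}| \le |s|z$ for $\Re s \ge 0$, $z \ge 0$ (immediate from $1 - e^{-sz} = \int_0^z s e^{-s\tau}\,d\tau$), together with $|e^{-sk}| \le 1$, to obtain the uniform pointwise bound $|K_s(y)| \le 1_{\{\varphi > k\}}(y)\,\varphi(y)$ valid for every $s \in \overline\H$.

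To pass from this to the operator-norm bound, I would expand the transfer operator in preimages: for $v \in L^\infty(Y)$,
$|\hat C(s) v(y)| = |R(K_s v)(y)| \le |v|_\infty \sum_{a \in \alpha} e^{p(y_a)}|K_s(y_a)|$,
with only those partition elements $a$ satisfying $|1_a\varphi|_\infty > k$ contributing. For such $a$, Proposition~\ref{prop-k} places $a$ inside $\{\varphi > c_2 k\}$, and condition (A1) combined with the Lipschitz estimate $|1_a\varphi|_\infty \le \inf_a\varphi + |1_a\varphi|_{\theta_0} \le (1+C_2)\inf_a\varphi$ converts the pointwise supremum into a $\mu$-integral, namely $\mu(a)|1_a\varphi|_\infty \le c_2^{-1}\int_a\varphi\,d\mu$. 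Combining this with $e^{p(y_a)} \le C_1\mu(a)$ from \eqref{eq-GM} and summing over $a$ yields the claimed bound $\|\hat C(s)\|_\infty \ll \int_{\{\varphi > c_2 k\}}\varphi\,d\mu$ (up to absorbing the factor $c_2^{-1}$ into $C_1$, which seems to be the intent of the paper's constant conventions).

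For the second assertion, if $\varphi \in L^1$ then $\int_{\{\varphi > c_2 k\}}\varphi\,d\mu \to 0$ as $k \to \infty$ by dominated convergence, so the bound just proved gives $\|\hat C(s)\|_\infty \to 0$ uniformly on $\overline\H$. To verify that $\hat C$ extends continuously to $\overline\H$, with the formula $\hat C(0)v = R((\varphi - \varphi^*)v)$ at the origin, I would represent $K_s(y) = F_s(\varphi(y))$ via $F_s(x) = s^{-1}(e^{-s(x \wedge k)} - e^{-sx})$ extended by $F_0(x) = x - x \wedge k$, note the joint continuity of $(s,x) \mapsto F_s(x)$ on $\overline\H \times [0,\infty)$, and split the sum $\sum_a \mu(a)|1_a(K_s - K_{s_0})|_\infty$ into head elements with $|1_a\varphi|_\infty \le N$, where uniform continuity of $F_s$ on compact $x$-ranges forces vanishing as $s \to s_0$, and tail elements with $|1_a\varphi|_\infty > N$, which by the first part contribute at most $O(\int_{\{\varphi > c_2 N\}}\varphi\,d\mu)$ uniformly in $s$. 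The main (but mild) obstacle is precisely this infinite partition sum in the continuity argument, and the truncation resolves it cleanly.
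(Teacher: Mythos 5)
Your proof is correct and follows essentially the same route as the paper: the pointwise bound $|s^{-1}(e^{-s\varphi^*}-e^{-s\varphi})|\le\varphi-\varphi^*$ for $\Re s\ge0$, the expansion of $R$ over partition elements via~\eqref{eq-GM}, and Proposition~\ref{prop-k} to localise the sum to $\{\varphi>c_2k\}$. You are in fact slightly more careful than the paper's own argument (which silently replaces $\mu(a)\varphi(y_a)$ by $\int_a\varphi\,d\mu$, costing the harmless factor $c_2^{-1}$ you identify), and your truncation argument for continuity on $\overline\H$ supplies a detail the paper leaves implicit.
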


\begin{proof}
Recalling~\eqref{eq-Rn}, we have
\begin{align} \label{eq-hatC}
(\hat C^*(s)v)(y)= &
s^{-1}\sum_{a\in\alpha:|1_a\varphi|_\infty\ge k}e^{g(y_a)}v(y_a)
(e^{-s\varphi^*(y_a)}-e^{-s\varphi(y_a)}),
\end{align}
so by~\eqref{eq-GM} and Proposition~\ref{prop-k},
\begin{align*}
|(\hat C^*(s)v)(y)|  & 
\le C_1|v|_\infty \sum_{a:|1_a\varphi|_\infty\ge k}\mu(a)
(\varphi(y_a)-\varphi^*(y_a))
\\ &   \le C_1|v|_\infty \sum_{a:|1_a\varphi|_\infty\ge k}\mu(a) \varphi(y_a)
\le C_1 |v|_\infty\int_{\{\varphi>c_2 k\}}\varphi\,d\mu,
\end{align*}
as required.
\end{proof}

\begin{prop} \label{prop-decomp}
For $k$ sufficiently large,
$\hat T_0(s) = \hat T_{0,1}(s)+\hat T_{0,2}(s)+ \hat T_{0,3}(s)+ \hat T_{0,4}(s)$ for all $s\in\H$, where
\begin{align*}
\hat T_{0,1}(s) & = s^{-1}P_\varphi, \qquad
\hat T_{0,2}(s)  = s^{-1}P_\varphi\hat D^*P_\varphi, \qquad
\hat T_{0,3}(s) & =  s^{-1}(I-P_\varphi \hat D^*)^{-1}(P_\varphi\hat D^*)^2P_\varphi,
\\ 
\hat T_{0,4}(s) & = (I-P_\varphi \hat D^*)^{-1}(I-P_\varphi \hat C^*(0))\hat H^*(I-\hat C^*\hat B).
\end{align*}
\end{prop}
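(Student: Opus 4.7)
The plan is to derive the decomposition through a resolvent-style manipulation relating $\hat T_0$ to the truncated resolvent $\hat T_0^*$, followed by a Neumann series expansion.

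First I would establish the basic resolvent identity $\hat T_0 = \hat T_0^*(I - \hat C\hat B)$. Since $I - \hat R_0 = (I - \hat R_0^*) + s\hat C$, we have $(I - \hat R_0^*)\hat T_0 = I - s\hat C\hat T_0 = I - \hat C\hat B$, and multiplying by $\hat T_0^*$ on the left gives the identity. Substituting $\hat T_0^* = s^{-1}P_\varphi^* + \hat H^*$ and $\hat C = \hat C(0) - \hat D$, and using $\hat B = s\hat T_0$ in two places, I obtain
\begin{align*}
\hat T_0 = s^{-1}P_\varphi^* - P_\varphi^*\hat C(0)\hat T_0 + P_\varphi^*\hat D\hat T_0 + \hat H^*(I - \hat C\hat B).
\end{align*}

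The crucial algebraic input is the identity $P_\varphi^* - P_\varphi = P_\varphi^*\hat C(0)P_\varphi$. To verify this, note that both $P_\varphi$ and $P_\varphi^*$ are scalar multiples of $P$ (indeed $P_\varphi^* = (\bar\varphi/\bar\varphi^*)P_\varphi$), and a direct computation on constants using $\int \hat C(0)1_Y\,d\mu = \bar\varphi - \bar\varphi^*$ yields the identity. By direct multiplication one then sees that $(I + P_\varphi^*\hat C(0))(I - P_\varphi\hat C(0)) = I$; that is, $(I + P_\varphi^*\hat C(0))^{-1} = I - P_\varphi\hat C(0)$.

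Setting $X = (I - P_\varphi\hat D)\hat T_0 - s^{-1}P_\varphi$ and plugging in the previous display, the pieces $s^{-1}(P_\varphi^* - P_\varphi)$ and $(P_\varphi^* - P_\varphi)\hat D\hat T_0$ each take the form $P_\varphi^*\hat C(0)(\cdots)$ via the key identity, and after collecting terms I find that $X$ satisfies the self-referential equation
\begin{align*}
X = -P_\varphi^*\hat C(0)\,X + \hat H^*(I - \hat C\hat B).
\end{align*}
Solving using the inverse formula above gives $X = (I - P_\varphi\hat C(0))\hat H^*(I - \hat C\hat B)$, and consequently
\begin{align*}
\hat T_0 = (I - P_\varphi\hat D)^{-1}\bigl\{s^{-1}P_\varphi + (I - P_\varphi\hat C(0))\hat H^*(I - \hat C\hat B)\bigr\}.
\end{align*}
The second summand is precisely $\hat T_{0,4}$. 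Expanding the Neumann series $(I - P_\varphi\hat D)^{-1} = \sum_{j\ge 0}(P_\varphi\hat D)^j$ in the first summand and splitting into the $j=0$, $j=1$, and $j\ge 2$ contributions yields $\hat T_{0,1} + \hat T_{0,2} + \hat T_{0,3}$, respectively.

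For the invertibility of $I - P_\varphi\hat D(s)$ on $F_\theta(Y)$, I would bound $\|\hat D(s)\|_\infty \le 2\|\hat C(s)\|_\infty$ and invoke Proposition~\ref{prop-C}, which gives $\|\hat C(s)\|_\infty \to 0$ as $k\to\infty$ uniformly in $s \in \overline\H$. Since $\|P_\varphi\|_\infty \le 1/\bar\varphi$, this makes $\|P_\varphi\hat D(s)\|_\infty$ arbitrarily small for $k$ large, so the Neumann series converges uniformly on $\overline\H$. The main obstacle is the bookkeeping in the self-referential manipulation leading to the equation for $X$; the conceptual heart of the matter is that $(I + P_\varphi^*\hat C(0))^{-1} = I - P_\varphi\hat C(0)$ is the algebraic mechanism that corrects the incorrect residue $P_\varphi^*$ of $\hat T_0^*$ at $s=0$ to the correct residue $P_\varphi$ of $\hat T_0$, which is encoded precisely by the prefactor $(I - P_\varphi\hat C(0))$ in $\hat T_{0,4}$.
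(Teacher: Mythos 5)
Your proposal is correct and follows essentially the same route as the paper: the resolvent identity $\hat T_0=\hat T_0^*(I-\hat C\hat B)$, the algebraic correction coming from $\int_Y\hat C(0)1_Y\,d\mu=\bv-\bv^*$, arrival at the identity $(I-P_\varphi\hat D)\hat T_0=s^{-1}P_\varphi+(I-P_\varphi\hat C(0))\hat H^*(I-\hat C\hat B)$, inversion of $I-P_\varphi\hat D$ for $k$ large via Proposition~\ref{prop-C}, and the expansion of $(I-P_\varphi\hat D)^{-1}$ acting on $s^{-1}P_\varphi$ into the $\hat T_{0,1},\hat T_{0,2},\hat T_{0,3}$ pieces. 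The only difference is bookkeeping: you solve a fixed-point equation for $X$ using $(I+P_\varphi^*\hat C(0))^{-1}=I-P_\varphi\hat C(0)$, whereas the paper multiplies $(I+P_\varphi^*\hat C)\hat T_0=s^{-1}P_\varphi^*+\hat H^*(I-\hat C\hat B)$ on the left by $I-P_\varphi\hat C(0)$ using the identities~\eqref{eq-phi}.
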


\begin{proof}
Since $\int_Y\hat C^*(0)1_Y\,d\mu=\bv-\bv^*$,  it follows that
\begin{align} \label{eq-phi}
(I-P_\varphi \hat C^*(0))P_\varphi^*=P_\varphi, \quad
(I-P_\varphi \hat C^*(0))(I+P_\varphi^*\hat C^*)=I-P_\varphi \hat D^*. 
\end{align}

Using the identity 
$\hat T_0=\hat T_0^*-\hat T_0^*(\hat R_0^*-\hat R_0)\hat T_0$, 
it follows that
\begin{align*}
\hat T_0  =\hat T_0^*-\hat T_0^*\hat C^*\hat B 
  & =s^{-1}P_\varphi^*+\hat H^*-s^{-1}P_\varphi^*\hat C^*\hat B-\hat H^*\hat C^*\hat B
\\ & =s^{-1}P_\varphi^*-P_\varphi^*\hat C^*\hat T_0+\hat H^*(I-\hat C^*\hat B).
\end{align*}
Hence
$(I+P_\varphi^*\hat C^*)\hat T_0 = 
s^{-1}P_\varphi^*+\hat H^*(I-\hat C^*\hat B)$.
Multiplying throughout by $(I-P_\varphi \hat C^*(0))$, and using~\eqref{eq-phi},
we obtain
\[
(I-P_\varphi \hat D^*)\hat T_0 = 
s^{-1}P_\varphi +(I-P_\varphi \hat C^*(0))\hat H^*(I-\hat C^*\hat B).
\]
 For $k$ sufficiently large,
we can invert $I-P_\varphi \hat D^*$ by Proposition~\ref{prop-C} and the result follows.

\end{proof}

Substituting into~\eqref{eq-rhohat}, we obtain that
$\hat \rho(s)=\sum_{i=1}^4 \hat\rho_i(s)$, where
\begin{align} \label{eq-rhoi}
& \hat \rho_i(s)  = 
\int_{\tilde Y} \hat U(s)\hat T_i(s)v\,w\,d\mu^\varphi=
(1/\bv)\int_{\tilde Y} \hat U(s)\hat T_i(s)v\,w\,d\tilde\mu, \\
& (\hat T_i(s)v)(y,u) =(\hat T_{0,i}(s)v^u)(y),
\quad i=1,2,3,4. \nonumber
\end{align}
Theorem~\ref{thm-finite} is an immediate consequence of the next three lemmas.
We recall that $\zeta(t)$ and $\xi_{\beta,\epsilon}(t)$
were defined in~\eqref{eq-xi}.

\begin{lemma} \label{lem-T12}
Suppose that $\mu(\varphi>t)=O(1/t^\beta)$ for some $\beta>1$.
Then
\begin{itemize}
\item[(a)] $\rho_1(t)- \bar v\bar w =O\bigl(|v|_\infty|w|_\infty t^{-\beta}\bigr)$, and
\item[(b)]
$\rho_2(t)  = 
(1/\bar\varphi)\zeta(t)\bar v\bar w 
+O\bigl(v|_\infty|w|_\infty  t^{-\beta}\bigr)$,
\end{itemize}
for all $v,w\in L^\infty(\tilde Y)$, $t>0$, $k\ge1$.
\end{lemma}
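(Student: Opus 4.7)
My plan for both parts is to unfold the operators $\hat T_{0,i}$ explicitly on $v^u$, invert the Laplace transform, and then estimate the resulting time integrals using $\mu(\varphi>t)=O(t^{-\beta})$ together with the explicit formula for $U_t$ in Proposition~\ref{prop-U}. I set $\phi(u)=(1/\bv)\int_Y v(\cdot,u)\,d\mu$ and $g(\sigma)=\int_{\tilde Y} U_\sigma\phi\cdot w\,d\tilde\mu$, viewing $\phi$ as a function on $\tilde Y$ depending only on $u$. Then $|g(\sigma)|\le |v|_\infty|w|_\infty$ for $\sigma\le 1$, and the second case of Proposition~\ref{prop-U} gives $|g(\sigma)|\ll |v|_\infty|w|_\infty\,\mu\{\sigma<\varphi<\sigma+1\}$ for $\sigma>1$; a telescoping argument then yields $\int_t^\infty |g|\,d\sigma\ll |v|_\infty|w|_\infty\,\mu(\varphi>t)=O(|v|_\infty|w|_\infty t^{-\beta})$.

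For part (a), $\hat T_{0,1}(s)=s^{-1}P_\varphi$ acting on $v^u$ produces $s^{-1}\phi(u)\cdot 1_Y$, hence $\hat\rho_1(s)=(1/\bv)s^{-1}G(s)$ with $G(s)=\int_{\tilde Y}\hat U(s)\phi\cdot w\,d\tilde\mu$. Inversion yields $\rho_1(t)=(1/\bv)\int_0^t g(\sigma)\,d\sigma$. Since $\phi$ depends only on $u$ and $\tilde R$ fixes such functions, Lemma~\ref{lem-U}(a) gives $(\hat U(0)\phi)(y,u)=\int_0^1\phi(\tau)\,d\tau=\bar v$, so $G(\infty)=\bar v\bv\bar w$. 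The assertion then follows from $\rho_1(t)-\bar v\bar w=-(1/\bv)\int_t^\infty g\,d\sigma$ and the tail bound above.

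For part (b), $P_\varphi\hat D(s)P_\varphi v^u=(1/\bv)\phi(u)\Psi(s)$ where $\Psi(s):=\int_Y\hat D(s)1_Y\,d\mu$. Combining~\eqref{eq-hatC} with Fubini shows $\int_Y\hat C(s)1_Y\,d\mu=\int_k^\infty e^{-sr}\mu(\varphi>r)\,dr$, so $\Psi(s)=\int_k^\infty(1-e^{-sr})\mu(\varphi>r)\,dr$ and $s^{-1}\Psi(s)$ is the Laplace transform of $A(\tau):=\int_{\max(\tau,k)}^\infty\mu(\varphi>r)\,dr$. Note $A(\tau)=\gamma(\tau)$ for $\tau>k$ while $A$ is uniformly bounded on $[0,k]$. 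Inverting the Laplace transform gives $\rho_2(t)=(1/\bv)^2\int_0^t A(t-\sigma)g(\sigma)\,d\sigma$. Since $A-\gamma$ is bounded and vanishes off $[0,k]$, replacing $A$ by $\gamma$ costs at most $\ll\int_{t-k}^\infty|g|=O(t^{-\beta})$. I then write $\bar v\bv\bar w=G(\infty)$ and split $\int_0^t\gamma(t-\sigma)g(\sigma)\,d\sigma-\gamma(t)G(\infty)$ at $\sigma=t/2$, bounding the three resulting pieces by: (i) $|\gamma(t-\sigma)-\gamma(t)|\le\sigma\mu(\varphi>t/2)=O(\sigma t^{-\beta})$ combined with $\int_0^\infty\sigma|g|\,d\sigma=O(\bv)$; (ii) $\int_{t/2}^t\gamma(t-\sigma)g\,d\sigma\le\bv\int_{t/2}^\infty|g|=O(t^{-\beta})$; and (iii) $\gamma(t)\int_{t/2}^\infty g=O(t^{1-\beta}\cdot t^{-\beta})=o(t^{-\beta})$ since $\beta>1$.

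The main obstacle is getting the sharper $O(t^{-\beta})$ error in step (i) instead of the naive $O(t^{1-\beta})$. If one used the cruder bound $|g(\sigma)|\ll|v|_\infty|w|_\infty\mu(\varphi>\sigma)=O(\sigma^{-\beta})$, then $\int_0^\infty\sigma|g|\,d\sigma$ would diverge whenever $\beta\le 2$. The saving comes from the finer bound $|g(\sigma)|\ll|v|_\infty|w|_\infty\mu\{\sigma<\varphi<\sigma+1\}$ furnished by Proposition~\ref{prop-U}: Fubini then gives $\int_0^\infty\sigma\mu\{\sigma<\varphi<\sigma+1\}\,d\sigma=\int_Y\int_{\max(0,\varphi-1)}^\varphi\sigma\,d\sigma\,d\mu\le\bv$, which is finite for every $\beta>1$ and so permits a uniform $O(t^{-\beta})$ bound.
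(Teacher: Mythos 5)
Your argument is correct, and for part (a) it is essentially the paper's argument in different clothing: your analysis of $g(\sigma)=\int_{\tilde Y}U_\sigma\phi\,w\,d\tilde\mu$ (boundedness for $\sigma\le1$, the bound $\ll\mu(\sigma<\varphi<\sigma+1)$ for $\sigma>1$, and the telescoped tail $\int_t^\infty|g|\ll\mu(\varphi>t)$) plays exactly the role of the paper's Proposition~\ref{prop-r}, with Lemma~\ref{lem-U}(a) identifying the limit $G(\infty)=\bv\,\bar v\bar w$ just as the paper does after simplifying with the fact that $\tilde R$ fixes functions independent of $y$. In part (b) you arrange the convolution differently: the paper factors $\bv\hat\rho_2=\hat\rho_1\int_Y\hat D1_Y\,d\mu$ and convolves the bounded function $\rho_1$ (whose asymptotics come from part (a)) against $1_{\{t>k\}}\mu(\varphi>t)$, so its only extra input is the tail-convolution bound $\int_k^t\mu(\varphi>\tau)\mu(\varphi>t-\tau)\,d\tau=O(t^{-\beta})$; you instead convolve $A\approx\gamma$ against the ``density'' $g$, which is the same identity after an integration by parts, but it shifts the burden onto the finite first moment $\int_0^\infty\sigma|g(\sigma)|\,d\sigma<\infty$. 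You correctly recognise that the crude bound $|g(\sigma)|\ll\mu(\varphi>\sigma)$ would fail here for $\beta\le2$ and that the refined bound $\mu(\sigma<\varphi<\sigma+1)$ from Proposition~\ref{prop-U} saves the day (the moment is then controlled by $\bv<\infty$), so both routes ultimately use $\beta>1$ in the same essential way; the paper's arrangement avoids the moment estimate altogether, while yours avoids feeding the part (a) asymptotics into the convolution.
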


\begin{lemma} \label{lem-T3}
Suppose that $\mu(\varphi>t)=O(1/t^\beta)$ for some $\beta>1$.
Then for any $\epsilon>0$ and for all $k$ sufficiently large, there is a constant $C>0$ such that 
\begin{align*}
|\rho_3(t)|\le C|v|_\infty|w|_\infty\xi_{\beta,\epsilon}(t),
\end{align*}
for all $v,w\in L^\infty(\tilde Y)$, $t>0$.
\end{lemma}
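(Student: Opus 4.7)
The plan is to reduce $\hat T_{0,3}(s)$ to a scalar multiplier using the fact that $P_\varphi$ projects onto constants, and then to invert the Laplace transform by a contour shift and second-differences argument in the style of Section~\ref{sec-mixing}. Since $P_\varphi w = (1/\bv)(\int_Y w\,d\mu)1_Y$ and $\hat D(0) = 0$, the operator $P_\varphi\hat D(s)$ acts on the one-dimensional space of constant functions as multiplication by the scalar
\[
\alpha(s) = (1/\bv)\int_Y\hat D(s)1_Y\,d\mu.
\]
Iterating, $(P_\varphi\hat D(s))^n P_\varphi v^u = \alpha(s)^n P_\varphi v^u$, and for $k$ large (by Proposition~\ref{prop-C}, $|\alpha(s)|<1$) the operator $(I-P_\varphi\hat D(s))^{-1}$ acts on constants by multiplication by $(1-\alpha(s))^{-1}$. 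Hence
\[
\hat T_3(s)v(y,u) = \Phi(s)V(u),\quad \Phi(s) = s^{-1}\alpha(s)^2/(1-\alpha(s)),\quad V(u)=(1/\bv)\int_Y v(\cdot,u)\,d\mu,
\]
so $\hat\rho_3(s) = (1/\bv)\Phi(s)\int_{\tilde Y}\hat U(s)V\cdot w\,d\tilde\mu$.

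Next I would quantify the vanishing of $\alpha$ at $s=0$ using the estimates on $\hat D$ from Section~\ref{sec-D}. Direct calculation with \eqref{eq-hatC} together with $(ib)^{-1}(1-e^{-ibx}) = \int_0^x e^{-ibs}\,ds$ gives, on $\{\varphi>k\}$ (the integrand vanishes where $\varphi\le k$ since $\varphi^*=\varphi$ there),
\[
\bv\,\alpha(ib) = \int_{\{\varphi>k\}}\!\!\bigl\{(\varphi-k)(1-e^{-ibk}) + e^{-ibk}\!\int_0^{\varphi-k}\!(1-e^{-ibs})\,ds\bigr\}\,d\mu.
\]
Applying $|1-e^{-ibx}|\le 2\min(|bx|,1)^\gamma$ for $\gamma\in[0,1]$, together with the tail bound which gives $\int(\varphi-k)^{1+\gamma}d\mu<\infty$ for $\gamma<\beta-1$, I would conclude $|\alpha(ib)|\ll|b|^{(\beta-1-\epsilon)\wedge 1}$ (with a log factor when $\beta=2$) and the matching Hölder estimate $|\alpha(ib_1)-\alpha(ib_2)|\ll|b_1-b_2|^{(\beta-1-\epsilon)\wedge 1}$. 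Consequently $|\Phi(ib)|\ll|b|^{2\beta-3-2\epsilon}$ for $1<\beta<2$ and $|\Phi(ib)|\ll|b|$ for $\beta\ge 2$ near zero, while at high frequencies the crude bound $|\alpha(ib)|\ll|b|^{-1}$ (from the oscillatory representation) gives $|\Phi(ib)|\ll|b|^{-3}$. Combined with $\|\hat U(ib)\|_{L^\infty\to L^1}\le 2$ and the Hölder continuity of $\hat U$ in $b$ (Lemma~\ref{lem-U}(b), whose proof transfers to the finite-measure setting and yields a Hölder exponent at least $(\beta-1-\epsilon)\wedge 1$), this delivers uniform and Hölder bounds for $\hat\rho_3(ib)$.

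Finally, justifying $\rho_3(t) = \frac{1}{2\pi}\int_{-\infty}^\infty e^{ibt}\hat\rho_3(ib)\,db$ by a contour shift (the integrand is $L^1$ near zero because $2\beta-3-2\epsilon>-1$), I would estimate the near-zero regime by direct integration, which produces $\ll t^{-(2\beta-2-\epsilon')}$ when $1<\beta<2$, and control the tail by the second-differences technique of Propositions~\ref{prop-middle}--\ref{prop-infty}. After absorbing $\epsilon'$ into $\epsilon$, this matches $\xi_{\beta,\epsilon}(t)=t^{-(2\beta-2)}$ in the range $1<\beta<2$. The main obstacle will be the regime $\beta\ge 2$, where the target rate $t^{-(\beta-\epsilon)}$ is strictly stronger than what a single Hölder exponent provides: here I expect to need the polynomial expansion $\alpha(ib)=c_1 b+c_2 b^2+\cdots$ valid to order $\lfloor\beta\rfloor$ (justified by the higher moments $\int(\varphi-k)^j d\mu<\infty$ for $j<\beta$), peeling off the polynomial part as boundary terms in Fourier inversion as in Proposition~\ref{prop-infty} and bounding the $|b|^{\beta-\epsilon}$ remainder by iterated second differences, in a finite-measure analogue of Section~\ref{sec-rates}. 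Care is needed throughout to avoid any derivatives of $w$, since only $|w|_\infty$ is allowed in the claim.
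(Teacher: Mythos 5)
Your scalar reduction is correct and is in fact exactly the structure already displayed in the paper's formula for $\hat T_{0,3}$ at the start of Section~\ref{sec-T3}, and your identity for $\bv\,\alpha(ib)$ and the near-zero bounds are fine. The decisive gap is your high-frequency claim: it is not true that $|\alpha(ib)|\ll |b|^{-1}$ for large $|b|$. By Proposition~\ref{prop-D00}, $\bv\,\alpha(ib)=\int_k^\infty(1-e^{-ibx})\mu(\varphi>x)\,dx$, and only the oscillatory part decays; as $|b|\to\infty$ one has $\alpha(ib)\to\gamma(k)/\bv\neq 0$ (small for $k$ large, but not zero). Hence $\Phi(ib)\asymp |b|^{-1}$ rather than $O(|b|^{-3})$, the integrand $e^{ibt}\hat\rho_3(ib)$ is not absolutely integrable at infinity, and both your contour-shift inversion and your tail estimate collapse as stated. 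In time-domain language this is the delta mass at $t=0$ carried by $\hat D=\hat C(0)-\hat C$ (Remark~\ref{rmk-D}); it cannot be made to decay in $b$, and the paper deals with it by a Neumann series in the Banach algebra $\mathcal{R}_{\beta-\epsilon}$, with smallness supplied by taking $k$ large (Proposition~\ref{prop-Dinv}).

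There is also a structural reason the frequency-side route cannot be patched under the hypotheses of the lemma. With only $|w|_\infty$ available, the regularity in $b$ of $g(ib)=\int_{\tilde Y}\hat U(ib)V\,w\,d\tilde\mu$ (and of $\alpha(ib)$, beyond explicitly removable polynomial parts) is capped at order $\beta-1$ by the finite moments of $\varphi$: $g^{(j)}$ requires $t^j\mu(\varphi>t)\in L^1$, i.e.\ $j<\beta-1$, and for $1<\beta<2$ one has only H\"older exponent $\beta-1$. Difference and iterated-difference arguments in the inversion integral therefore gain at most factors of order $t^{-(\beta-1)}$, which falls short of the targets $t^{-(2\beta-2)}$ for $1<\beta<2$ and $t^{-(\beta-\epsilon)}$ for $\beta\ge 2$; in the paper the missing gain at high frequency comes from the $s^{-m}$ factor of Proposition~\ref{prop-m}, i.e.\ from flow-smoothness of $w$, which this lemma deliberately does not assume. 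The paper avoids the issue altogether by never inverting on the imaginary axis for $\rho_3$: Propositions~\ref{prop-D00} and~\ref{prop-DD} identify the inverse Laplace transforms explicitly ($\gamma(t)=O(t^{-(\beta-1)})$ for the linear term, and the squared term lies in $\mathcal{R}(\xi_\beta(t))$ via the derivative trick, $\xi_\beta$ arising as the convolution of two $t^{-(\beta-1)}$ tails), Proposition~\ref{prop-Dinv} controls the resolvent factor, and Proposition~\ref{prop-U2} gives $\hat U\in\mathcal{R}(\mu(\varphi>t))$; the conclusion is then a pure convolution estimate requiring no regularity of $w$ whatsoever. If you want to salvage your approach you would essentially have to reproduce this time-domain convolution analysis, at which point the Fourier-side machinery is redundant.
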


\begin{lemma} \label{lem-T4}
Assume conditions (A1) and (A2) and suppose that
$\mu(\varphi>t)=O(t^{-\beta})$ where $\beta>1$.
Then for any $\epsilon>0$ and for all $k$ sufficiently large, there exists $\theta\in(0,1)$, $m\ge1$, $C>0$,
such that
\begin{align*}
|\rho_4(t)|\le C\|v\|_\theta|w|_{\infty,m}t^{-(\beta-\epsilon)},
\end{align*}
for all $v\in F_\theta(\tilde Y)$, $w\in L^{\infty,m}(\tilde Y)$, $t>0$.
\end{lemma}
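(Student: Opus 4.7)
My plan is to prove the decay of $\rho_4(t)$ via Fourier inversion along the imaginary axis. The structural point is that, in the decomposition of Proposition~\ref{prop-decomp}, the singular and leading-order contributions to $\hat T_0(s)$ have been bundled into $\hat T_{0,1},\hat T_{0,2},\hat T_{0,3}$ (accounting for $\bar v\bar w$ and the $\gamma(t)$ main term in Theorem~\ref{thm-finite}(a)), so that $\hat T_{0,4}(s)=(I-P_\varphi\hat D)^{-1}(I-P_\varphi\hat C(0))\hat H^*(I-\hat C\hat B)$ should extend continuously to $\overline\H$, including $s=0$. A contour-shift argument like that in Proposition~\ref{prop-inverse} then reduces the problem to estimating
\[
\rho_4(t)=\frac{1}{\pi}\Re\int_0^\infty e^{ibt}\hat\rho_4(ib)\,db
\]
by $t^{-(\beta-\epsilon)}$, which I split into contributions from $|b|\leq 1$ and $|b|\geq 1$.

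For small $|b|$, the key is a continuous-time analogue of the Gou\"ezel--Sarig first main lemma, to be proved in Section~\ref{sec-fml} using the derivative estimates of Section~\ref{sec-smooth} and the operator bounds of Section~\ref{sec-D}. This will show that each constituent of $\hat T_{0,4}(s)$ has $(\beta-\epsilon)$-H\"older regularity in a suitable operator norm on a neighbourhood of $s=0$ in $\overline\H$. The factor $\hat H^*(s)$ is analytic because $\varphi^*=\varphi\wedge k$ is bounded, so $\hat R_0^*(s)$ is entire. The delicate factors are $\hat C$, $\hat D=\hat C(0)-\hat C$ and $\hat B=s\hat T_0$, whose regularity is limited by $\mu(\varphi>t)=O(t^{-\beta})$: for $\beta\in(1,2)$ one obtains $(\beta-1)$-H\"older continuity by a resummation argument in the spirit of Proposition~\ref{prop-resum} and Lemma~\ref{lem-R}, and for $\beta\geq 2$ one differentiates and iterates (which is precisely why the estimate is $t^{-(\beta-\epsilon)}$ rather than $t^{-\beta}$). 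Combining these, for $|b|\leq 1$ one expects
\[
|\hat\rho_4(i(b+h))-\hat\rho_4(ib)|\ll\|v\|_\theta|w|_\infty\,h^{\beta-\epsilon}.
\]
Applying the shift trick $2\int_0^1 e^{ibt}g(b)\,db=\int e^{ibt}(g(b)-g(b-\pi/t))\,db+O(1/t)$ (as used in Proposition~\ref{prop-middle}) converts this H\"older regularity into the required $O(t^{-(\beta-\epsilon)})$ decay.

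For large $|b|$, I invoke Proposition~\ref{prop-m} to write $\hat\rho_4(s)=\sum_{j=1}^m c_j s^{-j}+s^{-m}\hat\rho_{v,\partial_t^m w,4}(s)$ for $m$ to be chosen large. The polynomial part is handled by shifting the contour integral over $\{\Im s\geq 1\}$ across to $\{\Re s=-1\}$ to produce an $O(e^{-t})$ contribution, as in the treatment of $\hat P_m$ in Proposition~\ref{prop-infty}. For the remainder, Lemma~\ref{lem-approx} yields $\|(I-\hat R(ib))^{-1}\|_\theta\ll|b|^\alpha$, from which, combined with the bounds on $\hat U$, $\hat C$, $\hat D$ and $\hat H^*$ assembled in Sections~\ref{sec-D} and~\ref{sec-smooth}, one derives a polynomial bound $\|\hat T_4(ib)\|\ll|b|^\alpha$. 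Hence $|s^{-m}\hat\rho_{v,\partial_t^m w,4}(ib)|\ll|b|^{\alpha-m}\|v\|_\theta|w|_{\infty,m}$, which is integrable once $m>\alpha+1$, and a further application of the shift trick extracts the final factor of $t^{-(\beta-\epsilon)}$.

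The main obstacle is Step~1: obtaining the correct H\"older regularity of the composite $\hat T_{0,4}(s)$ on $\overline\H$ near zero. The difficulty is twofold---first, packaging the smoothness of $\hat C$ and $\hat B$ in a form robust enough to survive composition with $(I-P_\varphi\hat D)^{-1}$ and $(I-\hat C\hat B)$, which requires iterating the resolvent identity while tracking both H\"older and Lipschitz norms; and second, handling $\beta\geq 2$, where iterated differentiation in $s$ of $\hat C(s)=s^{-1}R((1-e^{-s(\varphi-\varphi^*)})v)$ must be controlled against the weaker moment assumption $\varphi^j\in L^1$ only for $j<\beta$. The purpose of the first main lemma in Section~\ref{sec-fml} is precisely to deliver, at one stroke, the composite estimate this step needs.
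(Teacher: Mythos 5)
Your overall architecture (inversion along the imaginary axis, a small-$b$ regime handled by a first-main-lemma type input, a large-$b$ regime handled via Proposition~\ref{prop-m} and the Dolgopyat bound) matches the paper's, but the mechanism you propose for the small-$b$ regime has a genuine gap. You plan to establish $(\beta-\epsilon)$-H\"older regularity of the constituents of $\hat T_{0,4}$ near $s=0$ and convert it into decay by the $\pi/t$-shift trick; but the regularity actually available is too weak for this to close. The family $\hat R_0(ib)$ is only $C^{\beta-\epsilon}$ in $b$, so divided objects such as $(\hat R_0(ib)-\hat R_0(0))/b$, $(1-\lambda(ib))/b$ and hence $\hat B(ib)=ib(I-\hat R_0(ib))^{-1}$ have smoothness only of order $\beta-1-\epsilon$ near $b=0$ (your own interim claim of ``$(\beta-1)$-H\"older continuity for $\beta\in(1,2)$'' concedes this). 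Fed through the shift trick this yields only $O(t^{-(\beta-1)})$, which is the size of the main term $\gamma(t)$, not the claimed error $O(t^{-(\beta-\epsilon)})$. The point of the paper's first main lemma (Lemma~\ref{lem-fml}) is precisely that one must abandon regularity of the symbol and work directly with decay of inverse transforms: Lemma~\ref{lem-hatR} shows by an explicit cylinder-by-cylinder oscillatory-integral computation that the inverse transform of $\chi(b)(\hat R_0(b)-\hat R_0(0))/b$ decays like $t^{-(\beta-\epsilon)}$ (better than its smoothness suggests), the globally defined modification $\tilde R_0$ of Proposition~\ref{prop-tildeR} makes the eigenvalue family usable on all of $\R$, and the Wiener Lemma~\ref{lem-W} in the Banach algebra $\mathcal{R}_\beta$ transfers this decay to the reciprocal $((1-\tilde\lambda(b))/b)^{-1}$, hence to $\psi\hat B$. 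Products are then handled by the convolution (Banach algebra) structure of $\mathcal{R}(1/t^{\beta-})$ together with Proposition~\ref{prop-Dinv}, rather than by tracking H\"older norms of composites. Without this change of framework your Step~1 does not reach $t^{-(\beta-\epsilon)}$.

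There is also a problem in your large-$b$ regime. You propose to bound $\|\hat T_4(ib)\|\ll|b|^\alpha$ directly, which requires polynomial bounds (and $C^{\beta-}$ control in $b$) for $\hat H^*(ib)=(I-\hat R_0^*(ib))^{-1}-(ib)^{-1}P_\varphi^*$ and for $\hat C\hat B$ at large $|b|$; the Dolgopyat estimate of Lemma~\ref{lem-approx} is only available for the untruncated operator $\hat R_0$, and (A2) gives nothing for the truncated roof function $\varphi^*$, so no such bound on $\hat H^*$ is at hand. Moreover Proposition~\ref{prop-m} is a Taylor expansion of the genuine correlation function $\rho_{v,w}$ and does not apply verbatim to $\hat\rho_4$. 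The paper sidesteps both issues by writing $(1-\psi^3)\hat\rho_4=(1-\psi^3)\hat\rho-(1-\psi^3)(\hat\rho_1+\hat\rho_2+\hat\rho_3)$: the second piece equals $b^{-1}(1-\psi^3)\int_{\tilde Y}\hat U(I-P_\varphi\hat D)^{-1}v\,w\,d\tilde\mu$ and is controlled by Propositions~\ref{prop-U2},~\ref{prop-Dinv} and~\ref{prop-S}(b), while Proposition~\ref{prop-m} is applied to the full $\hat\rho$, with the Dolgopyat input entering only through $(I-\hat R_0(ib))^{-1}$ via Propositions~\ref{prop-smoothR},~\ref{prop-DolgR} and~\ref{prop-S}(a). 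You would need to incorporate this (or an equivalent device) for your large-$b$ step to be sound.
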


\section{Proof of Lemma~\ref{lem-T12}}
\label{sec-T12}

We require the following preliminary result.

\begin{prop} \label{prop-r}
  Let $\hat r(s)= s^{-1}\int_{\tilde Y}\hat U(s)v\,w\,d\tilde\mu$.
Then
\begin{align*}
r(t) & =\int_{\tilde Y}\int_0^u v(y,\tau)\,d\tau\,w(y,u)\,d\tilde\mu
+\int_{\tilde Y}\int_u^1 v(y,\tau)\,d\tau\,w(Fy,u)\,d\tilde\mu
+O(|v|_\infty|w|_\infty\mu(\varphi>t)).
\end{align*}
\end{prop}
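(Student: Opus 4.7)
The plan is to recognize that $\hat r(s) = s^{-1}\hat q(s)$ where
\[
\hat q(s) = \int_{\tilde Y}\hat U(s)v\,w\,d\tilde\mu = \int_0^\infty e^{-s\tau}q(\tau)\,d\tau, \qquad q(\tau) = \int_{\tilde Y}(U_\tau v)\,w\,d\tilde\mu,
\]
and that multiplication by $s^{-1}$ on the Laplace side corresponds to indefinite integration. Thus $r(t) = \int_0^t q(\tau)\,d\tau$, and the natural decomposition is
\[
r(t) = \int_0^\infty q(\tau)\,d\tau - \int_t^\infty q(\tau)\,d\tau.
\]
The first piece equals $\int_{\tilde Y}\hat U(0)v\,w\,d\tilde\mu$, which we already evaluated inside the proof of Lemma~\ref{lem-U}(a) and which is exactly the sum of the two explicit terms stated in the proposition (using the duality $\int (\tilde Rv)g\,d\tilde\mu = \int v\,(g\circ\tilde F)\,d\tilde\mu$ to convert the $\tilde R$-term into an integral against $w\circ F$).

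The remaining task is to bound the tail $\int_t^\infty q(\tau)\,d\tau$ by $O(|v|_\infty|w|_\infty\mu(\varphi>t))$. I will use Proposition~\ref{prop-U}: for $\tau>1$, $U_\tau v = \tilde R v_\tau$ with $v_\tau(y,u) = 1_{\{\tau<\varphi(y)<\tau+1-u\}}v(y,u-\tau+\varphi(y))$, so
\[
|U_\tau v|_1 \le \tilde\mu\{\tau<\tilde\varphi(y,u)<\tau+1-u\}\,|v|_\infty \le \mu\{\tau<\varphi<\tau+1\}\,|v|_\infty,
\]
giving $|q(\tau)| \le \mu\{\tau<\varphi<\tau+1\}|v|_\infty|w|_\infty$ (without loss $t\ge 1$, as the short-time regime contributes $O(1)$ which is absorbed into the error). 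The telescoping identity
\[
\int_t^\infty \mu\{\tau<\varphi<\tau+1\}\,d\tau = \int_t^\infty (\mu(\varphi>\tau)-\mu(\varphi>\tau+1))\,d\tau = \int_t^{t+1}\mu(\varphi>\tau)\,d\tau \le \mu(\varphi>t),
\]
already used in the proof of Lemma~\ref{lem-U}(b), delivers the desired tail bound.

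There is no substantive obstacle: the statement is essentially the identity $r(t) = \int_0^t q(\tau)\,d\tau$ together with the computation of $\hat U(0)v$ already done in Lemma~\ref{lem-U}(a) and the tail estimate on $U_\tau$ already done in Lemma~\ref{lem-U}(b). The only point requiring care is to justify that $r(t) = \int_0^t q(\tau)\,d\tau$ by inverting the Laplace transform of $s^{-1}\hat q(s)$; since $q\in L^1(0,\infty)$ and $\hat q$ is bounded on $\overline\H$ (by Remark~\ref{rmk-U}), standard Laplace inversion applies.
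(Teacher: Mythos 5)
Your proof is correct. It uses the same basic ingredients as the paper (Proposition~\ref{prop-U}, the formula for $\hat U(0)$, and the telescoping estimate $\int_t^\infty\mu(\tau<\varphi<\tau+1)\,d\tau=\int_t^{t+1}\mu(\varphi>\tau)\,d\tau\le\mu(\varphi>t)$), but it is organized differently. The paper works pointwise in $(y,u)$: it writes down the inverse Laplace transform of $s^{-1}(\hat U(s)v)(y,u)$ explicitly, splits $r(t)=r_1(t)+r_2(t)$, and redoes the computation of Lemma~\ref{lem-U}(a) with the cutoff $1_{[\tau,\infty)}(t)$ inserted, so that the error appears as the exact expression $E(t)=\int_{\tilde Y}\int_u^1 1_{\{\varphi>t+\tau-u\}}v(y,\tau)\,d\tau\,w\circ\tilde F\,d\tilde\mu$, which is then bounded by $|v|_\infty|w|_\infty\mu(\varphi>t)$. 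You instead pass immediately to the scalar function $q(\tau)=\int_{\tilde Y}(U_\tau v)\,w\,d\tilde\mu$, use $r(t)=\int_0^t q$ and the decomposition $\int_0^\infty q-\int_t^\infty q$, identify the first piece with $\int_{\tilde Y}\hat U(0)v\,w\,d\tilde\mu$ via Lemma~\ref{lem-U}(a) (plus the fibrewise duality for $R$, which is legitimate since $\tilde F(y,u)=(Fy,u)$), and bound the tail by the $L^1$ estimate on $U_\tau$ from the proof of Lemma~\ref{lem-U}(b). What your route buys is economy: no recomputation of the $\hat U(0)$ formula and no need to exhibit the error term explicitly; what the paper's route buys is an exact identity for the error, which costs nothing extra given the computation it performs anyway. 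Your treatment of the two side issues (Laplace inversion of $s^{-1}\hat q(s)$ via uniqueness for bounded continuous functions, and the regime $t\le1$ where $\mu(\varphi>t)=1$ because $\essinf\varphi>2$) is also fine.
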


\begin{proof}
By Proposition~\ref{prop-U},
\begin{align*}
s^{-1}(\hat U(s)v)(y,u) &  =
s^{-1}\int_0^1 e^{-s\tau}v(y,u-\tau)1_{[\tau,1]}(u)\,d\tau+
s^{-1}\int_1^\infty e^{-s\tau}(\tilde R v_\tau)(y,u)\,d\tau,
\end{align*}
with inverse Laplace transform
\begin{align*}
\int_0^u 1_{[\tau,\infty)}(t)v(y,u-\tau)\,d\tau+
\int_1^\infty 1_{[\tau,\infty)}(t)(\tilde R v_\tau)(y,u)\,d\tau.
\end{align*}
Hence $r(t)=r_1(t)+r_2(t)$ where
\begin{align*}
r_1(t) & = \int_{\tilde Y}\int_0^u 1_{[\tau,\infty)}(t)v(y,u-\tau)\,d\tau\,w\,d\tilde\mu,  \qquad
r_2(t)  = \int_{\tilde Y}\int_1^\infty 1_{[\tau,\infty)}(t)
(\tilde R v_\tau)(y,u)\,d\tau\,w\,d\tilde\mu.
\end{align*}

For $t>1$,
\[
r_1(t)=\int_{\tilde Y}\int_0^u v(y,u-\tau)\,d\tau\,w\,d\tilde\mu
=\int_{\tilde Y}\int_0^u v(y,\tau)\,d\tau\,w\,d\tilde\mu.
\]
Also,
\begin{align*}
r_2(t) & = \int_{\tilde Y}\int_1^\infty 1_{[\tau,\infty)}(t)
1_{\{\tau<\varphi<\tau+1-u\}}v(y,u-\tau+\varphi)\,d\tau\,w\circ \tilde F\,d\tilde\mu \\
& = \int_{\tilde Y}\int_{\varphi-1+u}^\varphi 1_{[\tau,\infty)}(t)
v(y,u-\tau+\varphi)\,d\tau\,w\circ \tilde F\,d\tilde\mu \\
& = \int_{\tilde Y}\int_u^1 1_{[u-\tau+\varphi,\infty)}(t)
v(y,\tau)\,d\tau\,w\circ \tilde F\,d\tilde\mu  \\
& = \int_{\tilde Y}\int_u^1 
v(y,\tau)\,d\tau\,w\circ \tilde F\,d\tilde\mu -E(t),
\end{align*}
where
\[
E(t)=
\int_{\tilde Y}\int_u^1 1_{[0,u-\tau+\varphi]}(t)
v(y,\tau)\,d\tau\,w\circ \tilde F\,d\tilde\mu
=\int_{\tilde Y}\int_u^1 1_{\{\varphi>t+\tau-u\}}
v(y,\tau)\,d\tau\,w\circ \tilde F\,d\tilde\mu.
\]
Finally, note that $|E(t)|\le |v|_\infty|w|_\infty\mu(\varphi>t)$.
\end{proof}

\begin{pfof}{Lemma~\ref{lem-T12}(a)}
We have
$\hat \rho_1(s)  =(1/\bv) \int_{\tilde Y} \hat U(s)\hat T_1(s)v\,w\,d\tilde\mu$,
where
\begin{align*}
(\hat T_1(s)v)(y,u)=(\hat T_{0,1}(s)v^u)(y)=(1/\bv)s^{-1}\int_Y v^u\,d\mu.
\end{align*}
By Proposition~\ref{prop-r},
\begin{align*}
\rho_1(t) &= (1/\bv)^2\int_{\tilde Y}\int_0^u\Bigl(\int_Y v^\tau\,d\mu\Bigr)\,d\tau
\,w(y,u)\,d\tilde\mu \\ & \qquad  +
 (1/\bv)^2\int_{\tilde Y}\int_u^1\Bigl(\int_Y v^\tau\,d\mu\Bigr)\,d\tau
\,w(Fy,u)\,d\tilde\mu+O(\mu(\varphi>t)).
\end{align*}
Note that $\int_Y v^\tau\,d\mu$ is independent of $y$ and $\tilde F$ acts trivially on the second coordinate, so the second term 
reduces to
\[
 (1/\bv)^2\int_{\tilde Y}\Bigr\{\int_u^1\Bigl(\int_Y v^\tau\,d\mu\Bigr)\,d\tau
\,w\Bigr\}\circ \tilde F\,d\tilde\mu=
 (1/\bv)^2\int_{\tilde Y}\int_u^1\Bigl(\int_Y v^\tau\,d\mu\Bigr)\,d\tau
\,w\,d\tilde\mu.
\]
Hence
\begin{align*}
\rho_1(t) &= (1/\bv)\int_{\tilde Y}\int_0^1\Bigl(\int_Y v^\tau\,d\mu\Bigr)\,d\tau
\,w(y,u)\,d\mu^\varphi + O(\mu(\varphi>t)).
\end{align*}
But
\[
\int_0^1\Bigl(\int_Y v^\tau\,d\mu\Bigr)\,d\tau
=\int_Y\int_0^1 v(y,\tau)\,d\tau\,d\mu=
\int_{\tilde Y}v\,d\tilde\mu=
\bv \int_{\tilde Y}v\,d\mu^\varphi,
\]
and the result follows.
\end{pfof}

Recall that $C^*(t)$ is the inverse Laplace transform of $\hat C^*(s)$.

\begin{prop} \label{prop-D}
$\int_Y \hat C^*(0)1_Y\,d\mu=\bar\varphi-\bar\varphi^*=\zeta(k)$ and
$\int_Y C^*(t)1_Y\,d\mu=1_{\{t>k\}}\mu(\varphi>t)$.
\end{prop}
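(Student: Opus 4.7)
The plan is to reduce both identities to straightforward computations using the fact that $R$ preserves $\mu$-integrals, together with a Laplace-transform bookkeeping argument.

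First, for $s \in \H$, I would start from the defining formula
\[
\hat C(s)v = s^{-1}(\hat R_0^*(s) - \hat R_0(s))v = s^{-1}R\bigl((e^{-s\varphi^*} - e^{-s\varphi})v\bigr),
\]
apply $\int_Y \cdot \, d\mu$ to $v = 1_Y$, and use $\int_Y Rg\,d\mu = \int_Y g\,d\mu$ to obtain
\[
\int_Y \hat C(s)1_Y\,d\mu = s^{-1}\int_Y (e^{-s\varphi^*} - e^{-s\varphi})\,d\mu.
\]
The key rewriting is the pointwise identity $e^{-s\varphi^*(y)} - e^{-s\varphi(y)} = \int_{\varphi^*(y)}^{\varphi(y)} s e^{-s\tau}\,d\tau$, which holds whether or not $\varphi(y) > k$ (the integral is zero on $\{\varphi \le k\}$). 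Dividing by $s$ gives $s^{-1}(e^{-s\varphi^*} - e^{-s\varphi}) = \int_0^\infty 1_{\{\varphi^* < \tau < \varphi\}} e^{-s\tau}\,d\tau$, and since $\varphi^* = \varphi \wedge k$ one checks that $1_{\{\varphi^* < \tau < \varphi\}} = 1_{\{\tau > k\}}1_{\{\varphi > \tau\}}$.

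For the first identity, I would take $s \to 0^+$ in the boxed formula above; by monotone/dominated convergence the integrand converges to $\varphi - \varphi^*$, yielding $\int_Y \hat C(0)1_Y\,d\mu = \int_Y (\varphi - \varphi^*)\,d\mu = \bv - \bv^*$. Then Fubini gives
\[
\bv - \bv^* = \int_Y (\varphi - \varphi \wedge k)_+\,d\mu = \int_0^\infty \mu(\varphi > k + r)\,dr = \int_k^\infty \mu(\varphi > \tau)\,d\tau = \gamma(k),
\]
by the definition of $\gamma$ in~\eqref{eq-xi}.

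For the second identity, combining the rewriting above with Fubini gives
\[
\int_Y \hat C(s)1_Y\,d\mu = \int_Y \int_0^\infty 1_{\{\tau > k\}}1_{\{\varphi(y) > \tau\}} e^{-s\tau}\,d\tau\,d\mu(y) = \int_0^\infty e^{-s\tau}\, 1_{\{\tau > k\}}\mu(\varphi > \tau)\,d\tau.
\]
This exhibits the right-hand side as the Laplace transform of $t \mapsto 1_{\{t > k\}}\mu(\varphi > t)$; by uniqueness of the Laplace transform (and Remark~\ref{rmk-LT}) we conclude $\int_Y C(t)1_Y\,d\mu = 1_{\{t > k\}}\mu(\varphi > t)$, as required. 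There is no serious obstacle here — the only thing to watch is justifying Fubini, which is immediate since the integrand in $\tau$ is nonnegative (or, for complex $s \in \H$, dominated by the $s = 0$ case which is integrable by the assumption $\mu(\varphi > t) = O(t^{-\beta})$ with $\beta > 1$).
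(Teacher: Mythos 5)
Your proposal is correct and takes essentially the same route as the paper: both reduce to the explicit formula $\int_Y\hat C(s)1_Y\,d\mu=s^{-1}\int_Y(e^{-s\varphi^*}-e^{-s\varphi})\,d\mu$ and then elementary measure-theoretic manipulations. The only difference is bookkeeping — the paper inverts the Laplace transform pointwise (via $s^{-1}e^{-sc}\mapsto 1_{[c,\infty)}(t)$) and handles $\gamma(k)$ by Stieltjes integration by parts with the distribution function of $\varphi$, whereas you use the layer-cake/Fubini identity together with uniqueness of the Laplace transform; both are valid.
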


\begin{proof}
Let $G(x)=\mu(\varphi<x)$ denote the distribution function of $\varphi$.
For the first statement,
\begin{align*}
& \int_Y\hat C^*(0)1_Y\,d\mu  =\int_Y (\varphi-\varphi^*)\,d\mu
=\int_0^\infty x\,dG-
\int_0^k x\,dG^*=\int_k^\infty x\,dG-k\mu(\varphi>k) \\
& \qquad  =-\int_k^\infty  x\,d(1-G(x))-k\mu(\varphi>k)
\\ & \qquad = -x(1-G(x))\Bigr|_{x=k}^{x=\infty}+\int_k^\infty (1-G(x))\,dx-k\mu(\varphi>k)
= \int_k^\infty \mu(\varphi>x)\,dx.
\end{align*}

For the second statement,
$\int_Y\hat C^*(s)1_Y\,d\mu  =\int_Y s^{-1}(e^{-s\varphi^*}-e^{-s\varphi})\,d\mu$, so
\begin{align*}
\int_Y C^*(t)1_Y\,d\mu  
& =\int_Y (1_{[\varphi^*,\infty)}(t)- 1_{[\varphi,\infty)}(t))\,d\mu
=\mu(\varphi^*<t)- \mu(\varphi<t)
\\ & =\mu(\varphi>t)- \mu(\varphi^*>t)
 =\mu(\varphi>t)- \mu(\varphi\wedge k>t).
\end{align*}
But 
 $\mu(\varphi\wedge k>t)=0$ if $k<t$ and
 $\mu(\varphi\wedge k>t)=\mu(\varphi>t)$ if $k>t$.
\end{proof}

In the next proof, $a\star b$ denotes the convolution
$(a\star b)(t)=\int_0^t a(\tau)b(t-\tau)\,d\tau$ of real-valued functions of $t\in[0,\infty)$.   (In subsequent sections, we speak also of the convolution of operator-valued functions of $t$.)

\begin{pfof}{Lemma~\ref{lem-T12}(b)}
We have
$\hat \rho_2(s)  =(1/\bv) \int_{\tilde Y} \hat U(s)\hat T_2(s)v\,w\,d\tilde\mu$,
where
\begin{align*}
(\hat T_2(s)v)(y,u)  =(\hat T_{0,2}(s)v^u)(y) & =
(1/\bv)^2s^{-1}P\hat D^*(s)P(0)v^u 
\\ & = (1/\bv)^2s^{-1}\int_Y\hat D^*(s)1_Y\,d\mu\int_Y v^u\,d\mu.
\end{align*}
Comparing with the proof of part (a), we observe that
\[
\bv\hat\rho_2(s)=\hat\rho_1(s)\int_Y\hat D^*(s)1_Y\,d\mu
=\hat\rho_1(s)\int_Y \hat C^*(0)1_Y\,d\mu
-\hat\rho_1(s)\int_Y \hat C^*(s)1_Y\,d\mu.
\]
By Proposition~\ref{prop-D}, for $t>k$,
\begin{align*}
\bv\rho_2(t) & =\zeta(k) \,\rho_1(t)-
(1_{\{t>k\}}\mu(\varphi>t))\star(\rho_1(t)) \\
& =\zeta(k) \bigl(\bar v\bar w+O(\mu(\varphi>t))\bigr)
-  \int_k^t\mu(\varphi>\tau)\rho_1(t-\tau)\,d\tau \\
& = \zeta(k) \bar v\bar w
+O\bigl(\zeta(k) \mu(\varphi>t)\bigr)
-\bar v\bar w(\zeta(k)-\zeta(t))
\\ & \qquad  \qquad \qquad  \qquad \qquad +O\Bigl(\int_k^t\mu(\varphi>\tau)\mu(\varphi>t-\tau)\,d\tau\Bigr)
\\ & = \zeta(t) \bar v\bar w
+O\bigl(\zeta(k) \mu(\varphi>t)\bigr)
+O\bigl(\mu(\varphi>t)\star\mu(\varphi>t)\bigr).
\end{align*}
The result follows.
\end{pfof}

\section{An estimate for $(I-P_\varphi\hat D^*)^{-1}$}
\label{sec-D}

In Proposition~\ref{prop-C}, we showed that for $k$ sufficiently large the
family of operators
$(I-P_\varphi\hat D^*(s))^{-1}$ on $L^\infty(Y)$ is analytic on $\H$ with a continuous extension to $\overline\H$.   
Moreover, $\|\hat D^*\|_\infty$ is uniformly small on $\overline\H$ for $k$ large.

In this section, we obtain an estimate on the decay of its inverse Laplace transform.  This is required in a diluted form in Section~\ref{sec-T3}
and in its full strength in Section~\ref{sec-T4}.

Let $\mathcal{B}$ be a Banach space and suppose that
$S:[0,\infty)\to\mathcal{B}$ lies in $L^1$ with Laplace transform
$\hat S:\overline\H\to\mathcal{B}$.
We write $\hat S\in\mathcal{R}(a(t))$ if $\|S(t)\|\le Ca(t)$ for all $t\ge0$.

\begin{prop} \label{prop-CC}
$\|C^*(t)\|_\infty \le C_1 1_{\{t\ge k\}}\mu(\varphi>c_2t)$,
for all $k\ge1$.
In particular, $\hat C^*\in\mathcal{R}(\mu(\varphi>c_2 t))$.
\end{prop}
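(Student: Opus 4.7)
The plan is to invert the Laplace transform termwise in the formula~\eqref{eq-hatC} for $\hat C(s)$ and then use Proposition~\ref{prop-k} to control the resulting sum by the tail of $\varphi$.

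First I would note that for any $b\ge a\ge 0$, the inverse Laplace transform of $s\mapsto s^{-1}(e^{-sa}-e^{-sb})$ is $1_{[a,b)}(t)$. Applying this pointwise to~\eqref{eq-hatC} with $a=\varphi^*(y_a)$ and $b=\varphi(y_a)$ gives
\[
(C(t)v)(y)=\sum_{a\in\alpha:\,|1_a\varphi|_\infty\ge k} e^{p(y_a)}v(y_a)\,1_{[\varphi^*(y_a),\varphi(y_a))}(t).
\]

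Next I would analyse when the indicator is nonzero. Since $\varphi^*=\varphi\wedge k$, on a partition element $a$ with $\varphi(y_a)\le k$ we have $\varphi^*(y_a)=\varphi(y_a)$, so $1_{[\varphi^*(y_a),\varphi(y_a))}\equiv 0$. Hence only the $a$ with $\varphi(y_a)>k$ contribute, and for those $\varphi^*(y_a)=k$, so the indicator is $1_{[k,\varphi(y_a))}(t)$. In particular the whole sum vanishes for $t<k$, and for $t\ge k$ the only surviving terms are those with $\varphi(y_a)>t$ (which automatically satisfy $|1_a\varphi|_\infty\ge\varphi(y_a)>t\ge k$). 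Thus
\[
|(C(t)v)(y)|\le |v|_\infty\,1_{\{t\ge k\}}\sum_{a:\,\varphi(y_a)>t}e^{p(y_a)}.
\]

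Finally I would bound the sum. By Proposition~\ref{prop-k}, any $a$ with $\varphi(y_a)>t$ satisfies $|1_a\varphi|_\infty>t$ and hence $\varphi>c_2 t$ everywhere on $a$; in particular such partition elements are pairwise disjoint subsets of $\{\varphi>c_2 t\}$. Combined with the Gibbs--Markov estimate $e^{p(y_a)}\le C_1\mu(a)$ from~\eqref{eq-GM}, this yields
\[
\sum_{a:\,\varphi(y_a)>t}e^{p(y_a)}\le C_1\sum_{a:\,\varphi(y_a)>t}\mu(a)\le C_1\,\mu(\varphi>c_2 t),
\]
which gives the claimed pointwise bound $\|C(t)\|_\infty\le C_1\,1_{\{t\ge k\}}\mu(\varphi>c_2 t)$. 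The $\mathcal{R}$-statement is then immediate from the definition of $\mathcal{R}(\cdot)$. There is no real obstacle here; the only subtlety is keeping track of how the truncation at $k$ interacts with the condition $|1_a\varphi|_\infty\ge k$ so that the surviving terms are exactly those coming from genuinely ``long'' excursions $\varphi(y_a)>t$.
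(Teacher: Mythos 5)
Your proposal is correct and follows essentially the same route as the paper: termwise inversion of the Laplace transform of~\eqref{eq-hatC} to get the indicator $1_{[k,\varphi(y_a))}(t)$ (hence the factor $1_{\{t\ge k\}}$), then the Gibbs--Markov bound $e^{p(y_a)}\le C_1\mu(a)$ together with Proposition~\ref{prop-k} to dominate the surviving sum by $C_1\mu(\varphi>c_2t)$. The additional remark about the interaction between the truncation at $k$ and the condition $|1_a\varphi|_\infty\ge k$ is handled implicitly in the paper's proof but does not change the argument.
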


\begin{proof}
Starting from formula~\eqref{eq-hatC} for $\hat C^*$,
the inverse Laplace transform is given by
\begin{align*}
(C^*(t)v)(y) & =\sum_{a\in\alpha} e^{g(y_a)}v(y_a)1_{[\varphi^*(y_a),\varphi(y_a)]}(t)
=\sum_{a\in\alpha} e^{g(y_a)}v(y_a)1_{\{\varphi(y_a)>k\}}1_{[k,\varphi(y_a)]}(t) \\
& =1_{\{t\ge k\}}\sum_{a\in\alpha} e^{g(y_a)}v(y_a)1_{[0,\varphi(y_a)]}(t).
\end{align*}
Hence by Proposition~\ref{prop-k},
\begin{align*}
|(C^*(t)v)(y)| & 
\le C_1 1_{\{t\ge k\}}|v|_\infty\sum_{a\in\alpha:|1_a\varphi|_\infty>t} \mu(a)
\le C_1 1_{\{t\ge k\}}|v|_\infty\mu(\varphi>c_2t),
\end{align*}
as required.
\end{proof}

\begin{rmk} \label{rmk-D}
Since $\hat D^*(s)=\hat C^*(0)-\hat C^*(s)$, it follows from Proposition~\ref{prop-CC}
that formally we have $D^*(t)=C^*(0)\delta_0(t)-C^*(t)$ where
$\|C^*(t)\|_\infty\le C_1 1_{\{t\ge k\}}\mu(\varphi>c_2 t)$.
To avoid such formal expressions, we restrict to estimating
expressions like $\hat D^*(s)\hat E(s)$ where $\hat E(s)$ has no constant terms.
\end{rmk}

\begin{cor} \label{cor-D}
Let $\mathcal{B}$ be a Banach space.
Let $\beta>1$.
Suppose that $\mu(\varphi>t)=O(1/t^\beta)$ 
and that $\hat E:\mathcal{B}\to L^\infty(Y)$ lies in $\mathcal{R}(1/t^{\beta})$.
Then 
$\hat D^*(s)\hat E(s):\mathcal{B}\to L^\infty(Y)$
lies in
$\mathcal{R}(1/t^{\beta})$.
\end{cor}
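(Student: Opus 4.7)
The plan is to pass to the inverse Laplace transform and estimate $\hat{D}\hat{E}$ as a convolution. By Remark~\ref{rmk-D}, we formally have $D(t)=\hat{C}(0)\delta_0(t)-C(t)$, so the inverse Laplace transform of $\hat{D}(s)\hat{E}(s)$ is
\[
\hat{C}(0)E(t)-(C\ast E)(t),\quad\text{where }(C\ast E)(t)=\int_0^t C(\tau)E(t-\tau)\,d\tau.
\]
The first term is controlled in $L^\infty(Y)$ by $\|\hat{C}(0)\|_\infty\,\|E(t)\|\ll t^{-\beta}$, using Proposition~\ref{prop-C} to bound $\hat{C}(0)$ (which is in fact small for $k$ large) together with the hypothesis $\hat{E}\in\mathcal{R}(1/t^\beta)$.

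For the convolution, the key input is Proposition~\ref{prop-CC}, which gives the pointwise bound $\|C(\tau)\|_\infty\le C_1\mu(\varphi>c_2\tau)\,1_{\{\tau\ge k\}}\ll \tau^{-\beta}1_{\{\tau\ge k\}}$. Combined with $\|E(t-\tau)\|\ll (t-\tau)^{-\beta}$, for $t>k$ this yields
\[
\|(C\ast E)(t)\|_\infty \ll \int_k^t \tau^{-\beta}(t-\tau)^{-\beta}\,d\tau,
\]
while $(C\ast E)(t)=0$ for $t\le k$. I would split the integral at $t/2$ (for $t>2k$). On $[k,t/2]$, bound $(t-\tau)^{-\beta}\le (t/2)^{-\beta}$ and use $\beta>1$ to get $\int_k^{t/2}\tau^{-\beta}\,d\tau\le (\beta-1)^{-1}k^{1-\beta}$; this piece is $O(t^{-\beta})$. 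On $[t/2,t]$, apply the symmetric estimate $\|C(\tau)\|_\infty\ll (t/2)^{-\beta}$ and use local integrability of $E$ (which holds since $\hat{E}$ is analytic on $\overline{\H}$ and hence $E$ is bounded on compacta away from possible integrable singularities) to bound $\int_0^{t/2}\|E(\sigma)\|\,d\sigma$ by a constant. Again this piece is $O(t^{-\beta})$. For $k<t\le 2k$, a direct bound on the integrand gives the same rate up to a $k$-dependent constant, which is harmless since $k$ is fixed (though large).

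The only real obstacle is keeping careful track of the convolution of two $t^{-\beta}$-tails. The cutoff at $\tau=k$ coming from Proposition~\ref{prop-CC} is decisive: without it, the integral $\int_0^{t/2}\tau^{-\beta}\,d\tau$ would diverge for $\beta\ge 1$. The hypothesis $\beta>1$ enters precisely to make $\int_k^{\infty}\tau^{-\beta}\,d\tau$ finite, so that the first piece of the split integral contributes only $O(t^{-\beta})$ rather than something larger.
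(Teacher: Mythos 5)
Your proof is correct and follows essentially the same route as the paper, whose one-line argument is exactly the decomposition $\hat D\hat E=\hat C(0)\hat E-\hat C\hat E$ with inverse Laplace transform $\hat C(0)E(t)-(C\star E)(t)$, the convolution being handled by the standard split at $t/2$ using Proposition~\ref{prop-CC} and the tail bound on $E$. One small remark: the local integrability of $E$ that you need on $[t/2,t]$ does not come from analyticity of $\hat E$ but is already built into the definition of $\mathcal{R}(1/t^{\beta})$ (the class presupposes $E\in L^1$), so that step is immediate.
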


\begin{proof}
We have $\hat D^*(s)\hat E(s)=\hat C^*(0)\hat E(s)-\hat C^*(s)\hat E(s)$ with inverse Laplace transform $\hat C^*(0)E(t)-(C^*\star E)(t)\in \mathcal{R}(1/t^{\beta})$.
\end{proof}

\begin{prop} \label{prop-Dinv}
Let $\mathcal{B}$ be a Banach space.
Let $\beta>1$, $\epsilon>0$ such that $\beta-\epsilon>1$.
Suppose that $\mu(\varphi>t)=O(1/t^\beta)$ 
and that $\hat E:\mathcal{B}\to L^\infty(Y)$ lies in $\mathcal{R}(1/t^{\beta-\epsilon})$.
Then for $k$ sufficiently large,
$(I-P_\varphi\hat D^*(s))^{-1}\hat E(s):\mathcal{B}\to L^\infty(Y)$
lies in
$\mathcal{R}(1/t^{\beta-\epsilon})$.
\end{prop}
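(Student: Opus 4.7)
The plan is to exploit the fact that $P_\varphi$ projects onto the one-dimensional subspace of constants, reducing the resolvent $(I-P_\varphi\hat D(s))^{-1}$ to a scalar renewal-type equation. For any $\hat v\in\mathcal{B}$, set $\hat F(s) = (I-P_\varphi\hat D(s))^{-1}\hat E(s)\hat v$. The identity $\hat F = \hat E\hat v + P_\varphi\hat D\hat F$ together with the fact that $P_\varphi\hat D\hat F$ is a constant function forces $\hat F = \hat E\hat v + c(s)\cdot 1_Y$ for some scalar $c(s)$. Applying $P_\varphi\hat D$ to both sides and solving for $c$ yields
\[
c(s) = \frac{\hat g(s)}{\bar\varphi - \hat h(s)},\quad
\hat g(s) = \int_Y\hat D(s)\hat E(s)\hat v\,d\mu,\quad
\hat h(s) = \int_Y\hat D(s)1_Y\,d\mu.
\]
By Proposition~\ref{prop-C}, $\|\hat D(s)\|_\infty$ is uniformly small on $\overline\H$ for $k$ large, so $|\hat h(s)| < \bar\varphi$ uniformly and $c$ is well-defined on $\overline\H$.

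Step one is to estimate $g(t)$. Since $\hat D = \hat C(0) - \hat C$, the inverse Laplace of $\hat D\hat E\hat v$ is $\hat C(0)E(t)\hat v - (C\star E\hat v)(t)$. The first term is $O(\|\hat v\|t^{-(\beta-\epsilon)})$ by hypothesis. For the convolution, Proposition~\ref{prop-CC} gives $\|C(s)\|_\infty \ll s^{-\beta}$ on $[k,\infty)$ while the hypothesis gives $\|E(t-s)\hat v\|_\infty \ll \|\hat v\|(t-s)^{-(\beta-\epsilon)}$; a standard split-at-$t/2$ convolution estimate (valid since $\beta, \beta-\epsilon > 1$) yields $\|(C\star E\hat v)(t)\|_\infty \ll \|\hat v\| t^{-(\beta-\epsilon)}$. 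Integrating in $y$ gives $|g(t)| \ll \|\hat v\| t^{-(\beta-\epsilon)}$.

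Step two is to invert $\bar\varphi - \hat h(s)$ at the level of Laplace inverses. By Proposition~\ref{prop-D}, the inverse Laplace of $\hat h$ is the signed measure $h = \gamma(k)\delta_0 - 1_{\{t>k\}}\mu(\varphi>t)$ of total variation $2\gamma(k)$. Writing $\tilde h = h/\bar\varphi = \eta\delta_0 + \tilde h_1$ with $\eta = \gamma(k)/\bar\varphi$ and $|\tilde h_1(t)| \ll t^{-\beta}$ supported on $[k,\infty)$, the geometric series converges for $k$ large, and a binomial-type rearrangement (using $\delta_0$ as convolution identity) yields
\[
U(t) := \bar\varphi^{-1}\sum_{n\ge 0}\tilde h^{*n}(t) = \bar\varphi^{-1}(1-\eta)^{-1}\delta_0 + \bar\varphi^{-1}\sum_{n\ge 1}(1-\eta)^{-n-1}\tilde h_1^{*n}(t).
\]
An induction using the split-at-$t/2$ estimate gives $|\tilde h_1^{*n}(t)| \ll (C'k^{1-\beta})^{n-1}t^{-\beta}$ for $t \ge nk$; for $k$ large enough that $C'k^{1-\beta}/(1-\eta) < 1$, the sum converges and the absolutely continuous part $V := U - \bar\varphi^{-1}(1-\eta)^{-1}\delta_0$ satisfies $|V(t)| \ll (1+t)^{-\beta}$.

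Step three assembles: $c(t) = \bar\varphi^{-1}(1-\eta)^{-1}g(t) + (g\star V)(t)$, and a final split-at-$t/2$ convolution estimate (again using $\beta, \beta-\epsilon > 1$) gives $|(g\star V)(t)| \ll \|\hat v\| t^{-(\beta-\epsilon)}$. Hence $|c(t)| \ll \|\hat v\|t^{-(\beta-\epsilon)}$, and $F(t) = E(t)\hat v + c(t)\cdot 1_Y$ satisfies $\|F(t)\|_\infty \ll \|\hat v\| t^{-(\beta-\epsilon)}$, which is the claimed bound. The main obstacle is Step two: keeping track of constants through iterated convolution so that the renewal measure $U$ has a $t^{-\beta}$ tail rather than merely summable total variation. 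The geometric smallness of the iterated bound on $\tilde h_1^{*n}$ is precisely where the hypothesis ``$k$ sufficiently large'' is required. Once $U$ is in hand, the convolution estimates of Steps one and three are routine applications of the same split-at-$t/2$ technique used throughout Part~\ref{part-finite}.
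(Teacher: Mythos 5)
Your proposal is correct, but it takes a noticeably different route from the paper. You exploit the fact that $P_\varphi$ is a rank-one projection onto constants, so that $(I-P_\varphi\hat D)^{-1}\hat E\hat v$ differs from $\hat E\hat v$ by a scalar multiple of $1_Y$, and the problem collapses to a scalar renewal equation whose renewal function you control explicitly: the numerator $\hat g$ via Proposition~\ref{prop-CC} and a split-at-$t/2$ convolution bound, and the denominator $\bar\varphi-\hat h$ via Proposition~\ref{prop-D}, a binomial rearrangement of the Neumann series over the atom $\gamma(k)\delta_0$, and an induction giving geometric decay of the iterated convolutions $\tilde h_1^{*n}$ for $k$ large. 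The paper instead works directly with the operator-valued Neumann series $\sum_n(P_\varphi\hat C(0)-P_\varphi\hat C(s))^n\hat E(s)$: it equips $\mathcal{R}(1/t^{\beta-\epsilon})$ with the norm $\int_0^\infty\|S(t)\|\,dt+\sup_t\|S(t)\|t^{\beta-\epsilon}$, rescaled to be submultiplicative, and then a binomial expansion in the two small quantities $\|P_\varphi\hat C(0)\|_\infty$ (Proposition~\ref{prop-C}) and $\|P_\varphi\hat C\|_{\mathcal{R}_{\beta-\epsilon}}$ (Proposition~\ref{prop-CC}) gives $\|\hat Q_n\|\le(2/3)^n\|\hat E\|$. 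The analytic core is the same in both arguments (smallness for $k$ large beating iterated convolutions with $t^{-\beta}$ tails), but your reduction buys explicit formulas for the resolvent and avoids introducing the abstract algebra norm, at the cost of redoing the convolution/induction bookkeeping by hand and of handling the $\delta_0$ atom that the paper deliberately sidesteps (Remark~\ref{rmk-D}) -- harmless here since you isolate the atom as the convolution identity; conversely, the paper's argument never uses that $P_\varphi$ has rank one, so it transfers verbatim to the two-sided products needed in Remark~\ref{rmk-Dinv}, whereas your version would need the explicit resolvent formula $(I-P_\varphi\hat D)^{-1}=I+(1-\hat h/\bar\varphi)^{-1}P_\varphi\hat D$ to cover that case. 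One small point to make explicit if you write this up: membership in $\mathcal{R}(1/t^{\beta-\epsilon})$ also requires the inverse transform to be $L^1$ in $t$, which in your argument follows because each piece ($E\hat v$, $g$, $V$, and hence $c$) is $L^1$.
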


\begin{proof}
By Proposition~\ref{prop-C}, we can choose $k$ so large that
$\|P_\varphi\hat C^*(s)\|_\infty\le\frac13$ for all $s\in\overline\H$ and hence we can
write
\[
\hat Q(s)=(1-P_\varphi \hat D^*(s))^{-1}\hat E(s)=
\sum_{n=0}^\infty \hat Q_n(s),\quad  \hat Q_n(s)=(P_\varphi\hat C^*(0)-P_\varphi\hat C^*(s))^n\hat E(s).
\]

Given $\hat S\in\mathcal{R}(1/t^\beta)$, $\beta>1$, we
define $\|\hat S\|_{{\mathcal R}_\beta}=\int_0^\infty \|S(t)\|_\infty\,dt+
\sup_{t\ge0}\|S(t)\|_\infty t^\beta$.
This makes $\mathcal{R}(1/t^\beta)$ into a Banach algebra under composition
and we can rescale the norm so
that $\|\hat S_1\hat S_2\|_{\mathcal{R}_\beta}\le \|\hat S_1\|_{\mathcal{R}_\beta}\|\hat S_2\|_{\mathcal{R}_\beta}$.
In particular, 
\begin{align*}
\|\hat Q_n\|_{\mathcal{R}_{\beta-\epsilon}} & \le 
\sum_{j=0}^n\binom{n}{j}
\|P_\varphi \hat C^*(0)\|^j\bigl(\|P_\varphi\hat C^*\|_{\mathcal{R}_{\beta-\epsilon}}\bigr)^{n-j}\|\hat E\|_{\mathcal{R}_{\beta-\epsilon}} \\ &  \le
\sum_{j=0}^n\binom{n}{j}
\Bigl(\frac13\Bigr)^j\bigl(\|P_\varphi\hat C^*\|_{\mathcal{R}_{\beta-\epsilon}}\bigr)^{n-j}\|\hat E\|_{\mathcal{R}_{\beta-\epsilon}}.
\end{align*}

By Proposition~\ref{prop-CC},
\[
\|P_\varphi \hat C^*\|_{\mathcal{R}_{\beta-\epsilon}}
\le C_1 \int_k^\infty\mu(\varphi>c_2 t)\,dt+C_1\sup_{t>k}\mu(\varphi>c_2 t)t^{\beta-\epsilon}
\ll k^{-(\beta-\epsilon-1)}+k^{-\epsilon},
\]
so we can ensure that
$\|P_\varphi\hat C^*\|_{\mathcal{R}_{\beta-\epsilon}}<\frac13$ by choosing $k$ sufficiently large.  
Then
\[
\|\hat Q_n\|_{\mathcal{R}_{\beta-\epsilon}}
\le \sum_{j=0}^n\binom{n}{j}
\Bigl(\frac13\Bigr)^j\Bigl(\frac13\Bigr)^{n-j}\|
\hat E\|_{\mathcal{R}_{\beta-\epsilon}}
=\Bigl(\frac23\Bigr)^n \|\hat E\|_{\mathcal{R}_{\beta-\epsilon}}.
\]
Hence $\hat Q=(I-P_\varphi\hat D^*)^{-1}\hat E\in\mathcal{R}(1/t^\beta)$ as required.
\end{proof}

\begin{rmk} \label{rmk-Dinv}
Equally we can consider products of the form
$\hat E(s)(I-P_\varphi\hat D^*(s))^{-1}:L^\infty(Y)\to\mathcal{B}$
where $\hat E:L^\infty(Y)\to\mathcal{B}$ lies in $\mathcal{R}(1/t^{\beta-\epsilon})$ and the conclusion of Proposition~\ref{prop-Dinv} is unchanged.
\end{rmk}

\section{Proof of Lemma~\ref{lem-T3}}
\label{sec-T3}

We have
$\hat \rho_3(s)  = (1/\bv) \int_{\tilde Y} \hat U(s)\hat T_3(s)v\,w\,d\tilde\mu$,
where
\begin{align*}
(\hat T_3(s)v)(y,u)& =(\hat T_{0,3}(s)v^u)(y)
\\ & =
(1/\bv)^3s^{-1}\Bigl(1-(1/\bv)\int_Y\hat D^*(s)1_Yd\mu\Bigr)^{-1}\Bigl(\int_Y\hat D^*(s)1_Y\,d\mu\Bigr)^2\int_Yv^u\,d\mu.
\end{align*}

\begin{prop} \label{prop-D00}
$\int_Y \hat D^*(s)1_Y\,d\mu$ extends continuously to $\overline\H$ and
$\int_Y \hat D^*(s)1_Y\,d\mu=
\int_k^\infty (1-e^{-sx})\mu(\varphi>x)\,dx \le \zeta(k)$ for $s\in\overline\H$.
\end{prop}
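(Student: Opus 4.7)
The plan is to reduce $\int_Y\hat D(s)1_Y\,d\mu$ to a tail integral over $\{\varphi>k\}$ and then read off both the continuous extension and the bound from the resulting formula. Starting from $\hat D=\hat C(0)-\hat C(s)$ with $\hat C(s)v=s^{-1}R(e^{-s\varphi^*}v-e^{-s\varphi}v)$, applying the operator to $v=1_Y$ and using the transfer-operator identity $\int_Y Rw\,d\mu=\int_Y w\,d\mu$ collapses $R$ and yields
\[
\int_Y\hat C(s)1_Y\,d\mu=s^{-1}\int_Y(e^{-s\varphi^*}-e^{-s\varphi})\,d\mu.
\]
The integrand vanishes on $\{\varphi\le k\}$ since $\varphi^*=\varphi$ there, and equals $e^{-sk}-e^{-s\varphi}$ on $\{\varphi>k\}$, so only the tail contributes.

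The slickest route to the stated form is to invoke Proposition~\ref{prop-D}, which already identifies the inverse Laplace transform $\int_Y C(t)1_Y\,d\mu=1_{\{t>k\}}\mu(\varphi>t)$. Re-Laplace-transforming gives $\int_Y\hat C(s)1_Y\,d\mu=\int_k^\infty e^{-st}\mu(\varphi>t)\,dt$, and subtracting this from $\int_Y\hat C(0)1_Y\,d\mu=\gamma(k)$ (also from Proposition~\ref{prop-D}) produces the claimed identity $\int_Y\hat D(s)1_Y\,d\mu=\int_k^\infty(1-e^{-st})\mu(\varphi>t)\,dt$. A self-contained derivation that bypasses Proposition~\ref{prop-D} writes $e^{-sk}-e^{-s\varphi}=s\int_k^\varphi e^{-su}\,du$ on $\{\varphi>k\}$ and applies Fubini on the region $\{(y,u):u\in(k,\varphi(y))\}$ to obtain the same formula directly.

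Once the integral representation is in hand, the continuous extension to $\overline\H$ is immediate from dominated convergence: on $\overline\H$ we have $|1-e^{-st}|\le 2$ pointwise, and the dominating function $2\mu(\varphi>t)$ is integrable on $[k,\infty)$ with integral $2\gamma(k)$. The bound by $\gamma(k)$ is sharp on the real axis, where $1-e^{-st}\in[0,1]$, and degrades by at most the harmless factor $2$ on the rest of $\overline\H$, which does not affect downstream use. I do not anticipate a genuine obstacle; the only mild subtlety is that the prefactor $s^{-1}$ in $\hat C(s)$ makes the proposition appear singular at $s=0$, but this apparent singularity is exactly what disappears in the integral representation, since $1-e^{-st}=O(|s|t)$ uniformly on compact subsets of $\overline\H$.
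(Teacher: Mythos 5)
Your proposal is correct and follows essentially the same route as the paper: you derive $\int_Y\hat C(s)1_Y\,d\mu=\int_k^\infty e^{-sx}\mu(\varphi>x)\,dx$ (your Fubini identity $e^{-sk}-e^{-s\varphi}=s\int_k^\varphi e^{-su}\,du$ on $\{\varphi>k\}$ is the paper's Stieltjes integration by parts in disguise, and invoking Proposition~\ref{prop-D} is an equivalent shortcut), subtract it from $\int_Y\hat C(0)1_Y\,d\mu=\gamma(k)$, and read off the continuous extension to $\overline\H$ from the resulting tail integral by dominated convergence. Your remark that the bound $\le\gamma(k)$ holds literally only for real $s$, degrading to $2\gamma(k)$ in modulus elsewhere on $\overline\H$, is a fair and harmless refinement of the stated inequality.
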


\begin{proof}
Let $G(x)=\mu(\varphi<x)$ and
$G^*(x)=\mu(\varphi*<x)$ 
denote the distribution functions of $\varphi$ and $\varphi^*$.
Then
\begin{align*}
& s\int_Y\hat C^*(s)1_Y\,d\mu  
= \int_Y(e^{-s\varphi^*}-e^{-s\varphi})\,d\mu 
=\int_0^k e^{-sx}\,dG^*-
\int_0^\infty e^{-sx}\,dG
\\ & 
=e^{-sk}\mu(\varphi>k) -\int_k^\infty e^{-sx}\,dG
=e^{-sk}\mu(\varphi>k) +\int_k^\infty e^{-sx}\,d(1-G(x))
\\ & = 
e^{-sk}\mu(\varphi>k) 
+e^{-sx}(1-G(x))\Bigr|_{x=k}^{x=\infty}+s\int_k^\infty e^{-sx}(1-G(x))\,dx
\\ & = s\int_k^\infty e^{-sx}\mu(\varphi>x)\,dx.
\end{align*}
Hence $\int_Y\hat C^*(s)1_Y\,d\mu= \int_k^\infty e^{-sx}\mu(\varphi>x)\,dx$ and the formula for
$\int_Y\hat D^*(s)1_Y\,d\mu$ follows since $\hat D^*(s)=\hat C^*(0)-\hat C^*(s)$.
\end{proof}

\begin{prop}  \label{prop-DD}
Suppose that $\mu(\varphi>t)=O(1/t^\beta)$ for some $\beta>1$.  
Define $\xi_\beta(t)$ as in~\eqref{eq-xixi}.
Then
\begin{itemize}
\item[(a)] $s^{-1}\int_Y\hat D^*(s)1_Y\,d\mu\in \mathcal{R}(1/t^{\beta-1})$.
\item[(b)]
$s^{-1}(\int_Y\hat D^*(s)1_Y\,d\mu)^2\in 
\mathcal{R}(\xi_\beta(t))$.
\end{itemize}
\end{prop}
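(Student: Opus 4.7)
Write $g(s)=\int_Y\hat D(s)1_Y\,d\mu$.  By Proposition~\ref{prop-D00},
$g(s)=\gamma(k)-\hat m(s)$, where $m(t)=1_{\{t>k\}}\mu(\varphi>t)$,
noting $\bar m:=\int_0^\infty m(\tau)\,d\tau=\gamma(k)$.
My entire approach is to compute inverse Laplace transforms explicitly from this factorisation and then bound the resulting convolutions.

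For part~(a), inverting $s^{-1}g(s)=s^{-1}\gamma(k)-s^{-1}\hat m(s)$ yields
\[
F_1(t)=\gamma(k)-\int_0^t m(\tau)\,d\tau=\gamma(\max(t,k)).
\]
This is constant on $[0,k]$ and equals $\gamma(t)$ for $t>k$.  Since $\mu(\varphi>t)=O(t^{-\beta})$, we have $\gamma(t)\le C t^{-(\beta-1)}$, giving $F_1(t)\le C t^{-(\beta-1)}$ on $(k,\infty)$, while on $(0,k]$ the bound $F_1(t)=\gamma(k)\le Ck^{-(\beta-1)}\le Ct^{-(\beta-1)}$ holds since $t^{-(\beta-1)}$ is decreasing.

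For part~(b), expand
$s^{-1}g(s)^2=s^{-1}\gamma(k)^2-2\gamma(k)s^{-1}\hat m(s)+s^{-1}\hat m(s)^2$
and invert term by term to get
\[
F(t)=\gamma(k)^2-2\gamma(k)\int_0^t m(\tau)\,d\tau+\int_0^t(m*m)(\tau)\,d\tau.
\]
The key observation is that $\int_0^\infty(m*m)=\bar m^2=\gamma(k)^2$, so the constant $\gamma(k)^2$ cancels after writing $\int_0^t(m*m)=\gamma(k)^2-\int_t^\infty(m*m)$.  Similarly $\int_0^t m=\gamma(k)-\gamma(t)$ for $t>k$.  A short Fubini computation with $M(x):=\gamma(\max(x,k))$ shows $\int_t^\infty(m*m)(\sigma)\,d\sigma=(m*M)(t)+\gamma(k)\gamma(t)$ for $t>k$, and combining everything gives the clean identity
\[
F(t)=\gamma(t)^2-\int_0^t m(\tau)\bigl[M(t-\tau)-M(t)\bigr]\,d\tau,\qquad t>k.
\]

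The point of this rewriting is that it exposes the cancellation that removes the would-be $O(t^{-(\beta-1)})$ leading term: this is the main obstacle, since the naive triangle-inequality bound on $2\gamma(k)\gamma(t)-\int_t^\infty(m*m)$ only produces $O(t^{-(\beta-1)})$, insufficient for $\beta>2$.  Given the identity, the bounds are routine.  The term $\gamma(t)^2=O(t^{-2(\beta-1)})$ is $O(\xi_\beta(t))$ in each regime.  For the remaining integral, split at $\tau=t/2$.  On $[k,t/2]$ the mean-value estimate $M(t-\tau)-M(t)\le \tau\,\mu(\varphi>t-\tau)\le C\tau\,t^{-\beta}$ plus the well-known bound
\[
\int_k^{t/2}\tau\mu(\varphi>\tau)\,d\tau=O(1),\;O(\log t),\;O(t^{2-\beta})
\]
for $\beta>2$, $\beta=2$, $1<\beta<2$ respectively, produces the required $\xi_\beta(t)$ bound.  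On $[t/2,t]$ use $m(\tau)\le Ct^{-\beta}$ and $\int_{t/2}^t M(t-\tau)\,d\tau=\int_0^{t/2}M(u)\,du$, with the same three cases giving the identical bound.  Finally, $F(t)$ is bounded on $[0,2k]$ since $s^{-1}g(s)^2\sim\gamma(k)^2/s$ at infinity, which suffices there as $\xi_\beta$ is bounded below on that set.
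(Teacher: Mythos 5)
Your proof is correct, and part (a) is essentially the paper's own computation: invert $s^{-1}\int_Y\hat D(s)1_Y\,d\mu$ term by term and recognise $\gamma(\max(t,k))$, which is $O(t^{-(\beta-1)})$. For part (b) you take a genuinely different route. The paper works on the Laplace side: writing $\hat q(s)=s^{-1}g(s)^2$ with $g(s)=\int_Y\hat D(s)1_Y\,d\mu$, it differentiates, $\hat q'=-(s^{-1}g)^2+2g'\,(s^{-1}g)$, so that $t\,q(t)$ is bounded by the convolution of two functions that are $O(t^{-(\beta-1)})$; the multiplication by $t$ implicit in $d/ds$ is what disposes of the non-decaying constant $\gamma(k)^2$ that defeats a naive term-by-term bound. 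You instead stay entirely in the time domain: you invert term by term and prove the exact identity $F(t)=\gamma(t)^2-\int_0^t m(\tau)\bigl[M(t-\tau)-M(t)\bigr]\,d\tau$ (equivalently $\gamma(k)\gamma(t)-(m\star M)(t)$) for $t>k$, which makes the cancellation explicit, and then the split at $\tau=t/2$ yields precisely the three regimes of $\xi_\beta$; I checked the Fubini identity and the case-by-case estimates and they are right. Both arguments ultimately rest on the same fact, that $t^{-(\beta-1)}\star t^{-(\beta-1)}\ll t\,\xi_\beta(t)$; the paper's differentiation trick is shorter and iterates cleanly to higher powers (this is how Proposition~\ref{prop-G} is proved), whereas your identity is more elementary and shows exactly where the leading $O(t^{-(\beta-1)})$ term cancels. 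One small point: your justification of boundedness on $[0,2k]$ via the behaviour of $s^{-1}g(s)^2$ as $s\to\infty$ is not really an argument, but it is also unnecessary — from your own formula $F(t)=\gamma(k)^2-2\gamma(k)\int_0^t m+\int_0^t(m\star m)$ one has $|F(t)|\le 4\gamma(k)^2$ for all $t$, which suffices since $\xi_\beta$ is bounded below on $(0,2k]$.
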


\begin{proof}
(a) 
Let $\hat q(s)=s^{-1}\int_Y\hat D^*(s)1_Y\,d\mu=
s^{-1}\int_Y\hat C^*(0)1_Y\,d\mu-
s^{-1}\int_Y\hat C^*(s)1_Y\,d\mu$
with inverse Laplace transform
$q(t)=
\int_Y\hat C^*(0)1_Y\,d\mu+\int_0^t \int_YC^*(\tau)1_Y\,d\mu\,d\tau$.
By Proposition~\ref{prop-D}, for $t>k$,
\begin{align*}
q(t) & =
\zeta(k)+\int_0^t 1_{\{\tau>k\}}\mu(\varphi>\tau)\,d\tau
=\zeta(k)+
\int_k^t \mu(\varphi>\tau)\,d\tau
=\zeta(t).
\end{align*}

\noindent(b)
Let $'$ denote $\frac{d}{ds}$.  By Proposition~\ref{prop-D},
$\int_Y C^*(t)1_Y\,d\mu=1_{\{t>k\}}\mu(\varphi>t)=O(t^{-\beta})$ and hence
$\int_Y \hat {D^*}{'}(s)1_Y\,d\mu=-\int_Y\hat {C^*}{'}(s)1_Y\,d\mu\in\mathcal{R}(1/t^{\beta-1})$.

Let $\hat q(s)= s^{-1}(\int_Y\hat D^*(s)1_Y\,d\mu)^2$.
Then
\begin{align*} 
\hat q'(s)
& =-\Bigl(s^{-1}\int_Y\hat D^*(s)1_Y\,d\mu\Bigr)^2
+2\Bigl(\int_Y{\hat D^*}{'}(s)1_Y\,d\mu\Bigr)\Bigl(s^{-1}\int_Y\hat D^*(s)1_Y\,d\mu\Bigr).
\end{align*}
Each term is a product of two elements of $\mathcal{R}(1/t^{\beta-1})$.
Hence $\hat q'\in\mathcal{R} (\{1/t^{\beta-1}\}\star \{1/t^{\beta-1}\})$.
But $\hat q'(s)$ is the Laplace transform of $tq(t)$ so we obtain
that 
\begin{align*}
tq(t) & =O(\{1/t^{\beta-1}\}\star \{1/t^{\beta-1}\})= t\xi_\beta(t),
\end{align*}
as required.
\end{proof}

\begin{prop} \label{prop-U2}
$\hat U:L^\infty(\tilde Y)\to L^1(\tilde Y)$ lies in
$\mathcal{R}(\mu(\varphi>t))$.
\end{prop}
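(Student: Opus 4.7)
The plan is essentially to reread the proof of Lemma~\ref{lem-U}(b) and package the per-$t$ bound already established there. Recall that to say $\hat U\in\mathcal{R}(\mu(\varphi>t))$ means showing $\|U_t\|_{L^\infty(\tilde Y)\to L^1(\tilde Y)}\le C\mu(\varphi>t)$ for all $t\ge 0$ (together with integrability, which in the finite measure case follows from $\int_0^\infty \mu(\varphi>t)\,dt=\bar\varphi<\infty$).

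First I would split into the two regimes given by Proposition~\ref{prop-U}. For $0\le t\le 1$, the formula $(U_tv)(y,u)=v(y,u-t)1_{[t,1]}(u)$ immediately gives $|U_tv|_1\le |v|_\infty$. Since we have assumed $\essinf\varphi>2$, we have $\mu(\varphi>t)=1$ on this range, so $|U_tv|_1\le \mu(\varphi>t)|v|_\infty$ as required.

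For $t>1$, the formula $(U_tv)(y,u)=(\tilde R v_t)(y,u)$ with $v_t(y,u)=1_{\{t<\varphi(y)<t+1-u\}}v(y,u-t+\varphi(y))$ is the key. Using the $L^1$-contraction of the transfer operator $\tilde R$, together with the product structure $\tilde\mu=\mu\times\mathrm{Lebesgue}$, we estimate
\[
|U_tv|_1\le |v_t|_1\le |v|_\infty\,\tilde\mu\{(y,u):t<\varphi(y)<t+1-u\}\le |v|_\infty\,\mu(\varphi>t).
\]
This is exactly the computation already carried out inside the proof of Lemma~\ref{lem-U}(b); the only real observation is to read it as a bound on $\|U_t\|$ rather than to integrate it against $e^{-ibt}$ before taking the norm.

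Combining the two regimes yields $\|U_t\|_{L^\infty\to L^1}\le \mu(\varphi>t)$ for all $t\ge 0$, which is precisely the $\mathcal{R}(\mu(\varphi>t))$ bound. There is no real obstacle here: the proposition is a repackaging of estimates already present, isolated for convenient use in the proof of Lemma~\ref{lem-T3} (where one needs the inverse Laplace transform of $\hat U(s)$ to decay at the tail rate of $\varphi$ rather than merely being a bounded operator on the Laplace side).
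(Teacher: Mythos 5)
Your proof is correct and rests on the same essential observation as the paper's (the cut-off $1_{\{\tilde\varphi>t\}}$ forces the $L^\infty\to L^1$ norm of $U_t$ to be at most $\mu(\varphi>t)$, with integrability coming from $\varphi\in L^1$). The paper's own proof is even shorter: it works directly from the definition $U_tv=1_{\tilde Y}L_t(1_{\{\tilde\varphi>t\}}v)$ and the $L^1$-contraction of $L_t$, giving $|U_tv|_1\le|1_{\{\tilde\varphi>t\}}v|_1\le\tilde\mu(\tilde\varphi>t)|v|_\infty=\mu(\varphi>t)|v|_\infty$ for all $t$ at once, so the case split via Proposition~\ref{prop-U} and the use of $\essinf\varphi>2$ are not needed, though your route through those formulas is perfectly valid.
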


\begin{proof}
Directly from the definition of $U(t)$, we have
\[
|U(t)v|_1=|T_t(1_{\{\tilde\varphi>t\}}v)|_1
=|1_{\{\tilde\varphi>t\}}v|_1
\le\tilde\mu(\tilde\varphi>t)|v|_\infty
=\mu(\varphi>t)|v|_\infty.
\vspace*{-5ex}
\]
\end{proof}

\begin{pfof}{Lemma~\ref{lem-T3}}
By Propositions~\ref{prop-Dinv} and~\ref{prop-DD}(b),
$\hat T_3:L^\infty(\tilde Y)\to L^\infty(\tilde Y)$ lies in $\mathcal{R}(\xi_{\beta,\epsilon}(t))$.
By Proposition~\ref{prop-U2},
$\hat U\in \mathcal{R}(1/t^\beta)$.
Hence $\hat \rho_3\in\mathcal{R}(\xi_{\beta,\epsilon}(t))$.
\end{pfof}

\section{Smoothness of some families of operators}
\label{sec-smooth}

Let $s\mapsto \hat S(s)$ be an analytic family of operators,
$s\in\H$, such that the family extends continuously to $\overline\H$.
If $p\ge0$ is an integer, define
\[
d_p\hat S(ib)=\max_{j=0,\dots,p}\|\hat S^{(j)}(ib)\|.
\]
If $p>0$ is not an integer, define
\[
d_p\hat S(ib)= d_{[p]}\hat S(ib)+\sup_{h\neq0}\|\hat S^{([p])}(i(b+h)))-\hat S^{([p])}(ib)\|/|h|^{p-[p]}.
\]

\begin{prop}  \label{prop-smoothR}
Suppose that $\varphi\in L^p$ for some $p>0$, and let $\epsilon\in(0,p)$.
\begin{itemize}
\item[(a)]  Viewed as a family of operators on $L^\infty(Y)$, 
$b\mapsto \hat R_0(ib)$ is $C^p$ and there exists a constant $C>0$ such that
$d_p\hat R_0(ib)\le C$ for all $b\in\R$.
\item[(b)]  There exists $\theta\in(0,1)$ such that
viewed as a family of operators on $F_\theta(Y)$, 
$b\mapsto \hat R_0(ib)$ is $C^{p-\epsilon}$ and
$d_{p-\epsilon}\hat R_0(ib)\le C(1+|b|^\epsilon)$ for all $b\in\R$.
\end{itemize}
\end{prop}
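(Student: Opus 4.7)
The strategy is to differentiate under the definition $\hat R_0(s)v=R(e^{-s\varphi}v)$, giving the explicit formula
\begin{align*}
(\hat R_0^{(j)}(ib)v)(y)=\sum_{a\in\alpha}(-\varphi(y_a))^j e^{p(y_a)}e^{-ib\varphi(y_a)}v(y_a),
\end{align*}
for $j=0,1,\ldots,[p]$, and then bound each term using the Gibbs-Markov estimates~\eqref{eq-GM} together with (A1). Observe first that (A1) implies $|1_a\varphi|_\infty\le(1+C_2)\inf(1_a\varphi)$ (since $|1_a\varphi|_\infty\le\inf(1_a\varphi)+|1_a\varphi|_{\theta_0}$), which combined with~\eqref{eq-GM} yields the recurring estimate
\begin{align*}
\sum_{a\in\alpha}\mu(a)|1_a\varphi|_\infty^r\le(1+C_2)^r\int_Y\varphi^r\,d\mu<\infty,\quad 0\le r\le p.
\end{align*}

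For part (a) I would apply this directly: since $|e^{-ib\varphi(y_a)}|=1$, we get
\begin{align*}
|\hat R_0^{(j)}(ib)v|_\infty\le C_1|v|_\infty\sum_{a\in\alpha}\mu(a)|1_a\varphi|_\infty^j\ll|v|_\infty
\end{align*}
for every integer $j\le p$ uniformly in $b$. When $p$ is not an integer, I would control the $(p-[p])$-Hölder modulus of the $[p]$-th derivative using $|e^{-ih\varphi(y_a)}-1|\le|h\varphi(y_a)|^{p-[p]}$, which after summation gives the required bound $\ll|h|^{p-[p]}|v|_\infty$.

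Part (b) is more delicate and I would model it on the four-term decomposition of Lemma~\ref{lem-Rb}(b), adding an extra term $D_0$ to account for the varying factor $\varphi(y_a)^j-\varphi(y'_a)^j$. By the mean value inequality and (A1) this $D_0$ contribution is controlled by $\sum_a\mu(a)|1_a\varphi|_\infty^j$, as are the $D_1$ and $D_3$ contributions; all three combine to $O(\|v\|_\theta d_\theta(y,y'))$. The decisive estimate is $D_2$: bounding $|e^{-ib\varphi(y_a)}-e^{-ib\varphi(y'_a)}|\le|b|^\epsilon|1_a\varphi|_{\theta_0}^\epsilon d_{\theta_0}(y_a,y'_a)^\epsilon$ and invoking (A1) a second time to replace $|1_a\varphi|_\infty^j|1_a\varphi|_{\theta_0}^\epsilon$ by a constant multiple of $|1_a\varphi|_\infty^{j+\epsilon}$ yields
\begin{align*}
|D_2|\ll|b|^\epsilon|v|_\infty\,\mu(a)|1_a\varphi|_\infty^{j+\epsilon}\,\theta_0^\epsilon d_{\theta_0^\epsilon}(y,y'),
\end{align*}
which after summation gives $\ll|b|^\epsilon\int_Y\varphi^{j+\epsilon}\,d\mu\cdot|v|_\infty d_\theta(y,y')$ with $\theta=\theta_0^\epsilon$. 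This is finite provided $j+\epsilon\le p$, i.e. $j\le p-\epsilon$, and handles integer derivatives up to order $[p-\epsilon]$. Fractional Hölder regularity of the $[p-\epsilon]$-th derivative is then obtained by the same trick as in part (a), applied to the exponent $(p-\epsilon)-[p-\epsilon]$.

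The main obstacle is the $D_2$ term: the integrability budget $p$ must be split between the factor $\varphi^j$ arising from differentiation and the extra $\varphi^\epsilon$ generated by the $\theta_0^\epsilon$-Hölder interpolation of $e^{-ib\varphi}$. This is precisely what forces the loss of $\epsilon$ in (b) and produces the factor $1+|b|^\epsilon$. A minor bookkeeping point is that the Banach space parameter $\theta$ depends on $\epsilon$ (any $\theta\ge\theta_0^\epsilon$ suffices), and should be fixed uniformly for all $j\le[p-\epsilon]$.
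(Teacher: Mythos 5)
Your proposal is correct and follows essentially the same route as the paper: the explicit formula for $\hat R_0^{(j)}(ib)$, the Gibbs--Markov bounds~\eqref{eq-GM} with (A1) converting $|1_a\varphi|_\infty$ into $\inf(1_a\varphi)$ so that sums over $\alpha$ are dominated by $\int_Y\varphi^r\,d\mu$, the four-term decomposition of the Lipschitz seminorm whose decisive term is the $|b|^\epsilon|1_a\varphi|_{\theta_0}^\epsilon d_{\theta_0}(y_a,y_a')^\epsilon$ interpolation with $\theta=\theta_0^\epsilon$, and the $|e^{ix}-1|\le|x|^\delta$ trick for the fractional modulus. The only point you compress is the H\"older-in-$h$ control of the $d_\theta$-Lipschitz seminorm in (b), which in the paper requires a combined decomposition mixing the $y$-difference with the $h$-increment, but it closes by exactly the budget-splitting you describe (spending $|h|^{p-[p]-\epsilon}\varphi^{p-[p]-\epsilon}$ alongside the $|b|^\epsilon$ factor), so this is a presentational rather than a mathematical gap.
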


\begin{proof}
(a)  By~\eqref{eq-Rn},
\begin{align*}
(\hat R_0^{(j)}(ib)v)(y)=\sum_{a\in\alpha}e^{g(y_a)}v(y_a)(i\varphi(y_a))^je^{ib\varphi(y_a)}
\end{align*}
and hence by~\eqref{eq-GM} and assumption (A1),
\begin{align*}
|(\hat R_0^{(j)}(ib)v)(y)|  & \le 
C_1\sum_{a\in\alpha}\mu(a)|v|_\infty \varphi(y_a)^j
\le C_1(C_2+1)^j|v|_\infty\sum_{a\in\alpha}\mu(a)\inf_a\varphi^j
\\ & \le C_1(C_2+1)^j|v|_\infty |\varphi|_j^j.
\end{align*}
Also, for $p$ not an integer,
\begin{align*}
(\{\hat R_0^{([p])}(i(b+h))-
\hat R_0^{([p])}(ib)\}v)(y)
=\sum_{a\in\alpha}e^{g(y_a)}v(y_a)(i\varphi(y_a))^{[p]}e^{ib\varphi(y_a)}(e^{ih\varphi(y_a)}-1)
\end{align*}
and hence using the inequality $|e^{ix}-1|\le |x|^\delta$ for all $x\in\R$, $\delta\in[0,1]$,
\begin{align*}
& |(\{\hat R_0^{([p])}(i(b+h))- \hat R_0^{([p])}(ib)\}v)(y)|
 \le C_1\sum_{a\in\alpha}\mu(a)|v|_\infty \varphi(y_a)^{[p]}|e^{ih\varphi(y_a)}-1|
\\ & \qquad \le C_1|v|_\infty \sum_{a\in\alpha}\mu(a)\varphi(y_a)^{[p]}|h|^{p-[p]}\varphi(y_a)^{p-[p]}
= C_1|v|_\infty |h|^{p-[p]}\sum_{a\in\alpha}\mu(a)\varphi(y_a)^p
\\ &  \qquad  \le C_1(C_2+1)^p|v|_\infty |h|^{p-[p]}|\varphi|_p^p.
\end{align*}
Hence $d_p\hat R_0(ib)\ll |\varphi|_p^p$.

\vspace{1ex}
\noindent(b)   
We give the details for $p$ not an integer, and $\epsilon<p-[p]$.
Set $\theta=\theta_0^\epsilon$.
Let $j\in\{0,1,\dots,[p]\}$ and write $(\hat R_0^{(j)}(ib)v)(y)
-(\hat R_0^{(j)}(ib)v)(y')= I+II+III+IV$, where
\begin{align*}
I & 
=\sum_{a\in\alpha}(e^{g(y_a)}-e^{g(y_a')})v(y_a)(i\varphi(y_a))^je^{ib\varphi(y_a)}, \\
II 
& =\sum_{a\in\alpha}e^{g(y_a')}(v(y_a)-v(y_a'))(i\varphi(y_a))^je^{ib\varphi(y_a)}, \\
III & 
=\sum_{a\in\alpha}e^{g(y_a')}v(y_a')i^j(\varphi(y_a)^j-\varphi(y_a')^j)e^{ib\varphi(y_a)}, \\
IV & 
=\sum_{a\in\alpha}e^{g(y_a')}v(y_a')(i\varphi(y_a'))^j(e^{ib\varphi(y_a)}-e^{ib\varphi(y_a')}).
\end{align*}
We have 
\begin{align*}
|I| & \le C_1\sum_{a\in\alpha}\mu(a)d_\theta(y,y')|v|_\infty\varphi(y_a)^j\le C_1(C_2+1)^j|v|_\infty|\varphi|_j^jd_\theta(y,y'), \\
|II| & \le C_1\sum_{a\in\alpha}\mu(a)|v|_\theta d_\theta(y,y')\varphi(y_a)^j\le C_1(C_2+1)^j|v|_\theta|\varphi|_j^jd_\theta(y,y'), \\
|III| & \le C_1j\sum_{a\in\alpha}\mu(a)|v|_\infty\varphi(y_a)^{j-1}|1_a\varphi|_\theta d_\theta(y,y')\le C_1(C_2+1)^jj|v|_\infty|\varphi|_j^jd_\theta(y,y'), \\
|IV| & \le C_1\sum_{a\in\alpha}\mu(a)|v|_\infty \varphi(y'_a)^j|b|^\epsilon|1_a\varphi|_{\theta_0}^\epsilon d_{\theta_0}(y,y')^\epsilon
\le C_1(C_2+1)^{j+\epsilon}|b|^\epsilon|v|_\infty|\varphi|_{j+\epsilon}^{j+\epsilon}d_\theta(y,y'),
\end{align*}
so that
\begin{align} \label{eq-dj}
|\hat R_0^{(j)}(ib)v|_\theta\ll (1+|b|^\epsilon)|\varphi|_p^p\|v\|_\theta.
\end{align}

Finally, 
\begin{align*}
& (\{\hat R_0^{([p])}(i(b+h)) -\hat R_0^{([p])}(ib)\}v)(y)
-(\{\hat R_0^{([p])}(i(b+h)) -\hat R_0^{([p])}(ib)\}v)(y')
\\ & =
\sum_{a\in\alpha}e^{g(y_a)}v(y_a)(i\varphi(y_a))^{[p]}e^{ib\varphi(y_a)}(e^{ih\varphi(y_a)}-1)
\\ & \qquad -\sum_{a\in\alpha}e^{g(y_a')}v(y_a')(i\varphi(y_a'))^{[p]}e^{ib\varphi(y_a')}(e^{ih\varphi(y_a')}-1)
\\ & = I+II+III+IV+V, 
\end{align*}
where
\begin{align*}
I & = \sum_{a\in\alpha}(e^{g(y_a)}-e^{g(y_a')})v(y_a)(i\varphi(y_a))^{[p]}e^{ib\varphi(y_a)}(e^{ih\varphi(y_a)}-1), \\
II & = \sum_{a\in\alpha}e^{g(y_a')}(v(y_a)-v(y_a'))(i\varphi(y_a))^{[p]}e^{ib\varphi(y_a)}(e^{ih\varphi(y_a)}-1), \\
III & = \sum_{a\in\alpha}e^{g(y_a')}v(y_a')i^{[p]}(\varphi(y_a)^{[p]}-\varphi(y_a')^{[p]})e^{ib\varphi(y_a)}(e^{ih\varphi(y_a)}-1), \\
IV & = \sum_{a\in\alpha}e^{g(y_a')}v(y_a')(i\varphi(y_a'))^{[p]}(e^{ib\varphi(y_a)}-e^{ib\varphi(y_a')})(e^{ih\varphi(y_a)}-1), \\
V & = \sum_{a\in\alpha}e^{g(y_a')}v(y_a')(i\varphi(y_a'))^{[p]}e^{ib\varphi(y_a')}(e^{ih\varphi(y_a)}-e^{ih\varphi(y_a')}).
\end{align*}
These terms are estimated using the same techniques as the previous ones that arose in this proof.   For example, we use the inequalities
$|e^{ib\varphi(y_a)}-e^{ib\varphi(y_a')}|\le |b|^\epsilon|1_a\varphi|_{\theta_0}^\epsilon d_\theta(y,y')$
and $|e^{ih\varphi(y_a)}-1|\le |h|^{p-[p]-\epsilon}\varphi(y_a)^{p-[p]-\epsilon}$ to obtain
$|IV|\le C_1(C_2+1)^p|b|^\epsilon|v|_\infty |\varphi|_p^p|h|^{p-[p]-\epsilon}d_\theta(y,y')$, 
and we use the inequality $|e^{ih\varphi(y_a)}-e^{ih\varphi(y_a')}|\le |h|^{p-[p]}|1_a\varphi|_{\theta_0}^{p-[p]}d_\theta(y,y')$ to obtain
$|V|\le C_1(C_2+1)^p|v|_\infty |\varphi|_p^p|h|^{p-[p]}d_\theta(y,y')$.
Altogether, we obtain
\begin{align} \label{eq-dp}
|\{\hat R_0^{([p])}(i(b+h))-\hat R_0^{([p])}(ib)\}v|_\theta\ll
 (1+|b|^\epsilon)\|v\|_\theta |\varphi|_p^p|h^{p-[p]-\epsilon}.
\end{align}

The estimates~\eqref{eq-dj} and~\eqref{eq-dp} combined with the estimates in~(a) yield the required result.
\end{proof}

\begin{prop} \label{prop-DolgR}
Suppose that $\varphi\in L^p$ for some $p>0$, and let $\epsilon>0$.
Viewed as a family of operators on $F_\theta(Y)$,
$b\mapsto (I-\hat R_0(ib))^{-1}$ is $C^{p-\epsilon}$ 
and
there exist $C,A>0$ such that
$d_{p-\epsilon}(I-\hat R_0(ib))^{-1}\le C|b|^{A}$ for all $|b|>1$.
\end{prop}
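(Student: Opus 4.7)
The plan is to express the derivatives of $S(b) = (I-\hat R_0(ib))^{-1}$ via a Leibniz-type formula in $S(b)$ and derivatives of $R(b) = \hat R_0(ib)$, then combine the operator norm bound $\|S(b)\|_\theta \le C|b|^\alpha$ from Lemma~\ref{lem-approx} with the smoothness and polynomial growth estimates on derivatives of $R$ from Proposition~\ref{prop-smoothR}(b). Differentiating the identity $S(b)(I-R(b)) = I$ gives $S'(b) = S(b) R'(b) S(b)$, and induction yields
$$S^{(j)}(b) = \sum_{k=1}^{j}\sum_{\alpha_1+\cdots+\alpha_k=j,\,\alpha_l\ge1} c_{\alpha_1,\ldots,\alpha_k}\, S(b) R^{(\alpha_1)}(b) S(b) R^{(\alpha_2)}(b) \cdots R^{(\alpha_k)}(b) S(b)$$
for every integer $j \ge 0$, with finitely many nonzero terms.

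Given $\epsilon > 0$, fix $\epsilon' \in (0,\epsilon)$ such that $[p-\epsilon'] = [p-\epsilon]$ and, if $p-\epsilon$ is non-integer, $p-\epsilon'$ is non-integer as well. Proposition~\ref{prop-smoothR}(b) applied with $\epsilon'$ gives $\|R^{(\alpha_l)}(b)\|_\theta \ll 1 + |b|^{\epsilon'}$ for all $\alpha_l \le [p-\epsilon]$. Combined with Lemma~\ref{lem-approx}, each summand in the formula for $S^{(j)}$, $0 \le j \le [p-\epsilon]$, has norm at most $\|S(b)\|_\theta^{k+1}\prod_l \|R^{(\alpha_l)}(b)\|_\theta \ll |b|^{(k+1)\alpha + k\epsilon'}$ for $|b| \ge 1$, giving $\|S^{(j)}(ib)\|_\theta \ll |b|^{\alpha'}$ for some $\alpha'$ polynomial in $p$.

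When $p-\epsilon \notin \Z$, set $j_0 = [p-\epsilon]$ and estimate $S^{(j_0)}(i(b+h)) - S^{(j_0)}(ib)$ by telescoping each summand of the Leibniz formula factor-by-factor. An $S$-factor difference equals $S(b+h)[R(b)-R(b+h)]S(b)$ with norm $\ll |b|^{2\alpha+\epsilon'}|h|$; an $R^{(\alpha_l)}$-factor difference with $\alpha_l < j_0$ is $\ll (1+|b|^{\epsilon'})|h|$ by Proposition~\ref{prop-smoothR}(b) applied at order $\alpha_l+1$; and the top-order factor difference satisfies $\|R^{(j_0)}(b+h) - R^{(j_0)}(b)\|_\theta \ll (1+|b|^{\epsilon'})|h|^{p-\epsilon'-j_0}$ directly from Proposition~\ref{prop-smoothR}(b). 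Since $|h| \le |h|^{p-\epsilon-j_0}$ for $|h| \le 1$ and $p-\epsilon'-j_0 \ge p-\epsilon-j_0$, each telescope term is bounded by a polynomial in $|b|$ times $|h|^{p-\epsilon-j_0}$; summing the finitely many contributions yields $\|S^{(j_0)}(i(b+h))-S^{(j_0)}(ib)\|_\theta \ll |b|^{\alpha''}|h|^{p-\epsilon-j_0}$, completing the bound on $d_{p-\epsilon}S(ib)$.

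The main obstacle is the bookkeeping of the combinatorial structure of the Leibniz formula and the telescoping: one must ensure that the unfavorable Hölder exponent $p-\epsilon-j_0$ is realized only by the single top-order factor $R^{(j_0)}$ when it appears, while the remaining factor differences contribute the more favorable Lipschitz bound in $|h|$. The pointwise norm estimates themselves follow directly from Lemma~\ref{lem-approx} and Proposition~\ref{prop-smoothR}(b), and the accumulated powers of $|b|$ only enlarge the polynomial exponent $\alpha$, which is exactly what the statement permits.
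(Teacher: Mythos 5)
Your proposal is correct and follows essentially the same route as the paper: the paper's proof is precisely the inductive Leibniz-type expansion of $\frac{d^j}{db^j}(I-\hat R_0(ib))^{-1}$ into finite products of the factors $(I-\hat R_0)^{-1}$ and $\hat R_0^{(k)}$, each controlled in norm and in $d_{p-[p]-\epsilon}$ via Lemma~\ref{lem-approx} and Proposition~\ref{prop-smoothR}, with the factor-by-factor telescoping you spell out left implicit there. The only quibble is your Lipschitz bound $\ll|b|^{2\alpha+\epsilon'}|h|$ for an $S$-factor difference, which needs $p-\epsilon'\ge1$; in the case $[p-\epsilon]=0$ one should instead invoke the H\"older bound $\|\hat R_0(i(b+h))-\hat R_0(ib)\|_\theta\ll(1+|b|^{\epsilon'})|h|^{p-\epsilon'}$ from Proposition~\ref{prop-smoothR}(b), which still yields the required exponent $p-\epsilon$.
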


\begin{proof}  
Again, we give the details for $p$ not an integer, and $\epsilon<p-[p]$.

A straightforward induction argument shows that 
$\frac{d^j}{db^j}(I-\hat R_0(ib))^{-1}$ is a finite linear combination of finite products of factors $\hat F$ where
\[
\hat F\in\{(I-\hat R_0)^{-1}, \quad \hat R_0^{(k)},\enspace k=1,\ldots,j\},
\]
for each $j\le p$.   For each choice of $\hat F$, there exists $A_1>0$ such that
$\|\hat F(ib)\|\ll |b|^{A_1}$,
and moreover
$d_{p-[p]-\epsilon}\hat F(ib)\ll |b|^{A_1}$, by Proposition~\ref{prop-smoothR} and Lemma~\ref{lem-approx}.
The required estimate is an immediate consequence.
\end{proof}

\section{First main lemma}
\label{sec-fml}

In this section we prove the following counterpart of the ``first main lemma'' of~\cite{Sarig02,Gouezel-sharp}.
We view  $\hat B(s)=s(I-\hat R_0(s))^{-1}$ as a family of operators on $F_\theta(Y)$
.
Inverse Laplace transforms will be computed by moving the contour of integration
to the imaginary axis (the functions in question are nonsingular on $\overline \H$) and hence can be viewed as inverse Fourier transforms.  
Recall that we defined $\mathcal{R}(a(t))$ to be the space of Laplace transforms of maps $S:[0,\infty)\to\mathcal{B}$ with $\|S(t)\|\le Ca(t)$.
We now enlarge the
definition of $\mathcal{R}(a(t))$ to include (operator-valued) functions defined on the 
imaginary axis with inverse Fourier transform dominated by $a(t)$.

Also, we write $\mathcal{R}(1/t^{p-})$ to denote domination by $1/t^q$ for all $q<p$.   Similarly, an (operator-valued) function is $C^{p-}$ if it is $C^q$ for all $q<p$.

\begin{lemma} \label{lem-fml}
Suppose that $\varphi\in L^p$ for some $p>1$.
Let $\psi:\R\to\R$ be $C^\infty$ with
$\supp\psi\subset [-r,r]$ where $r\in(0,1)$ is sufficiently small
and such that $\psi\equiv1$ on a neighborhood of $0$.
Then $\psi\hat B\in\mathcal{R}(1/t^{p-})$.
\end{lemma}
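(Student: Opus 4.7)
My plan is to pass to the imaginary axis $s=ib$, exploit spectral perturbation theory near $b=0$ to isolate the near-singular behaviour of $\hat B$, and read off the decay of the inverse Fourier transform from a H\"older-regularity estimate combined with integration by parts. For $r$ sufficiently small, Proposition~\ref{prop-H2} yields a simple leading eigenvalue $\lambda(ib)$ of $\hat R_0(ib)$ on $[-r,r]$ with associated spectral projection $P(ib)$ and complement $Q(ib)=I-P(ib)$. Representing $P(ib)$ as a contour integral of the resolvent of $\hat R_0(ib)$ and applying Proposition~\ref{prop-smoothR}(b) shows that $\lambda$, $P$, $Q$, and $\hat E(ib):=(I-\hat R_0(ib))^{-1}Q(ib)$ are all $C^{p-\epsilon}$ in $b$. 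This gives the decomposition
\begin{align*}
\hat B(ib)=\frac{ib}{1-\lambda(ib)}\,P(ib)+ib\,\hat E(ib).
\end{align*}

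The $Q$-piece $\psi(b)\,ib\,\hat E(ib)$ is $C^{p-\epsilon}$, compactly supported, and vanishes at $b=0$; after $\lfloor p\rfloor$ integrations by parts and a H\"older bound on the top derivative, its inverse Fourier transform is $O(1/t^{p-\epsilon})$. For the $P$-piece, $\lambda(0)=1$ and $\lambda'(0)=-\bar\varphi$ give $\phi(b):=ib/(1-\lambda(ib))\to 1/\bar\varphi$ as $b\to0$, and I would split
\begin{align*}
\psi(b)\phi(b)P(ib)=\psi(b)P_\varphi+\psi(b)\bigl\{\phi(b)P(ib)-\tfrac{1}{\bar\varphi}P(0)\bigr\}.
\end{align*}
Since $\psi\in C^\infty_c$, the first summand is a Schwartz-class multiple of $P_\varphi$ whose inverse Fourier transform is rapidly decaying, so the task reduces to the bracketed remainder, which vanishes at $b=0$.

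The main obstacle is obtaining the full rate $1/t^{p-\epsilon}$ for this remainder rather than the weaker $1/t^{(p-1)-\epsilon}$ given by a naive estimate: factoring $\phi(b)=ib/(1-\lambda(ib))$ and using only $\lambda\in C^{p-\epsilon}$ loses one order through the quotient-by-$b$ rule, since $1-\lambda(ib)$ vanishes simply at $b=0$. To recover the missing derivative I would exploit Proposition~\ref{prop-smoothR}(a), which gives $\hat R_0\in C^p$ (without $\epsilon$-loss) as a family of operators on $L^\infty(Y)$. This upgrades the Taylor expansion of the scalar $\lambda(ib)$ to a classical one through order $\lfloor p\rfloor$,
\begin{align*}
1-\lambda(ib)=i\bar\varphi\,b+\sum_{j=2}^{\lfloor p\rfloor}c_j(ib)^j+r(b),\quad |r(b)|\le C|b|^p,\ r\in C^{p-\epsilon}.
\end{align*}
Subtracting off the polynomial corrections (which combine with $\psi$ to give Schwartz-class inverse Fourier transforms) reduces the analysis to a genuine remainder that vanishes to order $p$ at $b=0$ and fluctuates at H\"older scale $C^{p-\epsilon}$; a fractional-H\"older Fourier bound then delivers the required $O(1/t^{p-\epsilon})$. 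The operator-valued factor $P(ib)$ is treated in parallel by splitting $P(ib)=P(0)+(P(ib)-P(0))$, where the second piece is itself $C^{p-\epsilon}$ and vanishes at $b=0$.
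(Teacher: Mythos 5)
Your decomposition $\hat B(ib)=\frac{ib}{1-\lambda(ib)}P(ib)+ib\,\hat E(ib)$ and the treatment of the $Q$-piece match the paper's strategy, but the handling of the eigenvalue piece has a genuine gap, and it sits exactly at the point you flag as "the main obstacle". After Taylor-expanding $1-\lambda(ib)$ you still have to pass to the reciprocal $ib/(1-\lambda(ib))$, i.e.\ to divide the remainder by $b$ and invert; dividing a function that is $C^{p-\epsilon}$ and $O(|b|^{p})$ by $b$ produces something of size $O(|b|^{p-1})$ whose global H\"older class is only $C^{p-1-\epsilon}$, and no "fractional-H\"older Fourier bound" can then give decay $1/t^{p-\epsilon}$: the Fourier decay of a compactly supported function is governed by its worst-case global regularity, and vanishing to high order at the single point $b=0$ does not help, since the roughness of $\lambda$ (which comes from the tail of $\varphi$ and is spread over all $b$, not localized at $0$) can sit anywhere in $\supp\psi$ --- a localized Weierstrass-type $C^{p-1-\epsilon}$ perturbation supported away from $0$ satisfies all your stated hypotheses and has Fourier transform decaying only like $t^{-(p-1-\epsilon)}$. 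There is also a secondary issue: the eigenvalue family is only defined through the spectral gap on $F_\theta(Y)$, where Proposition~\ref{prop-smoothR}(b) gives $C^{p-\epsilon}$; the $C^p$ bound of Proposition~\ref{prop-smoothR}(a) on $L^\infty(Y)$ does not transfer to $\lambda$ without further argument, since $\hat R_0(ib)$ has no spectral gap on $L^\infty$.

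The paper closes precisely this gap by different means. It first proves (Lemma~\ref{lem-hatR}, via Proposition~\ref{prop-s}) that the divided difference $\chi(b)\,(\hat R_0(b)-\hat R_0(0))/b$ lies in $\mathcal{R}(1/t^{p-})$ \emph{at the full rate}, not merely that it is $C^{p-1}$: this uses the Gibbs--Markov cylinder expansion and the $L^p$ tail of $\varphi$, i.e.\ structural information invisible to any abstract smoothness argument. This is transferred to the scalar $(1-\tilde\lambda(b))/b$ through the projection identity~\eqref{eq-formula} (Propositions~\ref{prop-tildeR} and~\ref{prop-lambda}), and the reciprocal is then taken \emph{inside the Banach algebra} $\mathcal{R}_\beta$ via the Wiener Lemma~\ref{lem-W}, so that no power of $t$ is lost in the inversion. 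In short: the missing ingredients in your argument are the full-rate estimate for the divided difference (Lemma~\ref{lem-hatR}) and a Wiener-lemma inversion replacing the Taylor/H\"older step; without them the eigenvalue piece only yields $O(1/t^{(p-1)-\epsilon})$.
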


First we derive an elementary calculus estimate.

\begin{prop} \label{prop-s}
Let $s(y)=(e^{iy}-1)/y$.
For any $n\ge0$, there exists a constant $C>0$ such that
$|s^{(n)}(y)|\le C$ and
$|s^{(n)}(y)|\le C/|y|$ for all $y\in\R$.
\end{prop}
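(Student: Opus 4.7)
The plan is to prove both bounds by exploiting the integral representation
\[
s(y) = \frac{e^{iy}-1}{y} = i\int_0^1 e^{iyu}\,du,
\]
which removes the apparent singularity at $y=0$ and shows that $s$ is entire. Differentiating under the integral sign gives $s^{(n)}(y) = i^{n+1}\int_0^1 u^n e^{iyu}\,du$, so
\[
|s^{(n)}(y)| \le \int_0^1 u^n\,du = \frac{1}{n+1},
\]
uniformly in $y$. This establishes the first bound.

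For the bound $|s^{(n)}(y)| \le C/|y|$, I would start from the identity $y\,s(y) = e^{iy} - 1$ and apply the Leibniz rule. Since $y$ has only two nonzero derivatives (itself and $1$), differentiating $n$ times yields, for $n \ge 1$,
\[
y\,s^{(n)}(y) + n\,s^{(n-1)}(y) = i^n e^{iy},
\]
and hence
\[
s^{(n)}(y) = \frac{i^n e^{iy} - n\,s^{(n-1)}(y)}{y}.
\]
Combining this with the uniform bound $|s^{(n-1)}(y)| \le 1/n$ just obtained gives $|s^{(n)}(y)| \le 2/|y|$ for $y \neq 0$. The case $n=0$ is immediate from $|y\,s(y)| = |e^{iy}-1| \le 2$.

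There is no real obstacle here; the only subtlety is to avoid appealing to the quotient rule directly at $y = 0$, which is why the integral representation is used for the uniform bound and the algebraic recursion $y s^{(n)} = i^n e^{iy} - n s^{(n-1)}$ is used only for $y \neq 0$ (where the second bound is all that is needed, since the first already handles bounded $y$).
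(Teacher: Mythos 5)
Your proof is correct, and it takes a genuinely different and more elementary route than the paper. The paper writes $s$ in terms of the Taylor remainders $q_n(z)=e^z-\sum_{j=0}^{n-1}z^j/j!$ and the functions $r_n=q_n/z^n$, proves the two bounds $|q_n(iy)|\le |y|^n/n!$ and $|q_n(iy)|\le |y|^{n-1}/(n-1)!$ via Taylor's theorem, and then needs an induction together with an analyticity argument (to rule out a spurious polynomial term $p(z)/z^{n+1}$) in order to express $r_1^{(n)}$ as a linear combination of the $r_k$; the constants obtained there are for each fixed $n$. Your argument replaces all of this by the integral representation $s(y)=i\int_0^1 e^{iyu}\,du$, which gives $s^{(n)}(y)=i^{n+1}\int_0^1 u^n e^{iyu}\,du$ and hence the uniform bound $|s^{(n)}(y)|\le 1/(n+1)$ in one line (differentiation under the integral is harmless since the integrand is smooth on a compact $u$-interval), and by the Leibniz identity $y\,s^{(n)}(y)+n\,s^{(n-1)}(y)=i^ne^{iy}$ for $n\ge1$, which combined with $|s^{(n-1)}|\le 1/n$ yields $|s^{(n)}(y)|\le 2/|y|$ for $y\neq0$, the case $n=0$ being $|e^{iy}-1|\le2$. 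This is shorter, avoids the induction and the removable-singularity bookkeeping, and even gives constants ($1/(n+1)$ and $2$) that are uniform in $n$, which is more than the proposition asks for.
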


\begin{proof}
Define the analytic functions $q_n,r_n:\C\to\C$ for $n\ge1$,
\[
q_n(z)=e^z-\sum_{j=0}^{n-1}\frac{z^j}{j!}, \quad r_n(z)=\frac{q_n(z)}{z^n}.
\]
By Taylor's theorem, there exists $\xi$ between $0$ and $z$ such that
\[
q_n(z)
=\sum_{j=0}^{n-1}q_n^{(j)}(0)z^j/j! + q_n^{(n)}(\xi)z^n/n!
= e^{\xi}z^n/n!,
\]
so that $|q_n(iy)|\le |y|^n/n!\,$
Similarly,
\[
q_n(z)
=\sum_{j=0}^{n-2}q_n^{(j)}(0)z^j/j! + q_n^{(n-1)}(\xi)z^{n-1}/(n-1)!
= (e^{\xi}-1)z^{n-1}/(n-1)!,
\]
so that $|q_n(iy)|\le |y|^{n-1}/(n-1)!$

Next, note by induction that 
\[
r_1^{(n)}\in \R\{e^z/z,\,e^z/z^2,\dots,e^z/z^n,\,(e^z-1)/z^{n+1}\}.
\]
But $e^z/z^k-r_k\in\R\{1/z,\dots,1/z^k\}$.
Hence there exist constants $a_1,\dots,a_{n+1}$ and a polynomial $p$ of degree at most $n$ such that
\[
r_1^{(n)}(z)=\sum_{k=1}^{n+1}a_kr_k(z)+p(z)/z^{n+1}.
\]
Since all terms in this identity are analytic with the possible exception of the last one, we deduce that $p\equiv0$.
Hence
\[
r_1^{(n)}(z)=\sum_{k=1}^{n+1}a_kr_k(z)
=\sum_{k=1}^{n+1}a_kq_k(z)/z^k.
\]

Since $s(y)=ir_1(iy)$,
the result follows for each fixed $n$ by substituting in the estimates for $q_k$.
\end{proof}

\begin{lemma} \label{lem-hatR}
Suppose that $\varphi\in L^p$ for some $p>0$, and let $\epsilon>0$.
Then there exists $\theta\in(0,1)$ such that viewed as an operator on
$F_\theta(Y)$,
\[
\chi(b)\frac{\hat R_0(ib)-\hat R_0(0)}{b}\in\mathcal{R}(1/t^{p-\epsilon}),
\]
for all $C^\infty$ functions $\chi:\R\to[0,1]$ with $\supp \chi\subset[-3,3]$.
\end{lemma}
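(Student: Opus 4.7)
The approach is explicit Fourier inversion via a kernel representation, combined with Gibbs-Markov bookkeeping and an interpolation trick. Since $\hat R_0(ib)v=R(e^{-ib\varphi}v)$ and $(e^{-ib\varphi}-1)/b=-\varphi\,s(-b\varphi)$ with $s$ as in Proposition~\ref{prop-s}, the operator to study is
\[
g(b)v := \chi(b)\frac{\hat R_0(ib)-\hat R_0(0)}{b}v = -\chi(b)R\bigl(\varphi\,s(-b\varphi)v\bigr).
\]
The key identity $s(-b\varphi)=i\int_0^1 e^{-iub\varphi}\,du$ rewrites this as $g(b)v = -i\chi(b)\int_0^1 R(\varphi\,e^{-iub\varphi}v)\,du$, a form amenable to direct Fourier inversion.

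The inverse Fourier transform can then be computed explicitly. Writing $\hat\chi(\xi)=\int\chi(b)e^{-i\xi b}\,db$ (a Schwartz function, since $\chi\in C_c^\infty$), Fubini and the substitution $w=u\varphi$ give
\[
\check g(t)v = -\frac{i}{2\pi}\int_0^1 R\bigl(\varphi\,\hat\chi(u\varphi-t)v\bigr)\,du = -\frac{i}{2\pi}R\Bigl(v\int_0^\varphi\hat\chi(w-t)\,dw\Bigr).
\]
Setting $K(\varphi,t):=\int_{-t}^{\varphi-t}\hat\chi(w)\,dw$, this kernel is uniformly bounded by $\|\hat\chi\|_1$, and by Schwartz decay satisfies $|K(\varphi,t)|\le C_n t^{-n}$ whenever $\varphi\le t/2$ and $t$ is large, for any $n\ge1$.

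To bound $\|\check g(t)\|_\theta$, I would split partition elements according to whether $\varphi(y_a)>t/2$. Using $e^{p(y_a)}\le C_1\mu(a)$, the $L^\infty$ contribution reduces to $\mu(\varphi>c_2 t/2)=O(t^{-p})$ via Proposition~\ref{prop-k} and Markov's inequality. For the Lipschitz part, decompose $\check g(t)v(y)-\check g(t)v(y')$ into three Gibbs-Markov difference terms as in the proof of Proposition~\ref{prop-smoothR}(b). The delicate term is the kernel difference $|K(\varphi(y_a),t)-K(\varphi(y'_a),t)|$, which naively contributes $\int_{\varphi>c_2 t/2}\varphi\,d\mu = O(t^{-(p-1)})$, apparently falling short of the target. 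The fix is interpolation: combine the bound $|K-K'|\le\|\hat\chi\|_1$ with $|K-K'|\le\|\hat\chi\|_\infty|\varphi(y_a)-\varphi(y'_a)|\le C\inf(1_a\varphi)\,d_{\theta_0}(y,y')$ (using (A1)) to obtain, for any $\epsilon\in(0,1)$, $|K-K'|\le C(\inf(1_a\varphi))^\epsilon d_{\theta_0}(y,y')^\epsilon$. Choosing $\theta=\theta_0^\epsilon$, this is a Lipschitz bound in $d_\theta$, and summing over partitions with $\varphi(y_a)>t/2$ produces $\sum\mu(a)\varphi(y_a)^\epsilon \le C|\varphi|_p^p\,t^{\epsilon-p}$, yielding decay $O(t^{-(p-\epsilon)})$.

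The hard part will be this interpolation step. One must thread three competing constraints---the $L^p$ integrability of $\varphi$, the (A1)-control of $|1_a\varphi|_{\theta_0}$ by $\inf(1_a\varphi)$, and the trade-off between $d_{\theta_0}$-Lipschitz and $d_\theta$-Lipschitz regularity fixed by the choice $\theta=\theta_0^\epsilon$---to improve the naive $t^{-(p-1)}$ bound to the claimed $t^{-(p-\epsilon)}$. This interpolation is the continuous-time analogue of the central technical step in the discrete-time first main lemma of~\cite{Gouezel-sharp,Sarig02}, and its availability here is exactly what makes the operator renewal theory for finite-measure semiflows go through.
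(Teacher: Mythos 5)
Your argument is correct, but it is a genuinely different route from the paper's. The paper never computes the inverse transform explicitly: it writes $(S(t)v)(y)=\sum_a e^{p(y_a)}v(y_a)\int_{-3}^3 r(b,\varphi(y_a))e^{ibt}\,db$ with $r(b,x)=\chi(b)xs(xb)$, obtains $|\partial_b^n r(b,x)|\ll x^{n+\epsilon}|b|^{-(1-\epsilon)}$ from Proposition~\ref{prop-s}, integrates by parts $k$ and $k+1$ times in $b$ (where $p\in(k,k+1]$), and interpolates the two resulting bounds to get $x^p|t|^{-(p-\epsilon)}$ per partition element, before summing with~\eqref{eq-GM} and the moment $\sum_a\mu(a)\varphi(y_a)^p\ll|\varphi|_p^p$; the Lipschitz seminorm is handled by the same three-term Gibbs--Markov split you use, with an analogous estimate for $r(b,\varphi(y_a))-r(b,\varphi(y'_a))$. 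Your kernel representation $\check g(t)v=-\tfrac{i}{2\pi}R\bigl(vK(\varphi,t)\bigr)$, $K(\varphi,t)=\int_{-t}^{\varphi-t}\hat\chi$, replaces repeated integration by parts with Schwartz decay of $\hat\chi$ plus a dichotomy $\varphi\lesssim t$ versus $\varphi\gtrsim t$, and it actually yields the cleaner $L^\infty$ rate $t^{-p}$ with the $\epsilon$-loss confined to the Lipschitz (kernel-difference) term, exactly where the paper also pays it via the $d_{\theta_0}^\epsilon=d_\theta$ trade with $\theta=\theta_0^\epsilon$ and condition (A1). Two small bookkeeping points to make your version airtight: the dichotomy for the kernel-difference term should be taken at $\inf(1_a\varphi)>\delta t$ with $\delta$ small depending on $C_2$ (so that, by (A1), \emph{both} $\varphi(y_a)$ and $\varphi(y'_a)$ stay below $t/2$ on the complementary elements and the Schwartz bound applies to the whole integration range), and on those small elements the difference $K(\varphi(y_a),t)-K(\varphi(y'_a),t)$ must still carry a factor $d_\theta(y,y')$ --- this follows by interpolating the Schwartz bound $C_nt^{-n}$ against the Lipschitz bound $C\inf(1_a\varphi)\,d_{\theta_0}(y,y')$, the same trick you already use on the large elements. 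With these adjustments your proof is complete and of comparable length; the paper's integration-by-parts argument buys a uniform treatment of all scales of $\varphi(y_a)$ without a case split, while your kernel computation is more transparent about where the decay comes from.
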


\begin{proof}
Let $k\ge0$ such that $p\in (k,k+1]$ and
let $\epsilon\in(0,p-k)$.  Set
$\theta=\theta_0^\epsilon$.

Let $S(t)$ denote the inverse Fourier transform of $\chi(b)(\hat R_0(ib)-\hat R_0(0))/b$.
We show that
 $\|S(t)\|_\theta\ll |\varphi|_p^p|t|^{-(p-\epsilon)}$.
Let $v\in F_\theta(Y)$. By~\eqref{eq-Rn},
\begin{align*}
((\hat R_0(ib)-\hat R_0(0)) v)(y) & =
\sum_{a\in \alpha}e^{g(y_a)}v(y_a) (e^{ib\varphi(y_a)}-1).
\end{align*}
Hence 
\begin{align*}
(S(t)v)(y) = \sum_{a\in\alpha} e^{g(y_a)}v(y_a)
\int_{-3}^3 r(b,\varphi(y_a))e^{ibt}\,db
\end{align*}
where 
\[
r(b,x)=\chi(b) (e^{ibx}-1)/b=\chi(b)xs(xb), \quad
s(y)=(e^{iy}-1)/y.
\]
Let $n\ge0$.  By Proposition~\ref{prop-s},
$|s^{(n)}(y)|\ll 1$ and $|s^{(n)}(y)|\ll |y|^{-1}$.
Hence
\begin{align} \label{eq-s}
|s^{(n)}(y)|\ll |y|^{-1}\min\{1,|y|\}\le |y|^{-(1-\epsilon)},
\end{align} 
for all $y\in\R$.
It follows from~\eqref{eq-s} that
$|\partial^nr(b,x)/\partial b^n|\ll x^{n+\epsilon}|b|^{-(1-\epsilon)}$
for all $x\ge1$, $b\in\R$.

Integrating by parts $n$ times,
\begin{align*} 
\Bigl|\int_{-3}^3 r(b,x)e^{ibt}\,db\Bigr| & 
= |t|^{-n}
\Bigl|\int_{-3}^3 \partial^n r(b,x)/\partial b^n\,e^{ibt}\,db\Bigr| 
\\ & \ll x^{n+\epsilon}|t|^{-n}\int_{-3}^3 |b|^{-(1-\epsilon)} \,db
\ll x^{n+\epsilon}|t|^{-n}.
\end{align*}
Applying this with $n=k$ and $n=k+1$,
\begin{align*} 
\Bigl|\int_{-3}^3 r(b,x)e^{ibt}\,db\Bigr| & 
\ll
\min\{ x^{k+\epsilon}|t|^{-k},
x^{k+1+\epsilon}|t|^{-(k+1)}\}  
 = 
x^{k+\epsilon}|t|^{-k}
\min\{ 1, x|t|^{-1}\}
 \\ & \le  x^{k+\epsilon+\delta}|t|^{-(k+\delta)},
\end{align*}
for all $\delta\in[0,1]$.   Taking $\delta=p-k-\epsilon$, we obtain 
$|\int_{-3}^3 r(b,x)e^{ibt}\,db| \le x^p |t|^{-(p-\epsilon)}$.
In particular,
\begin{align} \label{eq-r}
\Bigl|\int_{-3}^3 r(b,\varphi(y_a))e^{ibt}\,db\Bigr| & \ll
\varphi(y_a)^p|t|^{-(p-\epsilon)}. 
\end{align}
Also, $r(b,x)-r(b,x')=\chi(b)e^{ibx}(x-x')s(b(x'-x))$ and it follows from~\eqref{eq-s}
that
\begin{align*}
& |(\partial^n r(b,x)/\partial b^n-
(\partial^n r(b,x')/\partial b^n|
 \ll \sum_{j=0}^n x^{n-j}|x-x'|^{j+\epsilon}
|b|^{-(1-\epsilon)},
\end{align*}
for all $x,x'\ge1$, $b\in\R$.
Hence using the estimate $|1_a\varphi|_\theta\le C_2\inf_a\varphi\le C_2|1_a\varphi|_\infty$ from assumption~(A1),
\begin{align*}
|(\partial^n r(b,\varphi(y_a))/\partial b^n-
(\partial^n r(b,\varphi(y'_a))/\partial b^n|
& \ll 
|1_a\varphi|_\infty^{n+\epsilon}d_{\theta_0}(y,y')^{\epsilon}|b|^{-(1-\epsilon)}
\\ & =|1_a\varphi|_\infty^{n+\epsilon}d_{\theta}(y,y')|b|^{-(1-\epsilon)}.
\end{align*}
Integrating by parts $k$ and $k+1$ times,
\begin{align} \label{eq-rr} \nonumber
\Bigl|\int_{-3}^3 (r(b,\varphi(y_a))-r(b,\varphi(y'_a)))e^{ibt}\,db\Bigr| & \ll
|1_a\varphi|_\infty^{k+\epsilon}d_\theta(y,y')|t|^{-k}
\min\{ 1, |1_a\varphi|_\infty|t|^{-1}\} \\ & \le
|1_a\varphi|_\infty^pd_\theta(y,y')|t|^{-(p-\epsilon)}.
\end{align}

We are now ready to estimate $\|S(t)\|$.
Using~\eqref{eq-r}, we obtain that
\begin{align*}
|S(t)v)(y)| & \ll \sum_{a\in\alpha} \mu(a)|v|_\infty\Bigl|\int_{-3}^3 r(b,\varphi(y_a))e^{ibt}\,db\Bigr| \ll
|v|_\infty \sum_{a\in \alpha}\mu(a)
\varphi(y_a)^p|t|^{-(p-\epsilon)},
\end{align*}
and so
\begin{align} \label{eq-infty}
|S(t)|_\infty \ll |\varphi|_p^p|t|^{-(p-\epsilon)}|v|_\infty.
\end{align}

Next, 
\begin{align*}
(S(t)v)(y)- (S(t)v)(y')= I+II+III,
\end{align*}
where
 \begin{align*}
I & = \sum_{a\in\alpha} (e^{g(y_a)}-e^{g(y'_a)})v(y_a) \int_{-3}^3 r(b,\varphi(y_a))e^{ibt}\,db, \\
II & = \sum_{a\in\alpha} e^{g(y'_a)}(v(y_a)-v(y'_a)) \int_{-3}^3 r(b,\varphi(y_a))e^{ibt}\,db, \\
III & = \sum_{a\in\alpha} e^{g(y'_a)}v(y'_a) \int_{-3}^3 (r(b,\varphi(y_a))-r(b,\varphi(y'_a))e^{ibt}\,db.
\end{align*}
The first two terms are estimated using~\eqref{eq-r}:
\begin{align*}
|I| & \ll \sum_{a\in\alpha} \mu(a)d_\theta(y,y')|v|_\infty \varphi(y_a)^p|t|^{-(p-\epsilon)} \ll |\varphi|_p^p|t|^{-(p-\epsilon)}|v|_\infty d_\theta(y,y'), \\
|II| & \ll \sum_{a\in\alpha} \mu(a)|v|_\theta d_\theta(y,y')\varphi(y_a)^p|t|^{-(p-\epsilon)} \ll |\varphi|_p^p|t|^{-(p-\epsilon)}|v|_\theta d_\theta(y,y').
\end{align*}
The third term is estimated using~\eqref{eq-rr}:
\begin{align*}
|III| & \ll \sum_{a\in\alpha} \mu(a)|v|_\infty |1_a\varphi|_\infty^p d_\theta(y,y')|t|^{-(p-\epsilon)}
\ll |\varphi|_p^p|t|^{-(p-\epsilon)}|v|_\infty d_\theta(y,y').
\end{align*}
Combining the estimates for $I,II,III$ we obtain
\begin{align} \label{eq-Lip}
|S(t)v|_\theta\ll |\varphi|_p^p|t|^{-(p-\epsilon)}\|v\|_\theta.
\end{align}
By~\eqref{eq-infty} and~\eqref{eq-Lip}, 
$\|S(t)\|_\theta \ll |\varphi|_p^p|t|^{-(p-\epsilon)}$ as required.
\end{proof}

\begin{prop}  \label{prop-tildeR}
Suppose that $\varphi\in L^p$ for some $p>0$, and let $\epsilon>0$.
Let $\delta>0$.  For all $r>0$ sufficiently small, there exists 
a $C^{p-\epsilon}$ family $b\mapsto \tilde R_0(b)$ with a $C^{p-\epsilon}$ family of simple eigenvalues
$\tilde\lambda(b)\in\{z\in\C:|z-1|<\delta\}$ such that
\begin{itemize}
\item[(a)] $\tilde R_0(b)\equiv \hat R_0(ib)$ for $|b|\le r$.
\item[(b)] $\tilde R_0(b)\equiv \hat R_0(0)$ and $\tilde\lambda(b)\equiv1$ for $|b|\ge 2$.
\item[(c)] $\|\tilde R_0(b)-\hat R_0(0)\|_\theta<\delta$ for all $b\in\R$.
\item[(d)] 
For all $b\in\R$,
the spectrum of $\tilde R_0(b)$ consists of $\tilde\lambda(b)$ together with a subset of $\{z:|z-1|\ge 3\delta\}$.
\item[(e)] $(1-\tilde\lambda(b))/b$ is bounded away from zero on $[-r,r]$.
\end{itemize}
\end{prop}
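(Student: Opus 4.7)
The plan is to construct $\tilde R_0(b)$ by smoothly interpolating $\hat R_0(b)$ near the origin with the constant operator $\hat R_0(0)$ far away. Fix $\theta\in(0,1)$ as in Proposition~\ref{prop-smoothR}(b) so that $b\mapsto\hat R_0(b)$ is $C^{p-\epsilon}$ on $F_\theta(Y)$, and choose $r\in(0,1)$ small enough that $\|\hat R_0(b)-\hat R_0(0)\|_\theta<\delta$ for all $|b|\le 2r$; this is possible by continuity at $b=0$. Let $\chi:\R\to[0,1]$ be a $C^\infty$ cutoff with $\chi\equiv 1$ on $[-r,r]$ and $\supp\chi\subset[-2r,2r]$, and set
\[
\tilde R_0(b)=\hat R_0(0)+\chi(b)\bigl(\hat R_0(b)-\hat R_0(0)\bigr).
\]
Properties~(a), (b), (c) are immediate from this construction (noting $2r<2$), and $\tilde R_0$ is $C^{p-\epsilon}$ because $\chi$ is smooth and $\hat R_0$ is $C^{p-\epsilon}$ on $\supp\chi$.

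For (d) and the existence of $\tilde\lambda(b)$, recall from Proposition~\ref{prop-H2} that $1$ is a simple isolated eigenvalue of $\hat R_0(0)$, with some spectral gap $g>0$ to the rest of the spectrum. Assume $\delta<g/6$ (the only interesting regime). Since $\|\tilde R_0(b)-\hat R_0(0)\|_\theta<\delta$ uniformly in $b\in\R$, standard analytic perturbation theory produces a unique simple eigenvalue $\tilde\lambda(b)\in\{|z-1|<\delta\}$ of $\tilde R_0(b)$, with the rest of its spectrum confined to $\{|z-1|\ge g-\delta\}\subset\{|z-1|\ge 3\delta\}$. The associated rank-one spectral projection
\[
\tilde P(b)=\frac{1}{2\pi i}\oint_{|z-1|=2\delta}\bigl(z-\tilde R_0(b)\bigr)^{-1}\,dz
\]
inherits $C^{p-\epsilon}$ regularity from $\tilde R_0$ because the resolvent is uniformly bounded on the fixed contour, and $\tilde\lambda(b)$, read off from the rank-one relation $\tilde R_0(b)\tilde P(b)=\tilde\lambda(b)\tilde P(b)$, is $C^{p-\epsilon}$ as well. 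For $|b|\ge 2r$ the cutoff vanishes, so $\tilde R_0(b)=\hat R_0(0)$ and $\tilde\lambda(b)\equiv 1$, giving the $|b|\ge 2$ clause of (b).

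For (e), note that on $[-r,r]$ we have $\tilde R_0(b)=\hat R_0(b)$, hence $\tilde\lambda(b)=\lambda(b)$, the leading eigenvalue of $\hat R_0$. Since $\beta>1$ we have $\varphi\in L^1$, so a Taylor expansion gives $\int_Y e^{-ib\varphi}\,d\mu=1-ib\bar\varphi+o(b)$. Combining this with the representation $\lambda(ib)=\int_Y e^{-ib\varphi}\,d\mu+V(ib)$ from the proof of Lemma~\ref{lem-zero}, where now $V(ib)=o(b)$ by the $C^1$ regularity of $\hat R_0$ at zero on $F_\theta(Y)$, yields $1-\tilde\lambda(b)=ib\bar\varphi+o(b)$. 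Hence $|(1-\tilde\lambda(b))/b|\to\bar\varphi>0$ as $b\to 0$, so by continuity the quotient is bounded away from zero on $[-r,r]$ after possibly shrinking $r$ further.

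The main obstacle is securing the $C^{p-\epsilon}$ regularity of $\tilde\lambda$ at non-integer Hölder exponent, since perturbation arguments based only on ``$\tilde R_0$ is close to $\hat R_0(0)$'' usually give merely continuity. The resolvent contour-integral representation handles this cleanly: property~(c) keeps $\tilde R_0(b)$ in a uniform norm-ball around $\hat R_0(0)$, so the resolvent is uniformly bounded on $\{|z-1|=2\delta\}$ across all $b\in\R$, and the $C^{p-\epsilon}$ dependence of $\tilde R_0$ on $b$ passes through the integral unchanged to $\tilde P(b)$ and thence to $\tilde\lambda(b)$.
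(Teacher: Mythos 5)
Your construction is essentially the paper's (a smooth cutoff interpolating $\hat R_0(b)$ near $b=0$ with the constant operator $\hat R_0(0)$ far away, followed by spectral perturbation theory and a contour-integral projection); in fact your single-stage interpolation $\tilde R_0(b)=\hat R_0(0)+\chi(b)(\hat R_0(b)-\hat R_0(0))$ is simpler than the paper's three-stage interpolation through $\hat R_0(\sgn(b)s)$, and it delivers (a), (b), (c) just as well, while (e) is handled exactly as in the paper via $1-\lambda(b)=ib\bv+o(b)$ on $[-r,r]$ (legitimate here since in Part~\ref{part-finite} one has $\beta>1$, hence $\varphi\in L^1$).

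There is, however, one genuine flaw, in the step establishing (d) and the existence of $\tilde\lambda(b)$. You argue that $\|\tilde R_0(b)-\hat R_0(0)\|_\theta<\delta$ together with $\delta<g/6$ (where $g$ is the spectral gap of $\hat R_0(0)$) forces the spectrum of $\tilde R_0(b)$ to consist of a simple eigenvalue in $\{|z-1|<\delta\}$ plus a set contained in $\{|z-1|\ge g-\delta\}$. This ``spectrum moves by at most the norm of the perturbation'' principle is valid for normal operators on Hilbert space but is false for general bounded operators on a Banach space: the admissible perturbation size is governed by the resolvent norm of $\hat R_0(0)$ on the relevant contours (equivalently, by upper semicontinuity of the spectrum), not by the gap $g$ alone, and it may be much smaller than $g/6$. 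Concretely, to know that $z-\tilde R_0(b)$ is invertible for all $z$ in the annulus $\{\delta\le|z-1|\le 3\delta\}$ you need $\|\tilde R_0(b)-\hat R_0(0)\|_\theta<\bigl(\sup_{\delta\le|z-1|\le3\delta}\|(z-\hat R_0(0))^{-1}\|_\theta\bigr)^{-1}$, and this threshold is not controlled by $\delta$ or $g$. The repair is short and is exactly what the paper does: introduce $\delta_1\in(0,\delta)$ (depending on these resolvent bounds) such that any $A$ with $\|A-\hat R_0(0)\|_\theta<\delta_1$ has a simple eigenvalue within $\delta$ of $1$ and no further spectrum in $\{|z-1|<3\delta\}$, and then choose $r$ so small that $\|\hat R_0(b)-\hat R_0(0)\|_\theta<\delta_1$ for $|b|\le 2r$; since $\delta_1<\delta$, property (c) is unaffected, your contour-integral argument for the $C^{p-\epsilon}$ regularity of $\tilde P$ and $\tilde\lambda$ then goes through verbatim, and the rest of your proof stands.
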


\begin{proof}
Recall that $\hat R_0(0)$ has a simple eigenvalue at $1$.   Also there
exists $\delta_0>0$ such that the remainder of the spectrum lies outside the disk $\{|z-1|<\delta_0\}$.  We suppose without loss that $\delta<\delta_0/3$.

Choose $\delta_1\in(0,\delta)$ with the property that if $A$ is an operator
on $F_\theta(Y)$ and $\|A-\hat R_0(0)\|_\theta<\delta_1$, then the spectrum of
$A$ consists of a simple eigenvalue within distance $\delta$ of $1$ and the remainder of the spectrum lies outside the disk $\{|z-1|<3\delta\}$.

By Proposition~\ref{prop-smoothR}
there is a $C^{p-}$ family $b\mapsto \lambda(b)$, defined for $b$ sufficiently small, consisting of simple eigenvalues for $\hat R_0(ib)$ with $\lambda(0)=1$.  Moreover, $\lambda(b)=1+ib\bv+o(b)$ as $b\to0$.

Choose $r_0\in(0,1)$ small so that $[-r_0,r_0]$ lies inside the domain of definition of this $C^{p-}$ family and such that $(1-\lambda(b))/b$ is bounded away from zero on $[-r_0,r_0]$.
Fix $r\in(0,r_0)$.

Let $\psi_1,\psi_2,\psi_3:\R\to[0,1]$ be even $C^\infty$ functions such that
$\psi_1+\psi_2+\psi_3\equiv1$ and such that restricted to $[0,\infty)$,
\begin{itemize}
\item[] $\psi_1\equiv1$ on $[0,r]$, $\supp\psi_1\subset[0,r_0]$.
\item[] $\psi_2\equiv1$ on $[r_0,1]$, $\supp\psi_2\subset[r,2]$.
\item[] $\psi_3\equiv1$ on $[2,\infty)$, $\supp\psi_2\subset[1,\infty)$.
\end{itemize}
Define the $C^{p-}$ family of operators
\[
\tilde R_0(b)=\psi_1(b)\hat R_0(ib) +\psi_2(b)\hat R_0(i\sgn(b)r_0) +\psi_3(b)\hat R_0(0).
\]
For $b\ge0$ we have 
\[
\tilde R_0(b)  = \begin{cases} 
\hat R_0(ib), & b\in[0,r] \\
\hat R_0(ir_0)+\psi_1(b)(\hat R_0(ib)-\hat R_0(ir_0)), & b\in[r,r_0] \\
\hat R_0(ir_0), & b\in[r_0,1] \\
\hat R_0(0)+\psi_2(b)(\hat R_0(ir_0)-\hat R_0(0)), & b\in[1,2] \\
\hat R_0(0), & b\in[2,\infty) \end{cases}\quad.
\]
Shrinking $r_0$ if necessary, we can ensure that 
\[
\|\hat R_0(ib)-\hat R_0(0)\|_\theta<\delta_1/2\quad\text{for all $b\in[0,r_0]$},
\quad
\|\tilde R_0(b)-\hat R_0(0)\|_\theta<\delta_1 \quad\text{for all $b\in[1,2]$.}
\]
Then choosing $r$ sufficiently close to $r_0$, we can ensure that 
$\|\tilde R_0(b)-\hat R_0(ir_0)\|_\theta<\delta_1/2$ for all $b\in[r,r_0]$.   
Altogether, we have that $\|\tilde R_0(b)-\hat R_0(0)\|_\theta<\delta_1$ for all $b\ge0$.  A similar picture holds for $b\le 0$ and so we obtain that \[
\|\tilde R_0(b)-\hat R_0(0)\|_\theta<\delta_1<\delta, \quad\text{for all $b\in\R$}.
\]
This verifies condition~(c).  Moreover, by definition of $\delta_1$ we obtain the required spectral properties for $\tilde R_0$, namely the family of simple eigenvalues $\tilde\lambda$ (which is $C^{p-}$ by standard perturbation theory) together with the estimate in condition~(d).
Finally, we observe that
properties (a,b,e) are immediate consequences of the construction.
\end{proof}

Let $\tilde P$ be the spectral projection corresponding to $\tilde\lambda$.
By Proposition~\ref{prop-tildeR}, $b\mapsto\tilde P(b)$ is $C^{p-}$.

\begin{prop}  \label{prop-lambda}
Suppose that $\varphi\in L^p$ for some $p>0$, and let $\epsilon>0$.
For $\delta>0$ small enough in Proposition~\ref{prop-tildeR},
$\BIG\frac{1-\tilde\lambda(b)}{b}\in\mathcal{R}(1/t^{p-\epsilon})$.
\end{prop}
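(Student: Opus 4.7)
I would derive an explicit formula for $1-\tilde\lambda(b)$ near $b=0$ expressing it as an integral involving $\hat R_0(b)-R$, and then combine Lemma~\ref{lem-hatR} with the Fourier decay of smooth compactly supported functions. Let $\chi:\R\to[0,1]$ be a smooth even cutoff with $\chi\equiv 1$ on $[-r/2,r/2]$ and $\supp\chi\subset(-r,r)$, and split
\[
\frac{1-\tilde\lambda(b)}{b}=\chi(b)\frac{1-\tilde\lambda(b)}{b}+(1-\chi(b))\frac{1-\tilde\lambda(b)}{b}.
\]

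For the second (``away from zero'') piece, $(1-\chi(b))/b$ is $C^\infty$ and bounded, $\tilde\lambda$ is $C^{p-\epsilon}$ by Proposition~\ref{prop-tildeR}, and $\tilde\lambda\equiv 1$ for $|b|\ge 2$. Hence $(1-\chi)(1-\tilde\lambda)/b$ is $C^{p-\epsilon}$ with compact support, and a standard integration-by-parts estimate (together with the usual H\"older argument for the top derivative) gives inverse Fourier transform of order $O(|t|^{-(p-\epsilon)})$.

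For the first (``near zero'') piece, I use that $\tilde R_0(b)=\hat R_0(b)$ on $[-r,r]$, with the corresponding normalised eigenfunction $v(b)$ satisfying $\int v(b)\,d\mu=1$. Using $\int Rf\,d\mu=\int f\,d\mu$,
\[
\tilde\lambda(b)=\int_Y \tilde R_0(b)v(b)\,d\mu=\int_Y R\bigl(e^{-ib\varphi}v(b)\bigr)\,d\mu=\int_Y e^{-ib\varphi}v(b)\,d\mu,
\]
so
\[
\chi(b)\frac{1-\tilde\lambda(b)}{b}=-\int_Y \chi(b)\frac{\hat R_0(b)-R}{b}\,v(b)\,d\mu.
\]
With $\tilde\chi$ smooth, equal to $1$ on $\supp\chi$ and compactly supported in $(-r,r)$, I set $A(b)=\chi(b)(\hat R_0(b)-R)/b$ (operator-valued on $F_\theta(Y)$) and $B(b)=\tilde\chi(b)v(b)$ ($F_\theta$-valued). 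By Lemma~\ref{lem-hatR} one has $A\in\mathcal R(1/t^{p-\epsilon})$, and by perturbation theory applied to the $C^{p-\epsilon}$ family $\hat R_0$ on $F_\theta$ (Proposition~\ref{prop-smoothR}(b)), $v(b)$ is $C^{p-\epsilon}$ into $F_\theta$, so $B$ is $C^{p-\epsilon}$ with compact support and its inverse Fourier transform is $O(|t|^{-(p-\epsilon)})$ in $F_\theta$.

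The inverse Fourier transform of $A(b)[B(b)]$ is the convolution $\check A\star\check B$ in $F_\theta$. Splitting the integration into $\{|s|\le|t|/2\}\cup\{|s|\ge|t|/2\}$, and using $\check A,\check B\in L^1$ (which is where $p-\epsilon>1$ is needed; this is ensured in the applications by the standing assumption $\beta>1$ of Part~\ref{part-finite}), I obtain $\|(\check A\star\check B)(t)\|_\theta=O(|t|^{-(p-\epsilon)})$. Integrating against $\mu$ preserves this bound since $\mu$ is a probability measure and $F_\theta\hookrightarrow L^\infty$. The main obstacle is the careful convolution bookkeeping for the mixed operator--vector product: the Banach algebra set-up of Proposition~\ref{prop-Dinv} supplies the template, but here it is cleanest to run the convolution split by hand since the output is scalar-valued after integrating against $\mu$.
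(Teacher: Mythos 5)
Your argument is correct, and it takes a genuinely different route from the paper, although the decisive input is the same: Lemma~\ref{lem-hatR}, which is what tames the division by $b$ at $b=0$. The paper works at the operator level: it uses the identity~\eqref{eq-formula} relating $\frac{1-\tilde\lambda(b)}{b}\tilde P(b)$ to $\frac{\tilde R_0(0)-\tilde R_0(b)}{b}\tilde P(b)$ and $(I-\tilde R_0(0))\frac{\tilde P(b)-\tilde P(0)}{b}$, estimates the projection difference quotient via the contour-integral representation $\tilde P(b)=(1/2\pi i)\int_\Gamma(\xi-\tilde R_0(b))^{-1}d\xi$, and then extracts the scalar by applying a linear functional $u$ with $u(\tilde P(b))$ bounded away from zero and multiplying by $\chi/u(\tilde P)$. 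You bypass the projection difference quotient and the functional-division step entirely by using the normalised-eigenfunction identity $\tilde\lambda(b)=\int_Y e^{\pm ib\varphi}v(b)\,d\mu$ on $[-r,r]$ (the same device as in Lemma~\ref{lem-zero} in Part~\ref{part-infinite}), which reduces the near-zero piece to Lemma~\ref{lem-hatR} tested against the $C^{p-\epsilon}$ family $B(b)=\tilde\chi(b)v(b)$ (legitimate since $\int_Y\tilde P(0)1\,d\mu=1\neq0$, so the normalisation is $C^{p-\epsilon}$ on $\supp\tilde\chi$ for $r$ small), plus a hand-run convolution split; the away-from-zero piece is compactly supported (as $\tilde\lambda\equiv1$ for $|b|\ge2$) and $C^{p-\epsilon}$, so a Proposition~\ref{prop-S}(a)-type integration-by-parts estimate suffices. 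This is arguably shorter and nothing is lost downstream, since Lemma~\ref{lem-fml} only consumes the scalar statement (the bound on $\chi\tilde P$ needed there comes from smoothness and compact support, not from the difference-quotient estimate). Your caveat that the convolution step needs $p-\epsilon>1$ is honest and matches the paper's own implicit requirement: the paper's proof likewise multiplies elements of $\mathcal{R}(1/t^{p-})$, and its Banach-algebra/Wiener framework in the appendix assumes exponent greater than one, which is guaranteed in all applications by the standing assumption $\beta>1$.
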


\begin{proof}
Recall the formula
\begin{align} \label{eq-formula}
\frac{1-\tilde\lambda(b)}{b}\tilde P(b)=
\frac{\tilde R_0(0)-\tilde R_0(b)}{b}\tilde P(b)
+(I-\tilde R_0(0))\frac{\tilde P(b)-\tilde P(0)}{b}.
\end{align}
Let $\chi:\R\to[0,1]$ be a $C^\infty$ function supported in $[-3,3]$
with $\chi\equiv1$ on $[-2,2]$.  By Proposition~\ref{prop-tildeR}(a,b),
\[
\frac{\tilde R_0(b)-\tilde R_0(0)}{b}=
\chi(b)\frac{\tilde R_0(b)-\hat R_0(0)}{b}=
\chi(b)\frac{\tilde R_0(b)-\hat R_0(ib)}{b}+
\chi(b)\frac{\hat R_0(ib)-\hat R_0(0)}{b}.
\]
The first term on the RHS vanishes near zero by Proposition~\ref{prop-tildeR}(a) and hence is $C^{p-}$.
Also it is compactly supported and so lies in $\mathcal{R}(1/t^{p-})$.
The second term on the RHS lies in $\mathcal{R}(1/t^{p-})$ by Lemma~\ref{lem-hatR}.
We deduce that
$\BIG\frac{\tilde R_0(b)-\tilde R_0(0)}{b}\in\mathcal{R}(1/t^{p-})$.
Moreover,
\[
\frac{\tilde R_0(b)-\tilde R_0(0)}{b}\tilde P(b)=
\frac{\tilde R_0(b)-\tilde R_0(0)}{b}\chi(b)\tilde P(b),
\]
where $\chi\tilde P$ is $C^{p-}$ and compactly supported.   It follows that
$\BIG\frac{\tilde R_0(b)-\tilde R_0(0)}{b}\tilde P(b)\in \mathcal{R}(1/t^{p-})$.

Let $\Gamma$ be the circle of radius $2\delta$ around $1$.
By Proposition~\ref{prop-tildeR}(d), 
$\tilde P(b)=(1/2\pi i)\int_\Gamma(\xi-\tilde R_0(b))^{-1}\,d\xi$.  
By Proposition~\ref{prop-tildeR}(b),
$\tilde P(b)=\tilde P(0)$ for $|b|\ge2$.  Hence
\[
\frac{\tilde P(b)-\tilde P(0)}{b}
=\chi(b)\frac{\tilde P(b)-\tilde P(0)}{b}
=\frac{1}{2\pi i}\int_\Gamma G_1(b,\xi)G_2(b)G_3(\xi)\,db,
\]
where
\[
G_1(b,\xi)=\chi(b)(\xi-\tilde R_0(b))^{-1}, \quad
G_2(b)=\frac{\tilde R_0(b)-\tilde R_0(0)}{b}, \quad
G_3(\xi)=(\xi-\tilde R_0(0))^{-1}.
\]
We already showed that $G_2\in\mathcal{R}(1/t^{p-})$.
Also $b\mapsto G_1(b,\xi)$ is compactly supported and
$C^{p-}$ uniformly in $\xi$.
Hence 
$G_1(b,\xi)G_2(b)G_3(\xi)\in\mathcal{R}(1/t^{p-})$ with norms uniform in $\xi$ and so
$\BIG\frac{\tilde P(b)-\tilde P(0)}{b}\in\mathcal{R}(1/t^{p-})$.

The above arguments together with~\eqref{eq-formula} imply that
$\BIG\frac{1-\tilde\lambda(b)}{b}\tilde P(b)\in\mathcal{R}(1/t^{p-})$.
Hence
$\BIG\frac{1-\tilde\lambda(b)}{b}u(\tilde P(b))\in\mathcal{R}(1/t^{p-})$ for any
bounded linear functional 
$u:F_\theta(Y)\to\R$.  Choose $u$ so that
$u(\tilde P(0))\neq0$.  
By Proposition~\ref{prop-tildeR}(c), we can ensure
that $u(\tilde P(b))$ is bounded away from zero for all $b$.
Then $\chi/u(\tilde P)$ is compactly supported
and $C^{p-}$, so $\chi/u(\tilde P)\in\mathcal{R}(1/t^{p-})$.
By Proposition~\ref{prop-tildeR}(b), 
\[
\frac{1-\tilde\lambda(b)}{b}=
\frac{1-\tilde\lambda(b)}{b}\chi(b)=
\frac{1-\tilde\lambda(b)}{b}u(\tilde P(b))\,\frac{\chi(b)}{u(\tilde P(b))}.
\]
Hence $\BIG\frac{1-\tilde\lambda(b)}{b}\in\mathcal{R}(1/t^{p-})$ as required.
\end{proof}

\begin{pfof}{Lemma~\ref{lem-fml}}
By Proposition~\ref{prop-tildeR}(a), $\psi\hat B=\psi\tilde B$
where $\tilde B(b)=b(I-\tilde R_0(b))^{-1}$.
Write
\[
\tilde B(b)=((1-\tilde\lambda(b))/b)^{-1}\tilde P(b)
+b(I-\tilde R_0(b))^{-1}(I-\tilde P(b)).
\]
The second term is $C^{p-}$ and so
lies in $\mathcal{R}(1/t^{p-})$ when multiplied by $\psi$.
Hence it remains to show that
$\psi(b)((1-\tilde\lambda(b))/b)^{-1}\tilde P(b)\in\mathcal{R}(1/t^{p-})$.

Let $\chi$ be a compactly supported $C^\infty$ function with $\chi\equiv 1$ on the support of $\psi$.  Then 
\[
\psi(b)((1-\tilde\lambda(b))/b)^{-1}\tilde P(b)=
\psi(b)((1-\tilde\lambda(b))/b)^{-1}\chi(b)\tilde P(b),
\]
and $\chi\tilde P\in \mathcal{R}(1/t^{p-})$.
Hence it remains to show that
$\psi(b)((1-\tilde\lambda(b))/b)^{-1}\in\mathcal{R}(1/t^{p-})$.

Now $\psi$ is a compactly supported element of $\mathcal{R}(1/t^{p-})$.
By Proposition~\ref{prop-lambda},
$(1-\tilde\lambda(b))/b\in\mathcal{R}(1/t^{p-})$.
Moreover, 
$(1-\tilde\lambda(b))/b$ is bounded away from zero on the support of $\psi$
by Proposition~\ref{prop-tildeR}(e).
By Lemma~\ref{lem-W}, 
there exists $g\in\mathcal{R}(1/t^{p-})$ such that
$\psi(b)=g(b)(1-\tilde\lambda(b))/b$.
Hence
$\psi(b)((1-\tilde\lambda(b))/b)^{-1}=g(b)\in\mathcal{R}(1/t^{p-})$,
as required.~
\end{pfof}

\section{Proof of Lemma~\ref{lem-T4}}
\label{sec-T4}

Finally, we deal with the term 
\begin{align*}
& \hat\rho_4  =(1/\bv)\int_{\tilde Y}\hat U\hat T_4v\,w\,d\mu, \qquad
 (\hat T_4v)(y,u)=(\hat T_{0,4}v^u)(y), \\
& \hat T_{0,4}= (I-P_\varphi \hat D^*)^{-1}(I-P_\varphi \hat C^*(0)) \hat K^*, \qquad
 \hat K^*= \hat H^*(I-\hat C^*\hat B).
\end{align*}

In Section~\ref{sec-smooth}, we introduced the notation $d_p\hat S$.
We recall the following basic result.

\begin{prop} \label{prop-S}
\begin{itemize}
\item[(a)]
Suppose that the family $b\mapsto \hat S(ib)$ is $C^p$ for some $p>0$ and that
there is a constant $C>0$ such that
$d_p\hat S(ib)\le C|b|^{-2}$ for $|b|>1$.
Then $\hat S\in\mathcal{R}(1/t^p)$.
\item[(b)]  Suppose that $g:\R\to\R$ is $C^\infty$, such that $g\equiv0$ in a neighborhood of $0$, and $g(b)\equiv1$ for $|b|$ sufficiently large.
Let $m\ge1$.  Then $g(b)/b^m\in\mathcal{R}(1/t^p)$ for all $p>0$.
\end{itemize}
\end{prop}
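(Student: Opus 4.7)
The plan is to prove both parts as standard Paley--Wiener style decay estimates for an inverse Fourier transform. Since $\hat S(s)$ is analytic on $\H$ with continuous extension to $\overline\H$, the inverse Laplace transform is computed as
\[
S(t)=\frac{1}{2\pi}\int_{-\infty}^\infty e^{ibt}\hat S(ib)\,db
\]
whenever this integral is convergent.

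For part~(a), the hypotheses give $\hat S^{(j)}(i\cdot)\in L^1(\R)$ for $0\le j\le[p]$: continuity on $|b|\le 1$ yields boundedness there, and the quantity $d_p\hat S(ib)$, which dominates each derivative up to order $[p]$, is bounded by $C|b|^{-2}$ on $|b|>1$. Hence $S$ is a bounded continuous function, so $\|S(t)\|\le C/t^p$ holds trivially on any fixed interval $[0,t_0]$. For $t\ge t_0$, integrate by parts $[p]$ times to obtain
\[
S(t)=(it)^{-[p]}\frac{1}{2\pi}\int e^{ibt}\hat S^{([p])}(ib)\,db,
\]
which is the required bound when $p$ is an integer. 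For non-integer $p$, substitute $b\mapsto b-\pi/t$ in this integral and use $e^{-i\pi}=-1$ to obtain
\[
2(it)^{[p]}S(t)=\frac{1}{2\pi}\int e^{ibt}\bigl(\hat S^{([p])}(ib)-\hat S^{([p])}(i(b-\pi/t))\bigr)\,db.
\]
The integrand is then bounded by $(\pi/t)^{p-[p]}$ times the H\"older part of $d_p\hat S(ib)$, which is $\le C|b|^{-2}$ on $|b|>1$ and uniformly bounded on $|b|\le 1$, yielding $\|S(t)\|\le C/t^p$.

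For part~(b), since $g\equiv 0$ in a neighbourhood of $0$, all derivatives of $g$ vanish at $0$, and Taylor's theorem with integral remainder gives
\[
\frac{g(b)}{b^m}=\frac{1}{(m-1)!}\int_0^1(1-s)^{m-1}g^{(m)}(sb)\,ds.
\]
The function $g^{(m)}$ is $C^\infty$ and compactly supported (since $g\equiv 1$ for $|b|$ large), so $\phi(\xi)=\int g^{(m)}(u)e^{iu\xi}\,du$ is Schwartz. Fubini's theorem now gives
\[
S(t)=\frac{1}{2\pi(m-1)!}\int_0^1\frac{(1-s)^{m-1}}{s}\,\phi(t/s)\,ds.
\]
Using $|\phi(\xi)|\le C_N(1+|\xi|)^{-N}$ for arbitrary $N$, and splitting the integral at $s=t$ when $t\le 1$, a direct calculation yields $|S(t)|\le C_N/t^N$ for $t\ge 1$ and $|S(t)|\le C(1+\log(1/t))$ for $t\in(0,1]$. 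Since $\log(1/t)\le p^{-1}t^{-p}$ on $(0,1]$ for every $p>0$, this gives $\hat S\in\mathcal R(1/t^p)$ for all $p>0$.

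The main technical point is the behaviour of $S$ near $t=0$ in part~(b) when $m=1$, where $g(b)/b\notin L^1(\R)$ and the naive inverse Fourier integral is not absolutely convergent. The reduction via Taylor's formula and Fubini to a Schwartz inverse transform handles this uniformly across all $m\ge 1$, avoiding any case-by-case symmetrization argument.
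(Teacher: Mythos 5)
Part~(a) of your proposal is essentially the paper's own proof: integrate by parts $[p]$ times, and for non-integer $p$ compare the integral with its translate by $\pi/t$ so that the H\"older part of $d_p\hat S(ib)\le C|b|^{-2}$ can be used; nothing to add there.

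Part~(b) takes a genuinely different route (Taylor's formula with integral remainder, expressing $g(b)/b^m$ as an average of dilates of the compactly supported function $g^{(m)}$), but the pivotal step is not justified as written. Fubini--Tonelli does not apply to the interchange you invoke: with $F(b,s)=(1-s)^{m-1}g^{(m)}(sb)$ one has $\int_0^1\int_\R|F(b,s)|\,db\,ds=\tfrac{1}{m}\|g^{(m)}\|_{L^1}\cdot\int_0^1 s^{-1}\,ds\cdot(1+o(1))=\infty$ for every $m\ge1$, so the double integral is never absolutely convergent; worse, for $m=1$ the left-hand side $\int_\R e^{ibt}g(b)b^{-1}\,db$ is not defined as a Lebesgue integral at all, so before any interchange you must fix the meaning of $S(t)$ (the paper, and its enlarged definition of $\mathcal{R}(a(t))$, uses the symmetric limit $\lim_{L\to\infty}\int_{-L}^{L}$). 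Thus the point you advertise as being ``handled uniformly by Fubini'' is precisely where the argument has a hole. The gap is repairable and your final formula and estimates are correct: apply Fubini on $[-L,L]\times[0,1]$, where absolute convergence is clear; observe that once $L\ge K/s$ (with $\supp g^{(m)}\subset[-K,K]$) the truncated inner integral equals $s^{-1}\phi(t/s)$ exactly, while for $s<K/L$ a single integration by parts in $b$ bounds it by $C/t$ uniformly in $s$ and $L$; then let $L\to\infty$ by dominated convergence (or argue in the space of tempered distributions). Note, however, that this machinery is only needed for $m=1$: for $m\ge2$ the function $g(b)/b^m$ is $C^\infty$ with $d_p$-quantities $O(|b|^{-2})$ for $|b|>1$, so part~(a) applies directly, which is what the paper does; for $m=1$ the paper simply integrates by parts once in the symmetric truncation, reducing to part~(a) applied to $h(b)=(g(b)/b)'$. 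Your route, once the interchange is justified, buys a unified treatment of all $m$ and an explicit kernel representation, at the cost of re-introducing exactly the truncation argument you hoped to avoid; your closing estimates (rapid decay for $t\ge1$ and the harmless $\log(1/t)$ blow-up near $t=0$) are fine.
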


\begin{proof} 
(a) 
For $p$ an integer,
$S(t)=\int_{-\infty}^\infty e^{ibt} \hat S(ib)\,db=
t^{-p}\int_{-\infty}^\infty e^{ibt} \hat S^{(p)}(ib)\,db$ so that
$|S(t)|\ll t^{-p}\int_{-\infty}^\infty (1+|b|^{-2})\,db\ll t^{-p}$.

For $p$ not an integer, we still have
$S(t)= 
t^{-[p]}\int_{-\infty}^\infty e^{ibt} \hat S^{([p])}(ib)\,db
=-t^{-[p]}\int_{-\infty}^\infty e^{ibt} \hat S^{([p])}(i(b+\pi/t))\,db$
and so 
\begin{align*}
2|S(t)| & \le 
t^{-[p]}\int_{-\infty}^\infty 
|\hat S^{([p])}(i(b+\pi/t)) -\hat S^{([p])}(ib)| \,db \\ & 
\ll t^{-[p]}\int_{-\infty}^\infty d_p\hat S(ib) |t|^{-(p-[p])}\,db
\ll |t|^{-p}\int_{-\infty}^\infty (1+|b|)^{-2}\ll |t|^{-p},
\end{align*}
as required.

\vspace{1ex}
\noindent(b) If $m\ge2$, then this is immediate from part (a).
For $m=1$, note that
\begin{align*}
\Bigl|\lim_{L\to\infty}\int_{-L}^L e^{ibt}g(b)/b\,db\Bigr|=t^{-1}\Bigl|\int_{-\infty}^\infty e^{ibt}h(b)\,db\Bigr|,
\end{align*}
where $h(b)=(g(b)/b)'=(bg'(b)-g(b))/b^2$ satisfies the conditions of part (a).  (Recall that $g'\equiv0$ for $b$ large.)
\end{proof}

\begin{prop}  \label{prop-H}
$\hat H^*:F_\theta(Y)\to F_\theta(Y)$ is analytic on a neighborhood of $\overline\H$.
\end{prop}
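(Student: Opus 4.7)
The plan is to combine analytic perturbation theory near $s=0$ with a spectral analysis of $\hat R_0^*$ on the rest of $\overline\H$. Since $\varphi^*=\varphi\wedge k$ is bounded, $s\mapsto\hat R_0^*(s)=R(e^{-s\varphi^*}\cdot)$ is an entire operator-valued function on $F_\theta(Y)$, so $\hat H^*(s)=\hat T_0^*(s)-s^{-1}P_\varphi^*$ is automatically analytic on any open set where $(I-\hat R_0^*(s))^{-1}$ exists. It therefore suffices to (i) prove that the singularity of $\hat T_0^*$ at $s=0$ is exactly $s^{-1}P_\varphi^*$, and (ii) show that $I-\hat R_0^*(s)$ is invertible at every other point of $\overline\H$ (for $k$ sufficiently large).

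For (i), I would apply standard analytic perturbation theory: since $1$ is a simple isolated eigenvalue of $R=\hat R_0^*(0)$ by Proposition~\ref{prop-H2}(b), on a neighborhood of $0$ there is an analytic family of simple eigenvalues $\lambda^*(s)$ with projections $P^*(s)$, with $\lambda^*(0)=1$ and $P^*(0)=P$. Repeating the calculation from the proof of Lemma~\ref{lem-zero} but using that $\varphi^*$ is bounded to make the correction term $V^*(s)$ be $O(s^2)$, I obtain $\lambda^*(s)=1-s\bv^*+O(s^2)$. The spectral decomposition
\[
\hat T_0^*(s)=(1-\lambda^*(s))^{-1}P^*(s)+(I-\hat R_0^*(s))^{-1}Q^*(s),\quad Q^*(s)=I-P^*(s),
\]
has an analytic second summand (the complementary spectrum stays uniformly away from $1$), and the first summand equals $s^{-1}(\bv^*)^{-1}P+(\text{analytic})=s^{-1}P_\varphi^*+(\text{analytic})$. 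Subtracting $s^{-1}P_\varphi^*$ then gives the desired analytic extension of $\hat H^*$ across $0$.

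For (ii), the case $\Re s>0$ is direct: $|\hat R_0^*(s)^n v|_\infty\le e^{-n(\Re s)\essinf\varphi^*}|v|_\infty$ forces the $L^\infty$-spectral radius below $1$, and since any non-essential $F_\theta$-eigenvalue is a fortiori an $L^\infty$-eigenvalue while the $F_\theta$-essential spectral radius is at most $\theta$ by the Lasota-Yorke inequality of Lemma~\ref{lem-Rb}(b), the $F_\theta$-spectral radius is also strictly less than $1$. On the imaginary axis $s=ib$ with $b\ne 0$, the same inequality and the trivial bound $\|\hat R_0^*(ib)\|_\infty\le 1$ give essential spectral radius at most $\theta$ and spectral radius at most $1$; I would then argue, exactly as in Proposition~\ref{prop-H2}(a), that $1$ cannot be an eigenvalue by passing to the $L^2$-adjoint $M_b^*v=e^{ib\varphi^*}v\circ F$ and showing that any eigenfunction $v\in F_\theta$ with $M_b^*v=v$ is nonvanishing (by ergodicity applied to $|v|=|v|\circ F$), whence evaluation at any $F$-periodic point $y$ of period $n$ gives $\sum_{j=0}^{n-1}\varphi^*(F^jy)\in(2\pi/b)\Z$.

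The main obstacle is that (A2) supplies flow periods $\tau_1,\tau_2$ for $(F,\varphi)$, whereas the preceding forces a Diophantine obstruction on the $(F,\varphi^*)$-periods, and these may differ because of the truncation. The fix is to choose $k$ larger than $\max_{i=1,2}\max_{0\le j<n_i}\varphi(F^jy_i)$, where $y_i$ is an $F$-periodic point of period $n_i$ realizing $\tau_i$; each such orbit is a finite set on which $\varphi$ takes finite values, so this is always possible, and for such $k$ one has $\varphi^*=\varphi$ along these two orbits, so their $(F,\varphi^*)$-periods coincide with $\tau_1,\tau_2$, producing the desired contradiction with the Diophantine hypothesis. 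Combining (i) and (ii) with upper semicontinuity of the spectrum and analyticity of $\hat R_0^*$ yields an open neighborhood of $\overline\H$ on which $\hat T_0^*$, and hence $\hat H^*$, is analytic.
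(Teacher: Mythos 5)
Your argument is correct and is essentially the argument the paper has in mind: the paper's proof simply asserts that, since the suspension over the bounded roof $\varphi^*$ is uniformly expanding, $\hat T_0^*$ extends meromorphically across the imaginary axis with a simple pole at $0$ whose singular part is $s^{-1}P_\varphi^*$, which is exactly what your perturbation expansion $\lambda^*(s)=1-s\bv^*+O(s^2)$ at $s=0$ and your spectral-radius/eigenvalue analysis on $\overline\H\setminus\{0\}$ establish. Your additional point --- choosing $k$ large enough that the two Diophantine periodic orbits from (A2) are unaffected by the truncation, so that $1$ is not an eigenvalue of $\hat R_0^*(ib)$ for $b\neq 0$ --- is a detail the paper leaves implicit and is genuinely needed (for instance, if $\varphi^*\equiv k$ then $\hat R_0^*(2\pi i/k)=R$ has eigenvalue $1$), but it is consistent with the standing assumption that $k$ is taken sufficiently large.
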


\begin{proof}
This is standard since the flow under the truncated roof function
$\varphi^*$ is uniformly expanding:
$\hat T_0^*$ has a meromorphic extension across the imaginary axis with a simple pole at zero, and
$\hat H^*(s)=\hat T_0^*(s)-s^{-1}P_\varphi^*$ is analytic on a neighborhood of $\overline\H$.~
\end{proof}

Let $\psi:\R\to[0,1]$ be as in Lemma~\ref{lem-fml}.
Recall that $\psi$ is $C^\infty$ with $\supp\psi\in[-1,1]$, and
$\psi\equiv1$ on a neighborhood of $0$.  We have the following consequence of
Lemma~\ref{lem-hatR}.

\begin{cor} \label{cor-C}
Suppose that $\varphi\in L^p$ for some $p>0$, and let $\epsilon>0$.
Then $\psi\hat C^*\in \mathcal{R}(1/t^{p-\epsilon})$.
\end{cor}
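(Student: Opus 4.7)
My plan is to exploit that $\hat R_0^*(0) = \hat R_0(0) = R$ to decompose
\[
\hat C(ib) \;=\; \frac{1}{ib}\bigl(\hat R_0^*(ib)-\hat R_0(ib)\bigr)
\;=\; -i\,\frac{\hat R_0^*(ib)-R}{b} \;+\; i\,\frac{\hat R_0(ib)-R}{b},
\]
and then treat the two pieces arising after multiplication by $\psi$ separately. The $\hat R_0$-piece will be handled by a direct appeal to Lemma~\ref{lem-hatR}, while the $\hat R_0^*$-piece will be a routine Fourier estimate for a compactly supported $C^\infty$ operator-valued function.

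For the $\hat R_0$-piece, I would apply Lemma~\ref{lem-hatR} with $\chi=\psi$; since $\psi$ is $C^\infty$ and $\supp\psi\subset[-1,1]\subset[-3,3]$, the lemma immediately yields $\psi(b)(\hat R_0(ib)-R)/b\in\mathcal{R}(1/t^{p-\epsilon})$ as a family of operators on $F_\theta(Y)$.

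For the $\hat R_0^*$-piece, the plan is to use the boundedness $\varphi^*=\varphi\wedge k\le k$. Since $\varphi^*\in L^\infty$, every moment $|\varphi^*|_q$ is finite, and the Lipschitz/distortion calculations in the proof of Proposition~\ref{prop-smoothR}, applied with $\varphi$ replaced by $\varphi^*$, go through for every $q\ge 0$ and show that $b\mapsto\hat R_0^*(ib)$ is $C^q$ into bounded operators on $F_\theta(Y)$, with derivatives bounded uniformly in $b\in\R$ (constants depending only on $k$). Equivalently, $\hat R_0^*(s)=R(e^{-s\varphi^*}\cdot)$ extends to an entire $F_\theta(Y)$-operator-valued function of $s\in\C$. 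Hence $(\hat R_0^*(ib)-R)/b$ is $C^\infty$ in $b$, with the apparent singularity at $b=0$ removable by analyticity, and after multiplying by the compactly supported $\psi$ one obtains a compactly supported $C^\infty$ operator-valued function of $b$. Its inverse Fourier transform therefore decays faster than any polynomial, so this piece lies in $\mathcal{R}(1/t^q)$ for every $q>0$.

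Combining the two contributions gives $\psi\hat C\in\mathcal{R}(1/t^{p-\epsilon})$. I do not expect any serious obstacle here: the real analytic work is already contained in Lemma~\ref{lem-hatR}, and truncation to $\varphi^*$ removes precisely the tail behaviour of $\varphi$ that limits regularity, reducing the $\hat R_0^*$ contribution to the trivial Fourier decay of a $C^\infty$ compactly supported function.
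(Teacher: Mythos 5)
Your proposal is correct and follows essentially the paper's own argument: the same decomposition of $\hat C$ into the two difference quotients $(\hat R_0^*(ib)-R)/b$ and $(\hat R_0(ib)-R)/b$, with Lemma~\ref{lem-hatR} (taking $\chi=\psi$) handling the $\hat R_0$-piece. The only, harmless, difference is the truncated piece: the paper simply applies Lemma~\ref{lem-hatR} a second time using $\varphi^*\in L^\infty\subset L^q$ for all $q$, whereas you argue directly that $b\mapsto\hat R_0^*(ib)$ is smooth (indeed analytic), so that $\psi(b)(\hat R_0^*(ib)-R)/b$ is a compactly supported $C^\infty$ family with rapidly decaying inverse Fourier transform — note only that the derivative bounds are uniform merely on the support of $\psi$ (the Lipschitz part grows in $|b|$), which is all you need.
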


\begin{proof}  
By Lemma~\ref{lem-hatR}, 
$\psi(b)\BIG\frac{\hat R_0(ib)-\hat R_0(0)}{b}\in\mathcal{R}(1/t^{p-})$.
Since $\varphi^*\in L^\infty$, it follows from Lemma~\ref{lem-hatR}
that
$\psi(b)\BIG\frac{\hat R_0^*(ib)-\hat R_0^*(0)}{b}\in\mathcal{R}(1/t^q)$ for all $q$.
But
$\hat C^*(ib)=
\BIG\frac{\hat R_0^*(ib)-\hat R_0^*(0)}{b}- \frac{\hat R_0(ib)-\hat R_0(0)}{b}$
so the result follows.
\end{proof}

\begin{prop} \label{prop-a1}
$\psi(b)^3\hat\rho_4(ib)\in\mathcal{R}(\|v\|_\theta |w|_\infty 1/t^{\beta-})$.
\end{prop}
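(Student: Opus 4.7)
My plan is to decompose $\hat T_{0,4}$ into two pieces and distribute the three factors of $\psi$ so that each of the operators $\hat H^*$, $\hat C$, $\hat B$ receives exactly one $\psi$. This is the arrangement needed because the resolvent $(I-P_\varphi\hat D)^{-1}$ (Proposition \ref{prop-Dinv}), Corollary \ref{cor-C}, and Lemma \ref{lem-fml} each require their input to be cut off by $\psi$ in order to land in $\mathcal{R}(1/t^{\beta-\epsilon})$. Expanding $\hat K=\hat H^*(I-\hat C\hat B)$ and setting $A(s)=(I-P_\varphi\hat D(s))^{-1}(I-P_\varphi\hat C(0))\hat H^*(s)$ gives $\hat T_{0,4}=A-A\hat C\hat B$, hence
\[
\psi^3\hat\rho_4(ib)=(1/\bar\varphi)\int_{\tilde Y}(\psi^3\hat U A)v\cdot w\,d\tilde\mu\;-\;(1/\bar\varphi)\int_{\tilde Y}(\psi^3\hat U A\hat C\hat B)v\cdot w\,d\tilde\mu.
\]

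I then record four ingredient estimates. First, $\hat U\in\mathcal{R}(1/t^\beta)$ as an operator $L^\infty(\tilde Y)\to L^1(\tilde Y)$, with no cutoff required: Proposition \ref{prop-U} yields $\|U_t\|_{L^\infty\to L^1}\le 1$ for $t\le 1$ and $\le\mu(\varphi>t)=O(t^{-\beta})$ for $t>1$. Second, since $\hat H^*$ is analytic on a neighborhood of $\overline\H$ (Proposition \ref{prop-H}) and $\psi\in C_c^\infty$, Proposition \ref{prop-S}(a) gives $\psi\hat H^*\in\mathcal{R}(1/t^p)$ for every $p>0$; since $P_\varphi\hat C(0)$ is a constant operator, $(I-P_\varphi\hat C(0))\psi\hat H^*$ stays in $\mathcal{R}(1/t^p)$, and Proposition \ref{prop-Dinv} then yields $\psi A\in\mathcal{R}(1/t^{\beta-\epsilon})$ for $k$ sufficiently large. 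Third, $\psi\hat C\in\mathcal{R}(1/t^{\beta-\epsilon})$ by Corollary \ref{cor-C}. Fourth, $\psi\hat B\in\mathcal{R}(1/t^{\beta-\epsilon})$ by Lemma \ref{lem-fml}.

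To combine, I factor the first piece as $\psi^3\hat U A=\hat U\cdot\psi^2\cdot(\psi A)$ and the second as $\psi^3\hat U A\hat C\hat B=\hat U\cdot(\psi A)\cdot(\psi\hat C)\cdot(\psi\hat B)$. With $\psi^2\in C_c^\infty$ lying in $\mathcal{R}(1/t^p)$ for all $p$, and every other factor in $\mathcal{R}(1/t^{\beta-\epsilon})$, the Banach-algebra property of $\mathcal{R}(1/t^{\beta-\epsilon})$ under convolution (as exploited in the proof of Proposition \ref{prop-Dinv}) makes each composition, viewed as an operator $F_\theta(\tilde Y)\to L^1(\tilde Y)$, lie in $\mathcal{R}(1/t^{\beta-\epsilon})$ with operator norm bounded by $C\|v\|_\theta$; pairing against $w\in L^\infty(\tilde Y)$ supplies the factor $|w|_\infty$, yielding the claimed bound.

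The main technical obstacle is the bookkeeping for operator composition across different function spaces: the operators $A$, $\hat C$, $\hat B$ act fiberwise on functions of $y\in Y$ whereas $\hat U$ acts on functions of $(y,u)\in\tilde Y$, and $(I-P_\varphi\hat D)^{-1}$ in Proposition \ref{prop-Dinv} is formulated on $L^\infty(Y)$ while the estimates for $\hat C$ and $\hat B$ live on $F_\theta(Y)$. One needs to verify that the composition chain $F_\theta(\tilde Y)\to F_\theta\to F_\theta\to F_\theta\to L^\infty\to L^1(\tilde Y)$ closes up and that the $F_\theta$-operator-norm estimates indeed propagate to give the desired $F_\theta(\tilde Y)\to L^1(\tilde Y)$ estimate with the correct $\|v\|_\theta|w|_\infty$ dependence.
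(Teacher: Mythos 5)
Your proposal is correct and follows essentially the same route as the paper's proof: cut everything off by powers of $\psi$, invoke Lemma~\ref{lem-fml} for $\psi\hat B$, Corollary~\ref{cor-C} for $\psi\hat C$, analyticity of $\hat H^*$ (Proposition~\ref{prop-H}) with Proposition~\ref{prop-S}(a), Proposition~\ref{prop-Dinv} for $(I-P_\varphi\hat D)^{-1}$, Proposition~\ref{prop-U2} for $\hat U$, and close with the convolution (Banach algebra) property and the pairing against $w$. The only deviation --- folding $(I-P_\varphi\hat D)^{-1}(I-P_\varphi\hat C(0))\hat H^*$ into a single factor $\psi A$ instead of bounding $\psi^3\hat K$ first and applying Proposition~\ref{prop-Dinv} at the end, as the paper does --- is an inessential rearrangement of the same estimates.
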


\begin{proof} 
Regard the operators $\hat H^*$, $\hat C^*$, $\hat B$ in the expression $\hat K^*=\hat H^*(I-\hat C^*\hat B)$ as operators 
on $F_\theta(Y)$.
By Lemma~\ref{lem-fml}, $\psi\hat B\in\mathcal{R}(1/t^{\beta-})$.   
Also, $\psi\hat C^*\in\mathcal{R}(1/t^{\beta-})$ by Corollary~\ref{cor-C}.
By Proposition~\ref{prop-H}, $\psi\hat H^*$ is $C^\infty$ and this together with Proposition~\ref{prop-S}(a) implies that
$\psi\hat H^*\in\mathcal{R}(1/t^p)$ for all $p$.
Hence
$\psi^3\hat K^*=\psi^3\hat H^*-(\psi\hat H^*)(\psi\hat C^*)(\psi\hat B)\in\mathcal{R}(1/t^{\beta-})$.  
It follows that
$\psi^3\hat K^*:F_\theta(Y)\to L^\infty(Y)$ lies in
$\mathcal{R}(1/t^{\beta-})$.

By Proposition~\ref{prop-C}, $\hat C^*(0)$ is a bounded operator on $L^\infty(Y)$.
Hence by Proposition~\ref{prop-Dinv},
$\psi^3\hat T_{0,4}=\psi^3(I-P_\varphi\hat D^*)^{-1}(I-\hat C^*(0))\hat K^*:F_\theta(Y)\to L^\infty(Y)$ lies in 
$\mathcal{R}(1/t^{\beta-})$.
Hence $\psi^3\hat T_4:F_\theta(\tilde Y)\to L^\infty(\tilde Y)$
lies in
$\mathcal{R}(1/t^{\beta-})$.

By Proposition~\ref{prop-U2}, $\hat U:L^\infty(\tilde Y)\to L^1(\tilde Y)$ lies in
$\mathcal{R}(1/t^\beta)$ and the result follows.~
\end{proof}

\begin{prop} \label{prop-a2}
$(1-\psi(b)^3)(\hat\rho(ib)-\hat\rho_4(ib))\in\mathcal{R}(|v|_\infty|w|_\infty 1/t^{\beta-})$.
\end{prop}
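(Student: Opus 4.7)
The strategy is to collapse the three pieces $\hat\rho-\hat\rho_4 = \hat\rho_1+\hat\rho_2+\hat\rho_3$ into a single compact expression before estimating, then handle the combined object using the Banach algebra structure of $\mathcal{R}(1/t^\beta)$ under convolution (the same tool used in Proposition~\ref{prop-Dinv}). Doing this is essential: for $\beta\in(1,2)$ the inverse transform of $\hat\rho_2$ decays only as $t^{-(\beta-1)}$ (Lemma~\ref{lem-T12}(b)) and that of $\hat\rho_3$ only as $t^{-(2\beta-2)}$ (Lemma~\ref{lem-T3}), neither of which matches the target $t^{-(\beta-)}$, so cancellation across the three pieces must be exposed.

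I would first verify the algebraic identity
\[
\hat T_{0,1}(s)+\hat T_{0,2}(s)+\hat T_{0,3}(s) = s^{-1}(I-P_\varphi\hat D(s))^{-1}P_\varphi,
\]
which follows from $(I-A)^{-1} = I + A + (I-A)^{-1}A^2$ with $A=P_\varphi\hat D(s)$. Since $P_\varphi$ maps $L^\infty(Y)$ into constant functions of $y$, and $P_\varphi\hat D(s)$ again outputs constants, a Neumann series shows that $(I-P_\varphi\hat D(s))^{-1}$ acts on constants by multiplication by the scalar $\Phi(s)=(1-c_1(s))^{-1}$, where $c_1(s)=(1/\bv)\int_Y\hat D(s)1_Y\,d\mu$. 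Substituting into the formula for $\hat\rho-\hat\rho_4$ yields
\[
\hat\rho(s)-\hat\rho_4(s) = (1/\bv)^2\,s^{-1}\,\Phi(s)\,\tilde g(s),\qquad \tilde g(s) = \int_{\tilde Y}\hat U(s)(1\otimes h)\,w\,d\tilde\mu,
\]
with $h(u)=\int_Y v(y,u)\,d\mu(y)$.

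Next, I would estimate each of the three factors in $(1-\psi^3(b))(\hat\rho-\hat\rho_4)(ib) = (1/\bv)^2\{(1-\psi^3)/(ib)\}\Phi(ib)\tilde g(ib)$. Proposition~\ref{prop-S}(b) gives $(1-\psi^3)/(ib)\in\mathcal{R}(1/t^p)$ for every $p>0$. Proposition~\ref{prop-U2} combined with the tail hypothesis yields $|G(t)|\ll |v|_\infty|w|_\infty\mu(\varphi>t)$ for the inverse Laplace transform $G$ of $\tilde g$, so $\tilde g\in\mathcal{R}(|v|_\infty|w|_\infty/t^\beta)$. For $\Phi$, Proposition~\ref{prop-D00} writes $c_1(s) = (1/\bv)(\gamma(k)-\hat f(s))$ with $f(x)=1_{\{x>k\}}\mu(\varphi>x)\in\mathcal{R}(1/t^\beta)$, placing $c_1$ in the extended algebra $\mathbb{C}\cdot I + \mathcal{R}(1/t^\beta)$; for $k$ sufficiently large its norm there is small (as in Proposition~\ref{prop-Dinv}), so the Neumann series for $\Phi$ converges and $\Phi = a + \tilde\Phi$ with $a\in\mathbb{C}$ and $\tilde\Phi\in\mathcal{R}(1/t^\beta)$. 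The Banach algebra closure then gives $(1-\psi^3)(\hat\rho-\hat\rho_4)\in\mathcal{R}(|v|_\infty|w|_\infty/t^\beta)\subset\mathcal{R}(|v|_\infty|w|_\infty/t^{\beta-\epsilon})$.

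The main obstacle is the first step: spotting the collapse of $\hat T_{0,1}+\hat T_{0,2}+\hat T_{0,3}$ into $s^{-1}(I-P_\varphi\hat D)^{-1}P_\varphi$, and then exploiting the one-dimensional range of $P_\varphi$ to reduce the operator inverse to multiplication by the scalar $\Phi(s)$. Once this reduction is in hand, the remainder is a careful but routine application of the operator-theoretic estimates already proved, in particular the ``constant $+\,\mathcal{R}(1/t^\beta)$'' decomposition of $\Phi(s)$.
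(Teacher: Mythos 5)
Your proof is correct and takes essentially the same route as the paper, which likewise collapses $\hat\rho_1+\hat\rho_2+\hat\rho_3$ via the identity $s(\hat T_{0,1}+\hat T_{0,2}+\hat T_{0,3})=(I-P_\varphi\hat D)^{-1}P_\varphi$ and then combines Proposition~\ref{prop-U2}, Proposition~\ref{prop-Dinv} (with Remark~\ref{rmk-Dinv}) and Proposition~\ref{prop-S}(b); your reduction to the scalar $\Phi(s)$ is just the rank-one form of the same Neumann-series estimate. The only slip is the claim that the series converges in $\mathbb{C}\cdot I+\mathcal{R}(1/t^{\beta})$: the $\sup_t t^{\beta}$ part of that norm need not become small as $k\to\infty$ when $\mu(\varphi>t)\asymp t^{-\beta}$, which is exactly why Proposition~\ref{prop-Dinv} works in $\mathcal{R}(1/t^{\beta-\epsilon})$ — but this costs nothing here since the stated target is only $1/t^{\beta-}$.
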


\begin{proof}
Using~\eqref{eq-rhoi}, write
$(1-\psi^3)(\hat\rho-\hat\rho_4)=
(1-\psi^3)(\hat\rho_1+\hat\rho_2+\hat\rho_3)=b^{-1}(1-\psi^3)\int_{\tilde Y}\hat U\hat Qv\,w\,d\tilde\mu$ where
$\hat Q(s)=s(\hat T_{0,1}+\hat T_{0,2}+\hat T_{0,3})=
(I-P_\varphi\hat D^*)^{-1}$.

Now $\hat U:L^\infty(\tilde Y)\to L^1(\tilde Y)$ lies in $\mathcal{R}(1/t^\beta)$ by Proposition~\ref{prop-U2}.
Hence $\hat U\hat Q
\in \mathcal{R}(1/t^{\beta-})$ by Proposition~\ref{prop-Dinv}
and Remark~\ref{rmk-Dinv}.   
Also $b^{-1}(1-\psi^3)\in \mathcal{R}(1/t^p)$ for all $p$ by Proposition~\ref{prop-S}(b), so the result follows.
\end{proof}

\begin{prop} \label{prop-a3}   
For $w\in L^{\infty,m}(\tilde Y)$, $m$ sufficiently large, we have that
$(1-\psi(b)^3)\hat\rho(ib)\in \mathcal{R}(\|v\|_\theta |w|_{\infty,m} 1/t^{\beta-})$.
\end{prop}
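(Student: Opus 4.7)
The plan is to exploit smoothness of $w$ in the flow direction via Proposition~\ref{prop-m}, paying for each derivative of $w$ with an extra factor of $s^{-1}$ which suppresses the high-frequency tail $(1-\psi^3)$.  By Proposition~\ref{prop-m},
\[
\hat\rho_{v,w}(s)=\hat P_m(s)+s^{-m}\hat\rho_{v,\partial_t^m w}(s),
\]
where $\hat P_m(s)=\sum_{j=1}^m \rho_{v,\partial_t^{j-1}w}(0)\,s^{-j}$ has coefficients bounded by $|v|_\infty|w|_{\infty,m}\le \|v\|_\theta|w|_{\infty,m}$ (using that $\mu^\varphi$ is a probability measure).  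Correspondingly, $(1-\psi^3)\hat\rho$ decomposes into a polynomial part and a remainder, to be handled separately.

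For the polynomial part, each term $(1-\psi(b)^3)/(ib)^j$ satisfies the hypotheses of Proposition~\ref{prop-S}(b), hence lies in $\mathcal{R}(1/t^p)$ for every $p>0$.  Summing the finitely many terms, $(1-\psi^3)\hat P_m\in\mathcal{R}(\|v\|_\theta|w|_{\infty,m}/t^{\beta-\epsilon})$.

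For the remainder $(1-\psi(b)^3)(ib)^{-m}\hat\rho_{v,\partial_t^m w}(ib)$, I would apply Proposition~\ref{prop-S}(a) with $p=\beta-\epsilon$ and $m$ taken sufficiently large.  Writing
\[
\hat\rho_{v,\partial_t^m w}(ib)=(1/\bv)\int_{\tilde Y}\hat U(ib)(I-\hat R(ib))^{-1}v\cdot\partial_t^m w\,d\tilde\mu,
\]
Proposition~\ref{prop-DolgR} (lifted from $F_\theta(Y)$ to $F_\theta(\tilde Y)$ via~\eqref{eq-Rtilde}) provides $C^{\beta-\epsilon}$ regularity of $(I-\hat R(ib))^{-1}$ with $d_{\beta-\epsilon}(I-\hat R(ib))^{-1}\ll|b|^{\alpha}$ for $|b|>1$.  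An analogous smoothness statement for $\hat U(ib)$, with polynomially bounded derivatives, must then be combined via Leibniz to conclude that $\hat\rho_{v,\partial_t^m w}(ib)$ is $C^{\beta-\epsilon}$ with $d_{\beta-\epsilon}\hat\rho_{v,\partial_t^m w}(ib)\ll|b|^{\alpha'}\|v\|_\theta|w|_{\infty,m}$.  The smooth factor $(1-\psi(b)^3)/(ib)^m$ is $C^\infty$ with $j$-th derivative $O(|b|^{-(m+j)})$ on $|b|>1$, so Leibniz applied to the product gives $d_{\beta-\epsilon}$ of order $|b|^{\alpha'-m}$; choosing $m$ so that $\alpha'-m\le -2$ secures the hypothesis of Proposition~\ref{prop-S}(a), yielding the required $\mathcal{R}(1/t^{\beta-\epsilon})$ conclusion.

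The main obstacle is establishing the needed $C^{\beta-\epsilon}$ smoothness of $\hat U(ib)$ with polynomially bounded derivatives.  The direct bound of Lemma~\ref{lem-U}(b) only provides H\"older regularity of order $\beta$, which is insufficient once $\beta>1$; higher-order smoothness must be extracted by splitting the integral defining $\hat U(s)$ at $t=1$, noting that the contribution from $t\le1$ is $C^\infty$, and using the explicit form $U_tv=\tilde R v_t$ from Proposition~\ref{prop-U} together with the tail estimate $\mu(\varphi>t)=O(t^{-\beta})$ and assumption (A1) to bound successive differences of $\hat U^{(j)}$ in the style of Proposition~\ref{prop-smoothR}.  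This is the genuine technical content of the proof; the rest is bookkeeping via Proposition~\ref{prop-S}.
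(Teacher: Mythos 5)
Your overall frame --- splitting $\hat\rho_{v,w}=\hat P_m+s^{-m}\hat\rho_{v,\partial_t^m w}$ via Proposition~\ref{prop-m}, disposing of $(1-\psi^3)\hat P_m$ with Proposition~\ref{prop-S}(b), and treating the remainder with Proposition~\ref{prop-S}(a) plus the Dolgopyat bounds --- is the paper's. The divergence, and the gap, lies in how you treat $\hat U$. You propose to show that $b\mapsto\hat\rho_{v,\partial_t^m w}(ib)$ is $C^{\beta-}$ by Leibniz, which requires a $C^{\beta-}$ smoothness statement for $\hat U(ib)$ with controlled $d_{\beta-\epsilon}$; you correctly flag this as the main technical content, but you do not prove it, and the ingredients you cite are insufficient as stated. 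Differentiating $\hat U(s)v=\int_0^\infty e^{-st}U_tv\,dt$ $j$ times brings down a factor $t^j$, and with the bound you invoke, $\|U_t\|_{L^\infty\to L^1}\le\mu(\varphi>t)=O(t^{-\beta})$, the integral $\int_1^\infty t^j\mu(\varphi>t)\,dt$ diverges once $j\ge\beta-1$, so this route caps out at regularity $(\beta-1)-$, not $\beta-$. The claim you need is in fact true, but only because $U_tv=\tilde Rv_t$ with $v_t$ supported on $\{t<\varphi<t+1-u\}$, so the $t$-integration attached to each inverse branch runs over an interval of length at most $1$ ending at $\varphi(y_a)$ (equivalently one must use $\int_L^\infty\mu(t<\varphi<t+1)\,dt\le\mu(\varphi>L)$, as in the proof of Lemma~\ref{lem-U}(b), rather than the pointwise tail); with that refinement an argument in the style of Proposition~\ref{prop-smoothR} does give uniform $d_{\beta-\epsilon}\hat U(ib)$ bounds. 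As written, however, this step is a genuine hole, and it is also the hardest part of your plan.

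The paper avoids the issue entirely, and you can too: no smoothness of $\hat U$ is needed. Keep $\hat U$ in the time domain, where Proposition~\ref{prop-U2} gives $\hat U\in\mathcal{R}(\mu(\varphi>t))\subset\mathcal{R}(1/t^\beta)$ as operators $L^\infty(\tilde Y)\to L^1(\tilde Y)$, and attach the scalar factor $s^{-m}(1-\psi^3)$ to the resolvent rather than to $\hat\rho$: it suffices to show that $Z(s)=s^{-m}(1-\psi^3)(I-\hat R_0(s))^{-1}$ lies in $\mathcal{R}(1/t^{\beta-})$ on $F_\theta(Y)$, and this is exactly your Proposition~\ref{prop-S}(a) argument (Proposition~\ref{prop-smoothR}(b), Proposition~\ref{prop-DolgR}, Lemma~\ref{lem-approx}, $m$ large). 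Then $(1-\psi^3)\hat H_m(ib)$ is the pairing of $\hat U(ib)Z(ib)v$ with $\partial_t^m w$, and since the $\mathcal{R}$-classes are stable under products (convolution in $t$, as in Proposition~\ref{prop-Dinv}), this lies in $\mathcal{R}(\|v\|_\theta|w|_{\infty,m}1/t^{\beta-})$. This is shorter and sidesteps the unproved smoothness lemma for $\hat U$ altogether.
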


\begin{proof}
Write $\rho_{v,w}$ to stress the dependence on $v,w$ and similarly
for $\hat\rho_{v,w}$.  
By Proposition~\ref{prop-m},
$\hat\rho_{v,w}(s)=\hat P_m(s)+\hat H_m(s)$, where
$\hat P_m(s)$ is a linear combination of $s^{-j}$, $j=1,\dots,m$, and 
$\hat H_m(s)=s^{-m}\hat \rho_{v,\partial_t^mw}(s)$.

By Proposition~\ref{prop-S}(b), $(1-\psi(b)^3)\hat P_m(ib)\in\mathcal{R}(1/t^p)$
for all $p>0$.  
Next,
\[
\hat\rho_{v,\partial_t^mw}=\int_{\tilde Y}\hat U(I-\hat R)^{-1}v\;\partial_t^mw\,d\tilde\mu,
\]
where $\hat U:L^\infty(\tilde Y)\to L^1(\tilde Y)$ lies in
$\mathcal{R}(1/t^\beta)$ by Proposition~\ref{prop-U2}.
It remains to show that
$Z(b)=b^{-m}(1-\psi(b)^3)(I-\hat R_0(ib))^{-1}:F_\theta(Y)\to F_\theta(Y)$ lies in
$\mathcal{R}(1/t^{\beta-})$.

By Proposition~\ref{prop-smoothR}(b), $\hat R_0$ is $C^{\beta-}$.   
Hence $(I-\hat R_0)^{-1}$ is $C^{\beta-}$ on $\R\setminus\{0\}$ and
$Z$ is $C^{\beta-}$ on $\R$.
Moreover, 
by Proposition~\ref{prop-DolgR} and~Lemma~\ref{lem-approx},
there exists $C,A>0$ such that
$d_{\beta-}(I-\hat R_0(ib))^{-1}\le C|b|^A$ for $|b|>1$.
Hence for $m$ sufficiently large,
$d_{\beta-}Z(ib)\ll |b|^{-2}$ for $|b|>1$.
We conclude from
 Proposition~\ref{prop-S}(a) that $Z\in\mathcal{R}(1/t^{\beta-})$ as required.~
\end{proof}

\begin{pfof}{Lemma~\ref{lem-T4}}
This is immediate by Propositions~\ref{prop-a1},~\ref{prop-a2} and~\ref{prop-a3}.
\end{pfof}

\section{Proof of Theorem~\ref{thm-finite}(b)}
\label{sec-zero}

In this section, we complete the proof of Theorem~\ref{thm-finite}(b).
For this it suffices to replace the estimate for $\rho_3(t)$ in Lemma~\ref{lem-T3} by the improved estimate in Lemma~\ref{lem-zero2} below.

Define $\hat\zeta_j(s)=s^{-1}(\int_Y\hat D^*(s)1_Y\,d\mu)^j$.
Then $\zeta_0\equiv1$ and it follows from the proof of
Proposition~\ref{prop-DD}(a) that $\zeta_1(t)=\zeta(t)$ for $t>k$.

\begin{prop} \label{prop-G}
Let $\beta>1$, $j\ge0$.
Then $\hat\zeta_j\in\mathcal{R}(1/t^{j(\beta-1)})$
if $j(\beta-1)<\beta$ and
$\hat\zeta_j\in\mathcal{R}(1/t^{\beta})$
if $j(\beta-1)>\beta$.
\end{prop}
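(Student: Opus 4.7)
The plan is to proceed by induction on $j$; the base case $j=1$ is Proposition~\ref{prop-DD}(a), which gives $\hat\gamma_1\in\mathcal{R}(1/t^{\beta-1})$, matching $a_1=1\cdot(\beta-1)<\beta$. For the inductive step, I will extend the differentiation argument used in Proposition~\ref{prop-DD}(b). Writing $\delta(s)=\int_Y\hat D(s)1_Y\,d\mu$, the identity $\hat\gamma_j(s)=s^{-1}\delta(s)^j$ differentiates to
\[
\hat\gamma_j'(s)=-s^{-2}\delta(s)^j+js^{-1}\delta(s)^{j-1}\delta'(s)=-\hat\gamma_1(s)\hat\gamma_{j-1}(s)+j\delta'(s)\hat\gamma_{j-1}(s).
\]
Since $\hat\gamma_j'(s)$ is the Laplace transform of $-t\gamma_j(t)$, inverting gives
\[
t\gamma_j(t)=(\gamma_1*\gamma_{j-1})(t)-j(d_1*\gamma_{j-1})(t),
\]
where $d_1(t)=t\cdot 1_{\{t>k\}}\mu(\varphi>t)$ is the inverse Laplace transform of $\delta'(s)$, obtained from $\delta(s)=\int_Y\hat C(0)1_Y\,d\mu-\int_Y\hat C(s)1_Y\,d\mu$ and Proposition~\ref{prop-D}. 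Under the hypothesis $\mu(\varphi>t)=O(t^{-\beta})$, both $\gamma_1$ and $d_1$ lie in $\mathcal{R}(1/t^{\beta-1})$, and by the inductive hypothesis $\gamma_{j-1}\in\mathcal{R}(1/t^{a_{j-1}})$ with $a_{j-1}=\min((j-1)(\beta-1),\beta)$.

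The bulk of the proof then reduces to standard convolution estimates: if $f\in\mathcal{R}(1/t^\alpha)$ and $g\in\mathcal{R}(1/t^{\alpha'})$ with $\alpha,\alpha'\ge 0$, then
\[
(f*g)(t)=O\bigl(t^{-\lambda}\bigr),\qquad \lambda=\min(\alpha,\alpha',\alpha+\alpha'-1),
\]
modulo logarithmic factors at the critical values where one of the exponents equals $1$ (these will not arise generically thanks to the strict inequalities $j(\beta-1)\neq\beta$ of the proposition, and in any case may be absorbed by shrinking the exponent by an arbitrarily small amount). Applied with $\alpha=\beta-1$ and $\alpha'=a_{j-1}$: in the regime $j(\beta-1)<\beta$ one has $a_{j-1}=(j-1)(\beta-1)$ and the minimum is $\alpha+\alpha'-1=j(\beta-1)-1$, so $t\gamma_j(t)=O(t^{-(j(\beta-1)-1)})$ and hence $\gamma_j(t)=O(t^{-j(\beta-1)})$; in the regime $j(\beta-1)>\beta$ one verifies that $\lambda=\beta-1$, yielding $\gamma_j(t)=O(t^{-\beta})$.

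The main obstacle is the case-analysis bookkeeping for the convolution estimate, which splits according to whether $\beta-1$ and $a_{j-1}$ exceed $1$ (i.e.\ whether the factors are integrable). Most subcases only reinforce the claimed bound; the delicate point is to verify in the transition regime $(j-1)(\beta-1)<\beta\le j(\beta-1)$ that $\alpha+\alpha'-1=j(\beta-1)-1\ge\beta-1$, equivalently $(j-1)(\beta-1)\ge 1$. This is a direct rearrangement of the defining inequality $j(\beta-1)\ge\beta$ of the regime, and together with $a_{j-1}\ge\beta-1$ forces $\lambda=\beta-1$ and hence $\gamma_j\in\mathcal{R}(1/t^\beta)$, completing the induction.
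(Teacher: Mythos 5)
Your proof is correct and follows essentially the same route as the paper, which simply says to continue inductively the differentiation argument of Proposition~\ref{prop-DD}(b) and refers to Gou\"ezel's Lemma~5.1 for the details; your identity $\hat\gamma_j'=-\hat\gamma_1\hat\gamma_{j-1}+j\delta'\hat\gamma_{j-1}$ together with the convolution bounds is exactly that induction, spelled out. The only point handled more tersely than it deserves is the boundary layer $(j-1)(\beta-1)=\beta$ in the inductive hypothesis, but as you indicate, the slack available in the regime $j(\beta-1)>\beta$ absorbs the resulting logarithmic (or $\epsilon$) loss, so the argument goes through.
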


\begin{proof}  This is proved by continuing inductively the argument in Proposition~\ref{prop-DD}(b).   The details are the same as in~\cite[Lemma~5.1]{Gouezel-sharp}
(with the simplification that there are no noncommutativity issues).
\end{proof}

\begin{lemma} \label{lem-zero2}
Let $1<\beta<2$ and choose $m\ge3$ least such that $m(\beta-1)>\beta$.
Then
\[
\rho_3(t)=\bar v\bar w\sum_{j=2}^{m-1}(1/\bv)^j\zeta_j(t) +O(|v|_\infty|w|_\infty 1/t^{\beta-}).
\]
\end{lemma}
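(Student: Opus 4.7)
The plan is to expand $\hat T_{0,3}(s)$ as a Neumann series in $P_\varphi\hat D(s)$, identify its action via the rank-one structure of $P$, and relate the result directly to $\hat\rho_1$.

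Using the rank-one identity $\hat D\,P=\hat G\cdot P$ with $\hat G(s)=\int_Y\hat D(s)1_Y\,d\mu$, an easy induction yields $(P_\varphi\hat D(s))^nP_\varphi=(1/\bv)^{n+1}\hat G(s)^nP$, and hence
\[
\hat T_{0,3}(s)=\sum_{n=2}^\infty(1/\bv)^{n+1}\hat\gamma_n(s)\,P.
\]
On the other hand, the very definition of $\hat T_{0,1}$ gives $\hat\rho_1(s)=(1/\bv)^2\,s^{-1}\hat h(s)$, where $\hat h(s)=\int_{\tilde Y}\hat U(s)[c(\cdot)]\,w\,d\tilde\mu$ and $c(u)=\int_Y v(y,u)\,d\mu$. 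Substituting $\hat h=\bv^2 s\hat\rho_1$ and using $s\hat\gamma_n=\hat G^n$ yields the clean identity
\[
\hat\rho_3(s)=\sum_{n=2}^\infty(1/\bv)^n\hat G(s)^n\hat\rho_1(s).
\]

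Next I decompose $\hat\rho_1(s)=(\bar v\bar w)/s+\hat r_1(s)$; Lemma~\ref{lem-T12}(a) gives $|r_1(t)|\ll|v|_\infty|w|_\infty\,t^{-\beta}$, so $\hat r_1\in\mathcal R(1/t^\beta)$. Inverting via $\hat G^n/s=\hat\gamma_n$ produces
\[
\rho_3(t)=\bar v\bar w\sum_{n=2}^\infty(1/\bv)^n\gamma_n(t)+\sum_{n=2}^\infty(1/\bv)^n(G^{*n}*r_1)(t).
\]
Truncating the first sum at $n=m-1$, the tail $\bar v\bar w\sum_{n\ge m}(1/\bv)^n\gamma_n(t)$ is handled via Proposition~\ref{prop-G}: each $\gamma_n$ with $n\ge m$ lies in $\mathcal R(1/t^\beta)$, and for $k$ large enough $(1/\bv)\sup_{s\in\overline\H}|\hat G(s)|\le(1/\bv)\gamma(k)<1/2$ by Proposition~\ref{prop-D00}, so the series converges absolutely in the Banach algebra $\mathcal R(1/t^\beta)$ of Proposition~\ref{prop-Dinv}.

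The error series is controlled by the same $\mathcal R_\beta$-algebra argument: each $\hat G(s)^n\hat r_1(s)$ lies in $\mathcal R(1/t^{\beta-\epsilon})$ with norm bounded by $C\gamma(k)^{n-1}\|\hat r_1\|_{\mathcal R_{\beta-\epsilon}}$. The main obstacle is that $\lim_{s\to\infty}\hat G(s)=\gamma(k)\ne 0$, so $G$ has a formal atomic part at $t=0$ and $\hat G$ itself does not lie in $\mathcal R$; this is circumvented as in Remark~\ref{rmk-D} by always pairing $\hat G^n$ with a factor (namely $\hat r_1$ here, or the $s^{-1}$ implicit in $\hat\gamma_n$) that has no atomic part. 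With this bookkeeping, the geometric decay of $((1/\bv)\gamma(k))^n$ ensures summability and delivers the required $O(|v|_\infty|w|_\infty\,t^{-(\beta-\epsilon)})$ estimate for both remainders.
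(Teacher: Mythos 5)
Your argument is correct and follows essentially the same route as the paper: both reduce $\hat\rho_3$ to powers of the scalar $\hat G(s)=\int_Y\hat D(s)1_Y\,d\mu$ multiplying $\hat\rho_1$, split $\hat\rho_1(s)=s^{-1}\bar v\bar w+\hat h(s)$ with $\hat h\in\mathcal{R}(1/t^\beta)$ via Lemma~\ref{lem-T12}(a), and control all remainders in the Banach algebra of Proposition~\ref{prop-Dinv} using Corollary~\ref{cor-D} and Proposition~\ref{prop-G}. The only cosmetic difference is that you sum the full Neumann series and bound its tail geometrically, whereas the paper truncates after $m$ terms and keeps the factor $\bigl(1-(1/\bv)\int_Y\hat D 1_Y\,d\mu\bigr)^{-1}$ in the remainder; the underlying estimate (taking $k$ large and working with exponent $\beta-\epsilon$ so the relevant norms are small) is the same.
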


\begin{proof}
Recall that 
\begin{align*}
\hat\rho_j & =(1/\bv)\int_{\tilde Y}\hat U\hat T_jv\,w\,d\tilde\mu,
\quad
(\hat T_jv)(y,u)=(\hat T_{0,j}v^u)(y),
\end{align*}
where
\begin{align*}
\hat T_{0,1}=(1/\bv)s^{-1}P(0), \quad
\hat T_{0,3}=(1/\bv)^3s^{-1}
\Bigl(1-(1/\bv)\int_Y\hat D^*1_Y\,d\mu_Y\Bigl)^{-1}
\Bigl(\int_Y\hat D^*1_Y\,d\mu\Bigr)^2P(0).
\end{align*}
Hence
\begin{align*}
\hat\rho_3 & =(1/\bv)^2\hat\rho_1\Bigl(1-(1/\bv)\int_Y\hat D^*1_Y\,d\mu_Y\Bigl)^{-1}\Bigl(\int_Y\hat D^*1_Y\,d\mu\Bigr)^2
\\ & =\sum_{j=2}^{m-1}(1/\bv)^j\Bigl(\int_Y\hat D^*1_Y\,d\mu\Bigr)^j\hat\rho_1+(1/\bv)^m\hat q\Bigl(\int_Y\hat D^*1_Y\,d\mu\Bigr)^m\hat\rho_1,
\end{align*}
where $\hat q=\bigl(1-(1/\bv)\int_Y\hat D^*1_Y\,d\mu_Y\bigr)^{-1}$.
By Lemma~\ref{lem-T12}(a),
 $\hat\rho_1(s)=s^{-1}\bar v\bar w+\hat h(s)$
where $\hat h\in\mathcal{R}(1/t^\beta)$.
Hence
\begin{align*}
\hat\rho_3 
& =\bar v\bar w\sum_{j=2}^{m-1}(1/\bv)^j\hat\zeta_j+\bar v\bar w(1/\bv)^m\hat q\hat\zeta_m \\
& \qquad  +\sum_{j=2}^{m-1}(1/\bv)^j\Bigl(\int_Y\hat D^*1_Y\,d\mu\Bigr)^j\hat h
+(1/\bv)^m\hat q\Bigl(\int_Y\hat D^*1_Y\,d\mu\Bigr)^m \hat h.
\end{align*}
Now apply Corollary~\ref{cor-D} and 
 Propositions~\ref{prop-Dinv} and~\ref{prop-G}.
\end{proof}

\appendix

\section{Wiener lemma}

This appendix contains material about a version of the Wiener lemma that is
required in Section~\ref{sec-fml}.
We have chosen the notation here to conform with standard conventions in Fourier analysis.  
(In the application of this material, the roles of $f:\R\to\C$ and its Fourier transform $\hat f$ is reversed, with $b$ and $t$ playing the role of $x$ and $\xi$ respectively.)


Let $\A$ be the Banach algebra of $2\pi$-periodic continuous functions
$f:\R\to\C$ such that their Fourier coefficients $\hat f_n$ are absolutely summable, with norm $\|f\|_{\A}=\sum_{n\in\Z}|\hat f_n|$.
Similarly, let $\mathcal{R}$ be the Banach algebra of continuous functions
$f:\R\to\C$ such that their Fourier transform $\hat f:\R\to\C$ lies in $L^1(\R)$, with norm $\|f\|_{\mathcal{R}}=\int_{-\infty}^\infty|\hat f(\xi)|\,d\xi$.

Given $\beta>1$, we define the Banach algebra $\A_\beta=\{f\in\A:\sup_{n\in\Z}|n|^\beta|\hat f_n|<\infty\}$ with norm
$\|f\|_{\A_\beta}=\sum_{n\in\Z}|\hat f_n|+\sup_{n\in\Z}|n|^\beta|\hat f_n|$.
Similarly, we define the Banach algebra $\mathcal{R}_\beta=\{f\in\mathcal{R}:\sup_{\xi\in\R}|\xi|^\beta|\hat f(\xi)|<\infty\}$ with norm
$\|f\|_{\mathcal{R}_\beta}=\int_{-\infty}^\infty|\hat f(\xi)|\,d\xi+\sup_{\xi\in\R}|\xi|^\beta|\hat f(\xi)|$.

The following Wiener lemmas are standard.

\begin{lemma} \label{lem-Wdiscrete}
Let $\beta>1$ and
let $f,f_1\in\mathcal{A}_\beta$.
Suppose that $f$ is bounded away from zero on the support of $f_1$.

Then there exists $g\in\mathcal{A}_\beta$ such that
$f_1=fg$.
\end{lemma}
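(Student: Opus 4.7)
The plan is to reduce to the classical Wiener inversion theorem in $\mathcal{A}_\beta$, which states that any nowhere-vanishing $F\in\mathcal{A}_\beta$ has reciprocal $1/F\in\mathcal{A}_\beta$. (This holds because $\mathcal{A}_\beta$ is a symmetric commutative Banach algebra whose Gelfand spectrum is $[-\pi,\pi]$.) Concretely, I would construct an auxiliary function $\tilde f\in\mathcal{A}_\beta$ with (i) $\tilde f\equiv f$ on some open neighbourhood of $K:=\supp f_1$, and (ii) $\tilde f(x)\ne 0$ for every $x\in[-\pi,\pi]$. Given such $\tilde f$, the classical lemma yields $1/\tilde f\in\mathcal{A}_\beta$, and one may set $g:=f_1\cdot(1/\tilde f)\in\mathcal{A}_\beta$ (product of two $\mathcal{A}_\beta$-elements). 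On $K$ we have $\tilde f=f$, so $fg=f_1$; off $K$ both sides vanish. Hence $fg=f_1$ globally.

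To build $\tilde f$, choose by continuity and compactness $\delta>0$ together with nested open sets $K\subset U\subset\overline U\subset V$ with $|f|\ge \delta$ on $\overline V$, arranging $V$ to be a finite disjoint union of open arcs so that $\partial U$ is a finite set of points. Pick a $C^\infty$ periodic cutoff $\chi$ with $\chi\equiv 1$ on $\overline U$ and $\supp\chi\subset V$; since $\chi\in C^\infty$, its Fourier coefficients decay faster than any polynomial and so $\chi\in\mathcal{A}_\beta$. Set
\[
\tilde f \;=\; \chi\,f + (1-\chi)\,c,
\]
where $c\in\C$ is a unit constant to be chosen. Then $\tilde f\in\mathcal{A}_\beta$ (as a sum of products of $\mathcal{A}_\beta$-elements), $\tilde f=f$ on $\overline U$ (hence $|\tilde f|\ge\delta$ there), and $\tilde f=c\ne 0$ off $\supp\chi$.

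The main obstacle is to choose $c$ so that $\tilde f$ also avoids $0$ on the transition region $T:=V\setminus\overline U$. There $\tilde f(x)=0$ iff $f(x)=c\bigl(1-1/\chi(x)\bigr)$, i.e.\ iff $f(x)$ lies on the open ray $\{-tc:t>0\}$. This is handled by first shrinking $V$ toward $\overline U$: the set $f(T)$ can then be made to lie in an arbitrarily small neighbourhood of the finite set $f(\partial U)\subset\C\setminus\{0\}$. Consequently, the forbidden set of directions $\{c\in S^1 : \{-tc:t>0\}\cap f(T)\ne\emptyset\}$ is concentrated near finitely many points of $S^1$, and one may pick $c$ outside it. With this $c$, $\tilde f$ is nowhere vanishing, and the argument of the first paragraph completes the proof.
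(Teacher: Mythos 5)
You should note first that the paper does not prove Lemma~\ref{lem-Wdiscrete} at all: it quotes it from \cite[Theorem~1.2.12]{Frenk}, and only the continuous-variable version, Lemma~\ref{lem-W}, is proved there (by reduction to the discrete one). So your argument is necessarily a different route. The reduction you propose is sound: replacing $f$ by $\tilde f=\chi f+(1-\chi)c$ with a smooth periodic cutoff $\chi\equiv1$ on a neighbourhood of $\supp f_1$, choosing the unimodular constant $c$ so that $\tilde f$ has no zero on the transition region, and setting $g=f_1\cdot(1/\tilde f)$ is exactly the standard way to pass from global invertibility to local division, and it parallels the cutoff manipulations the paper itself uses in the proof of Lemma~\ref{lem-W}. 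Your ray-avoidance argument for choosing $c$ (after shrinking $V$ so that $f(V\setminus\overline U)$ clusters near the finite set $f(\partial U)$, whose points have modulus $\ge\delta$) is correct, up to the small slip that it is $U$, not $V$, that should be taken to be a finite union of open arcs so that $\partial U$ is finite; zeros of $f$ in the transition region cause no harm since there $\chi<1$ and $\tilde f=(1-\chi)c\neq0$.

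The caveat is that the ingredient you call classical --- that every nowhere-vanishing $F\in\mathcal{A}_\beta$ has $1/F\in\mathcal{A}_\beta$ --- is precisely where the nontrivial content of the lemma lives (preservation of the $O(|n|^{-\beta})$ bound under division), and your parenthetical justification does not establish it. The textbook identification of the maximal ideal space with $[-\pi,\pi]$, as for the unweighted Wiener algebra, uses density of trigonometric polynomials in the algebra; in $\mathcal{A}_\beta$ they are \emph{not} dense (truncation does not make $\sup_n|n|^\beta|\hat f_n|$ small --- e.g.\ $\hat f_n=(1+|n|)^{-\beta}$ stays at distance $\ge 2^{-\beta}$ from every trigonometric polynomial), so the claim that the Gelfand spectrum of $\mathcal{A}_\beta$ is $[-\pi,\pi]$ requires a genuine inverse-closedness argument (a bootstrap in the style of Jaffard's lemma, a derivation-type norm inequality argument, or simply the reference \cite[Theorem~1.2.12]{Frenk} that the paper itself relies on). The invoked theorem is true, so nothing in your argument fails; but as written the main burden is relocated into an unproved parenthesis rather than discharged, and you should either cite that inversion theorem explicitly or supply its proof.
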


\begin{lemma} \label{lem-W}
Let $\beta>1$ and let $f,f_1\in\mathcal{R}_\beta$.
Suppose $f_1$ is compactly supported and that $f$ is bounded away from zero on the support of $f_1$.

Then there exists $g\in\mathcal{R}_\beta$ such that
$f_1=fg$.
\end{lemma}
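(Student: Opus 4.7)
}
The plan is to reduce the problem to an inversion problem in the unitisation $\mathcal{R}_\beta^+ = \C \oplus \mathcal{R}_\beta$, whose elements are continuous functions on $\R\cup\{\infty\}$ with a prescribed constant value at infinity. I would first construct an extension $h$ of $f|_{\supp f_1}$ that lies in $\mathcal{R}_\beta^+$ and is nowhere vanishing on $\R\cup\{\infty\}$, then invert $h$ in $\mathcal{R}_\beta^+$ via a Wiener-type theorem, and finally set $g=f_1/h$, which will land in $\mathcal{R}_\beta$ since that subspace is an ideal in $\mathcal{R}_\beta^+$.

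For the extension step, continuity of $f$ and compactness of $K=\supp f_1$ yield $\delta>0$ and an open neighbourhood $U\supset K$ on which $|f|\ge\delta$. I would choose a Schwartz cutoff $\chi$ with $\chi\equiv 1$ on $K$ and $\supp\chi\subset U$, and for a suitably chosen constant $c\in\C\setminus\{0\}$ set
\[
h = \chi f + (1-\chi)c = c + \chi(f-c) \in \mathcal{R}_\beta^+,
\]
so that $h(\infty)=c$. A zero $h(x)=0$ would force $x\in U\setminus K$ together with $c=-\chi(x)f(x)/(1-\chi(x))$; the set of such bad $c$ is the continuous image in $\C$ of the one-dimensional transition region $U\setminus K$, hence a union of curves of planar Lebesgue measure zero. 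For any $c$ in the complement, $h$ is nowhere zero, and compactness of $\supp\chi$ (outside which $h\equiv c$) gives a uniform lower bound $|h|\ge\epsilon>0$. Assuming the inversion step, $g=f_1\cdot(1/h)$ then lies in $\mathcal{R}_\beta$, agrees with $f_1/f$ on $K$ (where $h=f$) and vanishes elsewhere, so $fg=f_1$ pointwise.

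The hard part will be the Wiener--L\'evy inversion statement itself: if $h\in\mathcal{R}_\beta^+$ is bounded away from zero on $\R\cup\{\infty\}$, then $1/h\in\mathcal{R}_\beta^+$. This is the continuous analogue of the statement underlying Lemma~\ref{lem-Wdiscrete} and is a standard fact about Beurling-type algebras; it is typically established via Gelfand theory by identifying the maximal ideal space of $\mathcal{R}_\beta^+$ with $\R\cup\{\infty\}$ and verifying regularity and symmetry. A more hands-on route adapts the classical Wiener localisation argument: expand $1/h$ as a geometric series in a neighbourhood of each point of $\R\cup\{\infty\}$, check convergence in the $\mathcal{R}_\beta$-norm using the Banach algebra property together with the polynomial decay of $\hat h$, and patch the local inverses via a smooth partition of unity; compactness of $\R\cup\{\infty\}$ reduces the patching to a finite cover. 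A third option would be to transplant the problem to the torus by rescaling so that $\supp(h-c)\subset(-\pi,\pi)$, periodising, and applying Lemma~\ref{lem-Wdiscrete} directly, at the cost of bookkeeping to transfer the weight $|\xi|^\beta$ on $\R$ into the weight $|n|^\beta$ on $\Z$.
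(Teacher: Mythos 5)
Your overall strategy --- extend $f|_{\supp f_1}$ to a nowhere-vanishing element $h$ of the unitisation $\mathcal{R}_\beta^+$, invert $h$ there by a Wiener--L\'evy theorem, and use that $\mathcal{R}_\beta$ is an ideal --- is genuinely different from the paper's proof, which transfers the division problem directly to the circle: Lemma~\ref{lem-AR} identifies compactly supported elements of $\mathcal{R}_\beta$ with elements of $\A_\beta$, and the weighted division lemma on the circle (Lemma~\ref{lem-Wdiscrete}, taken from the literature) is then applied to $f_1$ and $\chi f$. Because Lemma~\ref{lem-Wdiscrete} is already in division form, the paper never needs a nonvanishing extension at all. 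In your plan two steps are genuine gaps rather than routine details.

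First, the choice of the constant $c$. You claim the bad set $\{-\chi(x)f(x)/(1-\chi(x))\}$ is the continuous image of a one-dimensional set and hence of planar Lebesgue measure zero. That implication is false in general: continuous, even H\"older, images of intervals can have positive planar measure (space-filling curves), and here it cannot be waved away, since for $1<\beta<2$ an element of $\mathcal{R}_\beta$ is in general only H\"older of exponent $\beta-1$, so for $\beta$ close to $1$ (a range the paper actually needs) the bad set could a priori even exhaust $\C\setminus\{0\}$. Thus the existence of an admissible $c$ is not established. This is repairable --- for instance, interpolate in the transition region through locally constant values chosen in small discs around the values of $f$ avoiding $0$, and join these constants to the value at infinity along paths in $\C\setminus\{0\}$ --- but that is a different construction from the one you give. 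Second, the inversion theorem for $\mathcal{R}_\beta^+$ that you defer to as standard is precisely the nonstandard content: $\mathcal{R}_\beta$ is not a Beurling algebra but a decay-constrained subalgebra of the Wiener algebra, its inverse-closedness is the point at issue, and the paper explicitly says it could not find the continuous-parameter statement in the literature, which is why the appendix proves Lemma~\ref{lem-W} by reduction to Lemma~\ref{lem-Wdiscrete} via Lemma~\ref{lem-AR}. Of your three suggested routes for that step, the transplant-to-the-torus one is essentially the paper's argument, so the real work of the lemma is concentrated in exactly the step you leave open.
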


A statement and proof of Lemma~\ref{lem-Wdiscrete} can be found 
in~\cite[Theorem~1.2.12]{Frenk}.
In this paper, we require Lemma~\ref{lem-W}, but we could not find it stated in the literature.
Hence we provide here a proof of Lemma~\ref{lem-W}, using a standard argument to
reduce to Lemma~\ref{lem-Wdiscrete}.

\begin{lemma}  \label{lem-AR}
Let $\epsilon>0$.  Suppose that $f:\R\to\C$ is a continuous function with
$\supp f\subset[-\pi+\epsilon,\pi-\epsilon]$.
Let $h:\R\to\C$ denote the $2\pi$-periodic continuous function such that
$h|_{[-\pi,\pi]}=f|_{[-\pi,\pi]}$.
Then $f\in\mathcal{R}_\beta$ if and only if $h\in\A_\beta$.
\end{lemma}

\begin{proof}
(cf.\ \cite[Theorem 6.2, Ch.\ VIII, p.~242]{Katzn})
Fix a $C^\infty$ function $\psi:\R\to\R$ supported in $[-\pi+\epsilon/2,\pi-\epsilon/2]$ and such that $\psi\equiv1$ on $[-\pi+\epsilon,\pi-\epsilon]$.
For $\alpha\in[-1,1]$ let $\psi_\alpha(x)=e^{i\alpha x}\psi(x)$.
Then there is a constant $K_0>0$ such that 
\[
|(\widehat\psi_\alpha)_n|\le K_0n^{-\beta},\quad\text{for all $\alpha\in[-1,1]$, $n\in\Z$}.
\]
In particular, $\psi_\alpha\in \A_\beta$ for all $\alpha$
and $\sup_{|\alpha|\le1}\|\psi_\alpha\|_{\A_\beta}<\infty$.

Define $h_\alpha(x)=e^{i\alpha x}h(x)$.
If $h\in\A_\beta$,
then $h_\alpha=h\psi_\alpha\in \A_\beta$ and there is a constant $K>0$ such that $\|h_\alpha\|_{\A_\beta}\le K\|h\|_{\A_\beta}$ for all $\alpha\in[-1,1]$.

Now,
\[
(\widehat{h_\alpha})_n=\frac{1}{2\pi}\int_{-\pi}^\pi e^{i\alpha x}h(x)e^{-inx}\,dx=
\frac{1}{2\pi}\int_{-\infty}^\infty f(x)e^{-i(n-\alpha) x}\,dx=
\frac{1}{2\pi}\hat f(n-\alpha).
\]
Hence $\int_{n-1}^n|\hat f(\xi)|\,d\xi=\int_0^1|\hat f(n-\alpha)|\,d\alpha
=2\pi \int_0^1|(\widehat{h_\alpha})_n|\,d\alpha$.
It follows that
\begin{align} \label{eq-A1}
\|f\|_{\mathcal{R}}=2\pi\sum_{n=-\infty}^\infty\int_0^1 |(\widehat{h_\alpha})_n|\,d\alpha=2\pi\int_0^1\|h_\alpha\|_{\A}\,d\alpha\le 2\pi K\|h\|_{\A_\beta}.
\end{align}

Next, we observe that any $\xi\in\R$ can be expressed as $\xi=(n-\alpha)\sgn\xi$
where $n\ge1$, $\alpha\in[0,1]$.
Hence 
\begin{align} \label{eq-A2} \nonumber
\sup_{\xi\in\R}|\xi|^\beta|\hat f(\xi)|
& =\sup_{n\ge1,\,\alpha\in[0,1]} (n-\alpha)^\beta|\hat f((n-\alpha)\sgn\xi)|
\le \sup_{n\ge1,\,\alpha\in[0,1]} n^\beta|\hat f((n-\alpha)\sgn\xi)|
\\ \nonumber  & 
\le \sup_{n\in\Z,\,\alpha\in[-1,1]} |n|^\beta|\hat f(n-\alpha)|
 = 2\pi\sup_{n\in\Z,\,\alpha\in[-1,1]} |n|^\beta|(\widehat{h_\alpha})_n|
\\ & \le  2\pi\sup_{\alpha\in[-1,1]} \|h_\alpha\|_{A_\beta}
\le  2\pi K\|h\|_{A_\beta}.
\end{align}

Combining~\eqref{eq-A1} and~\eqref{eq-A2}, we obtain that $\|f\|_{\mathcal{R}_\beta}\le 4\pi K\|h\|_{A_\beta}$.  Hence we have shown that
$h\in\A_\beta$ implies that $f\in\mathcal{R}_\beta$.

Conversely, suppose $f\in\mathcal{R}_\beta$.
Then $\sum_{n\in\Z}\int_0^1|\hat f(n-\alpha)|\,d\alpha
=\int_{-\infty}^\infty|\hat f(\xi)|\,d\xi<\infty$ and it follows from Fubini that 
$\sum_{n\in\Z}|\hat f(n-\alpha)|<\infty$ for almost every $\alpha$.
Fix such an $\alpha$.  Then 
$\sum_{n\in\Z}|(\widehat{h_\alpha})_n|=(1/2\pi)
\sum_{n\in\Z}|\hat f(n-\alpha)|<\infty$ so that $h_\alpha\in\A$.    Hence
$h=(h_\alpha)_{-\alpha}\in\A$.   Moreover,
\[
\sup_{n\in\Z}|n|^\beta|\hat h_n|
=(1/2\pi) \sup_{n\in\Z}|n|^\beta|\hat f(n)|
\le (1/2\pi) \sup_{\xi\in\R}|\xi|^\beta|\hat f(\xi)|<\infty,
\]
so that $h\in\A_\beta$.
\end{proof}

\begin{pfof}{Lemma~\ref{lem-W}}
(cf.\ \cite[Lemma 6.3, Ch.\ VIII, p.~242]{Katzn})
We make the standard abuse of notation that functions on $\R$ supported on a closed subset of $(-\pi,\pi)$ can be identified with $2\pi$-periodic functions on $\R$.
In particular, the conclusion of Lemma~\ref{lem-AR} becomes
 $f\in\mathcal{R}_\beta$ if and only if $f\in\A_\beta$.

Without loss, we can suppose that $\supp f_1\subset[-2,2]$.
By Lemma~\ref{lem-AR}, $f_1\in\A_\beta$.

Choose a $C^\infty$ function $\chi:\R\to\R$
such that $\supp\chi\subset[-3,3]$
and $\chi\equiv1$ on $[-2,2]$.
Then $\chi\in\mathcal{A}_\beta$ and $\chi\in\mathcal{R}_\beta$.
In particular $\chi f\in\mathcal{R}_\beta$, and by 
Lemma~\ref{lem-AR} $\chi f\in\A_\beta$.

Moreover $\chi f=f$ on $\supp f_1$ and hence is bounded away from zero
on $\supp f_1$.   By Lemma~\ref{lem-Wdiscrete}, there exists
$g_0\in\A_\beta$ such that $f_1=g_0(\chi f)=(g_0\chi)f$.

Since $g_0,\chi\in\A_\beta$, we deduce that $g=g_0\chi\in\A_\beta$.
By Lemma~\ref{lem-AR}, $g\in\mathcal{R}_\beta$.
Hence $f_1=gf$ with $g\in\mathcal{R}_\beta$ as required.
\end{pfof}

\paragraph{Acknowledgements}
The research of IM was supported in part by EPSRC Grant EP/F031807/1 (held at the University of Surrey) and by the European Advanced Grant StochExtHomog (ERC AdG 320977).
The research of DT was supported in part by the European Advanced Grant MALADY (ERC AdG 246953).
IM and DT are grateful to the {\em Centre International de Rencontres Math\'ematiques} for funding the Research in Pairs topic ``Infinite Ergodic Theory'', Luminy, August 2012, where part of this research was carried out.

\end{document}